\crefname{equation}{}{}
\DeclareSymbolFont{symbolsC}{U}{pxsyc}{m}{n}
\DeclareMathSymbol{\medcircle}{\mathbin}{symbolsC}{7}
\crefname{algocf}{Algorithm}{Algorithms}
\crefname{equation}{}{} 
\colorlet{refkey}{orange!20}
\colorlet{labelkey}{blue!30}
\crefname{algocf}{Algorithm}{Algorithms}
\numberwithin{equation}{section}
\newtheorem{theorem}{Theorem}[section]
\newtheorem{proposition}[theorem]{Proposition}
\newtheorem{lemma}[theorem]{Lemma}
\newtheorem{claim}[theorem]{Claim}
\newtheorem*{claim*}{Claim}
\crefname{claim}{Claim}{Claims}
\newtheorem*{question*}{Question}
\theoremstyle{definition}
\newtheorem{definition}[theorem]{Definition}
\newtheorem*{definition*}{Definition}
\theoremstyle{remark}
\newtheorem{remark}[theorem]{Remark}
\newtheorem*{remark*}{Remark}
\newcommand{\norm}[1]{\bigg\lVert#1\bigg\rVert}
\newcommand{\snorm}[1]{\lVert#1\rVert}
\newcommand{\eps}{\varepsilon}
\renewcommand{\Re}{\on{Re}}
\newcommand{\su}{\subseteq}
\newcommand{\sm}{\setminus}
\newcommand{\mb}{\mathbb}
\newcommand{\mbm}{\mathbbm}
\newcommand{\mc}{\mathcal}
\newcommand{\mr}{\mathrm}
\newcommand{\on}{\operatorname}
\newcommand{\wh}{\widehat}
\newcommand{\wt}{\widetilde}
\let\originalleft\left
\let\originalright\right
\renewcommand{\left}{\mathopen{}\mathclose\bgroup\originalleft}
\renewcommand{\right}{\aftergroup\egroup\originalright}
\newcommand{\noop}[1]{}
\title{Anticoncentration in Ramsey graphs and a proof of the Erd\H{o}s--McKay conjecture}
\author[Kwan]{Matthew Kwan}
\address{Institute of Science and Technology (IST) Austria}
\email{matthew.kwan@ist.ac.at}
\author[A2]{Ashwin Sah}
\author[A3]{Lisa Sauermann}
\author[A4]{Mehtaab Sawhney}
\address{Department of Mathematics, Massachusetts Institute of Technology, Cambridge, MA 02139, USA}
\email{\{asah,lsauerma,msawhney\}@mit.edu}
\thanks{
Kwan was supported for part of this work by ERC Starting Grant ``RANDSTRUCT'' No.~101076777.
Sah and Sawhney were supported by NSF Graduate Research Fellowship Program DGE-2141064. Sah was supported by the PD Soros Fellowship. Sauermann was supported by NSF Award DMS-2100157, and for part of this work by a Sloan Research Fellowship.
}
\begin{document}

\global\long\def\mk#1{\textcolor{red}{\textbf{[MK comments:} #1\textbf{]}}}
\global\long\def\ls#1{\textcolor{blue}{\textbf{[LS comments:} #1\textbf{]}}}

\maketitle
\begin{abstract}
An $n$-vertex graph is called \emph{$C$-Ramsey} if it has no clique
or independent set of size $C\log_2 n$ (i.e., if it has near-optimal
Ramsey behavior). In this paper, we study edge-statistics in Ramsey graphs, in particular obtaining very precise control of the distribution of the number of edges in a random vertex subset of a $C$-Ramsey graph.
This brings together two ongoing lines of research: the study of ``random-like'' properties of Ramsey graphs and the study of small-ball probability for low-degree polynomials of independent random variables.

The proof proceeds via an ``additive structure'' dichotomy on the degree sequence, and involves a wide range of different tools from
Fourier analysis, random matrix theory, the theory of Boolean functions, probabilistic combinatorics, and low-rank approximation. In particular, a key ingredient is a new sharpened version of the quadratic Carbery--Wright theorem on small-ball probability for polynomials of Gaussians, which we believe is of independent interest.
One of the consequences of our result is the resolution of an old conjecture of Erd\H{o}s and McKay, for which Erd\H{o}s reiterated in several of his open problem collections, and for which he offered one of his notorious monetary prizes.

\textbf{MSC Subject Classification:} 60C05 05D10 (05C35)
\end{abstract}

\section{Introduction}\label{sec:introduction}
An induced subgraph of a graph is called \emph{homogeneous} if
it is a clique or independent set (i.e., all possible edges are present,
or none are). One of the most fundamental results in Ramsey theory,
proved in 1935 by Erd\H{o}s and Szekeres~\cite{ES35}, states that every $n$-vertex
graph contains a homogeneous subgraph with at least $\frac{1}{2}\log_{2}n$
vertices\footnote{Since the original submission of the present paper, this bound was improved to $(\frac12+\varepsilon)\log_2$ for an absolute constant $\varepsilon>0$ in breakthrough work by Campos, Griffiths, Morris, and Sahasrabudhe~\cite{CGMS}.}. On the other hand, Erd\H{o}s~\cite{Erd47} famously used the probabilistic
method to prove that, for all $n\ge 3$, 
there is an $n$-vertex graph
with no homogeneous subgraph on $2\log_{2}n$ vertices. Despite significant
effort (see for example \cite{BRSW12,Li18,CZ16,FW81,Coh16,Chu81,Nag72,Abb72,Fra77,Gop14}), there are no known non-probabilistic constructions
of graphs with comparably small homogeneous sets, and in fact the
problem of explicitly constructing such graphs is intimately related
to \emph{randomness extraction} in theoretical computer science (see for example \cite{Sha11} for an introduction to the topic).

For some $C>0$, an $n$-vertex graph is called \emph{$C$-Ramsey} if it has
no homogeneous subgraph of size $C\log_{2}n$. We think of $C$ as
being a constant (not varying with $n$), so $C$-Ramsey graphs are
those graphs with near-optimal Ramsey behavior. It is widely believed that
$C$-Ramsey graphs must in some sense resemble random graphs (which would provide some explanation for why it is so hard to find explicit constructions), and
this belief has been supported by a number of theorems showing that
certain structural or statistical properties characteristic of random graphs
hold for all $C$-Ramsey graphs. The first result of this type was
due to Erd\H{o}s and Szemer\'edi~\cite{ES72}, who showed that every $C$-Ramsey
graph $G$ has edge-density bounded away from zero and one (formally,
for any $C>0$ there is $\eps_{C}>0$ such that for sufficiently large $n$, the number of edges
in any $C$-Ramsey graph with $n$ vertices lies between $\eps_{C}\binom{n}{2}$
and $(1-\eps_{C})\binom{n}{2}$).
Note that this implies fairly
strong information about the edge distribution on induced subgraphs
of $G$, because any induced subgraph of $G$ with at least $n^{\alpha}$ vertices is itself $(C/\alpha)$-Ramsey.

This basic result was the foundation for a large amount of further
research on Ramsey graphs; over the years many conjectures have been
proposed and many theorems proved (see for example \cite{AH91,ABKS09,AB89,AK09,AKS03,BS07,EH77,Erd92,JKLY20,KS19,KS20,NST16,PR99,She98,AO95,LPRT17}). Particular attention has focused on
a sequence of conjectures made by Erd\H{o}s and his collaborators,
exploring the theme that Ramsey graphs must have diverse induced subgraphs. For example, for a $C$-Ramsey
graph $G$ with $n$ vertices, it was proved by Pr\"omel and R\"odl~\cite{PR99} (answering
a conjecture of Erd\H{o}s and Hajnal) that $G$ contains every possible
induced subgraph on $\delta_{C}\log n$ vertices; by Shelah~\cite{She98} (answering
a conjecture of Erd\H{o}s and R\'enyi) that $G$ contains $2^{\delta_{C}n}$
non-isomorphic induced subgraphs; by the first author and Sudakov~\cite{KS19}
(answering a conjecture of Erd\H{o}s, Faudree, and S\'os) that $G$
contains $\delta_{C}n^{5/2}$ subgraphs that can be distinguished
by looking at their edge and vertex numbers; and by Jenssen, Keevash, Long, and Yepremyan~\cite{JKLY20} (improving on a conjecture of Erd\H{o}s, Faudree, and S\'os proved by Bukh and Sudakov~\cite{BS07}) that $G$ contains an induced subgraph with $\delta_C n^{2/3}$ distinct degrees (all for some $\delta_{C}>0$
depending on $C$).

Only one of Erd\H os' conjectures (on properties of $C$-Ramsey graphs) from this period has
remained open until now: Erd\H{o}s and McKay (see \cite{Erd92}) made the ambitious conjecture that for essentially any ``sensible'' integer
$x$, every $C$-Ramsey graph must necessarily contain an induced
subgraph with exactly $x$ edges. To be precise, they conjectured
that there is $\delta_{C}>0$ depending on $C$ such that for  any $C$-Ramsey graph
$G$ with $n$ vertices and any integer
$0\le x\le\delta_{C}n^{2}$, there is an induced subgraph of $G$ with exactly $x$ edges.
Erd\H{o}s reiterated this problem in several collections of his favorite
open problems in combinatorics~\cite{Erd92,Erd95} (also in \cite{Erd97}), and offered one of his notorious
monetary prizes (\$100) for its solution (see \cite{Erd95,CG98,Chu97}).

Progress on the Erd\H{o}s--McKay conjecture has come from four different
directions. First, the canonical example of a Ramsey graph is (a typical outcome of) an Erd\H{o}s--R\'enyi random
graph. It was proved by Calkin, Frieze and McKay~\cite{CFM92} (answering questions raised by Erd\H{o}s and McKay) that for any constants
$p\in(0,1)$ and $\eta>0$, a random
graph $\mathbb{G}(n,p)$ typically contains induced subgraphs with
all numbers of edges up to $(1-\eta)p\binom{n}{2}$. Second,
improving on initial bounds of Erd\H{o}s and McKay \cite{Erd92}, it was proved by Alon, Krivelevich, and Sudakov~\cite{AKS03} that there is $\alpha_{C}>0$ such that in a $C$-Ramsey graph on $n$ vertices, one can always find an induced subgraph with
any given number of edges up to $n^{\alpha_{C}}$. Third, improving on a result
of Narayanan, Sahasrabudhe, and Tomon~\cite{NST16}, the first author and Sudakov~\cite{KS20}
proved that there is $\delta_{C}>0$ such that in any $C$-Ramsey
graph on $n$ vertices contains induced subgraphs with $\delta_{C}n^{2}$ different numbers of edges (though without making any guarantee on
what those numbers of edges are). Finally, Long and Ploscaru~\cite{LP} recently
proved a \emph{bipartite} analog of the Erd\H{o}s--McKay conjecture.

As our first result, we prove a substantial strengthening of the
Erd\H{o}s--McKay conjecture\footnote{To see that this implies the Erd\H os--McKay conjecture, first note that we can assume $n$ is sufficiently large in terms of $C$ (specifically, we can assume $n\ge n_C$ for any $n_C\in \mb{N}$ by taking $\delta_C$ small enough that $\delta_Cn_C^2<1$). Now, by the above-mentioned result of Erd\H os and Szemer\'edi \cite{ES72}, there is $\varepsilon_C>0$ such that for every $C$-Ramsey graph $G$ on $n$ vertices we have $e(G)\ge \varepsilon_C\binom n2\ge \varepsilon_C n^2/4$. So, taking $\delta_C\le \varepsilon_C/8$, the Erd\H os--McKay conjecture follows from the $\eta=1/2$ case of \cref{thm:erdos-mckay}.}. Let $e(G)$ be the number of edges in a graph $G$.

\begin{theorem}\label{thm:erdos-mckay}
Fix $C > 0$ and $\eta>0$, and let $G$ be a $C$-Ramsey graph on $n$ vertices, where $n$ is sufficiently large with respect to $C$ and $\eta$. Then for any integer $x$ with $0\leq x\leq (1-\eta)e(G)$, there is a subset $U\subseteq V(G)$ inducing exactly $x$ edges.
\end{theorem}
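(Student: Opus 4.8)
The plan is to deduce \cref{thm:erdos-mckay} from a precise, local-limit-type anticoncentration statement for the number of edges in a random induced subgraph of a $C$-Ramsey graph $G$ — the technical heart of the paper — and I first set up that reduction before sketching how I would prove the anticoncentration statement itself. For $p \in (0,1)$, let $\mathbb{U}_p \subseteq V(G)$ include each vertex independently with probability $p$, and write $e(G[\mathbb{U}_p]) = \sum_{uv \in E(G)} \xi_u \xi_v$, a degree-$\le 2$ polynomial in independent $\mathrm{Bernoulli}(p)$ variables $\xi_v$, with mean $\mu_p := p^2 e(G)$ and variance $\sigma_p^2 := \on{Var}(e(G[\mathbb{U}_p]))$. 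Substituting $\xi_v = p + (\xi_v - p)$ gives the Hoeffding decomposition
\[
 e(G[\mathbb{U}_p]) \;=\; \mu_p \;+\; p\sum_i d_i\,(\xi_i - p) \;+\; \sum_{uv \in E(G)} (\xi_u - p)(\xi_v - p),
\]
so the ``linear'' part has variance $p^3(1-p)\sum_i d_i^2$ and the genuinely quadratic part has variance $\Theta(p^2(1-p)^2 e(G))$. Since $\sum_i d_i = 2e(G) = \Theta(n^2)$ by Erd\H{o}s--Szemer\'edi \cite{ES72}, we have $\sum_i d_i^2 = \Theta(n^3)$, so for $p$ bounded away from $0$ and $1$ one has $\sigma_p = \Theta(n^{3/2})$. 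The statement I want to extract is a \emph{lower} bound: for $p$ in a suitable range, every integer $k$ within $O(\sigma_p)$ of $\mu_p$ satisfies $\Pr[e(G[\mathbb{U}_p]) = k] > 0$.

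Granting this, \cref{thm:erdos-mckay} follows almost immediately. Given a target integer $0 \le x \le (1-\eta)e(G)$: if $x \le n^{\alpha_C}$, the theorem of Alon, Krivelevich and Sudakov \cite{AKS03} already produces an induced subgraph with exactly $x$ edges. Otherwise set $p := \sqrt{x/e(G)} \in (0, \sqrt{1-\eta}\,]$, so that $\mu_p = x$ \emph{exactly}; since $x > n^{\alpha_C}$, the lower bound $\sigma_p^2 \ge p^3(1-p)\sum_i d_i^2$ shows $\sigma_p$ is polynomially large in $n$, so the integer $x = \mu_p$ lies in the bulk and the anticoncentration statement yields $U \subseteq V(G)$ with $e(G[U]) = x$. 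No ``sweep'' over $p$ is needed because $\mu_p = p^2 e(G)$ is already a continuous surjection onto $[0, e(G)]$. (When $x$ is close to $(1-\eta)e(G)$ — that is, $p$ close to $1$ — it is tidier to instead delete a $(1-p)$-random subset $R$ and apply the anticoncentration statement to $e(G) - e(G[V(G) \setminus R]) = \sum_{v \in R} d_v - e(G[R])$, which is again a degree-$\le 2$ Bernoulli polynomial; the hypothesis $x \le (1-\eta)e(G)$ is exactly what keeps $1-p$ bounded away from $0$, and this is the only point where it is used.)

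For the anticoncentration statement, I would run an ``additive structure'' dichotomy on the degree sequence $(d_1, \dots, d_n)$. In the \emph{unstructured} case — the multiset of degrees is far from lying in any short arithmetic progression, or more generally any low-complexity generalized arithmetic progression — the linear part $p\sum_i d_i(\xi_i - p)$ already anticoncentrates at the optimal scale $\sigma_p$ by an Erd\H{o}s--Littlewood--Offord-type inequality, and in fact obeys a local limit theorem via Esseen-type smoothing; being of strictly larger order than the quadratic part, this transfers to $e(G[\mathbb{U}_p])$ and gives positivity throughout the bulk. In the \emph{structured} case — at the extreme, $G$ is nearly regular — the linear part is essentially supported on a coset of a lattice $q\mb{Z}$ with $q$ possibly as large as $\Theta(n)$, so it cannot smooth the distribution on its own, and one must harvest anticoncentration from the quadratic form $\xi^{\mathsf T} A\xi$, where $A$ is the adjacency matrix of $G$. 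The natural route is to bound the characteristic function $\bigl|\mb{E}\exp\bigl(2\pi i\theta\, e(G[\mathbb{U}_p])\bigr)\bigr|$ at every frequency $\theta$, the dangerous $\theta$ being those near rationals $a/q$; there the problem becomes understanding the distribution of $\xi^{\mathsf T} A\xi \bmod q$, which I would attack using random-matrix input on the spectrum and (stable) rank of $A$, low-rank approximation of $A$, and inequalities from the analysis of Boolean functions — invoking the $C$-Ramsey hypothesis throughout to rule out the rigid global structure (e.g.\ $A$ close to low rank, or to a blow-up of a small graph) that would let $e(G[\mathbb{U}_p])$ collapse onto a proper sublattice of $\mb{Z}$.

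I expect the main obstacle to be precisely this structured case. When the linear part is degenerate there is no cheap source of randomness, and one must rule out any ``parity-type'' obstruction, showing that $e(G[\mathbb{U}_p])$ hits \emph{every} integer near its mean rather than merely every integer in some nontrivial residue class — equivalently, that its characteristic function is bounded away from $1$ at all nonzero frequencies, uniformly enough. Pushing the random-matrix and Fourier estimates down to the scale of a single integer, uniformly over the range of $p$ required by the reduction above, is where essentially all of the non-elementary work lies.
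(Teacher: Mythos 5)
Your overall architecture --- Alon--Krivelevich--Sudakov for $x\le n^{\alpha_C}$, plus a local-limit-type lower bound on point probabilities of $e(G[U_p])$ near the mean, itself proved via an additive-structure dichotomy on the degree sequence with Fourier estimates and rank/spectral input in the structured case --- is the same as the paper's. The gap is in your reduction, specifically in the claim that ``no sweep over $p$ is needed'' because one may set $p=\sqrt{x/e(G)}$ so that $\mu_p=x$ exactly. For $x$ in the intermediate range $n^{\alpha_C}\le x\le c\,n^2$ this forces $p$ to be polynomially small (as small as roughly $n^{\alpha_C/2-1}$), whereas the anticoncentration statement you propose to prove --- and the one the paper proves, \cref{thm:ramsey-sample} --- holds only for $p$ bounded away from $0$ and $1$: your own variance accounting ($\sigma_p=\Theta(n^{3/2})$, linear part dominating quadratic part) is carried out in that regime, the constants $\kappa_{C,A,\lambda}$ degenerate as $\lambda\to 0$, and extending the pointwise lower bound uniformly down to $p=n^{-1+\alpha_C/2}$ is a genuinely different sparse problem (the effective number of sampled vertices is only $pn$, and the whole Fourier machinery rescales) that your sketch does not address. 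Noting that $\sigma_p$ is ``polynomially large'' does not by itself give positivity of point probabilities at scale $1$.

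The paper closes exactly this range by the opposite move: keep $p$ fixed and bounded away from $0$ and $1$, and shrink the \emph{graph}. Order the vertices and let $G'=G[\{1,\ldots,m\}]$ be the smallest prefix with $e(G')\ge x/p^2$; then $|x-p^2e(G')|\le p^2 m\le m^{3/2}$, and since $m\ge n^{\alpha/2}$ the graph $G'$ is still $(2C/\alpha)$-Ramsey, so the lower bound of \cref{thm:ramsey-sample} applied to $G'$ with $A=1$ produces the desired subset. If you replace your choice of $p$ by this subgraph-selection step, the reduction goes through. As a minor further remark, your complementation trick for ``$p$ close to $1$'' is unnecessary: with $p=\sqrt{x/e(G)}\le\sqrt{1-\eta}$ the sampling probability is automatically bounded away from $1$, so the hypothesis $x\le(1-\eta)e(G)$ is already doing that work.
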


Given prior results due to Alon, Krivelevich and Sudakov~\cite{AKS03}, \cref{thm:erdos-mckay} is actually a simple corollary of a much deeper result (\cref{thm:ramsey-sample}) on edge-statistics in Ramsey graphs, which we discuss in the next subsection.

\subsection{Edge-statistics and low-degree polynomials}

For an $n$-vertex graph $G$, observe that the number of edges $e(G[U])$
in an induced subgraph $G[U]$ can be viewed as an evaluation of a
quadratic polynomial associated with $G$. Indeed, identifying the vertex set of $G$ with $\{1,\ldots,n\}$ and writing $E$ for
the edge set of $G$, consider the $n$-variable quadratic polynomial
$f(\xi_{1},\ldots,\xi_{n})=\sum_{ij\in E}\xi_{i}\xi_{j}$. Then, for
any vertex set $U$, let $\vec{\xi}\,^{(U)}$ be the characteristic
vector of $U$ (with $\vec{\xi}\,^{(U)}_v=1$ if $v\in U$, and $\vec{\xi}\,^{(U)}_v=0$
if $v\notin U$). It is easy to check that the number of edges $e(G[U])$
induced by $U$ is precisely equal to $f(\vec{\xi}\,^{(U)})$. That
is, to say, the statement that $G$ has an induced subgraph with exactly
$x$ edges is precisely equivalent to the statement that there is
a binary vector $\vec{\xi}\in \{0,1\}^n$ with $f(\vec{\xi})=x$.

There are many combinatorial quantities of interest that can
be interpreted as low-degree polynomials of binary vectors. For example,
the number of triangles in a graph, or the number of 3-term arithmetic
progressions in a set of integers, can both be naturally interpreted as evaluations
of certain \emph{cubic} polynomials. More generally, the study of
\emph{Boolean functions} is the study of functions of the form $f\colon\{0,1\}^{n}\to\mathbb{R}$;
every such function can be written (uniquely) as
a multilinear polynomial, and the degree of this polynomial
is a fundamental measure of the ``complexity'' of the Boolean function.

One of the most important discoveries from the analysis of Boolean functions
is that it is fruitful to study the behavior of (low-degree) Boolean functions evaluated on a \emph{random} binary vector $\vec{\xi}\in\{0,1\}^n$. This is the perspective we take in this paper: as our main result, for any Ramsey graph $G$ and a random vertex subset $U$, we obtain very precise control over the distribution of $e(G[U])$.

\begin{theorem}\label{thm:ramsey-sample}
Fix $C,\lambda > 0$, let $G$ be a $C$-Ramsey graph on $n$ vertices and let $\lambda\le p\le 1-\lambda$. Then if $U$ is a random subset of $V(G)$ obtained by independently including each vertex with probability $p$, we have
\[\sup_{x\in\mb{Z}}\Pr[e(G[U]) = x] \le K_{C,\lambda} n^{-3/2}\]
for some $K_{C,\lambda}>0$ depending only on $C,\lambda$. Furthermore, for every fixed $A>0$, we have
\[\qquad\inf_{\substack{x\in\mb{Z}\\|x-p^2e(G)|\le An^{3/2}}}\Pr[e(G[U])=x] \ge \kappa_{C,A,\lambda}n^{-3/2}\]
for some $\kappa_{C,A,\lambda}>0$ depending only on $C,A,\lambda$, if $n$ is sufficiently large in terms of $C,\lambda$ and $A$.
\end{theorem}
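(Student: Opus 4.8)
The plan is to view $e(G[U]) = f(\vec\xi)$ as a quadratic polynomial in the independent indicator variables $\xi_1,\dots,\xi_n \sim \mathrm{Ber}(p)$ and attack the anticoncentration bound $\sup_x \Pr[e(G[U])=x]\le K_{C,\lambda}n^{-3/2}$ via a dichotomy on the degree sequence of $G$. Write $f(\vec\xi) = \sum_{ij\in E}\xi_i\xi_j$; expanding around the mean, the "linear part" of the fluctuation has coefficients governed by the degrees $d_i$ of $G$, while the "genuinely quadratic part" is governed by the adjacency matrix. The key point is that for Ramsey graphs the degree sequence cannot be too degenerate: by the Erd\H{o}s--Szemer\'edi result quoted above (applied to induced subgraphs), $G$ and all its large induced subgraphs have density bounded away from $0$ and $1$, which forces a positive fraction of vertices to have degrees spread over a range of width $\Omega(n)$.

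The dichotomy I would set up is: \textbf{(Case 1, "spread degrees")} there is a large vertex set $W$ on which the degrees $d_i$ take many well-separated values — more precisely, the degree sequence restricted to $W$ is ``additively unstructured'' in the sense that it does not concentrate on a short arithmetic progression. In that case one conditions on all variables outside $W$, reducing $f$ to a quadratic whose linear coefficients are (an affine shift of) the $d_i$ for $i\in W$; a Berry--Esseen / Erd\H{o}s--Littlewood--Offord-type estimate for linear forms with many distinct coefficients then gives point probabilities $O(n^{-3/2})$ (the extra half-power beyond the classical $n^{-1/2}$ coming from the fact that we have $\Omega(n)$ coefficients occupying an interval of length $\Omega(n)$, so the relevant sum of indicators is anticoncentrated at scale $n^{3/2}$). \textbf{(Case 2, "structured degrees")} the degrees essentially lie in a short arithmetic progression (equivalently, the graph is ``close to degree-regular'' after rescaling). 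Here the linear part is useless, so one must extract anticoncentration from the quadratic part: the adjacency matrix of a Ramsey graph has large rank and, more importantly, no large ``structured'' part, so a quadratic-form anticoncentration inequality (of the type proved for low-degree polynomials of independent variables, combined with random-matrix input controlling the spectrum of a random principal submatrix) yields $\sup_x\Pr[f(\vec\xi)=x]=O(n^{-1})$ or better, which is more than enough. Combining the two cases gives the upper bound.

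For the matching \emph{lower} bound $\inf_{|x-p^2e(G)|\le An^{3/2}}\Pr[e(G[U])=x]\ge \kappa n^{-3/2}$, I would prove a local central limit theorem for $e(G[U])$ on the scale of its standard deviation. First one shows $\operatorname{Var}(e(G[U])) = \Theta(n^3)$: the lower bound $\Omega(n^3)$ again uses that Ramsey graphs have $\Omega(n)$ vertices with degrees spread over a $\Omega(n)$-length interval (so the linear part alone has variance $\Omega(n^3)$), and the upper bound $O(n^3)$ is automatic. Then, writing $e(G[U])$ as linear-part-plus-quadratic-part, one runs a Fourier-analytic LCLT argument: show the characteristic function $\mathbb{E}[e^{2\pi i t e(G[U])}]$ is close to a Gaussian for $|t|$ up to $\Theta(n^{-3/2})$ and is uniformly small for larger $|t|$ (up to $1/2$), the latter being the standard ``large-$t$'' obstruction handled by the anticoncentration/spectral input from the upper-bound argument. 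Integrating the Fourier inversion formula gives that every integer $x$ within $O(n^{3/2})$ of the mean $p^2e(G)$ is hit with probability $\asymp n^{-3/2}$. A subtlety for the lower bound is \emph{lattice structure}: one must confirm $e(G[U])$ genuinely takes values throughout (a dense subset of) the integers near its mean and is not supported on a sublattice — but since one can always find two disjoint pairs of vertices whose ``toggling'' changes the edge count by $1$ and by some other small amount, the span is all of $\mathbb{Z}$, so no lattice correction is needed.

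The main obstacle is \textbf{Case 2 of the dichotomy} — controlling the quadratic part when the degree sequence is (nearly) structured. There the entire signal comes from the adjacency matrix $M$ of $G$ restricted to a random principal submatrix, and one needs a quantitative anticoncentration statement for $\vec\xi^{\mathsf T}M\vec\xi$ that (i) is robust to the combinatorial structure a Ramsey graph might have and (ii) interfaces with the ``structured degrees'' hypothesis rather than fighting it. I expect this to require a fairly delicate low-rank-approximation / regularity decomposition of $M$ together with random-matrix estimates on the spectrum of random submatrices, and reconciling the two cases at the boundary (degrees ``somewhat but not very'' structured) so that at least one mechanism always delivers $O(n^{-3/2})$ is where most of the technical work lies.
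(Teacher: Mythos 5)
Your high-level themes (linear/quadratic decomposition in the Fourier--Walsh basis, a dichotomy on additive structure of the degree sequence, Fourier-analytic control of the characteristic function) do match the paper's strategy, but there are several concrete gaps that would sink the argument as proposed. First, in your Case 2 you assert the quadratic part yields $\sup_x\Pr[f(\vec\xi)=x]=O(n^{-1})$ ``or better, which is more than enough.'' It is not enough: the target is $O(n^{-3/2})$, and since the quadratic part has variance only $\Theta(n^2)$, no anticoncentration argument using it alone can beat $n^{-1}$. The missing mechanism is an averaging step: after partitioning the vertices into buckets of nearly-equal degree and conditioning on the bucket intersection sizes $\vec\Delta$, one gets a conditional bound of order $1/\sigma_{\vec\Delta}\asymp n^{-1}$, and the extra factor $n^{-1/2}$ comes from the fact that the conditional mean, as $\vec\Delta$ varies, spreads over a window of width $\Theta(n^{3/2})$, so only an $O(\sigma_{\vec\Delta}/n^{3/2})$-fraction of outcomes of $\vec\Delta$ can contribute to any fixed target value. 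Your proposal never identifies this, and without it the structured case fails. Relatedly, your Case 1 claim that ``degrees spread over an interval of width $\Omega(n)$'' forces linear-form point probabilities of order $n^{-3/2}$ is too optimistic: that hypothesis only gives $O(n^{-1})$; the $n^{-3/2}$ rate requires genuine additive unstructure of the coefficient vector (a large essential LCD), which Ramsey graphs need not have (regular Ramsey graphs exist), so the dichotomy must be drawn on an LCD-type quantity rather than on the range of the degrees.

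Second, your lower-bound plan relies on two things that are false or unavailable. A local central limit theorem does not hold in general: the distribution of $e(G[U])$ can exhibit two-scale (theta-function-like) behavior, and even after conditioning the limiting shape can be a non-Gaussian quadratic chaos (e.g.\ for a disjoint union of two random graphs), so the Gaussian density cannot be the comparison object; one needs small-ball lower bounds for quadratic polynomials of Gaussians instead. More seriously, controlling $|\varphi(t)|$ ``uniformly for larger $|t|$ up to $1/2$'' is precisely the step that does not seem achievable: the decoupling and RLCD techniques only control the characteristic function for $|t|\le\nu$ with $\nu$ a small constant, which yields probabilities of constant-length intervals $\Pr[|X-x|\le B]$, not exact point probabilities. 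Your remark about toggling pairs of vertices to rule out lattice obstructions gestures at the right fix, but establishing that $\Pr[X=x]$ is comparable to $\Pr[|X-x|\le B]$ (rather than, say, all the mass sitting on a few residues) requires a genuine additional argument --- the paper uses an averaged switching method with second-moment estimates and Cauchy--Schwarz --- and this is where both the exact upper and lower point-probability bounds are actually closed.
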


It is not hard to show that for any $C$-Ramsey graph $G$, the standard deviation $\sigma$ of $e(G[U])$ is of order $n^{3/2}$. So, \cref{thm:ramsey-sample} says (roughly speaking) that in the ``bulk'' of the distribution of $e(G[U])$ (i.e., within roughly standard-deviation-range of the mean), the point probabilities are all of order $1/\sigma$. In \cref{sec:deductions} we will give the short deduction of \cref{thm:erdos-mckay} from \cref{thm:ramsey-sample} and the aforementioned theorem of Alon, Krivelevich, and Sudakov.

\begin{remark}\label{rem:expander-works}
Our proof of \cref{thm:ramsey-sample} can be adapted to handle slightly more general types of graphs than Ramsey graphs. For example, we can obtain the same conclusions in the case where $G$ is a $d$-regular graph with $0.01n\le d\le 0.99n$, such that the eigenvalues $\lambda_1\ge\cdots\ge\lambda_n$ of the adjacency matrix of $G$ satisfy $\max\{\lambda_2,-\lambda_n\}\le n^{1/2+0.01}$ (i.e., the case where $G$ is a dense graph with near-optimal spectral expansion). See \cref{rem:expander-rich,rem:expander-dense} for some discussion of the necessary adaptations. Notably, this class of graphs includes \emph{Paley graphs}, which are ``random-like'' graphs with an explicit number-theoretic definition (see for example \cite{KS06}). These graphs are currently one of the most promising candidates for explicit constructions of Ramsey graphs, though precisely studying the Ramsey properties of these graphs seems to be outside the reach of current techniques in number theory (see \cite{HP21,DSW21} for recent developments).
\end{remark}

\begin{remark}
If $p=1/2$, then the random set $U$ in \cref{thm:ramsey-sample} is simply a uniformly random subset of vertices. So, for $x$ close to $e(G)/4$, \cref{thm:ramsey-sample} tells us that the \emph{number} of induced subgraphs with $x$ edges is of order $2^n/n^{3/2}$. It would be interesting to investigate the number of $x$-edge induced subgraphs for general $x$ (not close to $e(G)/4$). From \cref{thm:ramsey-sample} one can deduce a \emph{lower bound} on this number approximately matching the behavior of an appropriate Erd\H os--R\'enyi random graph (i.e., for any constant $\eta>0$, and $\eta n^2\le x\le (1-\eta)e(G)$, there are at least $\exp(H(\sqrt{x/e(G)})n + o(n))$ subgraphs with $x$ edges, where $H$ denotes the base-$e$ entropy function). However, a corresponding upper bound does not in general hold: to characterize the number of $x$-edge induced subgraphs up to any sub-exponential error term, one must incorporate more detailed information about the Ramsey graph $G$ than just its number of edges. (To see this, consider a union of two disjoint independent Erd\H os--R\'enyi random graphs $\mb G(n/2,0.01)\sqcup \mb G(n/2,0.99)$, and count subgraphs with $0.001n^2$ edges.)
\end{remark}

There has actually been quite some recent interest (see for example \cite{KST19,FKS21,MMNT19,FS20,AHKT20}) studying random variables of the form $e(G[U])$ for a graph $G$ and a random vertex set $U$, largely due to a sequence of conjectures by Alon, Hefetz, Krivelevich,
and Tyomkyn~\cite{AHKT20} motivated by the classical topic of \emph{graph inducibility}. Specifically, these works studied the \emph{anticoncentration} behavior of $e(G[U])$ (generally speaking, anticoncentration inequalities provide upper bounds on the probability that a random variable falls in some small ball or is equal to some particular value). As discussed above, $e(G[U])$ can be naturally interpreted as a quadratic polynomial, so this study falls within the scope of the so-called \emph{polynomial Littlewood--Offord problem} (which concerns anticoncentration of general low-degree polynomials of various types of random variables). There has been a lot of work from several different directions (see for example \cite{Cos13,Kan17,KS20b,TV09,NV11,Hal77,TV10b,RV08,Ngu12,NV13}) on the extent to which anticoncentration in the (polynomial) Littlewood--Offord problem is controlled by algebraic or arithmetic structure, and the upper bound in \cref{thm:ramsey-sample} can be viewed in this context: Ramsey graphs yield quadratic polynomials that are highly unstructured in a certain \emph{combinatorial} sense, and we see that such polynomials have strong anticoncentration behavior.

The first author, Sudakov and Tran~\cite{KST19} previously suggested to study anticoncentration of $e(G[W])$ for a Ramsey graph $G$ and a random vertex subset $W$ \emph{of a given size}. In particular, they asked whether for
a $C$-Ramsey graph $G$ with $n$ vertices, and a uniformly random subset $W$ of exactly
$n/2$ vertices, we have $\sup_{x\in\mathbb{Z}}\Pr[e(G[W])=x]\le K_{C}/n$
for some $K_{C}>0$ depending only on $C$. Some progress was made
on this question by the first and third authors~\cite{KS20b}; as a simple corollary of \cref{thm:ramsey-sample}, we answer this question in the affirmative.

\begin{theorem}\label{thm:KST}
For $C>0$ and $0<\lambda<1$, there is $K=K(C,\lambda)$
such that the following holds. Let $G$ be a $C$-Ramsey graph on $n$ vertices and
let $W\su V(G)$ be a random subset of exactly $k$ vertices, for some given $k$ with $\lambda n\le k\le(1-\lambda)n$.
Then
\[\sup_{x\in\mathbb{Z}}\Pr[e(G[W])=x]\le\frac{K}{n}.\]
\end{theorem}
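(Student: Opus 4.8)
The plan is to derive \cref{thm:KST} from \cref{thm:ramsey-sample} by the standard device of passing between the ``independent inclusion'' model and the ``prescribed cardinality'' model via conditioning on the size. Set $p=k/n$; since $\lambda n\le k\le (1-\lambda)n$ we have $\lambda\le p\le 1-\lambda$, so \cref{thm:ramsey-sample} applies for this choice of $p$ (with parameters $C$ and $\lambda$). Let $U$ be the random subset of $V(G)$ obtained by including each vertex independently with probability $p$, so that $|U|$ is a binomial random variable with parameters $n$ and $p$. The key observation is that, conditioned on the event $\{|U|=k\}$, the set $U$ is uniformly distributed over all $k$-element subsets of $V(G)$; that is, conditionally on $|U|=k$ the random variable $e(G[U])$ has exactly the same law as $e(G[W])$ in \cref{thm:KST}. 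Consequently, for every $x\in\mb Z$,
\[
\Pr[e(G[W])=x]=\Pr[e(G[U])=x \mid |U|=k]=\frac{\Pr[e(G[U])=x,\ |U|=k]}{\Pr[|U|=k]}\le\frac{\Pr[e(G[U])=x]}{\Pr[|U|=k]}.
\]

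It then remains to bound the two factors on the right. For the numerator, the upper bound in \cref{thm:ramsey-sample} gives $\Pr[e(G[U])=x]\le K_{C,\lambda}\,n^{-3/2}$. For the denominator, the point of choosing $p=k/n$ is that $k=np$ is exactly the mean of the binomial distribution of $|U|$; since $p$ is bounded away from $0$ and $1$ (by $\lambda$ and $1-\lambda$), standard estimates for the binomial distribution --- e.g.\ Stirling's formula applied to $\Pr[|U|=k]=\binom{n}{k}p^{k}(1-p)^{n-k}$ --- show that this central point probability satisfies $\Pr[|U|=k]\ge c(\lambda)\,n^{-1/2}$ for some $c(\lambda)>0$ and all sufficiently large $n$. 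Substituting these bounds into the display gives $\Pr[e(G[W])=x]\le\bigl(K_{C,\lambda}/c(\lambda)\bigr)n^{-1}$ for all large $n$; for the finitely many remaining small values of $n$ the asserted bound holds trivially after enlarging the constant so that $K/n\ge 1$. Taking $K=K(C,\lambda)$ to be an appropriate maximum of these constants completes the proof.

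There is essentially no genuine obstacle here: the entire content of \cref{thm:KST} is already contained in \cref{thm:ramsey-sample}, and the only quantitative ingredient is the elementary fact that the central point probability of a non-degenerate binomial distribution on $n$ trials is of order $n^{-1/2}$, which is exactly what converts the $n^{-3/2}$ anticoncentration bound of \cref{thm:ramsey-sample} into the $n^{-1}$ bound valid for vertex subsets of a given size. (If one additionally wanted a matching lower bound $\Pr[e(G[W])=x]\ge\kappa\,n^{-1}$ for $x$ within $O(n^{3/2})$ of $p^{2}e(G)$, a little more care would be required, since a lower bound on $\Pr[e(G[U])=x,\ |U|=k]$ does not follow from the ``furthermore'' part of \cref{thm:ramsey-sample} alone; but this is not needed for the statement as given.)
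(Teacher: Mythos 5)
Your proposal is correct and is essentially identical to the paper's own deduction: both set $p=k/n$, condition on the event $|U|=k$ (under which $U$ is uniform over $k$-subsets), bound the numerator by the upper bound in \cref{thm:ramsey-sample} and the denominator by the Stirling estimate $\Pr[|U|=k]\gtrsim_\lambda n^{-1/2}$. No differences worth noting.
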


It is not hard to show that the upper bound in \cref{thm:KST} is best-possible (indeed, this can be seen by taking $G$ to be a typical outcome of an Erd\H os--R\'enyi random graph $\mb G(n,1/2)$). However, in contrast to the setting of \cref{thm:ramsey-sample}, in \cref{thm:KST} one cannot hope for a matching lower bound when $x$ is close to $\mb{E}[e(G[W])]$ (as can be seen by considering the case where $G$ is a typical outcome of the union of two disjoint independent Erd\H{o}s--R\'enyi random graphs $\mb G(n,1/4)\sqcup \mb G(n,3/4)$).

\subsection{Proof ingredients and ideas}\label{sub:auxiliary}
We outline the proof of \cref{thm:ramsey-sample} in more detail in \cref{sec:outline}, but here we take the opportunity to highlight some of the most important ingredients and ideas.

\subsubsection{An approximate local limit theorem}\label{sub:local-limit}
A starting point is that, in the setting of \cref{thm:ramsey-sample}, standard techniques show that $e(G[U])$ satisfies a central limit theorem: we have $\Pr[e(G[U])\le x]=\Phi((x-\mu)/\sigma)+o(1/\sigma)$ for all $x\in\mathbb{R}$, where $\Phi$ is the standard Gaussian cumulative distribution function, and $\mu,\sigma$ are the mean and standard deviation of $e(G[U])$. It is natural to wonder (as suggested in \cite{KS20b} as a potential path towards the Erd\H{o}s--McKay conjecture) whether this can be strengthened to a \emph{local} central limit theorem: could it be that for all $x\in\mb R$ we have $\Pr[e(G[U])=x]=\Phi'((x-\mu)/\sigma)/\sigma+o(1/\sigma)$ (where $\Phi'$ is the standard Gaussian density function)? In fact, the statement of \cref{thm:ramsey-sample} can be interpreted as a local central limit theorem ``up to constant factors''. This perspective also suggests a strategy for the proof of \cref{thm:ramsey-sample}: perhaps we can leverage Fourier-analytic techniques previously developed for local central limit theorems (e.g.~\cite{Gne48,Var15,Ber16,Ber18,GK16,KLP17}), obtaining our desired result as a consequence of estimates on the characteristic function (i.e., Fourier transform) of our random variable $e(G[U])$.

However, it turns out that a local central limit theorem actually does not hold in general: while the coarse-scale distribution of $e(G[U])$ is always Gaussian, in general $e(G[U])$ may have a rather nontrivial ``two-scale'' behavior, depending on the additive structure of the degree sequence of $G$ (see \cref{fig:theta}). Roughly speaking, this translates to a certain ``spike'' in the magnitude of the characteristic function of $e(G[U])$, which rules out na\"ive Fourier-analytic approaches. To overcome this issue, we need to capture the ``reason'' for the two-scale behavior: It turns out that this ``spike'' can only happen if the degree sequence of $G$ is in a certain sense ``additively structured'', implying that there is a partition of the vertex set into ``buckets'' such that vertices in the same bucket have almost the same degree. Then, if we reveal the size of the intersection of $U$ with each bucket, the \emph{conditional} characteristic function of $e(G[U])$ is suitably bounded. We deduce conditional bounds on the point probabilities of $e(G[U])$, and average these over possible outcomes of the revealed intersection sizes of $U$ with the buckets.

\captionsetup{width=\linewidth}
\begin{figure}
    \centering
    \includegraphics[scale=0.3,trim={0 10px 0 15px},clip]{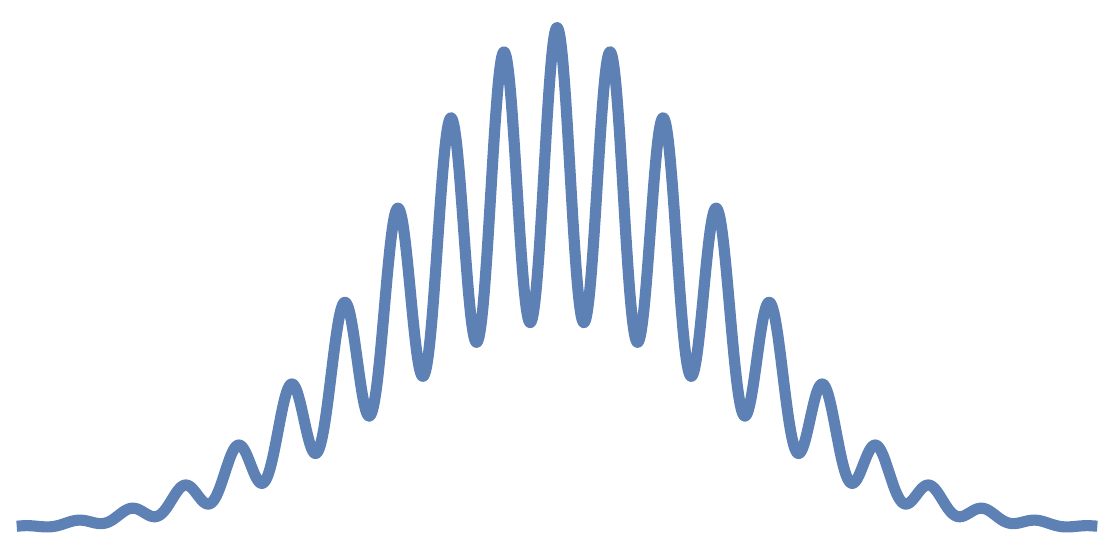}$\quad$
    \includegraphics[scale=0.2,trim={200px 20px 200px 20px},clip]{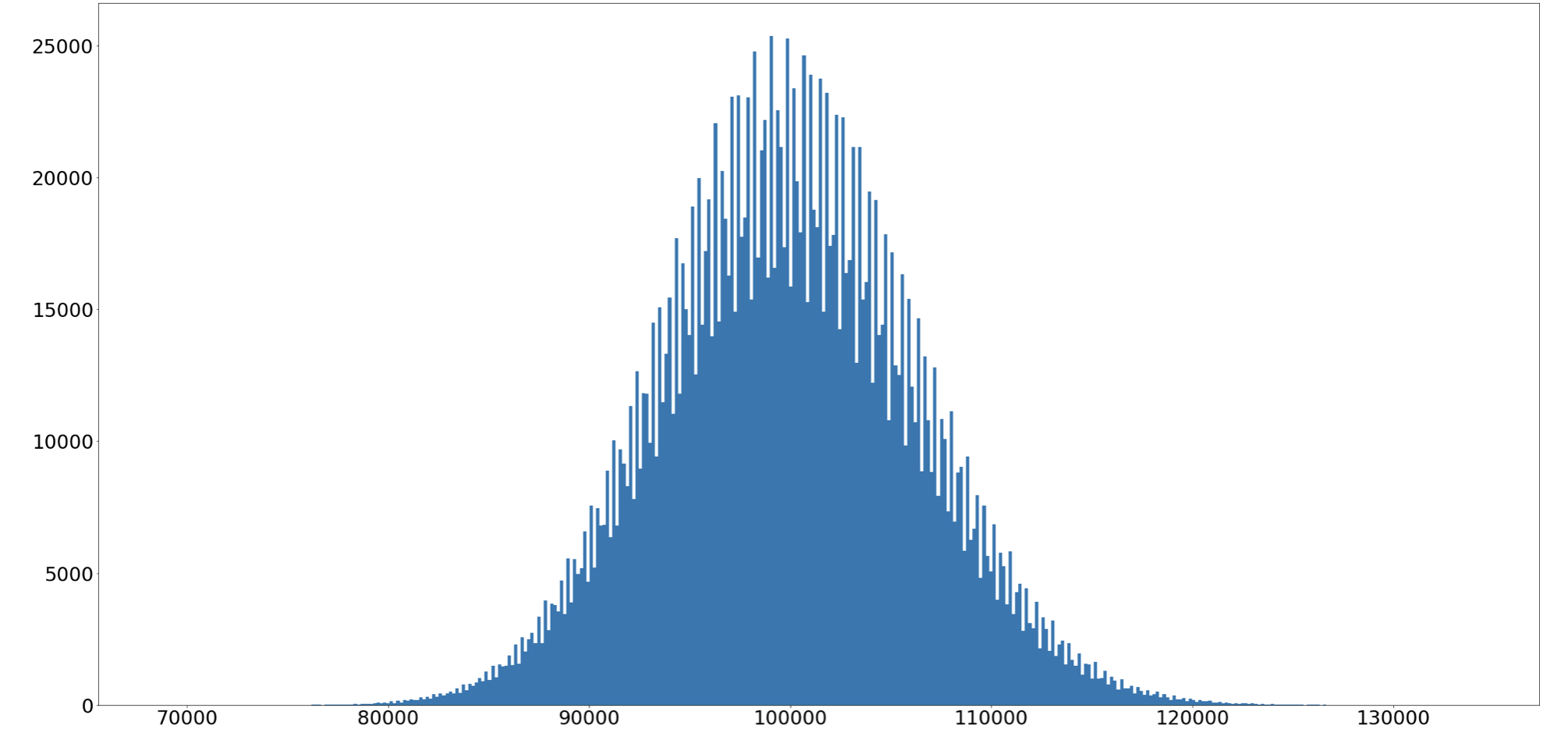}$\quad$
    \includegraphics[scale=0.2,trim={150px 20px 150px 20px},clip]{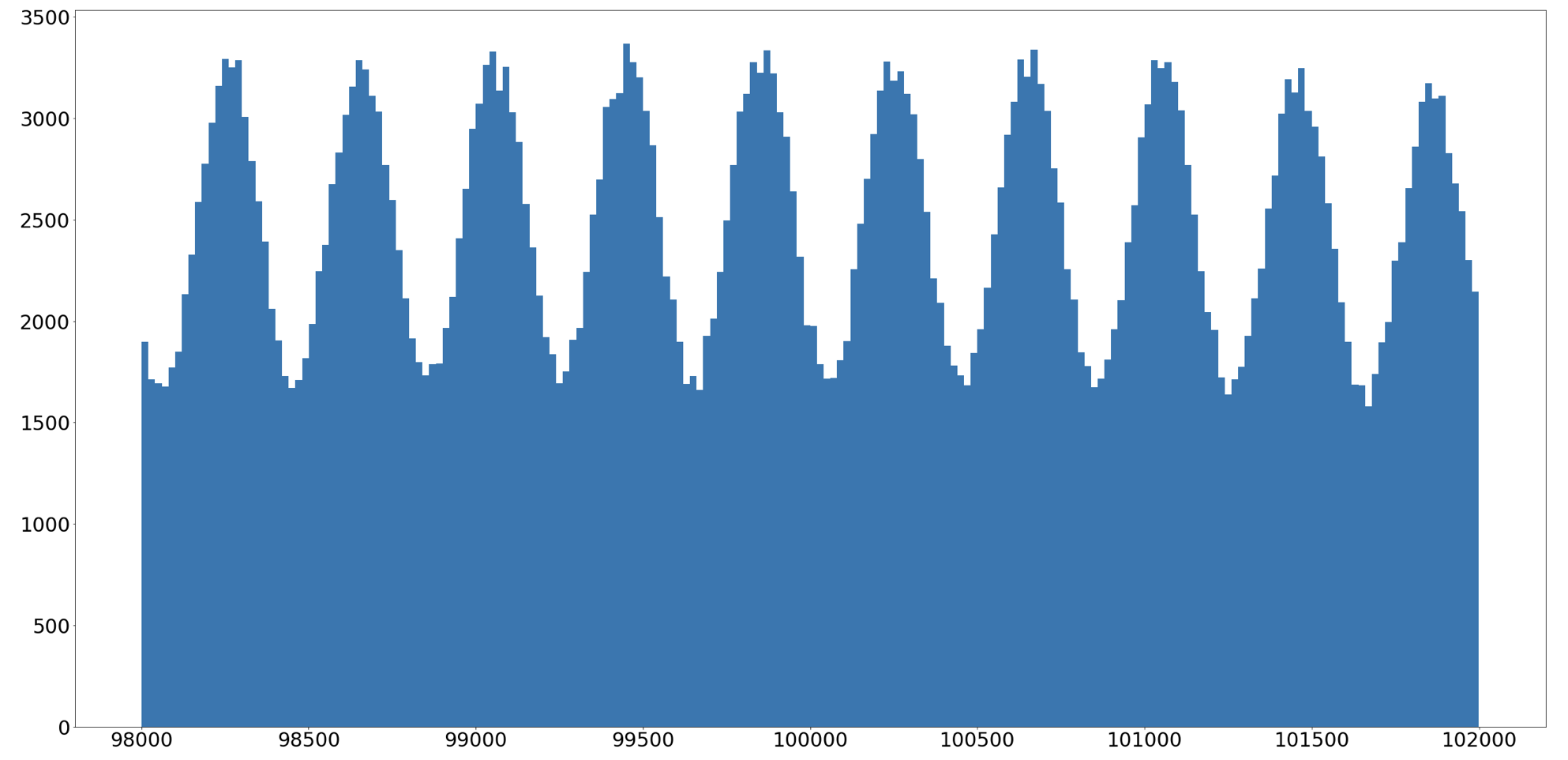}
    \caption{On the left is a cartoon of (one possibility for) the probability mass function of $e(G[U])$ for a Ramsey graph $G$ and a uniformly random vertex subset $U$: the large-scale behavior is Gaussian, but on a small scale we see many smaller Gaussian-like curves. The two images on the right are two different histograms at different scales, obtained from real data (namely, from two million independent samples of a uniformly random vertex subset in a graph $G$ obtained as an outcome of the Erd\H os--R\'enyi random graph $\mb G(1000,0.8)$).}
    \label{fig:theta}
\end{figure}

We remark that one interpretation of our proof strategy is that we are decomposing our random variable into ``components'' in physical space, in such a way that each component is well-behaved in Fourier space. This is at least superficially reminiscent of certain techniques in harmonic analysis; see for example \cite{Gut22}. Looking beyond the particular statement of \cref{thm:ramsey-sample}, we hope that the Fourier-analytic techniques in its proof will be useful for the general study of small-ball probability for low-degree polynomials of independent variables, especially in settings where Gaussian behavior may break down.

\subsubsection{Small-ball probability for quadratic Gaussian chaos}\label{subsub:gaussian-small-ball}

The general study of low-degree polynomials of independent random variables (sometimes called \emph{chaoses}) has a long and rich history. Some highlights include \emph{Kim--Vu polynomial concentration}~\cite{KV00}, the \emph{Hanson--Wright inequality}~\cite{HW71}, the \emph{Bonami--Beckner hypercontractive inequality} (see \cite{O14}), and \emph{polynomial chaos expansion} (see \cite{GS91}), which are fundamental tools in probabilistic combinatorics, high-dimensional statistics, the analysis of Boolean functions and mathematical modelling.

Much of this study has focused on low-degree polynomials of \emph{Gaussian} random variables, which enjoy certain symmetry properties that make them easier to study. While this direction may not seem obviously relevant to \cref{thm:ramsey-sample}, in part of the proof
we are able to apply the celebrated \emph{Gaussian invariance principle} of Mossel, O'Donnell, and Oleszkiewicz \cite{MOO10}, to compare our random variables of interest with certain ``Gaussian analogs''. Therefore, a key step in the proof of \cref{thm:ramsey-sample} is to study small-ball probability for quadratic polynomials of Gaussian random variables.

The fundamental theorem
in this area is the Carbery--Wright theorem~\cite{CW01}, which (specialized
to the quadratic case) says that for $0<\varepsilon<1$ and any real
quadratic polynomial $f=f(Z_{1},\ldots,Z_{n})$ of independent standard
Gaussian random variables $Z_{1},\ldots,Z_{n}\sim\mathcal{N}(0,1)$,
we have 
\[
\sup_{x\in\mathbb{R}}\Pr[|f-x|\le\varepsilon]=O\left(\sqrt{\varepsilon/\sigma(f)}\right).
\]
This is best-possible in general (for example, $\Pr[|Z_{1}^{2}|\le\varepsilon]$
scales like $\sqrt{\varepsilon}$ as $\varepsilon\to 0$). However, we are able to prove (in \cref{sec:anti-Gauss}) an
optimal bound of the form $O(\varepsilon/\sigma(f))$ in the case
where the degree-2 part of $f$ robustly has rank at least 3, in the sense of low-rank approximation (i.e.
in the case where the degree-2 part of $f$ is not close, in Frobenius\footnote{The Frobenius (or Hilbert-Schmidt) norm $\|M\|_{\mathrm{F}}$ of a matrix $M$ is the square
root of the sum of the squares of its entries.} norm, to a quadratic form of rank at most $2$).
\newcommand{\gaussianformula}{\[\sup_{x\in \mathbb R}\Pr[|f(\vec Z)-x|\le\eps] \le C_\eta\cdot \frac{\eps}{\sigma(f(\vec Z))}\] for some $C_\eta$ depending on $\eta$.}
\begin{restatable}{theorem}{gaussian}
\label{thm:gaussian-anticoncentration-upper-bound}
Let $\vec Z = (Z_1,\ldots,Z_n)\sim\mc{N}(0,1)^{\otimes n}$ be a vector of independent standard Gaussian random variables. Consider a real quadratic polynomial $f(\vec Z)$ of $\vec Z$, which we may write as 
\[f(\vec Z)=\vec Z^\intercal F \vec Z+\vec f\cdot \vec Z+f_0\]
for some nonzero symmetric matrix $F\in\mb{R}^{n\times n}$, some vector $\vec f\in\mb{R}^n$, and some $f_0\in\mb{R}$.
Suppose that for some $\eta>0$ we have
\[\min_{\substack{\wt F\in\mb{R}^{n\times n}\\\on{rank}(\wt F)\le 2}}\frac{\|F-\wt F\|^2_{\mr F}}{\|F\|^2_\mr{F}}\ge \eta.\]
Then for any $\eps > 0$ we have
\gaussianformula
\end{restatable}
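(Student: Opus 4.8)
The plan is to control the characteristic function $\phi(t):=\mb{E}\big[e^{\mathrm{i}tf(\vec Z)}\big]$ of $f(\vec Z)$: I will show that $\int_{\mb R}|\phi(t)|\,\mathrm dt\le C_\eta/\sigma(f(\vec Z))$, which by Fourier inversion implies that $f(\vec Z)$ has a density $p$ with $\|p\|_\infty\le\tfrac1{2\pi}\int_{\mb R}|\phi|$, and then $\Pr[|f(\vec Z)-x|\le\eps]\le 2\eps\|p\|_\infty$ gives the claim (after renaming $C_\eta$). A preliminary remark: since $\int_{\mb R}|\phi|$ is being bounded, we need not assume a priori that a density exists — the finiteness drops out of the computation.

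First I put $f$ into a normal form. Since the law of $\vec Z$ is rotation invariant, I may assume $F=\on{diag}(\lambda_1,\dots,\lambda_n)$ with $|\lambda_1|\ge\dots\ge|\lambda_n|$; completing the square in each coordinate $i$ with $\lambda_i\ne 0$ gives $f(\vec Z)=\sum_{i:\lambda_i\ne0}\lambda_i(Z_i+c_i)^2+\sum_{i:\lambda_i=0}f_iZ_i+\mathrm{const}$, a sum of independent terms, where $f_i$ denotes the $i$-th coordinate of $\vec f$ in the eigenbasis and $c_i=f_i/(2\lambda_i)$. In this form the hypothesis reads $\sum_{i\ge 3}\lambda_i^2\ge\eta\sum_i\lambda_i^2$ (the optimal rank-$\le2$ approximation of a diagonal matrix keeps its two entries of largest magnitude), so in particular $F$ has at least three nonzero eigenvalues; and a direct computation gives $\sigma(f(\vec Z))^2=2\sum_i\lambda_i^2+\sum_if_i^2$ with $\sum_if_i^2=s^2+4\sum_{i:\lambda_i\ne0}\lambda_i^2c_i^2$, where $s^2:=\sum_{i:\lambda_i=0}f_i^2$. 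The standard formula for the characteristic function of a noncentral $\chi^2_1$ variable then yields the pointwise bound
\[|\phi(t)|\le e^{-s^2t^2/2}\prod_{i:\lambda_i\ne0}(1+4\lambda_i^2t^2)^{-1/4}\exp\!\Big(-\frac{2\lambda_i^2c_i^2t^2}{1+4\lambda_i^2t^2}\Big).\]
For $|t|\le 1/(2|\lambda_1|)$ every $4\lambda_i^2t^2$ lies in $[0,1]$, so $\log(1+u)\ge u\log 2$ and $\tfrac{2u}{1+u}\ge u$ on $[0,1]$ turn this into $|\phi(t)|\le\exp(-\tfrac14\sigma(f(\vec Z))^2t^2)$, which integrates to $O(1/\sigma(f(\vec Z)))$ with an absolute constant (no hypothesis needed here).

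The remaining range $|t|>1/(2|\lambda_1|)$ is where the rank hypothesis is used. Here I would invoke the dichotomy $\max\{\sum_{i\ge3}\lambda_i^2,\ \sum_if_i^2\}\ge c_\eta\,\sigma(f(\vec Z))^2$, which is immediate from $\sigma(f(\vec Z))^2=2\sum_i\lambda_i^2+\sum_if_i^2\le\tfrac2\eta\sum_{i\ge3}\lambda_i^2+\sum_if_i^2$. If the quadratic part carries a $\gtrsim_\eta 1$ fraction of the variance, I split on the size of $|\lambda_3|$: when $|\lambda_3|$ is comparable to $\big(\sum_{i\ge3}\lambda_i^2\big)^{1/2}$, the crude bound $|\phi(t)|\le(1+4\lambda_3^2t^2)^{-3/4}$ integrates to $O(1/|\lambda_3|)=O_\eta(1/\sigma(f(\vec Z)))$; when $|\lambda_3|$ is much smaller, all the nonzero eigenvalues among $\lambda_3,\lambda_4,\dots$ are small, so there are $\Omega_\eta(1)$ of them, and greedily bundling them into that many groups of comparable $\sum\lambda_i^2$ upgrades $\prod(1+4\lambda_i^2t^2)^{-1/4}$ to a bound of the shape $(1+c\,t^2\sum_{i\ge3}\lambda_i^2)^{-m}$ with $m$ large, again integrable with the right constant. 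If instead the linear part carries most of the variance, I use the Gaussian factor $e^{-s^2t^2/2}$ together with the shift factors: from $\sum_i\tfrac{2\lambda_i^2c_i^2t^2}{1+4\lambda_i^2t^2}\ge\tfrac{t^2\sum_{i:\lambda_i\ne0}f_i^2}{2(1+4\lambda_1^2t^2)}$ one sees that, for $|t|>1/(2|\lambda_1|)$, the shift factors contribute at least $\exp(-\sum_{i:\lambda_i\ne0}f_i^2/(16\lambda_1^2))$, and with a further few-versus-many split according to whether each $f_i^2$ is large or small compared to $\lambda_i^2$ one checks that $|\phi|$ has Gaussian- or strong power-type decay at the scale $\big(\sum_if_i^2\big)^{1/2}\gtrsim_\eta\sigma(f(\vec Z))$. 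Adding the two ranges gives $\int|\phi|\le C_\eta/\sigma(f(\vec Z))$.

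The crux — and the only genuinely delicate point — is the high-frequency estimate. After completing the square the linear part of $f$ survives only as the shifts $c_i$, and a shifted squared Gaussian $\lambda_i(Z_i+c_i)^2$ behaves like a Gaussian of standard deviation $|f_i|$ near its bulk but keeps a square-root singularity near $0$, so its characteristic function decays only polynomially (not in Gaussian fashion) for large $t$. The worst configurations are those where almost all of the variance sits on very few eigendirections of moderate size — this is exactly where the rank hypothesis is needed, to guarantee at least three of them, just enough to make $\prod(1+4\lambda_i^2t^2)^{-1/4}$ integrable — as opposed to those where it is spread over many small directions and a central-limit-type argument takes over. Making the estimate uniform, with the final constant depending only on $\eta$, requires tracking how the quadratic mass $\sum_i\lambda_i^2$, the ``entangled'' linear mass $\sum_{i:\lambda_i\ne0}f_i^2$, and the ``free'' linear mass $s^2$ simultaneously feed into $|\phi|$ and into $\sigma(f(\vec Z))$; that bookkeeping is the part that demands care.
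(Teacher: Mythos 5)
Your argument is correct and follows essentially the same route as the paper: diagonalize $F$, reduce to a sum of independent shifted squared Gaussians, use the explicit noncentral-$\chi^2$ characteristic function (the paper's \cref{lem:char-gauss}), and exploit the robust-rank-$3$ hypothesis to extract integrable $|t|^{-3/2}$-type decay at high frequencies, so that $\int_{\mb R}|\varphi_{f(\vec Z)}(t)|\,dt\lesssim_\eta 1/\sigma(f(\vec Z))$ and the claim follows by Fourier inversion. The only difference is organizational: where you split the high-frequency regime according to whether the quadratic or the linear mass dominates the variance (with a further sub-split on the size of $|\lambda_3|$), the paper isolates a single dominant summand with $\sigma_j^2\ge\sigma^2/4$ and applies a weighted H\"older inequality when there is none; both bookkeeping schemes close the same estimates.
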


We remark that our robust-rank-3 assumption is best possible, in the sense that this stronger bound may fail for quadratic forms
with robust rank $2$; for example $Z_{1}^{2}-Z_{2}^{2}$ has standard deviation $2$, and one can compute that $\Pr[|Z_{1}^{2}-Z_{2}^{2}|\le\varepsilon]$
scales like $\varepsilon\log(1/\varepsilon)$ as $\eps\to 0$.

We also remark that \cref{thm:gaussian-anticoncentration-upper-bound} can be interpreted as a kind of \emph{inverse theorem} or \emph{structure theorem}:
the only way for $f(\vec Z)$ to exhibit atypical small-ball
behavior is for $f$ to be close to a low-rank quadratic form (c.f.~inverse theorems for the \emph{Littlewood--Offord problem}~\cite{TV09,NV11,TV10b,RV08,Ngu12,NV13,KS20b}). It is also worth mentioning a different structure theorem due to Kane~\cite{Kan17}, showing that \emph{all} bounded-degree polynomials of Gaussian random variables can be, in a certain sense, ``decomposed'' into a small number of parts with typical small-ball behavior.

Finally, we remark that it would be interesting to investigate extensions of \cref{thm:gaussian-anticoncentration-upper-bound} to higher-degree polynomials. Our proof uses diagonalization of quadratic forms in a crucial way, and new ideas would therefore be required (the ideas in the aforementioned paper of Kane~\cite{Kan17} may be relevant).

\subsubsection{Rank of Ramsey graphs}\label{subsub:ramsey-rank}
In order to actually apply \cref{thm:gaussian-anticoncentration-upper-bound}, we need to use the fact that Ramsey
graphs have adjacency matrices which robustly have high rank. A version
of this fact was first observed by the first and third authors~\cite{KS20b},
but we will need a much stronger version involving a partition into submatrices (\cref{lem:ramsey-low-rank}). We believe that
the connection between rank and homogeneous sets is of very general
interest: for example, the celebrated log-rank conjecture in communication
complexity has an equivalent formulation (due to Nisan and Wigderson~\cite{NW94}) stating that a zero-one matrix with no large ``homogeneous
rectangle'' must have high rank. As part of our study of the rank of Ramsey graphs, we prove (\cref{prop:rank-close}) that binary matrices which are close to a low-rank real
matrix are also close to a low-rank \emph{binary} matrix. This may be of independent interest. 

\subsubsection{Switchings via moments}
It turns out that in the setting of \cref{thm:ramsey-sample}, Fourier-analytic estimates (in combination with the previously mentioned ideas) can only take us so far: for a $C$-Ramsey graph we can roughly estimate the probability that $e(G[U])$ falls in a given short interval (whose length depends only on $C$), but not the probability that $e(G[U])$ is equal to a particular value. To obtain such precise control, we make use of the \emph{switching} method, studying small perturbations to our random set $U$.

Roughly speaking, the switching method works as follows. To estimate the relative probabilities of events $\mc{A}$ and $\mc{B}$, one designs an appropriate ``switching'' operation that takes outcomes satisfying $\mc{A}$ to outcomes satisfying $\mc{B}$. One then obtains the desired estimate via upper and lower bounds on the number of ways to switch from an outcome satisfying $\mc{A}$, and the number of ways to switch to an outcome satisfying $\mc{B}$. This deceptively simple-sounding method has been enormously influential in combinatorial enumeration and the study of discrete random structures, and a variety of more sophisticated variations (considering more than two events) have been considered; see \cite{HM10,FM07} and the references therein.

In our particular situation (where we are switching between different possibilities of the set $U$), it does not seem to be possible to define a simple switching operation which has a controllable effect on $e(G[U])$ and for which we can obtain uniform upper and lower bounds on the number of ways to perform a switch. Instead, we introduce
an \emph{averaged} version of the switching method. Roughly speaking,
we define random variables that measure the number of ways to switch
between two classes, and study certain moments of these random variables. We believe this idea may have other applications.

\subsection{Notation}\label{sub:notation}
We use standard asymptotic notation throughout, as follows. For functions $f=f(n)$ and $g=g(n)$, we write $f=O(g)$ or $f \lesssim g$ to mean that there is a constant $C$ such that $|f(n)|\le C|g(n)|$ for sufficiently large $n$. Similarly, we write $f=\Omega(g)$ or $f \gtrsim g$ to mean that there is a constant $c>0$ such that $f(n)\ge c|g(n)|$ for sufficiently large $n$. Finally, we write $f\asymp g$ or $f=\Theta(g)$ to mean that $f\lesssim g$ and $g\lesssim f$, and we write $f=o(g)$ or $g=\omega(f)$ to mean that $f(n)/g(n)\to0$ as $n\to\infty$. Subscripts on asymptotic notation indicate quantities that should be treated as constants.

We also use standard graph-theoretic notation. In particular, $V(G)$ and $E(G)$ denote the vertex set of a graph $G$, and $e(G)=|E(G)|$ denotes the numbers of vertices and edges. We write $G[U]$ to denote the subgraph \emph{induced} by a set of vertices $U\su V(G)$. For a vertex $v\in V(G)$, its neighborhood (i.e., the set of vertices adjacent to $v$) is denoted by $N_G(v)$, and its degree is denoted $\deg_G(v)=|N_G(v)|$ (the subscript $G$ will be omitted when it is clear from context). 
We also write $N_U(v)=U\cap N(v)$ and $\deg_U(v)=|N_U(v)|$ to denote the degree of $v$ into a vertex set $U$.

Regarding probabilistic notation, we write $\mc N(\mu,\sigma^2)$ for the Gaussian distribution with mean $\mu$ and variance $\sigma^2$. As usual, we call a random variable with distribution $\mc N(0,1)$ a \emph{standard Gaussian} and we write $\mc N(0,1)^{\otimes n}$ for the distribution of a sequence of $n$ independent standard Gaussian variables. For a real random variable $X$, we write $\varphi_X\colon t\mapsto\mb{E}e^{itX}$ for the characteristic function of $X$. Though less standard, it is also convenient to write $\sigma(X) = \sqrt{\on{Var}X}$ for the standard deviation of $X$.

We also collect some miscellaneous bits of notation. We use notation like $\vec{x}$ to denote (column) vectors, and write $\vec x_I$ for the restriction of a vector $\vec x$ to the set $I$. We also write $M[I\!\times\!J]$ to denote the $I\times J$ submatrix of a matrix $M$. For $r\in\mb{R}$, we write $\snorm{r}_{\mb{R}/\mb{Z}}$ to denote the distance of $r$ to the closest integer, and for an integer $n\in\mb{N}$, we write $[n]=\{1,\ldots,n\}$. 
All logarithms in this paper without an explicit base are to base $e$, and the set of natural numbers $\mb N$ includes zero.

\subsection{Acknowledgments}
We thank Jacob Fox for comments motivating the inclusion of \cref{rem:expander-works}, and Zach Hunter for pointing out several minor corrections to the manuscript. We also thank the two anonymous referees for carefully reading the manuscript and many helpful comments.

\section{Short deductions}\label{sec:deductions}

We now present the short deductions of \cref{thm:erdos-mckay,thm:KST} from \cref{thm:ramsey-sample}.

\begin{proof}[Proof of \cref{thm:erdos-mckay} assuming \cref{thm:ramsey-sample}]
As mentioned in the introduction, Alon, Krivelevich, and Sudakov~\cite[Theorem~1.1]{AKS03} proved that there is some $\alpha=\alpha(C)>0$ such that the conclusion of \cref{thm:erdos-mckay} holds for all $0\leq x\leq n^{\alpha}$.

Fix $0<\lambda<1/2$ with $(1-\lambda)^2\ge 1-\eta$ and let $p=1-\lambda$. It now suffices to prove the desired statement for $n^{\alpha}\leq x\leq p^2 e(G)$, so consider such an integer $x$. Let us identify the vertex set of $G$ with $\{1,\ldots,n\}$. We can find some $m\in\{1,\ldots,n\}$ such that $e(G[\{1,\ldots,m\}])\geq x/p^2\geq e(G[\{1,\ldots,m-1\}])$. Let $G'$ denote the induced subgraph of $G$ on the vertex set $\{1,\ldots,m\}$ and note that
\[e(G')\geq x/p^2\geq e(G[\{1,\ldots,m-1\}])\geq e(G')-m.\]
Hence $|x-p^2e(G')|\leq p^2m\leq m^{3/2}$. As $m^2\geq e(G')\geq x/p^2\geq n^{\alpha}$, we have $m\geq n^{\alpha/2}$ and therefore $G'$ is a $(2C/\alpha)$-Ramsey graph. Thus, for a random subset $U$ of $V(G')=\{1,\ldots,m\}$ that includes each vertex of $G'$ with probability $p$, by \cref{thm:ramsey-sample} (with $A=1$) we have $e(G[U])=e(G'[U])=x$ with probability $\Omega_{C,\lambda}(m^{-3/2})$. In particular, if $n$ and therefore $m$ is sufficiently large with respect to $C,\lambda$, then there exists a subset $U\su V(G')\su V(G)$ with $e(G[U])=e(G'[U])=x$.
\end{proof}

\begin{proof}[Proof of \cref{thm:KST} assuming \cref{thm:ramsey-sample}]
We may assume that $n$ is sufficiently large with respect to $C$ and $\lambda$ (noting that the statement is trivially true for $n\le K$). Let $U$ be a random subset of $V(G)$ obtained by including each vertex with probability $k/n$ independently (recalling that \cref{thm:KST} concerns a random set $W$ of exactly $k$ vertices). A direct computation using Stirling's formula shows that $\Pr[|U|=k]\gtrsim_\lambda 1/\sqrt n$, so for each $x\in\mb Z$, \cref{thm:ramsey-sample} yields
\[\Pr[e(G[W])=x]=\Pr\Big[e(G[U])=x\Big||U|=k\Big]\le \frac{\Pr[e(G[U])=x]}{\Pr[|U|=k]}\lesssim_{C,\lambda} \frac1n.\qedhere\]
\end{proof}

It turns out that in order to prove \cref{thm:ramsey-sample}, it essentially suffices to consider the case $p=1/2$, as long as we permit some ``linear terms''. Specifically, instead of considering  random variable $e(G[U])$ we need to consider a random variable of the form $X = e(G[U]) + \sum_{v\in U}e_v+e_0$, as in the following theorem\footnote{As suggested by one of the anonymous referees, it could also be of interest to consider the case where $e_v$ is allowed to be negative (say $|e_v|\le Hn$). In this generality, we can no longer hope for upper bounds of order $n^{-3/2}$, but it should be possible to adjust the methods in this paper to prove a variation of \cref{thm:point-control}.}.

\begin{theorem}\label{thm:point-control}
Fix $C,H> 0$. Let $G$ be a $C$-Ramsey graph with $n$ vertices, and consider $e_0\in\mb Z$ and a vector $\vec{e}\in\mb{Z}^{V(G)}$ with $0\le e_v\le Hn$ for all $v\in V(G)$. Let $U\subseteq V(G)$ be a random vertex subset obtained by including each vertex with probability $1/2$ independently, and let $X = e(G[U]) + \sum_{v\in U}e_v+e_0$. Then 
\[\sup_{x\in\mb{Z}}\Pr[X = x] \lesssim_{C,H} n^{-3/2}\]
and for every fixed $A > 0$,
\[\inf_{\substack{x\in\mb{Z}\\|x-\mb E X|\le An^{3/2}}}\Pr[X=x]\gtrsim_{C,H,A}n^{-3/2}.\]
\end{theorem}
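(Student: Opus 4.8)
The plan is to control $X$ via its characteristic function $\varphi_X(t)=\mathbb{E}[e^{itX}]$ and recover point probabilities by Fourier inversion on the circle: since $X$ is integer-valued, $\Pr[X=x]=\frac{1}{2\pi}\int_{-\pi}^{\pi}\varphi_X(t)e^{-itx}\,dt$, so $\sup_{x}\Pr[X=x]\le\frac{1}{2\pi}\int_{-\pi}^{\pi}|\varphi_X(t)|\,dt$ (and interval probabilities admit an analogous formula with a Dirichlet-kernel factor). Writing $\xi_v\in\{0,1\}$ for the indicator of $v\in U$ and separating the Efron--Stein levels, $X=\mathrm{const}+\sum_v\bigl(\tfrac12\deg(v)+e_v\bigr)(\xi_v-\tfrac12)+\sum_{ij\in E}(\xi_i-\tfrac12)(\xi_j-\tfrac12)$, so its linear part has variance $\tfrac14\sum_v(\tfrac12\deg(v)+e_v)^2$; since $G$ is $C$-Ramsey it has $\Omega(n^2)$ edges by Erd\H{o}s--Szemer\'edi, hence $\sum_v\deg(v)^2\gtrsim n^3$, and because $e_v\ge0$ there is no cancellation, so $\sigma(X)\asymp_{C,H}n^{3/2}$ (this is exactly where the sign hypothesis $0\le e_v\le Hn$ is needed; the matching upper bound $\sigma(X)\lesssim_H n^{3/2}$ is immediate, and the quadratic Efron--Stein level contributes only $O(n^2)$ to the variance). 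A standard expansion shows the portion of the inversion integral over small $|t|$ (say $|t|\le\sigma^{-1}\sqrt{\log\sigma}$) has size $O(1/\sigma)=O(n^{-3/2})$, which is the main term. Everything then reduces to showing the rest of the range $|t|\le\pi$ contributes only $O(n^{-3/2})$, i.e.\ that $|\varphi_X(t)|$ decays suitably for $t$ away from $0$.

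Since $e(G[U])$ is a quadratic form in the $\xi_v$, bounding $|\varphi_X(t)|$ in this remaining range is a matter of anticoncentration of a quadratic form of independent bits, and the plan is to pass to a Gaussian model via the invariance principle of Mossel--O'Donnell--Oleszkiewicz and apply \cref{thm:gaussian-anticoncentration-upper-bound}, whose hypothesis --- that the relevant quadratic form robustly has rank at least $3$ --- is furnished for the adjacency matrix of a Ramsey graph by the rank lemma (\cref{lem:ramsey-low-rank}) together with \cref{prop:rank-close}. The subtlety is that $\varphi_X$ can genuinely \emph{spike} at certain $t\neq0$ (this is the source of the two-scale behaviour illustrated in \cref{fig:theta}), and the plan to handle this is a dichotomy: such a spike can occur only when the degree sequence of $G$ is ``additively structured'', which then forces a partition of $V(G)$ into $O_{C,H}(1)$ ``buckets'' within which all vertices have almost equal degree. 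One then reveals the intersection sizes $|U\cap(\text{bucket})|$; conditionally the structured directions are frozen out, so the conditional characteristic function is suitably bounded (again via the Gaussian anticoncentration input, whose robust-rank hypothesis must be arranged to persist under the bucketing), and averaging the resulting conditional estimates over the bucket-intersection sizes --- whose joint law is essentially a product of near-Gaussian binomials --- reassembles the two scales.

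This machinery is designed to give the two-sided interval estimate $\Pr[X\in\{x,x+1,\dots,x+L\}]\asymp_{C,H}n^{-3/2}$ for a suitable fixed interval length $L=L(C,H)$ and every integer $x$ with $|x-\mathbb{E}X|\le An^{3/2}$ (the implied constants then also depending on $A$) --- the lower bound here coming from the near-origin part of the inversion integral, which genuinely has size $\asymp L/\sigma$, the spike and tail contributions being handled as above; and the interval length being forced by $C,H$ because, roughly, one needs the interval long enough to average out the sub-scale oscillations. The point-probability upper bound $\sup_x\Pr[X=x]\lesssim_{C,H}n^{-3/2}$ is then immediate from $\Pr[X=x]\le\Pr[X\in\{x,\dots,x+L\}]$. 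For the point-probability lower bound $\inf_{|x-\mathbb{E}X|\le An^{3/2}}\Pr[X=x]\gtrsim_{C,H,A}n^{-3/2}$, however, an interval estimate does not suffice, and the plan is an \emph{averaged} switching argument: introduce random variables counting, for an outcome with $X=y$, the number of small local modifications of $U$ (e.g.\ exchanging an in-set vertex $v$ for an out-set vertex $w$ along a carefully chosen pair, which changes $X$ by $\deg_U(w)-\deg_U(v)+e_w-e_v$ up to a $\pm1$ correction depending on whether $v\sim w$) that produce an outcome with $X=y'$, for prescribed nearby values $y,y'$ in that interval; bounding and comparing the first two moments of these counts transfers probability mass between neighbouring values and forces $\Pr[X=x]$ to be comparable to the average $\tfrac1L\Pr[X\in\{x,\dots,x+L\}]\asymp n^{-3/2}$.

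The hardest part, I expect, is making the additive-structure dichotomy both sharp and uniform: pinning down exactly which additive structure in the degree sequence makes $|\varphi_X(t)|$ spike, proving the inverse statement that a spike forces a genuine bounded partition into near-constant-degree buckets, and controlling --- uniformly over all the relevant $t$ --- the errors incurred by the conditioning and by averaging over bucket-intersection sizes. The Gaussian-comparison step is also delicate, since the invariance principle must be applied at a scale matched to the Fourier estimates and the robust-rank input has to be engineered so that it survives the bucketing; and designing a switching operation whose effect on $X$ is controllable \emph{exactly} (rather than only up to $O_C(1)$) is precisely what forces the averaged, moment-based variant rather than a classical switching count.
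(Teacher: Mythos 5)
Your proposal follows essentially the same route as the paper: a characteristic-function/Esseen-type analysis with an additive-structure dichotomy on the degree sequence, bucketing and conditioning on bucket intersection sizes in the structured case, Gaussian invariance plus the robust-rank input (\cref{lem:ramsey-low-rank}, \cref{prop:rank-close}) feeding \cref{thm:gaussian-anticoncentration-upper-bound}, two-sided interval estimates of radius $O_C(1)$ (the paper's \cref{thm:short-interval}), and an averaged, moment-based switching argument to convert the interval lower bound into a point lower bound. The only notable imprecision is that the structured case yields polynomially many (about $n^{\Theta(\gamma)}$) buckets of nearly-equal degree rather than $O_{C,H}(1)$ of them, which is what makes the averaging over bucket intersection sizes (and the control of the conditional mean and variance as they fluctuate) the technically delicate step you correctly anticipate.
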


This theorem implies \cref{thm:ramsey-sample} (which also allows for a sampling probability $p\ne 1/2$), as we show next. The rest of the paper will be devoted to proving \cref{thm:point-control}.

\begin{proof}[Proof of \cref{thm:ramsey-sample} assuming \cref{thm:point-control}]
We may assume that $n$ is sufficiently large with respect to $C$ and $\lambda$. We proceed slightly differently depending on whether $p\le 1/2$ or $p>1/2$.

\medskip
\noindent\textit{Case 1: $p\le 1/2$. } In this case, we can realize the distribution of $U$ by first taking a random subset $U_0$ in which every vertex is present with probability $2p$, and then considering a random subset $U\subseteq U_0$ in which every vertex in $U_0$ is present with probability $1/2$. By a Chernoff bound, we have $|U_0|\ge pn\ge \lambda n$ with probability $1-o_\lambda(n^{-3/2})$, in which case $G[U_0]$ is a $(2C)$-Ramsey graph. We may thus condition on such an outcome of $U_0$. By \cref{thm:point-control}, the conditional probability of the event $X=x$ is at most $O_{C}(|U_0|^{-3/2})\lesssim_{C,\lambda}n^{-3/2}$, proving the desired upper bound.

For the lower bound, first note that $e(G[U_0])$ has expectation $(2p)^2e(G)$ and variance $\sigma(e(G[U_0]))^2= \sum_{uv,wz\in E(G)}\mb{E}[(\mbm{1}_{u,v\in U_0}-(2p)^2)(\mbm{1}_{w,z\in U_0}-(2p)^2)] \le n^{3}$ (note that there are at most $n^3$ non-zero summands, since the summands for distinct $u,v,w,z$ are zero). Hence by Chebyshev's inequality and a Chernoff bound, with probability at least $1/2$ the outcome of $U_0$ satisfies $|e(G[U_0])-(2p)^2e(G)|\le 2n^{3/2}$ and $|U_0|\ge \lambda n$. Conditioning on such an outcome of $U_0$, the lower bound in \cref{thm:ramsey-sample} follows from the lower bound in \cref{thm:point-control} applied to $G[U_0]$ (noting that $x\in\mb{Z}$ with $|x-p^2e(G)|\le An^{3/2}$ differs from $\mb{E}[e(G[U])|U_0] = e(G[U_0])/4$ by at most $(A+1)n^{3/2}\le (A+1)/\lambda^3\cdot |U_0|^{3/2}$).

\medskip
\noindent\textit{Case 2: $p> 1/2$. } In this case, we can realize the distribution of $U$ by first taking a random subset $U_0$ in which every vertex is present with probability $2p-1$ and then considering a random superset $U\supseteq U_0$ in which every vertex outside $U_0$ is present with probability $1/2$.

By a Chernoff bound, we have $|V(G)\setminus U_0|\ge (1-p)n\ge \lambda n$ with probability $1-o_\lambda(n^{-3/2})$, in which case $G[V(G)\setminus U_0]$ is a $(2C)$-Ramsey graph. Conditioning on such an outcome of $U_0$, the upper bound in \cref{thm:ramsey-sample} follows from the upper bound in \cref{thm:point-control} applied to $G[V(G)\setminus U_0]$ (where now we take $e_0=e(G[U_0])$ and $e_v=\deg_{U_0}(v)$ for each $v\in V(G)\setminus U_0$ and $H=1/\lambda$).

For the lower bound, observe that $\mb{E}[e(G[U])|U_0]=e(G[U_0])+e(V(G)\setminus U_0,U_0)/2+e(G[V(G)\setminus U_0])/4$ has expectation $\mb{E}e(G[U])=p^2e(G)$ and variance at most $n^3$ (by a similar calculation as in Case 1). Thus, by Chebyshev's inequality and a Chernoff bound with probability at least $1/2$ the outcome of $U_0$ satisfies $|\mb{E}[e(G[U])|U_0]-p^2e(G)|\le 2n^{3/2}$ and $|V(G)\setminus U_0|\ge \lambda n$. Conditioning on such an outcome of $U_0$, the lower bound in \cref{thm:ramsey-sample} follows from the lower bound in \cref{thm:point-control} applied to $G[V(G)\setminus U_0]$ (again taking $e_0=e(G[U_0])$ and $e_v=\deg_{U_0}(v)$ for each $v\in V(G)\setminus U_0$ and $H=1/\lambda$ and observing that $|x-\mb{E}[e(G[U])|U_0]|\le (A+2)/\lambda^3\cdot |V(G)\setminus U_0|^{3/2}$).
\end{proof}

\section{Proof discussion and outline}\label{sec:outline}
In the previous section, we saw how all of our results stated in the introduction follow from \cref{thm:point-control}. Here we discuss the high-level ideas of the proof of \cref{thm:point-control}, and the obstacles that must be overcome. Afterwards, we will outline the organization of the rest of the paper.

\subsection{Central limit theorems at multiple scales}\label{subsec:multiple-scales}
As mentioned in the introduction, our starting point is the possibility that a \emph{local central limit theorem} might hold for the random variable
$X=e(G[U])+\sum_{v\in U} e_v+e_0$ in \cref{thm:point-control}. However, some further thought reveals that such a theorem cannot hold in general. To appreciate this, it is illuminating
to rewrite $X$ in the so-called \emph{Fourier--Walsh} basis: define $\vec{x}\in\{-1,1\}^{V(G)}$ by taking $x_{v}=1$ if
$v\in U$, and $x_{v}=-1$ if $v\notin U$. Then, we have
\begin{equation}\label{eq:fourier-walsh}
X = \mb{E}X + \frac{1}{2}\sum_{v\in V(G)}\left(e_{v}+\frac{1}{2}\deg_{G}(v)\right)x_{v} + \frac{1}{4}\sum_{uv\in E(G)}x_{u}x_{v}.
\end{equation}
Writing
$L=\frac{1}{2}\sum_{v\in V(G)}\left(e_{v}+\frac{1}{2}\deg_{G}(v)\right)x_{v}$ and $Q=\frac{1}{4}\sum_{uv\in E(G)}x_{u}x_{v}$, we have $X=\mb{E}X+L+Q$. Essentially, we have isolated the ``linear part'' $L$ and the ``quadratic
part'' $Q$ of the random variable $X$, in such a way that the covariance between $L$ and $Q$ is
zero. It turns out that $L$ typically dominates the large-scale behavior
of $X$: the variance of $L$ is always of order $n^{3}$, whereas
the variance of $Q$ is only of order $n^{2}$. It is easy to show
that $L$ satisfies a central limit theorem (being a sum of independent
random variables). However, this central limit theorem may break down
at small scales: for example, it is possible that in $G$, every vertex has degree exactly $n/2$, in which case (for $\vec{e}=\vec 0$) the linear part $L$ only takes values in the lattice $(n/8)\mathbb{Z}$.

In this $(n/2)$-regular case (with $\vec{e}=\vec 0$), we might hope to prove \cref{thm:point-control} in two stages: having shown
that $L$ satisfies a central limit theorem, we might hope to show
that $Q$ satisfies a local central limit theorem after conditioning
on an outcome of $L$ (in this case, revealing $L$ only reveals the number of vertices in our
random set $U$, so there is still plenty of randomness remaining
for $Q$).

If this strategy were to succeed, it would reveal that in this case the true distribution
of $X$ is Gaussian on two different scales: when ``zoomed out'',
we see a bell curve with standard deviation about $n^{3/2}$, but
``zooming in'' reveals a superposition of many smaller bell curves each
with standard deviation about $n$ (see \cref{fig:theta}). This kind of
behavior can be described in terms of a so-called \emph{Jacobi theta function}, and has
been observed in combinatorial settings before (by the second and fourth authors~\cite{SS20}).


\subsection{An additive structure dichotomy}\label{sub:additive-structure-dichotomy}

There are a few problems with the above plan. When $G$ is regular,
we have the very special property that revealing $L$ only reveals
the number of vertices in $U$ (after which $U$ is a uniformly random
vertex set of this revealed size). There are many available tools
to study random sets of fixed size (this setting is often called the
``Boolean slice''). However, in general, revealing $L$ may result
in a very complicated conditional distribution.

We handle this issue via an additive structure
dichotomy, using the notion of \emph{regularized least common denominator
}(RLCD) introduced by Vershynin~\cite{Ver14} in the context of random matrix theory (a ``robust version'' of the notion of \emph{essential LCD} previously introduced by Rudelson and Vershynin~\cite{RV08}). Roughly speaking, we consider the RLCD of the degree sequence
of $G$. If this RLCD is small, then the degree sequence is ``additively
structured'' (as in our $(n/2)$-regular example), which (as we prove in \cref{lem:decomposition-abstract}) has the consequence
that the vertices of $G$ can be divided into a small number of ``buckets'' of vertices which have roughly the same coefficient in $L$ (i.e.~the values of $e_v+\deg_G(v)/2$ are roughly the same).
This means that conditioning on the number of vertices of $U$ inside
each bucket is tantamount to conditioning on the approximate value of $L$ (crucially, this conditioning dramatically reduces the variance), while
the resulting conditional distribution
is tractable to analyze.

On the other hand, if the RLCD is large, then the degree sequence
is ``additively unstructured'', and the linear part $L$ is well-mixing
(satisfying a central limit theorem at scales polynomially smaller than $n$).
In this case, it essentially is possible\footnote{Strictly speaking, we do not quite obtain an estimate for point probabilities,
but only for probabilities that $X$ falls in very short intervals (the length of the interval we can control depends on the distance
from the mean and the desired multiplicative error). Throughout this outline, we use the term ``local limit theorem'' in a rather imprecise way.}
to prove a local central limit theorem for $X$ (this is the easier
of the two cases of the additive structure dichotomy). Concretely, an example of this case is when $G$
is a typical outcome of an inhomogeneous random graph on the vertex
set $\{m/4,\ldots,3m/4\}$, where each edge $ij$ is present with probability
$i\cdot j/m^{2}$ independently.

\subsection{Breakdown of Gaussian behavior}
Recall from the previous subsection that in the ``additively structured'' case, we study the distribution of $e(G[U])$ after conditioning on the sizes of the intersections of $U$ with our ``buckets'' of vertices (which, morally speaking, corresponds to ``conditioning on the approximate value of $L$''). It turns out that even after this conditioning, a local central limit theorem may \emph{still} fail to hold, in quite a dramatic way: it can happen that, conditionally, no central limit theorem holds at all (meaning
that when we ``zoom in'' we do not see bell curves but some completely
different shapes).
For example, if $G$ is a typical outcome of two independent disjoint
copies of the Erd\H{o}s--R\'enyi random graph $\mathbb{G}(n/2,1/2)$, then one may think of all vertices being in the same bucket, and one can show
that the limiting distribution of
$e(G[U])$ conditioned on the event $|U|=n/2$ (up to translation and scaling) is\footnote{Heuristically, the $Z_1^2$ term can be explained as follows. Conditioning on  $|U|=n/2$, the number $s$ of vertices of $U$ on the left side (i.e.~in the left copy of $\mathbb{G}(n/2,1/2)$) is hypergeometrically distributed, and approaches a limiting distribution of $n/4+(\sqrt{n}/4)Z_1$. The number of pairs of vertices in $U$ on the same side of $G$ is roughly $(s^2 +(n/2-s)^2)/2=n^2/4+(s-n/4)^2$, and so it is distributed like $n^2/4+(n/16)Z_1^2$. The linear term involving $Z_2$ comes from the random distribution of the edges in the two copies of $\mathbb{G}(n/2,1/2)$.} that of $Z_{1}^{2}+2\sqrt{3}Z_{2}$, where $Z_{1},Z_{2}$ are
independent standard Gaussian random variables (see \cref{fig:wrong-shape}). 

\captionsetup{width=\linewidth}

\begin{figure}
    \centering
    \includegraphics[scale=0.15,trim={100px 20px 100px 20px},clip]{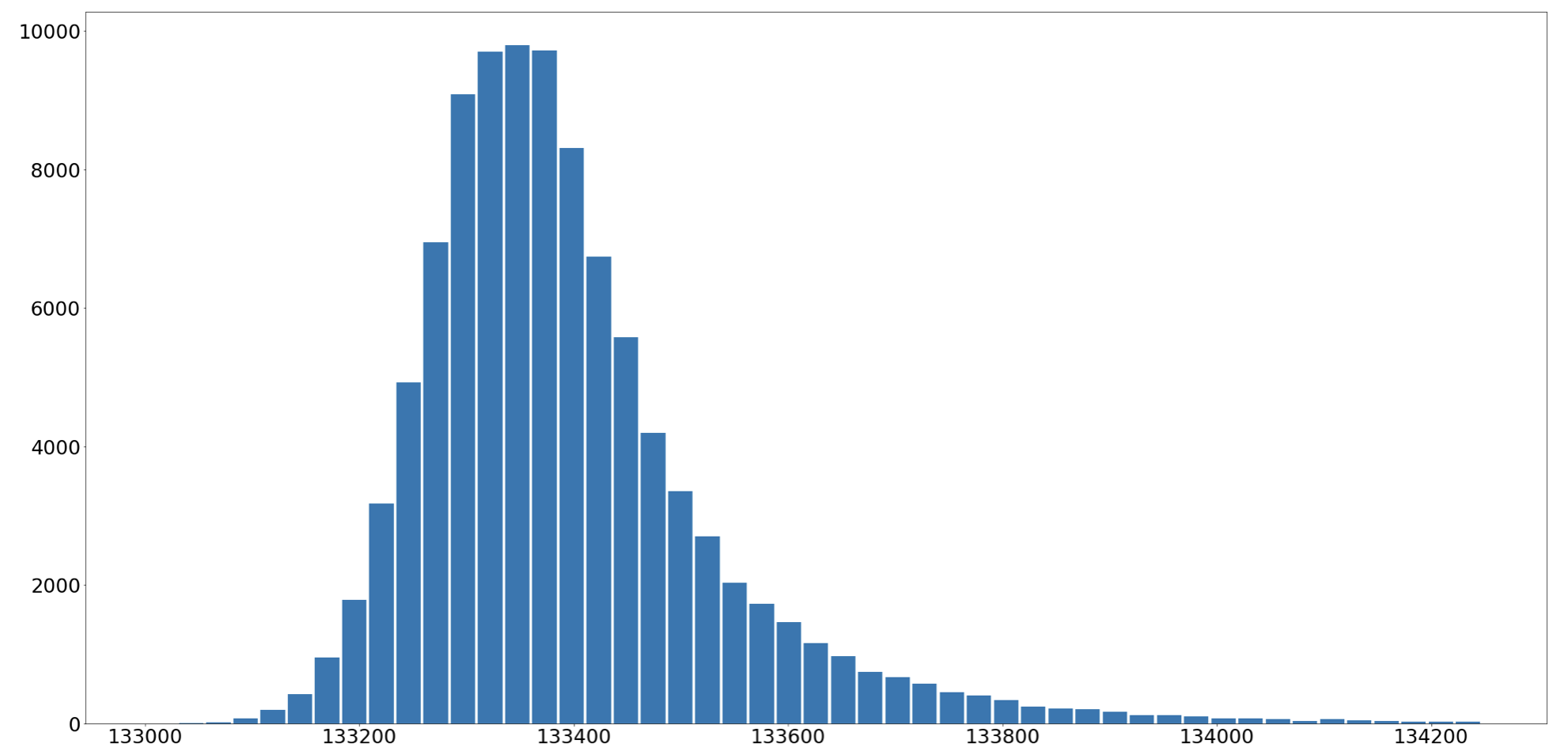}$\qquad$
    \includegraphics[scale=0.25,trim={0 10px 0 0},clip]{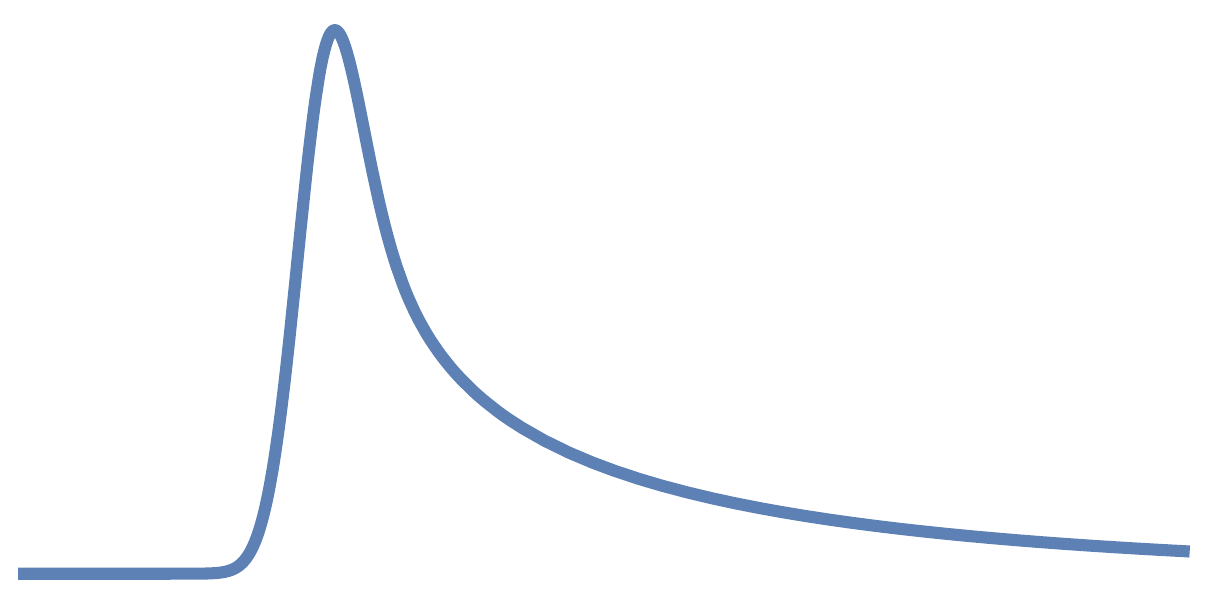}
    \caption{On the left, we obtain $G$ as a disjoint union of two independent Erd\H os--R\'enyi random graphs $\mb G(800,0.96)$, and we consider 500000 independent samples of a uniformly random vertex subsets $U$ \emph{with exactly 800 vertices}. The resulting histogram for $e(G[U])$ may look approximately Gaussian, but closer inspection reveals asymmetry in the tails. This is not just an artifact of small numbers: the limiting distribution comes from a nontrivial quadratic polynomial of Gaussian random variables. Actually, it is possible for the skew to be much more exaggerated (the curve on the right shows one possibility for the limiting probability mass function of $e(G[U])$), but this is difficult to observe computationally, as this shape only really becomes visible for enormous graphs $G$.}
    \label{fig:wrong-shape}
\end{figure}

In general, one can use a \emph{Gaussian invariance principle}~\cite{MOO10,FM19,FKMW18} to show that the asymptotic conditional distribution
of $e(G[U])$ always corresponds to some quadratic polynomial of Gaussian
random variables (see also \cite{BMM,BDMM}); instead of proving a local central limit theorem, we need to prove some type of local
limit theorem for convergence to that distribution.

In order to prove a local limit theorem of this type, it is
necessary to ensure that the limiting distribution (some quadratic
polynomial of Gaussian random variables) is ``well-behaved''. This
is where the tools discussed in \cref{subsub:gaussian-small-ball,subsub:ramsey-rank} come in: we prove that adjacency
matrices of Ramsey graphs robustly have high rank,
then apply certain variations of \cref{thm:gaussian-anticoncentration-upper-bound}.

\subsection{Controlling the characteristic function}
We are now left with the task of actually proving the necessary local
limit theorems. For this, we work in Fourier space, studying the characteristic
functions $\varphi_{Y}\colon\tau\mapsto\mathbb{E}e^{i\tau Y}$ of certain
random variables $Y$ (namely, we need to consider both the random
variable $X=e(G[U])+\sum_{v\in U} e_v+e_0$ and certain conditional random variables arising
in the additively structured case). Our aim is to compare $Y$
to an approximating random variable $Z$ (where $Z$ is either a Gaussian
random variable or some quadratic polynomial of Gaussian random variables).
This amounts to proving a suitable upper bound on $|\varphi_{Y}(\tau)-\varphi_{Z}(\tau)|$,
for as broad a range of $\tau$ as possible (if one wants to precisely
estimate point probabilities $\Pr[Y=x]$, it turns out that one needs to handle all $\tau$ in the range $[-\pi,\pi]$). We use different techniques for different ranges of $\tau\in\mathbb{R}$.

In the regime where $\tau$ is very small (e.g., when $|\tau|\le n^{0.01}/\sigma(Y)$),
$\varphi_{Y}(\tau)$ controls the large-scale distribution of $Y$,
so depending on the setting we either employ standard techniques
for proving central limit theorems, or a Gaussian invariance principle.

For larger $\tau$, it will be easy to show that our approximating characteristic function $\varphi_{Z}(\tau)$
is exponentially small in absolute value, so estimating $|\varphi_{Y}(\tau)-\varphi_{Z}(\tau)|$
amounts to proving an upper bound on $|\varphi_{Y}(\tau)|$, exploiting
cancellation in $\mathbb{E}e^{i\tau Y}$ as $e^{i\tau Y}$ varies
around the unit circle. Depending on the value of $\tau$, we are
able to exploit cancellation from either the ``linear'' or the ``quadratic''
part of $Y$.

To exploit cancellation from the linear part, we adapt a decorrelation technique first
introduced by Berkowitz~\cite{Ber18} to study clique counts in random graphs (see also \cite{SS20}), involving a subsampling argument and a Taylor
expansion. While all previous applications of this technique exploited the particular symmetries and combinatorial structure of a specific polynomial of interest, here we instead take advantage of the robustness inherent in the definition of RLCD. We hope that these types of ideas will be applicable to the study of even more general types of polynomials.

To exploit cancellation from the quadratic part, we use the method of \emph{decoupling},
building on arguments of the first and third authors~\cite{KS20b}. Our improvements involve taking advantage of Fourier cancellation ``on multiple scales'', which requires a sharpening of arguments of the first author and Sudakov~\cite{KS20} (building on work
of Bukh and Sudakov~\cite{BS07}) concerning ``richness'' of Ramsey graphs.

The relevant ideas for all the Fourier-analytic estimates discussed in this subsection will be discussed in more detail in the appropriate
sections of the paper (\cref{sec:initial-fourier,sec:high-fourier}).

\subsection{Pointwise control via switching}\label{subsec:switching}

Unfortunately, it seems to be extremely difficult to study the cancellations
in $\varphi_{X}(\tau)$ for very large $\tau$, and we are only able
to control the range where $|\tau|\le\nu$ for some small constant $\nu=\nu(C)$
(recalling that $G$ is $C$-Ramsey). As a consequence, the above
ideas only prove the following weakening of \cref{thm:point-control} (where we control the probability of $X$ lying in a constant-length interval instead of being equal to a particular value).

\begin{theorem}\label{thm:short-interval}
Fix $C > 0$. There is $B = B(C) > 0$ so the following holds for any fixed $H>0$. Let $G$ be an $C$-Ramsey graph with $n$ vertices, and consider $e_0\in\mb{R}$ and a vector $\vec{e}\in\mb{R}^{V(G)}$ with $0\le e_v\le Hn$ for all $v\in V(G)$. Let $U\subseteq V(G)$ be a random vertex subset obtained by including each vertex with probability $1/2$ independently, and let $X = e(G[U]) + \sum_{v\in U}e_v +e_0$. Then 
\[\sup_{x\in\mb Z}\Pr[|X-x|\le B]\lesssim_{C,H}n^{-3/2},\]
and for every fixed $A>0$,
\[\inf_{\substack{x\in\mb{Z}\\|x-\mb E X|\le An^{3/2}}}\Pr[|X-x|\le B]\gtrsim_{C,H,A}n^{-3/2}.\]
\end{theorem}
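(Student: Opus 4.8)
The plan is to argue in Fourier space, following the strategy sketched in \cref{sec:outline}. Write $X=\mb EX+L+Q$ as in \eqref{eq:fourier-walsh}, where $L=\tfrac12\sum_{v}a_vx_v$ is the linear part (with coefficients $a_v=e_v+\tfrac12\deg_G(v)$) and $Q=\tfrac14\sum_{uv\in E(G)}x_ux_v$ the quadratic part; a standard computation (using that most degrees of a $C$-Ramsey graph are of order $n$) shows $\sigma(X)$ is of order $n^{3/2}$. Fix a small constant $\nu=\nu(C)>0$ (whose precise value emerges from the arguments below) and a fixed smooth nonnegative function $m$ whose Fourier transform $\wh m$ is supported in $[-\nu,\nu]$. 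Choosing $m\ge\mbm{1}_{[-B,B]}$, Fourier inversion gives $\Pr[|X-x|\le B]\le\mb E\,m(X-x)\le\|\wh m\|_\infty\int_{-\nu}^{\nu}|\varphi_X(\tau)|\,d\tau$, so the upper bound reduces to showing $\int_{-\nu}^{\nu}|\varphi_X(\tau)|\,d\tau\lesssim_{C,H}n^{-3/2}$. For the lower bound one instead takes $m$ with $\int m=1$ concentrated near the origin; the contribution to $\mb E\,m(X-x)$ from $|\tau|\lesssim n^{0.01}/\sigma(X)$ is $\gtrsim_A n^{-3/2}$ for $x$ within $An^{3/2}$ of $\mb EX$, once we know that $X/\sigma(X)$ converges to a Gaussian (which it does, since $L$ dominates the variance and satisfies a CLT), while the contribution of larger $|\tau|\le\nu$ is negligible by the same estimate as for the upper bound; comparing $\mb E\,m(X-x)$ with $\Pr[|X-x|\le B]$ using the tail decay of $m$ and the upper bound then yields the lower bound provided $B=B(C)$ is chosen large enough. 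Thus everything comes down to controlling $|\varphi_X(\tau)|$, and in particular its integral, over the range $n^{0.01}/\sigma(X)\le|\tau|\le\nu$.

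To do this we run an \emph{additive structure dichotomy} on the coefficient vector $\vec a=(a_v)_{v\in V(G)}$, using its regularized least common denominator (RLCD) in the sense of Vershynin~\cite{Ver14}. If the RLCD is large, then $\vec a$ is ``additively spread out'' and the linear part $L$ equidistributes at scales polynomially below $n$; here we exploit cancellation in $\mb E\,e^{i\tau L}$, adapting the decorrelation technique of Berkowitz~\cite{Ber18} (partition the vertices into two random halves, reveal one of them, Taylor-expand the conditional expectation $\mb E[e^{i\tau Q}\mid\text{revealed half}]$, and extract the surviving linear phase in the remaining fresh variables). Unlike previous uses of this technique, which leaned on the combinatorial symmetries of a specific polynomial, here we instead use the robustness built into the definition of the RLCD to control the error terms. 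If the RLCD is small, then (by \cref{lem:decomposition-abstract}) the vertex set partitions into $O_C(1)$ ``buckets'' on each of which $a_v$ is essentially constant. We reveal the sizes $|U\cap B_i|$ of the intersections of $U$ with the buckets; this is tantamount to revealing $L$ up to a short window, so it collapses the conditional variance from order $n^3$ to order $n^2$, and the conditional distribution of $X$ is then governed by $Q$ at scale $n$. For small $\tau$ we invoke the Gaussian invariance principle of Mossel, O'Donnell and Oleszkiewicz~\cite{MOO10} to replace this conditional distribution (suitably normalized) by a quadratic polynomial of Gaussians, whose small-ball behavior we control via \cref{thm:gaussian-anticoncentration-upper-bound}; the hypothesis of that theorem --- robust rank at least $3$ --- is verified using \cref{lem:ramsey-low-rank}, which says the adjacency matrix of a Ramsey graph robustly has high rank even after restriction to the submatrices cut out by the buckets. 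This is exactly what forces the limiting quadratic-Gaussian characteristic function to decay fast enough to be integrable. For larger $\tau$ we instead exploit cancellation from $Q$ directly via the decoupling method of \cite{KS20b}, now taking advantage of Fourier cancellation on several scales simultaneously, which requires sharpening the ``richness'' estimates for Ramsey graphs of \cite{KS20,BS07}.

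Putting the pieces together, in the unstructured case we directly obtain $\int_{-\nu}^{\nu}|\varphi_X(\tau)|\,d\tau\lesssim_{C,H}n^{-3/2}$, while in the structured case we obtain, for each outcome $\vec b$ of the revealed bucket sizes, a conditional bound $\sup_x\Pr[|X-x|\le B\mid\vec b]\lesssim_{C,H}n^{-1}$, and we then average over $\vec b$: for fixed $x$, the conditional probability $\Pr[|X-x|\le B\mid\vec b]$ is non-negligible only when the conditional mean (a fixed quadratic polynomial of $\vec b$, dominated by the linear contribution $L(\vec b)$) lies within $O(n)$ of $x$, and this happens with probability $\lesssim n^{-1/2}$ by anticoncentration for $L(\vec b)$; this extra factor recovers the full $n^{-3/2}$. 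The lower bound follows as indicated above, using the large-scale Gaussian behavior and choosing $B=B(C)$ large. A Chernoff bound handles atypical outcomes of the revealed data throughout.

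The main obstacle is the structured case. The delicate points are: making the Gaussian invariance principle quantitative enough to give a genuine local limit theorem toward a (possibly non-Gaussian) quadratic polynomial of Gaussians, uniformly over $\tau$ up to the \emph{constant} $\nu$; feeding the robust high-rank property of the bucketed Ramsey adjacency matrices into \cref{thm:gaussian-anticoncentration-upper-bound} so that this limit is well-behaved; carrying out the multi-scale decoupling and richness argument for the intermediate-to-large range of $\tau$; and, finally, correctly combining the conditional estimate with the average over bucket sizes so as to recover the optimal exponent $n^{-3/2}$ rather than merely $n^{-1}$. Keeping the threshold between ``small'' and ``large'' RLCD consistent across all these estimates, and ensuring that the final constants $B$ and $\nu$ depend only on $C$ (and not on $H$), also requires care.
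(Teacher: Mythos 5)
Your outline tracks the paper's own strategy essentially step for step (the RLCD dichotomy, Berkowitz-style decorrelation in the unstructured case, bucketing plus the invariance principle and robust rank in the structured case, decoupling for larger $\tau$), so there is nothing different to compare at the level of approach. But as written the sketch has two genuine gaps, both in the structured case.

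First, the averaging over bucket sizes. You argue that $\Pr[|X-x|\le B\mid\vec b]$ is ``non-negligible only when the conditional mean lies within $O(n)$ of $x$'' and multiply the uniform conditional bound $n^{-1}$ by the probability $\lesssim n^{-1/2}$ of that event. The conditional probability is not zero outside that window: cutting off the far outcomes with a concentration inequality costs a $(\log n)^{O(1)}$ factor, and summing over dyadic windows with only the uniform bound $n^{-1}$ per window does not converge. To recover the clean $n^{-3/2}$ one needs the conditional bound to decay exponentially in $|x-E_{\vec\Delta}|/\sigma_{\vec\Delta}$, which is why the paper proves the non-uniform estimates \cref{thm:gaussian-anticoncentration-technical}(1) and \cref{lem:esseen-upper} rather than only \cref{thm:gaussian-anticoncentration-upper-bound}. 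Moreover the conditional variance $\sigma_{\vec\Delta}$ and the quadratic correction to the conditional mean both fluctuate with the bucket sizes, and one must rule out adversarial correlation between these fluctuations and the linear shift; this is the content of \cref{clm:crucial-claim} and is absent from your sketch.

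Second, the lower bound in the structured case. After conditioning on the bucket sizes the distribution of $X$ is not Gaussian but (up to error) a quadratic polynomial of Gaussians, so ``large-scale Gaussian behavior'' does not supply the required lower bound on the conditional point probabilities: you need a matching \emph{lower} bound on small-ball probabilities for quadratic Gaussian chaos (\cref{thm:gaussian-anticoncentration-technical}(2)), and that bound is inherently one-sided --- it holds only for $x$ on the correct side of the conditional mean (consider $Z_1^2$) --- so one must additionally show that the bucket sizes place $x-E_{\vec\Delta}$ on the correct side with probability $\gtrsim\sigma/n^{3/2}$. Relatedly, your mechanism for passing from $\mb{E}\,m(X-x)$ back to $\Pr[|X-x|\le B]$ (``tail decay of $m$ and the upper bound'') forces $B$ to grow with the implicit constants, which depend on $H$ and $A$; to obtain $B=B(C)$ as the statement requires (and as the switching argument in \cref{sec:switch} needs, cf.\ \cref{rem:dependence-B}) one has to exploit local near-constancy of the comparison density on a long window, as in \cref{lem:esseen-lower}.
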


\cref{thm:short-interval} already implies the upper bound in   \cref{thm:point-control}, but not the lower bound. In  \cref{sec:switch}, we deduce the desired lower bound on point probabilities from \cref{thm:short-interval} (interestingly, this deduction requires both the lower and the upper bound in \cref{thm:short-interval}). As mentioned in the introduction, for this deduction, we introduce an ``averaged'' version of the so-called \emph{switching method}. In particular, for $\ell\in\{-B,\ldots,B\}$, we consider the pairs of vertices $(y,z)$ with  $y\in U$ and $z\notin U$ such that modifying $U$ by removing $y$ and adding
$z$ (a ``switch'') increases $e(G[U])$ by exactly
$\ell$. We define random variables that measure the number of ways to perform such switches, and deduce \cref{thm:point-control} by studying certain moments of these random variables. Here we again need to use some arguments involving ``richness'' of Ramsey graphs, and we also make use of the technique of \emph{dependent random choice}.


\subsection{Technical issues}

The above subsections describe the high-level ideas of the proof, but there are various technical issues that arise, some of which have a substantial impact on the complexity of the proof. Most importantly, in the additively structured case, we outlined how to prove a conditional local limit theorem for the quadratic part $Q$, but we completely swept under the rug how to then ``integrate'' this over outcomes of the conditioning. Explicitly, if we encode the bucket intersection sizes in a vector $\vec \Delta$, we have outlined how to use Fourier-analytic techniques to prove certain estimates on conditional probabilities of the form $\Pr[|X-x|\le B|\vec \Delta]$, but we then need to average over the randomness of $\vec \Delta$ to obtain $\Pr\left[|X-x|\le B\right]=\mb E[\Pr[|X-x|\le B| \vec{\Delta}]]$ (taking into account that certain outcomes of $\vec \Delta$ give a much larger contribution than others).

There are certain relatively simple arguments with which we can accomplish this averaging while losing logarithmic factors in the final probability bound (namely, using a concentration inequality for $Q$ conditioned on $\vec{\Delta}$, we can restrict to only a certain range of outcomes $\vec{\Delta}$ which give a significant contribution to the overall probability $\Pr[|X-x|\le B]$).
To avoid logarithmic losses, we need to make sure that our conditional probability bounds ``decay away from the mean'', which requires a non-uniform version of \cref{thm:gaussian-anticoncentration-upper-bound} (with a decay term), and some specialized tools for converting control of $|\varphi_{Y}(\tau)-\varphi_{Z}(\tau)|$ into bounds on small-ball probabilities for $Y$. Also, we need some delicate moment estimates comparing dependent random variables of ``linear'' and ``quadratic'' types, to quantify the dependence between certain fluctuations in the conditional mean and variance as we vary $\vec \Delta$.

Furthermore, for the switching argument described in the previous subsection, it is important (for technical reasons discussed in \cref{rem:dependence-B}) that in the setting of \cref{thm:short-interval}, $B$ does not depend on $A$ and $H$. To achieve this, we develop Fourier-analytic tools that take into account ``local smoothness'' properties of the approximating random variable $Z$.

\subsection{Organization of the paper}
In \cref{sec:preliminaries} we collect a variety of (mostly known) tools which will be used throughout the paper. Then, in \cref{sec:anti-Gauss} we prove \cref{thm:gaussian-anticoncentration-upper-bound} (our sharp small-ball probability estimate for quadratic polynomials of Gaussians), and in \cref{sec:fourier-analysis} we prove some general ``relative'' Esseen-type inequalities deducing bounds on small-ball probabilities from Fourier control.

In \cref{sec:initial-fourier,sec:high-fourier} we obtain bounds on the characteristic function $\varphi_X(\tau)$ for various ranges of $\tau$ (specifically, bounds due to ``cancellation of the linear part'' appear in \cref{sec:initial-fourier}, and bounds due to ``cancellation of the quadratic part'' appear in \cref{sec:high-fourier}). This is already enough to handle the additively unstructured case of \cref{thm:short-interval}, which we do in \cref{sec:short-interval-unstructured}.

Most of the rest of the paper is then devoted to the additively structured case of \cref{thm:short-interval}. In \cref{sec:ramsey-robust-rank} we study the ``robust rank'' of Ramsey graphs, and in \cref{sec:coupling-lemmas} we prove some lemmas about quadratic polynomials on products of Boolean slices. All the ingredients collected so far come together in \cref{sec:short-interval-structured}, where the additively structured case of \cref{thm:short-interval} is proved.

Finally, in \cref{sec:switch} we use a switching argument to deduce \cref{thm:point-control} from \cref{thm:short-interval}.

\section{Preliminaries}\label{sec:preliminaries}
In this section we collect some basic definitions and tools that will be used throughout the paper.

\subsection{Basic facts about Ramsey graphs}\label{sub:ramsey-graphs}
First, as mentioned in the introduction, the following classical result about Ramsey graphs is due to Erd{\H{o}}s and Szemer\'{e}di \cite{ES72}.

\begin{theorem}\label{thm:dense-ramsey}
For any $C$, there exists $\eps=\eps(C)>0$ such that for every sufficiently large $n$, every $C$-Ramsey graph $G$ on $n$ vertices satisfies $\eps \binom n 2\leq e(G)\leq (1-\eps)\binom n 2$.
\end{theorem}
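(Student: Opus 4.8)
The plan is to derive both inequalities from a single density-dependent Ramsey estimate, using complementation to pass between them. Since $\bar G$ is $C$-Ramsey exactly when $G$ is, and $e(\bar G)=\binom n2-e(G)$, the upper bound $e(G)\le(1-\eps)\binom n2$ is equivalent to the lower bound $e(\bar G)\ge\eps\binom n2$. So it suffices to show: there is $\eps=\eps(C)>0$ such that every $C$-Ramsey graph $G$ on $n$ vertices (with $n$ large in terms of $C$) has $e(G)\ge\eps\binom n2$; equivalently, any $n$-vertex graph with no homogeneous set of size $C\log_2 n$ has $\Omega_C(n^2)$ edges.

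This is precisely the Erd\H{o}s--Szemer\'edi theorem \cite{ES72}. Quantitatively, the statement to use is that there is an absolute constant $c>0$ for which every $n$-vertex graph $G$ with $\alpha(G)\le k$ and $\omega(G)\le k$ satisfies $e(G)\ge c\,n^2(\log n)/k$. Granting this, the theorem follows at once: a $C$-Ramsey graph has $\alpha(G),\omega(G)<C\log_2 n$, hence $e(G)\ge c\,n^2(\log n)/(C\log_2 n)=(c\ln 2/C)\,n^2\ge (c\ln 2/C)\binom n2$, and applying the same bound to $\bar G$ yields the upper bound on $e(G)$; so one may take $\eps(C)=c\ln 2/C$. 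The shape of this estimate is the correct one: it is essentially sharp for a typical outcome of $\mathbb{G}(n,p)$ with $p$ of order $(\log n)/k$, whose independence number is of order $(\log n)/p$.

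It remains to prove the density-dependent estimate. The natural route is a recursion modelled on the Erd\H{o}s--Szekeres bound $R(s,t)\le R(s-1,t)+R(s,t-1)$, but run while keeping track of the edge density: in a graph of small density one passes to the non-neighbourhood of a minimum-degree vertex, which discards only an $O(\eps)$-fraction of the vertices and inflates the density by only a factor $1+O(\eps)$, and one iterates this to accumulate a homogeneous set. The hard part is to organize this so as to genuinely beat the unconditional Erd\H{o}s--Szekeres bound: the greedy process in a sparse graph produces an \emph{independent} set, but a recursive call may instead return a clique, so the argument must account for the clique-building and independent-set-building branches simultaneously while controlling how the density degrades down the recursion. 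This is the content of the argument of Erd\H{o}s and Szemer\'edi, which for our purposes we simply invoke.
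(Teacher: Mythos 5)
Your proposal is correct and matches the paper's treatment of this statement: the paper presents it as the classical Erd\H{o}s--Szemer\'edi theorem and simply cites \cite{ES72} without proof, which is exactly what you do after the standard (and correct) complementation reduction of the upper bound to the lower bound. The only minor quibble is that the sharp form of the density-dependent estimate you quote carries an extra $\log(1/\varepsilon)$ (equivalently, $\log k$) factor in the denominator, but this is immaterial in the regime $k=C\log_2 n$, where it only changes the constant $\eps(C)$.
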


\begin{remark}\label{rem:expander-dense}
In the setting of \cref{rem:expander-works}, where $G$ has near-optimal spectral expansion, the expander mixing lemma (see for example \cite[Corollary~9.2.5]{AS16}) implies that (for sufficiently large $n$) all subsets of $G$ with at least $n^{1/2+0.02}$ vertices have density very close to the overall density of $G$. It is possible to use this fact in lieu of \cref{thm:dense-ramsey} in our proof of \cref{thm:point-control}.
\end{remark}

More recently, building on work of Bukh and Sudakov~\cite{BS07}, the first author and Sudakov~\cite{KS20} proved that every Ramsey graph contains an induced subgraph in which the collection of vertex-neighborhoods is ``rich''. Intuitively speaking, the richness condition here means that for all linear-size vertex subsets $W$, there are only very few vertex-neighborhoods that have an unusually large or unusually small intersection with $W$.

\begin{definition}\label{def:rich}
Consider $\delta,\rho,\alpha>0$. We say that an $m$-vertex graph $G$ is \emph{$(\delta,\rho,\alpha)$-rich} if for every subset $W\su V(G)$ of size $|W|\geq \delta m$, there are at most $m^\alpha$ vertices $v\in V(G)$ with the property that $|N(v)\cap W|\leq \rho |W|$ or $|W\setminus N(v)|\leq \rho |W|$.
\end{definition}

When the parameter $\alpha$ is omitted, it is assumed to take the value $1/5$. That is to say, we write ``$(\delta,\rho)$-rich'' to mean ``$(\delta,\rho,1/5)$-rich''.

The following lemma is a slight generalization of \cite[Lemma 4]{KS20} (and is proved in the same way).

\begin{lemma}\label{lem:rich-subset}
For any fixed $C,\alpha>0$, there exists $\rho=\rho(C,\alpha)$ with $0<\rho<1$ such that the following holds. For $n$ sufficiently large in terms of $C$ and $\alpha$, for any $m\in\mb{R}$ with $\sqrt n\le m\leq \rho n$, and any $C$-Ramsey graph $G$ on $n$ vertices, there is an induced subgraph of $G$ on at least $m$ vertices which is $((m/n)^\rho,\rho,\alpha)$-rich.
\end{lemma}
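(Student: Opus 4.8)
The plan is to follow the proof of \cite[Lemma 4]{KS20} (which adapts an argument of Bukh and Sudakov \cite{BS07}); that proof is iterative, and its dependence on the parameter $\alpha$ runs only through a handful of quantitative estimates, so allowing a general $\alpha$ in place of the fixed value $1/5$ costs nothing except shrinking the constant $\rho$ appropriately (now depending on $\alpha$ as well as $C$).

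Fix a small $\rho=\rho(C,\alpha)>0$, to be chosen at the end, and suppose towards a contradiction that no induced subgraph of $G$ on at least $m$ vertices is $((m/n)^\rho,\rho,\alpha)$-rich. I would then build a decreasing chain of induced subgraphs $G=G_0\supseteq G_1\supseteq\cdots$, each $G_i$ on $m_i\ge m$ vertices, together with homogeneous ``pieces'' $Y_0,Y_1,\dots$ that will be assembled into a single clique or independent set of $G$. At stage $i$, since $G_i$ is not rich, \cref{def:rich} produces a ``bad'' set $W_i\subseteq V(G_i)$ with $|W_i|\ge (m/n)^\rho m_i$ and more than $m_i^\alpha$ vertices that are extreme with respect to $W_i$; by pigeonhole at least $\tfrac12 m_i^\alpha$ of these form a set $Z_i$ all of whose vertices $v$ satisfy, say, $|N(v)\cap W_i|\le \rho|W_i|$ (the other alternative in \cref{def:rich} is entirely symmetric, and yields a clique in place of an independent set). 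Since $|Z_i|>\tfrac12 m_i^\alpha\ge\tfrac12 n^{\alpha/2}$ is polynomial in $n$, the Erd\H{o}s--Szekeres bound applied inside $G[Z_i]$ gives a homogeneous subset of size $\Omega_\alpha(\log n)$; this subset can be taken to be an independent set $Y_i$, since otherwise $G[Z_i]$ (having bounded independence number) would contain a clique of polynomial size, which would contradict the Ramsey hypothesis for $n$ large. Finally, using that every vertex of $Y_i$ has at most $\rho|W_i|$ neighbors in $W_i$, I would pass to an induced subgraph $G_{i+1}\subseteq G_i[W_i]$ whose vertices are all non-adjacent to a chosen sub-collection $Y_i'\subseteq Y_i$ (so that $Y_i'$ can be appended to the growing independent set), the sizes being arranged so that the process can continue while $m_i\ge m$.

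Once the process stops, the union of the pieces $Y_i'$ is an independent set of $G$, and the whole point is to choose the parameters so that this independent set has size exceeding $C\log_2 n$ before the live set shrinks below $m$, contradicting that $G$ is $C$-Ramsey. The step I expect to be the main obstacle is exactly this balancing of parameters: each stage loses a factor $(m/n)^\rho$ from \cref{def:rich} and a further factor from the passage to $G_{i+1}$ (reflecting the tension between keeping $Y_i'$ large and keeping $G_{i+1}$ large), and $\rho$ must be chosen --- uniformly over the whole permitted range $\sqrt n\le m\le \rho n$ --- small enough that enough stages remain to harvest more than $C\log_2 n$ homogeneous vertices; one must also track the dependence on $\alpha$ (which enters only via the exponent in the count $m_i^\alpha$ of extreme vertices, hence via the $\Omega_\alpha(\log n)$ gain per stage) and ensure the harvested pieces assemble into a genuine clique or independent set (which is why the recursion must go into a common non-neighborhood, respectively neighborhood, and all pieces must be of the same type). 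All of this is already carried out in \cite[Lemma 4]{KS20}; the only new point is tracking the explicit role of $\alpha$.
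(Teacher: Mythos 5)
Your endgame does not work, and it is not in fact what \cite[Lemma 4]{KS20} does. The fatal parameter count is the one you flag as "the main obstacle" but do not resolve: each stage shrinks the live set by a factor of roughly $\delta=(m/n)^\rho$ and the live set must stay above $m$, so the process runs for at most about $1/\rho$ stages; and if the next live set is to lie in the \emph{common non-neighborhood} of $Y_i'$, then since each vertex of $Y_i'$ may have as many as $\rho|W_i|$ neighbors in $W_i$, you must take $|Y_i'|<1/\rho$ or that common non-neighborhood is empty. Hence each stage contributes $O(1/\rho)$ vertices and there are $O(1/\rho)$ stages, so the assembled independent set has constant size $O(1/\rho^2)=O_{C,\alpha}(1)$, nowhere near $C\log_2 n$. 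The $\Omega_\alpha(\log n)$ homogeneous set you extract from $Z_i$ via Erd\H{o}s--Szekeres cannot be harvested in bulk for exactly this reason, and on its own it is smaller than $C\log_2 n$, so it gives no contradiction either. Two further problems: the two alternatives in \cref{def:rich} are not "entirely symmetric" \emph{across} stages (some stages may yield sparse pieces and others dense pieces, and these cannot be assembled into a single homogeneous set; one needs a pigeonhole over stage types at the end), and your claim that $G[Z_i]$ with independence number $O(\log n)$ would contain a polynomial-size clique is false --- that bound on the independence number only forces a clique of constant size.

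The paper's proof avoids all of this by never distilling $Z_i$ down to a homogeneous set. It keeps a polynomial-size piece $S_i\subseteq Z_i$ of $\lceil m^\alpha/2\rceil$ extreme vertices at every stage, and only requires each vertex $u$ of the next live set $U_i$ to satisfy the \emph{density} condition $e(S_i,\{u\})\le 4\rho|S_i|$ (rather than full non-adjacency to $S_i$); a short averaging argument shows at least half of $W\setminus S_i$ qualifies, so the live set survives. After a bounded number $K=K(C,\alpha)$ of stages, at least half of the $S_i$ are of the same type; their union $S$ has size at least $n^{\alpha/2}$ and density at most $4\rho+1/K$ (or at least $1-4\rho-1/K$), and the contradiction comes from the Erd\H{o}s--Szemer\'edi theorem (\cref{thm:dense-ramsey}) applied to the $(2C/\alpha)$-Ramsey graph $G[S]$, not from exhibiting a homogeneous set of size $C\log_2 n$. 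This switch --- from "logarithmic homogeneous set" to "polynomial-size induced subgraph of extreme density" --- is the idea missing from your proposal.
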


For two disjoint vertex sets $U,W$ in a graph $G$, we write $e(U,W)$ for the number of edges between $U,W$ and write $d(U,W)=e(U,W)/(|U||W|)$ for the density between $U,W$. We furthermore write $d(U)=e(U)/\binom{|U|}{2}$ for the density inside the set $U$.

\begin{proof}
We introduce an additional parameter $K$, which will be chosen to be large in terms of $C$ and $\alpha$. We will then choose $\rho=\rho(C,\alpha)$ with $0<\rho<1$ to be small in terms of $K$, $C$, and $\alpha$. We do not specify the values of $K$ and $\rho$ ahead of time, but rather assume they are sufficiently large or small to satisfy certain inequalities that arise in the proof.

Let $\delta=(m/n)^\rho$ and suppose for the purpose of contradiction that every set of at least $m$ vertices fails to induce a $(\delta,\rho,\alpha)$-rich
subgraph. We will inductively construct a sequence of induced
subgraphs $G=G[U_{0}]\supseteq G[U_{1}]\supseteq\cdots\supseteq G[U_{K}]$
and disjoint vertex sets $S_{1},\ldots,S_{K}$ of size $|S_1|=\cdots=|S_K|=\lceil m^{\alpha}/2\rceil$ such that for each $i=1,\ldots,K$, we have
$|U_{i}|\ge(\delta/4)|U_{i-1}|$ and $S_{i}\subseteq U_{i-1}\setminus U_i$,
as well as
\[\big[e(S_{i},\{u\})\leq 4\rho\cdot |S_i|\text{ for all }u\in U_i\big]\text{ or }\big[e(S_{i},\{u\})\geq (1-4\rho)\cdot |S_i|\text{ for all }u\in U_i\big].\label{eq:density-cases}\]
This will suffice, as follows. First note that for each $i=1,\ldots,K$, we have
\[\big[d(S_{i},S_{j})\leq 4\rho\text{ for all }j\in \{i+1,\ldots,K\}\big]\text{ or }\big[d(S_{i},S_{j})\geq 1-4\rho\text{ for all }j\in \{i+1,\ldots,K\}\big].\]
Without loss of generality suppose that the first case holds for at least
half of the indices $i=1,\ldots,K$, and let $S$ be the union of the corresponding sets $S_{i}$. Then one can compute $d(S)\leq 4\rho+1/K$. On the other hand $|S|\geq (K/2)\cdot m^{\alpha}/2\geq m^{\alpha}\geq n^{\alpha/2}$ and therefore $G[S]$ is a $(2C/\alpha)$-Ramsey graph. However, now the density bound $d(S)\leq 4\rho+1/K$ contradicts \cref{thm:dense-ramsey} if $\rho$ is sufficiently small and $K$ is sufficiently large (in terms of $C$ and $\alpha$).

Let $U_{0}=V(G)$. For $i=1,\ldots,K$ we will construct
the vertex sets $U_{i}$ and $S_{i}$, assuming that $U_{0},\ldots,U_{i-1}$ and $S_{1},\ldots,S_{i-1}$
have already been constructed. Note that we have $|U_{i-1}|\ge (\delta/4)^{i-1}n\ge  (\delta/4)^Kn=(m/n)^{\rho K}4^{-K}n\ge  m$, using that $\rho K\leq 1/3$ and $m/n\leq\rho\leq 8^{-K}$ for $\rho$ being sufficiently small with respect to $K$. Therefore, by our assumption, $U_{i-1}$ must contain a set $W$
of at least $\delta|U_{i-1}|$ vertices and a set $Y$
of more than $|U_{i-1}|^{\alpha}\geq m^{\alpha}$ vertices contradicting $(\delta,\rho,\alpha)$-richness.
Suppose without loss of generality that $|N(v)\cap W|\le\rho|W|$
for at least half of the vertices $v\in Y$, and let $S_{i}\su Y\su U_{i-1}$ be a set of precisely $\lceil m^{\alpha}/2\rceil$ such vertices $v\in Y$. Then, let $U=W\setminus S_{i}\su U_{i-1}\setminus S_i$ and note that we have  $|U|\ge|W|/2$ since $|W|\geq \delta|U_{i-1}|\geq 4 \cdot (\delta/4)^Kn\geq 4m\geq 2|S_{i}|$. Furthermore, let $U_{i}\subseteq U$ be the set of vertices $u\in U$ with
$e(S_{i}, \{ u\} )\le4\rho\cdot |S_i|$. Now, we
just need to show $|U_{i}|\ge(\delta/4)|U_{i-1}|$.
To this end, note that for all $v\in S_{i}$ we have $e(\{ v\} ,U)=|N(v)\cap U|\le |N(v)\cap W|\leq  \rho|W|\leq 2\rho |U|$.
Hence,
\[
|U\setminus U_{i}|\cdot 4\rho\cdot |S_i|\leq \sum_{w\in U\setminus U_{i}}e(S_{i},\{ w\})=e(S_{i},U\setminus U_{i} )\le e(S_{i},U)=\sum_{v\in S_{i}}e(\{ v\} ,U)\le |S_i|\cdot 2\rho|U|,
\]
implying that $|U\setminus U_i|\leq |U|/2$ and hence $|U_{i}|\geq |U|/2\ge |W|/4\geq (\delta/4)|U_{i-1}|$,
as desired.
\end{proof}

\begin{remark}\label{rem:expander-rich}
In the setting of \cref{rem:expander-works}, where $G$ is dense and has near-optimal spectral expansion (and $n$ is sufficiently large), the expander mixing lemma can be used to prove that every induced subgraph of $G$ on at least $n^{0.9}$ vertices is $(n^{-0.05},0.005,\alpha)$-rich (and therefore \cref{lem:rich-subset} holds) for $\alpha \ge 0.2$. It is possible to use this in lieu of \cref{lem:rich-subset} in our proof of \cref{thm:point-control}.
\end{remark}

\subsection{Characteristic functions and anticoncentration}\label{sub:characteristic-functions}
For a real random variable $X$, recall that the \emph{characteristic function} $\varphi_X\colon\mb{R}\to \mb C$ is defined by $\varphi_X(t)=\mb E [e^{itX}]$. Note that we have $|\varphi_X(t)|\le 1$ for all $t\in \mb{R}$. If $\varphi_X(t)$ is absolutely integrable, then $X$ has a continuous density $p_X$, which can be obtained by the \emph{inversion formula}
\begin{equation}\label{eq:inversion}
p_{X}(u)=\frac{1}{2\pi}\int_{-\infty}^\infty e^{-itu}\varphi_{X}(t)\,dt.
\end{equation}
Next, recall the \emph{L\'evy concentration function}, which measures the maximum small-ball probability.
\begin{definition}\label{def:levy-concentration}
For a real random variable $X$ and $\varepsilon\ge 0$, we define $\mc{L}(X,\eps) = \sup_{x\in\mb{R}}\Pr[|X-x|\le\eps]$.
\end{definition}

If $X$ has a density $p_X$, then we trivially have $\mc{L}(X,\eps)\le \eps\max_{x\in\mb{R}} p_X(x)$. We can also control small-ball probabilities using only a certain range of values of the characteristic function, via \emph{Esseen's inequality} (see for example \cite[Lemma~6.4]{Rud14}):
\begin{theorem}\label{thm:esseen}
There is $C_{\ref{thm:esseen}} > 0$ so that for any real random variable $X$ and any $\eps > 0$, we have
\[\mc L(X,\eps)\le C_{\ref{thm:esseen}}\cdot \eps \int_{-2/\varepsilon}^{2/\varepsilon}|\varphi_X(t)|\,dt.\]
\end{theorem}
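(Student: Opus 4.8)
The plan is a standard \emph{smoothing} (mollification) argument: bound the indicator of the interval $[x_0-\eps,x_0+\eps]$ pointwise by a rescaled copy of a fixed nonnegative kernel whose Fourier transform is supported in a bounded interval, and then expand the resulting expectation using the Fourier inversion formula \eqref{eq:inversion}.

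Concretely, I would first fix once and for all the Fej\'er-type function $g_1(x)=\frac{\sin^2 x}{\pi x^2}$, whose Fourier transform $\widehat{g_1}(t)=\bigl(1-|t|/2\bigr)_+$ is the tent function supported on $[-2,2]$, so that $\|\widehat{g_1}\|_\infty=\widehat{g_1}(0)=1$. Since $x\mapsto(\sin x)/x$ is decreasing on $[0,1]$, we have $g_1(x)\ge c_1:=(\sin 1)^2/\pi>0$ for all $|x|\le 1$, and of course $g_1\ge 0$ everywhere. Rescaling, put $g(x)=g_1(x/\eps)$: then $g\ge 0$ with $g\ge c_1$ on $[-\eps,\eps]$, while $\widehat g(t)=\eps\,\widehat{g_1}(\eps t)$ is supported on $[-2/\eps,2/\eps]$ and satisfies $\|\widehat g\|_\infty=\eps$.

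Now fix any $x_0\in\mb R$. Using the pointwise bound $\mathbbm 1_{\{|y-x_0|\le\eps\}}\le c_1^{-1}g(y-x_0)$, the inversion formula $g(y)=\frac{1}{2\pi}\int_{-2/\eps}^{2/\eps}\widehat g(t)e^{ity}\,dt$, and Fubini's theorem (valid because $\widehat g$ is bounded with bounded support and $|\varphi_X|\le 1$, so the relevant double integral is $\le\|\widehat g\|_{L^1}<\infty$), I would write
\[\Pr[|X-x_0|\le\eps]\le c_1^{-1}\,\mb E\bigl[g(X-x_0)\bigr]=\frac{1}{2\pi c_1}\int_{-2/\eps}^{2/\eps}\widehat g(t)\,e^{-itx_0}\varphi_X(t)\,dt.\]
Taking absolute values inside the integral and using $|\widehat g(t)|\le\eps$ and $|e^{-itx_0}|=1$ gives
\[\Pr[|X-x_0|\le\eps]\le\frac{\eps}{2\pi c_1}\int_{-2/\eps}^{2/\eps}|\varphi_X(t)|\,dt,\]
and taking the supremum over $x_0\in\mb R$ yields the claimed bound with $C_{\ref{thm:esseen}}=\frac{1}{2\pi c_1}=\frac{1}{2\sin^2 1}$.

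Every step here is elementary, so there is no genuine obstacle; the only point requiring a modicum of care is producing the band-limited kernel, i.e.\ exhibiting a function that is simultaneously nonnegative, bounded below on a unit-length interval, and has Fourier transform supported in a bounded set, together with tracking the $\eps$-scaling of $\widehat g$ correctly. (If one prefers to avoid the explicit constant $(\sin 1)^2/\pi$, one may instead take $g_1=|h|^2$ for a fixed Schwartz bump $h$ with compactly supported Fourier transform, which is manifestly nonnegative and bounded below near the origin; this makes $C_{\ref{thm:esseen}}$ ineffective but still absolute.)
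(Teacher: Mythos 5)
Your proof is correct. The paper does not prove this statement itself (it cites \cite{Rud14}), but your smoothing argument with the Fej\'er kernel $g_1(x)=\sin^2(x)/(\pi x^2)$, $\widehat{g_1}(t)=(1-|t|/2)_+$, is the standard proof and is essentially the same device the paper uses for its relative variants in \cref{sec:fourier-analysis} (there the kernel appears as $\psi=\mbm 1_{[-1,1]}\ast\mbm 1_{[-1,1]}$ with $f=\hat\psi=(2\sin t/t)^2$, i.e.\ the same pair with the roles of physical and Fourier space written in the opposite order); all steps, including the lower bound $g_1\ge\sin^2(1)/\pi$ on $[-1,1]$, the $\eps$-scaling of $\widehat g$, and the Fubini justification, check out.
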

In \cref{sec:fourier-analysis} we will prove some more sophisticated ``relative'' variants of \cref{thm:esseen}.

\subsection{Distance-to-integer estimates, and regularized least common denominator}\label{sub:RLCD}
For $r\in\mb{R}$, let $\|r\|_{\mb{R}/\mb Z}$ denote the (Euclidean) distance of $r$ to the nearest integer. Recall that the Rademacher distribution is the uniform distribution on the set $\{-1,1\}$. If $x$ is Rademacher-distributed, then for any $r\in\mb{R}$ we have the well-known estimate
\begin{equation}\label{eq:cos}
|\mb{E}[\exp(irx)]| = |\!\cos(r)|\le 1-\snorm{r/\pi}_{\mb{R}/\mb{Z}}^2\le  \exp(-\snorm{r/\pi}_{\mb{R}/\mb{Z}}^2).
\end{equation}
If $\vec \xi\in \{0,1\}^n$ is a uniformly random length-$n$ binary vector, then for any $\vec a\in\mb{R}^n$ and any $b\in\mb{R}$, we can rewrite $\vec a\cdot \vec \xi+b$ as a weighted sum of independent Rademacher random variables. Specifically, we have  $\vec a\cdot \vec \xi+b=\vec{r}\cdot \vec{x}+\mb E [\vec a\cdot \vec \xi+b]$, where $\vec{r}=\vec{a}/2 \in\mb{R}^n$ and $\vec{x}\in \{-1,1\}^n$ is obtained from $\vec \xi\in \{0,1\}^n$ by replacing all zeroes by $-1$'s. Then $\vec{x}$ is uniformly random in $\{-1,1\}^n$, so  \cref{eq:cos} yields
\begin{equation}\label{eq:cos-0-1}
|\mb{E}[\exp(i(\vec{a}\cdot\vec{\xi}+b))]|=|\mb{E}[\exp(i(\vec{r}\cdot \vec{x}))]|=\prod_{j=1}^{n} |\mb{E}[\exp(ir_jx_j)]|\leq \exp\Bigg(-\sum_{j=1}^{n}\snorm{a_j/(2\pi)}_{\mb{R}/\mb{Z}}^2\Bigg).
\end{equation}
In the case where we want to study $\vec a\cdot \vec x$ where $\vec x\in \{0,1\}^n$ is a uniformly random binary vector \emph{with a given number of ones} (i.e., a random vector on a \emph{Boolean slice}), one has the following estimate. 
\begin{lemma}\label{lem:slice-estimate}
Fix $c>0$. Let $\vec{a}\in\mb{R}^{n}$, and suppose that for some $0< \delta\le 1/2$ there are disjoint pairs
$\{i_{1},j_{1}\},\ldots,\{i_{M},j_{M}\}\subseteq [n]$ such that  $\|(a_{i_{k}}-a_{j_{k}})/(2\pi)\|_{\mb{R}/\mb Z}\ge\delta$ for each $k=1,\ldots,M$. Let $s$ be an integer with $cn\le s\le (1-c)n$. Then for a
random zero-one vector $\vec{\xi}\in\left\{ 0,1\right\}^{n}$ with exactly $s$ ones, we have
\[
|\mb E[\exp(i(\vec{a}\cdot\vec{\xi}))]|\lesssim \exp\left(-\Omega_c(M\delta^2)\right).
\]
\end{lemma}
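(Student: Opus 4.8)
The plan is to reduce, by conditioning, to a situation where $\vec a\cdot\vec\xi$ is a genuine weighted Rademacher sum with $\Omega_c(M)$ terms, and then invoke the cosine estimate \eqref{eq:cos}. Set $P_k=\{i_k,j_k\}$ and $T=\bigcup_{k=1}^M P_k$, a set of exactly $2M\le n$ vertices. For each $k$ let $\delta_k=\xi_{i_k}+\xi_{j_k}\in\{0,1,2\}$, call $P_k$ \emph{split} if $\delta_k=1$, and let $M_{\mathrm{mix}}$ denote the number of split pairs. First I would condition on the vector $(\delta_1,\dots,\delta_M)$. Given this conditioning the conditional law of $\vec\xi$ is uniform over the set of consistent vectors, which is a product set: on each split pair it is an independent uniformly random choice of which of $\xi_{i_k},\xi_{j_k}$ equals $1$, independent of the restriction of $\vec\xi$ to $[n]\setminus T$. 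Encoding the split‑pair choices by independent Rademacher variables $x_k$ (so $x_k=+1$ if $\xi_{i_k}=1$), we can write $\vec a\cdot\vec\xi=X_0+\sum_{k:\,\delta_k=1}\tfrac{a_{i_k}-a_{j_k}}{2}\,x_k$, where $X_0$ is measurable with respect to $(\delta_1,\dots,\delta_M)$ and the singleton coordinates, hence independent of the $x_k$. Therefore, using \eqref{eq:cos} together with the hypothesis $\|(a_{i_k}-a_{j_k})/(2\pi)\|_{\mb R/\mb Z}\ge\delta$,
\[\bigl|\mb E[\exp(i\vec a\cdot\vec\xi)\mid \delta_1,\dots,\delta_M]\bigr|\le\prod_{k:\,\delta_k=1}\bigl|\cos\bigl(\tfrac{a_{i_k}-a_{j_k}}{2}\bigr)\bigr|\le e^{-\delta^2 M_{\mathrm{mix}}},\]
and so, taking expectations, $|\mb E[\exp(i\vec a\cdot\vec\xi)]|\le \mb E[e^{-\delta^2 M_{\mathrm{mix}}}]$.

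The heart of the matter is then to show that $M_{\mathrm{mix}}\ge \beta M$ with probability $1-e^{-\Omega_c(M)}$ for some $\beta=\beta(c)>0$. I would do this in two steps. Let $N=\sum_k\delta_k=|\{v\in T:\xi_v=1\}|$; this is a hypergeometric random variable with mean $2Ms/n$, and since $cn\le s\le(1-c)n$ both $N$ and $2M-N$ have mean at least $2cM$, so the multiplicative Chernoff bound (valid for hypergeometric variables) gives $N\in[cM,(2-c)M]$ with probability $1-e^{-\Omega_c(M)}$. Now condition additionally on $N=t$ for such a $t$; then $\{v\in T:\xi_v=1\}$ is a uniformly random $t$-subset $R$ of the $2M$-element set $T$, and $M_{\mathrm{mix}}=\#\{k:|P_k\cap R|=1\}$. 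A direct computation gives $\mb E[M_{\mathrm{mix}}\mid N=t]=t(2M-t)/(2M-1)\ge \tfrac12 c(2-c)M\ge \tfrac c2 M$. Realizing $R$ as the first $t$ elements of a uniformly random ordering of $T$, the value of $M_{\mathrm{mix}}$ changes by at most $2$ under any transposition, so a standard bounded‑differences (Azuma--McDiarmid-type) inequality for functions of a random permutation yields $\Pr[M_{\mathrm{mix}}<\tfrac c4 M\mid N=t]\le e^{-\Omega_c(M)}$. Combining the two steps, $\Pr[M_{\mathrm{mix}}<\tfrac c4 M]\le e^{-\Omega_c(M)}$.

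It remains to convert this tail bound into the stated estimate. Since $0\le M_{\mathrm{mix}}\le M$ always and $\delta\le 1/2$ (so $\delta^2\le 1$), splitting on whether $M_{\mathrm{mix}}\ge \tfrac c4 M$ gives
\[\mb E[e^{-\delta^2 M_{\mathrm{mix}}}]\le e^{-\delta^2\cdot(c/4)M}+\Pr\bigl[M_{\mathrm{mix}}<\tfrac c4 M\bigr]\le e^{-(c/4)M\delta^2}+e^{-\Omega_c(M)}\lesssim \exp\bigl(-\Omega_c(M\delta^2)\bigr),\]
where in the final step we use $M\delta^2\le M$ to bound the second term by $e^{-\Omega_c(M\delta^2)}$. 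This gives $|\mb E[\exp(i\vec a\cdot\vec\xi)]|\lesssim\exp(-\Omega_c(M\delta^2))$, as required.

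I expect the only real difficulty to be the concentration estimate for $M_{\mathrm{mix}}$: the reduction to a Rademacher sum is routine, but one cannot simply argue directly on the slice $\{0,1\}^n$, since a single swap can change $M_{\mathrm{mix}}$ while the number of "active" coordinates is only $2M$, which may be much smaller than $n$ (indeed $M$ can be as small as a constant). The two‑step reduction — first pin down $|S\cap T|$ via a hypergeometric Chernoff bound, then run the permutation concentration \emph{inside} $T$ — is precisely what makes the relevant scale $M$ rather than $n$. One should also note that the events ``$P_k$ is split'' are in general \emph{positively} correlated, so a union or product bound does not suffice and an honest concentration inequality is genuinely needed; fortunately the positive correlation only helps, and all we require is the one‑sided tail bound above.
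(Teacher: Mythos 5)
Your proposal is correct and follows essentially the same route as the paper: condition so that each ``split'' pair contributes an independent Rademacher sign, apply the cosine bound \cref{eq:cos} to those pairs, and show that $\Omega_c(M)$ pairs are split with probability $1-e^{-\Omega_c(M)}$. The only difference is in that last concentration step, where the paper exposes the pair intersection sizes sequentially and stochastically dominates the count by a $\mathrm{Bin}(M,c(1-c)/4)$ variable (after a WLOG reduction to $M\le cn/4$), while you condition on $|U\cap T|$ and run a bounded-differences argument inside $T$ — both are valid, and your tail-bound bookkeeping (using $\delta\le 1/2$ to absorb $e^{-\Omega_c(M)}$ into $e^{-\Omega_c(M\delta^2)}$) is sound.
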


\cref{lem:slice-estimate} can be deduced from \cite[Theorem~1.1]{Roo}. For the reader's convenience we include an alternative proof, reducing it to \cref{eq:cos-0-1}.
\begin{proof}
We may assume that $M\leq cn/4$ (indeed, noting that $M\leq n/2$ we can otherwise just replace $M$ by $\lfloor cn/4\rfloor$). The random vector $\vec{\xi}$ corresponds to a uniformly random subset $U\su [n]$ of size $s$. Let us first expose the intersection sizes $|U\cap \{i_1,j_1\}|, \ldots, |U\cap \{i_M,j_M\}|$, one at a time. For each $k=1,\ldots,M$ we have $|U\cap \{i_k,j_k\}|=1$ with probability at least $c(1-c)/4$ even when conditioning on any outcomes for the previously exposed intersection sizes $|U\cap \{i_1,j_1\}|, \ldots, |U\cap \{i_{k-1},j_{k-1}\}|$. Hence the number of indices $k\in [M]$ with $|U\cap \{i_k,j_k\}|=1$ stochastically dominates a binomial random variable with distribution $\mr{Bin}(M,c(1-c)/4)$. Thus, by a Chernoff bound (see e.g.~\cref{lem:chernoff}), with probability at least $1-\exp(-\Omega_c(M))$ there is a set $K\su[M]$ of at least $c(1-c)M/8$ indices $k$ with $|U\cap \{i_k,j_k\}|=1$. Let us expose and condition on all coordinates of $\vec{\xi}\in \{0,1\}^n$ except those in $\bigcup_{k\in K}\{i_k,j_k\}$. The only remaining randomness of the vector $\vec{\xi}\in \{0,1\}^n$ is that for each $k\in K$ we have either $\xi_{i_k}=1$ or $\xi_{j_k}=1$ (each with probability $1/2$, independently for all $k\in K$). Thus, after all of this conditioning, we have $\vec{a}\cdot \vec{\xi}=\sum_{k\in K} (a_{i_k}-a_{j_k})\xi_{i_k}+b$ for some $b\in\mb{R}$, where $(\xi_{i_k})_{k\in K}\in \{0,1\}^K$ is uniformly random. Thus, \cref{eq:cos-0-1} implies $|\mb E[\exp(i(\vec{a}\cdot\vec{\xi}))]|\leq \exp(-\sum_{k\in K} \|(a_{i_{k}}-a_{j_{k}})/(2\pi)\|_{\mb{R}/\mb Z}^2)\leq \exp(-\Omega_c(M\delta^2))$, as desired.
\end{proof}

The above estimates motivate the notion of the \emph{essential least common denominator} (LCD) of a vector $\vec v \in\mb{S}^{n-1}\subseteq \mb{R}^n$ (where $\mb{S}^{n-1}$ is the unit sphere in $\mb{R}^n$). The following formulation of this notion was proposed by Rudelson (see \cite[(1.17)]{Ver14} and the remarks preceding), in the context of random matrix theory.

\begin{definition}[LCD]\label{def:LCD}
For $t>0$, let $\log_+t = \max\{0,\log t\}$. For $L\ge 1$ and $\vec v \in\mb{S}^{n-1}\subseteq \mb{R}^n$, the (essential) least common denominator\footnote{To briefly explain the name ``LCD'', recall that the ordinary least common denominator of the entries of a rational vector $\vec v\in\mb{S}^{n-1}\cap \mb Q^n$ is $\inf\{\theta>0\colon\on{dist}(\theta \vec v, \mb{Z}^{n})=0\}$.} $D_L(\vec v)$ is defined as
\[D_L(\vec v) = \on{inf}\left\{\theta> 0: \on{dist}(\theta \vec v, \mb{Z}^{n}) < L\sqrt{\log_{+}(\theta /L)}\right\}.\]
(Here $\on{dist}(\theta \vec v, \mb{Z}^{n})=\sqrt{\sum_{i=1}^n \|\theta v_i\|_{\mb{R}/\mb Z}^2}$ denotes the Euclidean distance from $\theta \vec v$ to the nearest point in the integer lattice $\mb Z^n$.)
\end{definition}

The following lemma gives a lower bound on the LCD of a unit vector $\vec v$ in terms of $\snorm{\vec v}_\infty$. 
\begin{lemma}[{\cite[Lemma~6.2]{Ver14}}]\label{lem:LCD-init}
If $\vec v\in\mb{S}^{n-1}$ and $L\ge 1$, then
\[D_L(\vec v)\ge 1/(2\snorm{\vec v}_\infty).\]
\end{lemma}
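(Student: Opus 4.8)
The plan is to show directly from \cref{def:LCD} that no real number $\theta$ with $0<\theta<1/(2\snorm{\vec v}_\infty)$ can belong to the set $\{\theta>0:\on{dist}(\theta\vec v,\mb Z^n)<L\sqrt{\log_{+}(\theta/L)}\}$ whose infimum is $D_L(\vec v)$; the claimed bound is then immediate. So I would fix $\theta$ in this range and argue that it violates the defining inequality. The key observation is that $\theta<1/(2\snorm{\vec v}_\infty)$ forces $|\theta v_i|\le\theta\snorm{\vec v}_\infty<1/2$ for \emph{every} coordinate $i\in[n]$, so that $\snorm{\theta v_i}_{\mb R/\mb Z}=|\theta v_i|$ for all $i$; equivalently, $\vec 0$ is the lattice point nearest to $\theta\vec v$.

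Given this, I would compute $\on{dist}(\theta\vec v,\mb Z^n)^2=\sum_{i=1}^n\snorm{\theta v_i}_{\mb R/\mb Z}^2=\theta^2\sum_{i=1}^n v_i^2=\theta^2$, using $\vec v\in\mb S^{n-1}$, so that $\on{dist}(\theta\vec v,\mb Z^n)=\theta$. It then remains to verify the elementary inequality $\theta\ge L\sqrt{\log_{+}(\theta/L)}$ for all $\theta>0$ and $L\ge 1$: writing $u=\theta/L>0$, this is equivalent to $u^2\ge\log_{+}u$, which holds because $\log_{+}u=0\le u^2$ when $0<u\le 1$, while $\log_{+}u=\log u<u\le u^2$ when $u>1$. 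Hence $\on{dist}(\theta\vec v,\mb Z^n)=\theta\ge L\sqrt{\log_{+}(\theta/L)}$, so $\theta$ fails the (strict) condition in \cref{def:LCD} and does not lie in the set. As $\theta\in(0,1/(2\snorm{\vec v}_\infty))$ was arbitrary, $D_L(\vec v)\ge 1/(2\snorm{\vec v}_\infty)$.

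I do not expect any real obstacle here: the whole argument rests on the single observation that the $\ell^\infty$ bound $\theta\snorm{\vec v}_\infty<1/2$ places every coordinate of $\theta\vec v$ strictly within distance $1/2$ of the origin, which collapses $\on{dist}(\theta\vec v,\mb Z^n)$ to exactly $\theta\snorm{\vec v}_2=\theta$, after which only a routine one-variable estimate is needed. The only point meriting a moment's care is that \cref{def:LCD} uses a strict inequality; since in fact $u^2>\log_{+}u$ strictly for every $u>0$, this causes no difficulty.
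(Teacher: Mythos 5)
Your proof is correct and is essentially identical to the paper's: both observe that $\theta\snorm{\vec v}_\infty<1/2$ forces $\on{dist}(\theta\vec v,\mb Z^n)=\theta$, and then invoke the elementary inequality $x>\sqrt{\log_+(x)}$ for $x>0$ to conclude via the definition of the LCD. No issues.
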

\begin{proof}
Note that for $\theta \le 1/(2\snorm{\vec v}_{\infty})$ we have that $\snorm{\theta \vec{v}}_\infty\le 1/2$. Thus we have that 
\[\on{dist}(\theta \vec{v},\mb{Z}^{n}) = \on{dist}(\theta \vec{v},\vec{0}) = \theta >L\sqrt{\log_{+}(\theta/L)}\]
where we have used that $x>\sqrt{\log_{+}(x)}$ for $x > 0$. The result follows by the definition of LCD.
\end{proof}

If $D_L(\vec v)$ is large, then we can obtain strong control over the characteristic function of random variables of the form $\vec v\cdot \vec x$, for an i.i.d.~Rademacher vector $\vec x$ (specifically, we are able to compare such characteristic functions to the characteristic function $\varphi_Z(t)=e^{-t^2/2}$ of a standard Gaussian $Z\sim \mc N(0,1)$). However, if $D_L(\vec v)$ is small, then in a certain sense $\vec v$ is ``additively structured'', and we can deduce certain combinatorial consequences. Actually, to obtain the consequences we need, we will use the following more robust notion known as \emph{regularized LCD}, introduced by Vershynin~\cite{Ver14}.

\begin{definition}[regularized LCD]\label{def:RLCD}
Fix $L\ge 1$ and $0<\gamma<1$. For a vector $\vec v\in\mb{R}^n$ with fewer than $n^{1-\gamma}$ zero coordinates, the \emph{regularized least common denominator (RLCD)} $\wh{D}_{L,\gamma}(\vec v)$, is defined as
\[\wh{D}_{L,\gamma}(\vec v) = \max\{D_L(\vec v_I/\|\vec v_I\|_2)\colon |I|=\lceil n^{1-\gamma}\rceil\},\]
where $\vec{v}_I\in\mb{R}^I$ denotes the restriction of $\vec v$ to the indices in $I$.
\end{definition}

If a vector $\vec{d}$ is ``additively structured'' in the sense of having small RLCD, we can partition its index set into a small number of ``buckets'' such that the values of $d_i$ are similar inside each bucket. This is closely related to $\varepsilon$-net arguments using LCD assumptions that have previously appeared in the random matrix theory literature (see for example \cite[Lemma~7.2]{Rud14}).

\begin{lemma}\label{lem:decomposition-abstract}
Fix $H > 0$ and $0<\gamma<1/4$ and $L\ge 1$. Let $\vec{d}\in\mb{R}_{\ge 0}^{n}$ be a vector such that $\snorm{\vec{d}}_\infty\le Hn$ and $\snorm{\vec{d}_S}_2\geq n^{3/2-2\gamma}$ for every subset $S\su [n]$ of size $|S|=\lceil n^{1-\gamma}\rceil$, and assume that $n$ is sufficiently large with respect to $H$, $\gamma$ and $L$.

If $\wh{D}_{L,\gamma}(\vec{d}) \le n^{1/2}$, then there exists a partition $[n] = R\cup (I_1\cup\cdots\cup I_m)$ and real numbers $\kappa_1,\ldots,\kappa_m\ge 0$ with $|R|\le  n^{1-\gamma}$ and $|I_1|=\cdots=|I_m|=\lceil n^{1-2\gamma}\rceil$ such that for all $j=1,\ldots,m$ and $i\in I_j$ we have $|d_i - \kappa_j|\le n^{1/2+4\gamma}$.
\end{lemma}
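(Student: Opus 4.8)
The plan is to unpack the definition of $\wh{D}_{L,\gamma}(\vec d) \le n^{1/2}$ and combine it with a greedy bucketing argument. First I would observe that by hypothesis $\vec d$ has no zero coordinates among any large index set, so the RLCD is well-defined, and the bound $\wh{D}_{L,\gamma}(\vec d) \le n^{1/2}$ says that for \emph{every} index set $I$ with $|I| = \lceil n^{1-\gamma}\rceil$, we have $D_L(\vec d_I/\snorm{\vec d_I}_2) \le n^{1/2}$; equivalently, there is some $\theta = \theta_I \in (0, n^{1/2}]$ with $\on{dist}(\theta \vec d_I / \snorm{\vec d_I}_2, \mb Z^I) < L\sqrt{\log_+(\theta/L)} \le L\sqrt{\log n}$. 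Rescaling, setting $\beta_I = \theta_I/\snorm{\vec d_I}_2$, this means $\on{dist}(\beta_I \vec d_I, \mb Z^I)$ is small (at most $L\sqrt{\log n}$), where $\beta_I$ is not too small because $\theta_I \le n^{1/2}$ and $\snorm{\vec d_I}_2 \le \snorm{\vec d}_\infty \sqrt{|I|} \le Hn \cdot n^{(1-\gamma)/2}$, while $\beta_I$ is not too large because $\theta_I$ is bounded below using $\snorm{\vec d_S}_2 \ge n^{3/2-2\gamma}$ (which forces $D_L$ to be at least some polynomial quantity via a crude lower bound — actually we want an \emph{upper} bound on $\beta_I$, which comes from $\snorm{\vec d_I}_2 \ge n^{3/2 - 2\gamma}/\sqrt{\text{something}}$ together with $\theta_I \le n^{1/2}$).

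Next I would convert "$\beta \vec d_I$ is within $L\sqrt{\log n}$ of an integer vector" into "the entries of $\vec d_I$ cluster into few buckets". Write $\beta d_i = p_i + \epsilon_i$ with $p_i \in \mb Z$ and $\sum_i \epsilon_i^2 \le L^2 \log n$. Since $\snorm{\vec d}_\infty \le Hn$, we have $|p_i| \le \beta Hn + 1$, and using the lower bound on $\beta$ (of order $n^{-1+2\gamma}$ up to constants and the normalization), the range of possible $p_i$ values is polynomial in $n$ but crucially the spacing $1/\beta$ between consecutive integer multiples translates to a gap of order $1/\beta \le n^{1 - 2\gamma}$-ish in $d$-space — wait, this needs care: I actually want the \emph{buckets} to have width $n^{1/2+4\gamma}$, so I should group indices by the value of $p_i$, and within each such group the values $d_i = (p_i + \epsilon_i)/\beta$ differ by at most $\max|\epsilon_i|/\beta \le L\sqrt{\log n}/\beta$. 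Plugging in the lower bound $\beta \gtrsim n^{-1/2}/\snorm{\vec d_I}_2 \cdot$ (normalization)... let me instead bound directly: $L\sqrt{\log n}/\beta = L\sqrt{\log n}\, \snorm{\vec d_I}_2/\theta_I$, and since $\theta_I$ is bounded below (this is the step where I use $\snorm{\vec d_S}_2 \ge n^{3/2-2\gamma}$ to get $\theta_I \gtrsim$ a suitable power — roughly $D_L \ge c\snorm{\vec d_I}_2 / \snorm{\vec d_I}_\infty \ge c n^{3/2-2\gamma}/(Hn) = c' n^{1/2 - 2\gamma}$ by \cref{lem:LCD-init} applied to the normalized vector), we get $L\sqrt{\log n}\,\snorm{\vec d_I}_2/\theta_I \le L\sqrt{\log n} \cdot Hn \cdot n^{(1-\gamma)/2} / (c' n^{1/2-2\gamma})$, which is $O(n^{1/2 + 4\gamma})$ up to logarithmic factors, and the logarithmic factor is absorbed since $4\gamma$ leaves room (or one tightens constants). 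So grouping by $p_i$ gives buckets of $d$-width $\le n^{1/2+4\gamma}$.

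Then I would worry about two bookkeeping issues. First, the number of distinct $p_i$ values could a priori be large — but that doesn't matter, since the lemma allows $m$ to be arbitrary (there's no bound on $m$ in the statement). Second, and more importantly, the above only handles a single chosen index set $I$ of size $\lceil n^{1-\gamma}\rceil$; I need to bucket \emph{all} of $[n]$, with buckets of size exactly $\lceil n^{1-2\gamma}\rceil$, leaving a remainder $R$ of size $\le n^{1-\gamma}$. To do this, I would iterate: as long as at least $n^{1-\gamma}$ indices remain unbucketed, pick such an index set $I$, apply the RLCD bound to get its bucketing into $p$-classes, and then harvest full buckets of size exactly $\lceil n^{1-2\gamma}\rceil$ from the $p$-classes that are large enough (a counting argument: $|I| = \lceil n^{1-\gamma}\rceil$ indices split into $p$-classes; discard the classes smaller than $\lceil n^{1-2\gamma}\rceil$, losing at most $(\text{number of classes}) \cdot n^{1-2\gamma}$ indices — but the number of classes can be large, so instead harvest one bucket of size $\lceil n^{1-2\gamma}\rceil$ per step and repeat). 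Actually the cleanest approach: at each step, some $p$-class of the current $I$ has size $\ge |I|/(\#\text{classes})$, but since $\#$classes could be huge this gives nothing; so instead I should argue that \emph{some} $p$-class has size $\ge n^{1-2\gamma}$ directly — hmm, this isn't automatic either. The genuinely correct fix is: apply the RLCD bound once to the full vector restricted cleverly, or — and this is what I'd actually do — note that one can take $I$ to be \emph{any} subset; applying it to a random $I$ and a pigeonhole over $p$-classes won't obviously work, so I would follow the $\varepsilon$-net-style argument referenced after \cref{def:RLCD} (\cite[Lemma~7.2]{Rud14}): sort $[n]$ by the value of $d_i$, and show that if too many indices fell into a wide interval then we could pick $I$ inside it, contradicting that $D_L(\vec d_I/\snorm{\vec d_I}_2) \le n^{1/2}$ is small relative to what a "spread-out" vector would force.

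\textbf{Main obstacle.} The delicate point is the interplay of the three size scales: getting from the single-scale statement "$D_L$ small for one $I$ of size $n^{1-\gamma}$" to the global partition into buckets of the \emph{different} size $n^{1-2\gamma}$, while controlling the remainder at size $n^{1-\gamma}$ and keeping the bucket width at $n^{1/2+4\gamma}$ with no stray logarithmic factors. I expect this to require a careful iterative extraction — at each stage removing a full bucket and re-applying the RLCD hypothesis to the remaining $\ge n^{1-\gamma}$ indices — together with the quantitative lower bound $D_L(\vec v) \ge 1/(2\snorm{\vec v}_\infty)$ from \cref{lem:LCD-init} to pin down the scale $\beta$, and the hypothesis $\snorm{\vec d_S}_2 \ge n^{3/2-2\gamma}$ precisely to prevent $\beta$ from being too small (which would blow up the bucket width). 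Everything else is routine rescaling and pigeonholing.
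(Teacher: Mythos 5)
Your skeleton matches the paper's: restrict to a set $S$ of $\lceil n^{1-\gamma}\rceil$ not-yet-bucketed indices, apply the RLCD hypothesis to get $\theta$ with $\on{dist}(\theta\vec d_S/\snorm{\vec d_S}_2,\mb Z^S)\le L\sqrt{\log n}$, pin $\theta\in[H^{-1}n^{1/2-2\gamma},\,2n^{1/2}]$ via \cref{lem:LCD-init} together with $\snorm{\vec d_S}_2\ge n^{3/2-2\gamma}$ and $\snorm{\vec d}_\infty\le Hn$, group by the nearest integer, and extract one bucket at a time (the paper phrases the iteration as a maximality argument, which is equivalent). However, there are two genuine gaps, and both are closed in the paper by the same device you never invoke: Markov's inequality applied to the squared $\ell_2$ norms, discarding a small bad set of indices.

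First, the bucket width. You bound the spread of $d_i$ within a $p$-class by $\max_i|\epsilon_i|/\beta\le L\sqrt{\log n}/\beta$, and your own arithmetic gives $L\sqrt{\log n}\cdot Hn\cdot n^{(1-\gamma)/2}/(c'n^{1/2-2\gamma})=O(n^{1+3\gamma/2}\sqrt{\log n})$. This is \emph{not} $O(n^{1/2+4\gamma})$ up to logs: the exponents differ by $1/2-5\gamma/2$, which is positive for all $\gamma<1/5$, so the claim fails throughout most of the allowed range of $\gamma$. The point is that $L\sqrt{\log n}$ bounds the $\ell_2$ norm of the whole error vector, so a single coordinate error can indeed be that large; one must instead observe that at most $L^2\log n/n^{-1+4\gamma}\le n^{1-3\gamma}$ indices have $|\epsilon_i|>n^{-1/2+2\gamma}$, throw those away, and then the per-index error $n^{-1/2+2\gamma}$ yields a width of $n^{-1/2+2\gamma}\snorm{\vec d_S}_2/\theta\lesssim_H n^{1/2+(7/2)\gamma}\le n^{1/2+4\gamma}$.

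Second, the pigeonhole producing a class of size $\lceil n^{1-2\gamma}\rceil$, which you correctly flag as the main obstacle and then leave unresolved (your fallback of sorting by $d_i$ is not developed into an argument). The fix is that $\snorm{\vec w}_2\le\theta+L\sqrt{\log n}\le 3n^{1/2}$, so at most $9n^{1-4\gamma/3}$ indices have $w_i\ge n^{2\gamma/3}$; after discarding these and the bad indices from the previous paragraph, at least $|S|/2$ indices remain and their $w_i$ take at most $n^{2\gamma/3}+1$ values, so pigeonhole gives a class of size at least $n^{1-\gamma}/(2(n^{2\gamma/3}+1))\ge n^{1-2\gamma}$. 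Without some such bound on the number of admissible integer values, the extraction step does not go through.
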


\begin{proof}
Choose a partition $[n] = R\cup(I_1\cup\cdots\cup I_m)$ and $\kappa_j\geq 0$ for $j=1,\ldots,m$ with $|I_1|=\cdots=|I_m| = \lceil n^{1-2\gamma}\rceil$ such that $|d_i-\kappa_j|\le n^{1/2+4\gamma}$ for all $1\le j\le m$ and $i\in I_j$, such that $m$ is as large as possible. It then suffices to prove that $|R|\leq n^{1-\gamma}$.

So let us assume for contradiction that $|R|>n^{1-\gamma}$, and fix a subset $S\su R$ of size $|S|=\lceil n^{1-\gamma}\rceil$. Note that $D_L(\vec{d}_S/\snorm{\vec{d}_S}_2)\le \wh{D}_{L,\gamma}(\vec{d})\le n^{1/2}$ by \cref{def:RLCD}. Furthermore, since  $\snorm{\vec{d}_S/\snorm{\vec{d}_S}_2}_{\infty}\le Hn/n^{3/2-2\gamma}= Hn^{-1/2+2\gamma}$, \cref{lem:LCD-init} implies $D_L(\vec{d}_S/\snorm{\vec{d}_S}_2)\ge (H^{-1}/2) n^{1/2-2\gamma}$. Thus, by \cref{def:LCD}, there is some $\theta\in[(H^{-1}/2)n^{1/2-2\gamma},2n^{1/2}]$ such that
\begin{equation}\label{eq:lcd-discrepancy}
\snorm{(\theta/\snorm{\vec{d}_S}_2)\vec{d}_S-\vec{w}}_2\le L\sqrt{\log_+ (\theta/L)}\le L\sqrt{\log n}
\end{equation}
for some $\vec{w}\in\mb{Z}^{S}$. By choosing $\vec{w}$ to minimize the left-hand side, we may assume that $\vec{w}$ has nonnegative entries (recall that $\vec{d}$ has nonnegative entries).

Now, the number of indices $i\in S$ with $|(\theta/\snorm{\vec{d}_S}_2)d_i-w_i|> n^{-1/2+2\gamma}$ is at most
\[\frac{\snorm{(\theta/\snorm{\vec{d}_S}_2)\vec{d}_S-\vec{w}}_2^2}{n^{-1+4\gamma}}\leq \frac{L^2 \log n}{n^{-1+4\gamma}}\le n^{1-3\gamma}.\]
Furthermore, note that $\theta\leq 2n^{1/2}$ and \cref{eq:lcd-discrepancy} imply $\snorm{\vec{w}}_2\le 3n^{1/2}$, and hence the number of indices $i\in S$ with $w_i\geq n^{2\gamma/3}$ is at most $9n^{1-4\gamma/3}$. Thus, as $|S|=\lceil n^{1-\gamma}\rceil$, there must be at least $|S|/2\geq n^{1-\gamma}/2$ indices $i\in S$ with $|(\theta/\snorm{\vec{d}_S}_2)d_i-w_i|\leq n^{-1/2+2\gamma}$ and $w_i\in [0,n^{2\gamma/3}]\cap\mb{Z}$. Hence by the pigeonhole principle there is some $\kappa\geq 0$ and a subset $I_{m+1}\su S\su R$ of size $|I_{m+1}| = \lceil n^{1-2\gamma}\rceil $ such that for all $i\in I_{m+1}$ we have $w_i=\kappa$ and
\[|(\theta/\snorm{\vec{d}_S}_2)d_i-\kappa |=|(\theta/\snorm{\vec{d}_S}_2)d_i-w_i|\leq  n^{-1/2+2\gamma}= \frac{n^{1/2-2\gamma}}{n^{(1-\gamma)/2}n}\cdot n^{1/2+(7/2)\gamma}\lesssim_H \frac{\theta}{\snorm{\vec{d}_S}_2}\cdot n^{1/2+(7/2)\gamma}.\]
Defining $\kappa_{m+1}=(\snorm{\vec{d}_S}_2/\theta)\kappa\geq 0$, this implies $|d_i-\kappa_{m+1}|\leq n^{1/2+4\gamma}$ for all $i\in I_{m+1}$. But now the partition $V(G) = (R\setminus I_{m+1})\cup(I_1\cup\cdots\cup I_{m+1})$ contradicts the maximality of $m$.
\end{proof}

\subsection{Low-rank approximation}\label{sub:low-rank-approximation}
Recall the definition of the \emph{Frobenius norm} (also called the \emph{Hilbert--Schmidt norm}): for a matrix $M\in\mb{R}^{n\times n}$, we have
\[\|M\|_\mr{F}=\Big(\sum_{i,j=1}^n M_{ij}^2\Big)^{1/2}=\sqrt{\on{trace}(M^\intercal M)}.\] If $M$ is symmetric, then $\|M\|_{\mr F}^2$ is the sum of squares of the eigenvalues of $M$ (with multiplicity).

Famously, Eckart and Young~\cite{EY36} proved that for any real matrix $M$, the degree to which $M$ can be approximated by a low-rank matrix $\widetilde M$ can be described in terms of the spectrum of $M$. 
The following statement is specialized to the setting of real symmetric matrices.

\begin{theorem}\label{thm:eckart-young}
Consider a symmetric matrix $M\in\mb{R}^{n\times n}$, and let $\lambda_1,\ldots,\lambda_n$ be its eigenvalues. Then for any $r=0,\ldots,n$ we have
\[\min_{\substack{\widetilde M\in\mb{R}^{n\times n}\\\on{rank}(\widetilde M)\le r}}\|M-\widetilde M\|^2_{\mr F}=\min_{\substack{I\su [n]\\|I|=n-r}}\sum_{i\in I}\lambda_{i}^2,\]
where the minimum is over all (not necessarily symmetric\footnote{It is easy to show that there is always a symmetric matrix $\wt M$ which attains this minimum, though this will not be necessary for us.}) matrices $\widetilde M\in\mb{R}^{n\times n}$ with rank at most $r$.
\end{theorem}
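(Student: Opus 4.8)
The plan is to establish the two inequalities of the displayed identity separately, working throughout with a spectral decomposition $M=\sum_{i=1}^{n}\lambda_i v_iv_i^\intercal$ of the real symmetric matrix $M$ into an orthonormal eigenbasis $v_1,\dots,v_n$ (so that $\sum_i v_iv_i^\intercal$ is the identity).

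For the inequality ``$\le$'', let $J\su[n]$ be a set of $r$ indices maximising $\sum_{i\in J}\lambda_i^2$ (equivalently, the $r$ indices with largest $|\lambda_i|$), and put $\wt M=\sum_{i\in J}\lambda_i v_iv_i^\intercal$. Then $\on{rank}(\wt M)\le r$, and orthonormality of the $v_i$ gives $\|M-\wt M\|_{\mr F}^2=\sum_{i\in[n]\sm J}\lambda_i^2$, which by the choice of $J$ equals $\min_{|I|=n-r}\sum_{i\in I}\lambda_i^2$. (This incidentally shows the right-hand side is the sum of the $n-r$ smallest squared eigenvalues, and exhibits a \emph{symmetric} minimiser, as claimed in the footnote.) This direction is routine.

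The substance is the inequality ``$\ge$'': one must rule out \emph{every} matrix $\wt M$ of rank at most $r$ — crucially, not assumed symmetric, so $M-\wt M$ cannot simply be diagonalised. The route I would take is to pass to the kernel. Let $W$ be any $(n-r)$-dimensional subspace of $\ker\wt M$ (possible since $\dim\ker\wt M\ge n-r$), and let $P$ be orthogonal projection onto $W$. Since $\|AP\|_{\mr F}^2=\on{trace}(A^\intercal AP)\le\on{trace}(A^\intercal A)=\|A\|_{\mr F}^2$ for any $A$ (as $0\preceq P\preceq I$), and $\wt M P=0$, we get $\|M-\wt M\|_{\mr F}^2\ge\|(M-\wt M)P\|_{\mr F}^2=\|MP\|_{\mr F}^2$. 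Now set $c_i=\|Pv_i\|_2^2\in[0,1]$; using orthonormality again, $\sum_i c_i=\on{trace}(P)=n-r$ and $\|MP\|_{\mr F}^2=\on{trace}(M^2P)=\sum_i\lambda_i^2 c_i$. Thus $(c_i)_i$ lies in the hypersimplex $\{c\in[0,1]^n:\sum_i c_i=n-r\}$, on which the linear functional $c\mapsto\sum_i\lambda_i^2 c_i$ attains its minimum at a vertex — a $0/1$ vector supported on some $I$ with $|I|=n-r$ — so $\|MP\|_{\mr F}^2\ge\min_{|I|=n-r}\sum_{i\in I}\lambda_i^2$, as needed. (Alternatively one can invoke Weyl's inequality for singular values, $\sigma_{r+j}(M)\le\sigma_j(M-\wt M)$ for $j\ge1$, and sum squares over $j$, using $\sigma_j(M)=|\lambda_{\pi(j)}|$ for a permutation $\pi$ ordering the $|\lambda_i|$ non-increasingly; I would present whichever turns out cleaner to state in context.)

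I do not expect a serious obstacle here: the only point requiring a little care is the non-symmetry of $\wt M$, which both the kernel/projection argument and the singular-value argument handle automatically; the rest is bookkeeping with orthonormal bases and the elementary fact that a linear functional on a polytope is extremised at a vertex.
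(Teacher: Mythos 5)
Your proof is correct. Note that the paper itself gives no proof of this statement: it is quoted as a classical result of Eckart and Young, so there is no in-paper argument to compare against. Both halves of your argument check out: the ``$\le$'' direction via truncating the spectral decomposition is standard, and for ``$\ge$'' the projection onto an $(n-r)$-dimensional subspace of $\ker\wt M$ correctly handles the fact that $\wt M$ need not be symmetric, with the identities $\|MP\|_{\mr F}^2=\on{trace}(M^2P)=\sum_i\lambda_i^2\|Pv_i\|_2^2$, the constraints $\|Pv_i\|_2^2\in[0,1]$ and $\sum_i\|Pv_i\|_2^2=\on{trace}(P)=n-r$, and the vertex-minimisation on the hypersimplex all being valid; the singular-value alternative via Weyl's inequality would also work.
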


\subsection{Analysis of Boolean functions}\label{sub:hypercontractivity}
In this subsection we collect some tools from the theory of Boolean functions. A thorough introduction to the subject can be found in \cite{O14}.

Consider a multilinear polynomial $f(x_1,\ldots,x_n) = \sum_{S\subseteq [n]} a_S\prod_{i\in S}x_i$. An easy computation shows that if $\vec x$ is a sequence of independent Rademacher or independent standard Gaussian random variables, then $\mb{E}[f(\vec x)]=a_\emptyset$ and
\begin{equation}\label{eq:boolean-variance}
\on{Var}[f(\vec x)]=\sum_{\emptyset\ne S\subseteq [n]} a_S^2.
\end{equation}
Thus, in the case $\deg f=2$, we can consider the contributions to the variance $\on{Var}[f(\vec x)]$ coming from the ``linear'' part and the ``quadratic'' part.  This will be important in our proof of \cref{thm:point-control}.

We will need the following bound on moments of low-degree polynomials of Rademacher or standard Gaussian random variables (which is a special case of a phenomenon called \emph{hypercontractivity}).
\begin{theorem}[{\cite[Theorem~9.21]{O14}}]\label{thm:gauss-moment}
Let $f$ be a polynomial in $n$ variables of degree at most $d$. Let $\vec x=(x_1,\ldots,x_n)$ either be a vector of independent Rademacher random variables or a vector of independent standard Gaussian random variables. Then for any real number $q\geq 2$, we have
\[\mb{E}\big[|f(\vec x)|^q\big]^{1/q}\le \big(\sqrt{q-1}\big)^d\mb{E}\big[f(\vec x)^2\big]^{1/2}.\]
\end{theorem}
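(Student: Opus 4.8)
The plan is to derive this from the Bonami--Beckner $(2,q)$-hypercontractive inequality for the noise operator. Recall that the noise operator $T_\rho$ acts on a multilinear polynomial by $T_\rho\big(\sum_{S}a_S\prod_{i\in S}x_i\big)=\sum_S\rho^{|S|}a_S\prod_{i\in S}x_i$, and that the inequality in question states that for $q\ge 2$ and $\rho=1/\sqrt{q-1}$ one has $\|T_\rho g\|_q\le\|g\|_2$ for every polynomial $g$, both in the Rademacher and in the Gaussian setting. Granting this, the theorem follows by a one-line Fourier-weight computation: given $f=\sum_S a_S\prod_{i\in S}x_i$ of degree at most $d$, put $\rho=1/\sqrt{q-1}\le 1$ and let $g=\sum_S\rho^{-|S|}a_S\prod_{i\in S}x_i$, so that $T_\rho g=f$. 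Expanding $\|g\|_2^2$ in the orthogonal system $\{\prod_{i\in S}x_i\}_S$ (as in the computation around \cref{eq:boolean-variance}) and using $\rho^{-2|S|}\le\rho^{-2d}$ for every $S$ with $a_S\ne 0$, we get $\|g\|_2^2=\sum_S\rho^{-2|S|}a_S^2\le\rho^{-2d}\|f\|_2^2$, hence $\|f\|_q=\|T_\rho g\|_q\le\|g\|_2\le\rho^{-d}\|f\|_2=\big(\sqrt{q-1}\big)^d\|f\|_2$.

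To prove the hypercontractive inequality in the Rademacher case I would induct on the number of variables. The base case $n=1$ is the two-point inequality: for $a,b\in\mb{R}$ and a single Rademacher $x$,
\[\mb{E}\big|a+\rho b x\big|^q=\tfrac12\big|a+\rho b\big|^q+\tfrac12\big|a-\rho b\big|^q\le (a^2+b^2)^{q/2}\qquad\text{when }\rho^2=\tfrac{1}{q-1}.\]
This is the one genuinely analytic ingredient; after normalizing (e.g.\ $a=1$, $b=t$) it reduces to $\tfrac12|1+\rho t|^q+\tfrac12|1-\rho t|^q\le(1+t^2)^{q/2}$ for all $t$, which one checks by comparing Taylor coefficients in $t$ near $0$ (the identity $\rho^2(q-1)=1$ is exactly what makes the quadratic terms match) together with a convexity estimate to control large $|t|$. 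For the inductive step, write $f=g+x_n h$ with $g,h$ polynomials in $x_1,\dots,x_{n-1}$, so that $T_\rho f=T_\rho' g+\rho\,x_n\,T_\rho' h$ where $T_\rho'$ is the noise operator in the first $n-1$ coordinates. Conditioning on $x_1,\dots,x_{n-1}$ and applying the two-point inequality in $x_n$ gives $\mb{E}_{x_n}|T_\rho f|^q\le\big((T_\rho' g)^2+(T_\rho' h)^2\big)^{q/2}$ pointwise; taking $\mb{E}_{x_1,\dots,x_{n-1}}$ and using Minkowski's inequality in $L^{q/2}$ (valid since $q/2\ge 1$) gives $\|T_\rho f\|_q^2\le\|T_\rho' g\|_q^2+\|T_\rho' h\|_q^2$, which by the inductive hypothesis is at most $\|g\|_2^2+\|h\|_2^2=\|f\|_2^2$, the last equality by orthogonality of $g$ and $x_n h$.

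For the Gaussian case, the cleanest route is to bootstrap from the Rademacher case: in the degree-$d$ polynomial replace each Gaussian input $x_i$ by $m^{-1/2}(y_{i,1}+\dots+y_{i,m})$ for i.i.d.\ Rademachers $y_{i,j}$, apply the (dimension-free) Rademacher bound just proved, and let $m\to\infty$. By the multivariate central limit theorem the vector of inputs converges in distribution to a Gaussian, and a crude a priori bound on a slightly higher moment (say the $2q$-th) gives the uniform integrability needed to pass both the $q$-th and the second moment to the limit, yielding the Gaussian inequality with the same constant; alternatively one can argue directly through the Ornstein--Uhlenbeck semigroup, reducing to a one-dimensional Gaussian two-point-type inequality and inducting on dimension exactly as above. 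I expect the main obstacle to be the two-point inequality: it is the only place the sharp constant $\sqrt{q-1}$ is produced, and it requires a careful one-variable analysis rather than a soft argument. Everything else—the tensorization induction and the Fourier-weight reduction—is routine bookkeeping, with the only additional subtlety being the uniform-integrability justification of the central-limit passage to the Gaussian setting.
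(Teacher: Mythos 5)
Your proposal is correct: it is the standard Bonami--Beckner proof (reduction to the $(2,q)$-hypercontractive inequality for the noise operator, the two-point base case, tensorization by induction on the number of variables, and CLT approximation for the Gaussian case). The paper does not prove this statement itself---it cites \cite[Theorem~9.21]{O14} for the Rademacher case and handles the Gaussian case by exactly the Rademacher-sum approximation you describe---so your argument matches the paper's route, merely filling in the proof of the cited result.
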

We emphasize that we do not require $f(\vec x)$ to have mean zero, so in the general setting of \cref{thm:gauss-moment} one does not necessarily have $\mb{E}[f(\vec x)^2]^{1/2}=\sigma(f(\vec x))$ (though in our proof of \cref{thm:point-control} we will only apply \cref{thm:gauss-moment} in the case where $\mb{E}[f(\vec x)]=0$).

Note that \cite[Theorem~9.21]{O14} is stated only for Rademacher random variables; the Gaussian case of \cref{thm:gauss-moment} follows by approximating Gaussian random variables with sums of Rademacher random variables, using the central limit theorem.

Next, one can use \cref{thm:gauss-moment} to obtain the following concentration inequality. The Rademacher case is stated as \cite[Theorem~9.23]{O14}, and the Gaussian case may be proved in the same way.
\begin{theorem}\label{thm:concentration-hypercontractivity}
Let $f$ be a polynomial in $n$ variables of degree at most $d$. Let $\vec x=(x_1,\ldots,x_n)$ either be a vector of independent Rademacher random variables or a vector of independent standard Gaussian random variables. Then for any $t\ge (2e)^{d/2}$,
\[\Pr\left[|f(\vec x)|\ge t(\mb{E}[f(x)^2])^{1/2}\right]\le \exp\left(-\frac{d}{2e} t^{2/d}\right).\]
\end{theorem}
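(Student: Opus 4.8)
The plan is to derive the tail bound directly from the hypercontractive moment estimate \cref{thm:gauss-moment} via Markov's inequality, with an optimized choice of moment order. Write $\sigma = \mb{E}[f(\vec x)^2]^{1/2}$; if $\sigma = 0$ then $f(\vec x) = 0$ almost surely and the claimed bound is trivial, so assume $\sigma > 0$. For a parameter $q \ge 2$ to be chosen later, Markov's inequality applied to the nonnegative random variable $|f(\vec x)|^q$ gives
\[\Pr\big[|f(\vec x)| \ge t\sigma\big] = \Pr\big[|f(\vec x)|^q \ge t^q\sigma^q\big] \le \frac{\mb{E}\big[|f(\vec x)|^q\big]}{t^q\sigma^q}.\]
By \cref{thm:gauss-moment} we have $\mb{E}[|f(\vec x)|^q]^{1/q} \le (\sqrt{q-1})^d\,\sigma \le q^{d/2}\sigma$, so the right-hand side above is at most $(q^{d/2}/t)^q$.

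It then remains to choose $q$ so as to make $q^{d/2}/t$ small while keeping $q \ge 2$. The natural choice is $q = t^{2/d}/e$: the hypothesis $t \ge (2e)^{d/2}$ is precisely what guarantees $q \ge 2$ (so that \cref{thm:gauss-moment} applies), and with this choice $q^{d/2} = (t^{2/d}/e)^{d/2} = t\,e^{-d/2}$, hence $q^{d/2}/t = e^{-d/2}$. Substituting,
\[\Pr\big[|f(\vec x)| \ge t\sigma\big] \le \big(e^{-d/2}\big)^q = \exp\Big(-\frac{d}{2}\cdot\frac{t^{2/d}}{e}\Big) = \exp\Big(-\frac{d}{2e}\,t^{2/d}\Big),\]
which is the desired conclusion. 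Since \cref{thm:gauss-moment} is stated for both independent Rademacher variables and independent standard Gaussian variables, this argument applies verbatim in both cases.

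There is no substantive obstacle here: the entire content lies in the hypercontractive moment bound \cref{thm:gauss-moment}, and the passage to a tail bound is the standard ``moment method'' optimization of Markov's inequality. The only point requiring a little care is verifying that the chosen exponent $q$ satisfies $q \ge 2$, which is exactly where the hypothesis $t \ge (2e)^{d/2}$ is used; a slightly weaker hypothesis such as $t \ge e^{d/2}$ would also suffice, using instead the choice $q - 1 = t^{2/d}/e$ (which only improves the resulting bound by a factor $e^{-d/2}$).
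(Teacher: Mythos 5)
Your proof is correct and is exactly the standard argument behind the result the paper cites (it invokes \cite[Theorem~9.23]{O14} for the Rademacher case and notes the Gaussian case is identical): Markov's inequality on $|f(\vec x)|^q$, the hypercontractive moment bound of \cref{thm:gauss-moment}, and the optimized choice $q=t^{2/d}/e$, with the hypothesis $t\ge(2e)^{d/2}$ ensuring $q\ge 2$. The computation checks out, so there is nothing further to add.
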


\subsection{Basic concentration inequalities}\label{sub:concentration-inequalities}
We will frequently need the Chernoff bound for binomial and hypergeometric distributions (see for example \cite[Theorems~2.1 and~2.10]{JLR00}). Recall that the hypergeometric distribution $\mr{Hyp}(N,K,n)$ is the distribution of $|Z\cap U|$, for fixed sets $U\subseteq S$ with $|S|=N$ and $|U|=K$ and a uniformly random size-$n$ subset $Z\subseteq S$.

\begin{lemma}[Chernoff bound]\label{lem:chernoff}
Let $X$ be either:
\begin{itemize}
    \item a sum of independent random variables, each of which take values in $\{0,1\}$, or
    \item hypergeometrically distributed (with any parameters).
\end{itemize}
Then for any $\delta>0$ we have
\[\Pr[X\le (1-\delta)\mb{E}X]\le\exp(-\delta^2\mb{E}X/2),\qquad\Pr[X\ge (1+\delta)\mb{E}X]\le\exp(-\delta^2\mb{E}X/(2+\delta)).\]
\end{lemma}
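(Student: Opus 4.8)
The plan is to use the exponential-moment (Chernoff--Cram\'er) method: bound an exponential moment of $X$, apply Markov's inequality, optimize the auxiliary parameter, and finally reduce the resulting expressions to the stated clean bounds via two elementary one-variable inequalities. The hypergeometric case will be deduced from the case of sums of independent indicators.

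\textbf{Sums of independent indicators.} Write $X=\sum_{i=1}^k X_i$ with the $X_i\in\{0,1\}$ independent, set $p_i=\Pr[X_i=1]$ and $\mu=\mb{E}X=\sum_i p_i$. For the upper tail, for every $t>0$ Markov's inequality applied to $e^{tX}$ together with $1+x\le e^x$ gives
\[\Pr[X\ge(1+\delta)\mu]\le e^{-t(1+\delta)\mu}\prod_{i=1}^k\big(1+p_i(e^t-1)\big)\le e^{-t(1+\delta)\mu}\exp\!\big(\mu(e^t-1)\big),\]
and the choice $t=\log(1+\delta)$ yields $\Pr[X\ge(1+\delta)\mu]\le\big(e^\delta(1+\delta)^{-(1+\delta)}\big)^\mu$. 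Symmetrically, applying Markov's inequality to $e^{-tX}$ with $t=\log\frac1{1-\delta}$ (the lower-tail bound being vacuous when $\delta\ge1$) gives $\Pr[X\le(1-\delta)\mu]\le\big(e^{-\delta}(1-\delta)^{-(1-\delta)}\big)^\mu$. It then remains to verify the scalar inequalities $e^\delta(1+\delta)^{-(1+\delta)}\le\exp(-\delta^2/(2+\delta))$ for $\delta>0$ and $e^{-\delta}(1-\delta)^{-(1-\delta)}\le\exp(-\delta^2/2)$ for $0<\delta<1$. Taking logarithms, these amount to $g_1(\delta):=(1+\delta)\log(1+\delta)-\delta-\delta^2/(2+\delta)\ge0$ and $g_2(\delta):=(1-\delta)\log(1-\delta)+\delta-\delta^2/2\ge0$, each of which follows from $g_j(0)=0$ by checking $g_j'\ge0$ on the relevant range (for $g_2$ one uses $-\log(1-\delta)\ge\delta$; for $g_1$ one reduces to the further elementary estimate $\log(1+\delta)\ge\frac{2\delta}{2+\delta}$).

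\textbf{Hypergeometric variables.} Let $X\sim\mr{Hyp}(N,K,n)$, so $X=\sum_{i=1}^n Z_i$ where $Z_1,\dots,Z_n$ indicate whether the successive items of a size-$n$ sample drawn \emph{without} replacement from an $N$-item population (of which $K$ are marked) are marked; here $\mu=\mb{E}X=nK/N$. Let $Y\sim\mr{Bin}(n,K/N)$ be the analogous sampling-\emph{with}-replacement count, which has the same mean $\mu$. By a classical comparison due to Hoeffding, $\mb{E}[\phi(X)]\le\mb{E}[\phi(Y)]$ for every convex $\phi$ (sampling without replacement is ``more concentrated'' than sampling with replacement; one may prove this by realizing $X$ as a conditional expectation of a random rearrangement of the with-replacement sample, or by invoking negative association of the $Z_i$). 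Applying this with $\phi(x)=e^{tx}$ and with $\phi(x)=e^{-tx}$ shows that the exponential moments of $X$ are dominated by those of $Y$; since the previous paragraph used \emph{only} such exponential-moment estimates, the same two inequalities hold verbatim for $X$.

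\textbf{Main obstacle.} All the genuine content lies in the two scalar inequalities bounding $e^\delta(1+\delta)^{-(1+\delta)}$ and $e^{-\delta}(1-\delta)^{-(1-\delta)}$ (this is precisely where the constants $2$ and $2+\delta$ originate) and in the convexity comparison that transfers exponential-moment bounds from $\mr{Bin}$ to $\mr{Hyp}$; everything else is routine bookkeeping.
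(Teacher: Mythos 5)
Your proof is correct and is essentially the standard argument behind the result the paper simply cites (Theorems~2.1 and~2.10 of Janson--{\L}uczak--Ruci\'nski): exponential moments plus optimization of $t$ for independent indicators, the two scalar inequalities giving the constants $2$ and $2+\delta$, and Hoeffding's convex-ordering comparison to transfer the moment-generating-function bounds from the binomial to the hypergeometric case. The only microscopic loose end is the boundary case $\delta=1$ of the lower tail, where the bound is not vacuous but follows immediately from $\Pr[X=0]=\prod_i(1-p_i)\le e^{-\mb{E}X}$.
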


We will also need the following concentration inequality, which is a simple consequence of the Azuma--Hoeffding martingale concentration inequality (a special case appears in \cite[Corollary~2.2]{GKM17}, and the general case follows from the same proof). 

\begin{lemma}\label{lem:slice-concentration}
Consider a partition $[n]=I_1\cup\cdots\cup I_m$, and sequences $(\ell_1,\ldots,\ell_m), (\ell_1',\ldots,\ell_m')\in\mb N^m$ with $\ell_k+\ell'_k\leq |I_k|$ for $k=1,\ldots,m$ (and $\ell_1+\cdots+\ell_m+\ell_1'+\cdots+\ell_m'>0$). Let $S\su \{-1,0,1\}^{n}$ be the set of vectors $\vec{x}\in \{-1,0,1\}^{n}$ such that $\vec{x}_{I_k}$ has exactly $\ell_k$ entries being $1$ and exactly $\ell_k'$ entries being $-1$ for each $k=1,\ldots,m$. Let $a>0$ and suppose that $f\colon S\to\mb{R}$ is a function such that we have $|f(\vec{x})-f(\vec{x}')|\le a$ for any two vectors $\vec{x}, \vec{x}'\in S$ which differ in precisely two coordinates (i.e., which are obtained from each other by switching two entries inside some set $I_k$). Then for a uniformly random vector $\vec{x}\in S$ and any $t\geq 0$ we have
\[\Pr[|f(\vec{x})-\mb E f(\vec{x})|\ge t]\le2\exp\left(-\frac{t^{2}}{2\cdot (\ell_1+\cdots+\ell_m+\ell_1'+\cdots+\ell_m')\cdot a^2}\right).\]
\end{lemma}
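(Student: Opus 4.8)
The plan is to prove \cref{lem:slice-concentration} by exhibiting a Doob martingale that reveals the positions of the nonzero entries of $\vec x$ one at a time, and then applying the Azuma--Hoeffding inequality; this is the argument behind \cite[Corollary~2.2]{GKM17} in the single-slice case, and it goes through unchanged in the present generality. First I would set up a generative model for the uniform distribution on $S$: for each block $I_k$ draw an ordered tuple $(c_{k,1},\ldots,c_{k,\ell_k+\ell_k'})$ of distinct coordinates of $I_k$ uniformly at random, declare $x_{c_{k,i}}=1$ for $i\le \ell_k$ and $x_{c_{k,i}}=-1$ for $\ell_k<i\le \ell_k+\ell_k'$, and set all remaining coordinates to $0$. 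Since every element of $S$ arises from exactly $\prod_k \ell_k!\,\ell_k'!$ such tuples, the resulting $\vec x$ is uniform on $S$. Fixing an arbitrary global ordering of the $T:=\ell_1+\cdots+\ell_m+\ell_1'+\cdots+\ell_m'$ ``slots'' $(k,i)$, I would then take $X_0=\mb E f(\vec x),\,X_1,\ldots,X_T=f(\vec x)$ to be the Doob martingale obtained by revealing the coordinates $c_{k,i}$ one slot at a time (note $T>0$ by hypothesis).

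The heart of the matter is to bound the martingale differences by $a$. Suppose the $p$-th slot revealed is $(k,i)$. Conditionally on the history, $c_{k,i}$ is uniform over the coordinates of $I_k$ not yet used by earlier slots of block $k$; write $g(b)$ for the conditional expectation of $f(\vec x)$ given the history and given $c_{k,i}=b$. Then $X_p=g(c_{k,i})$ and $X_{p-1}=\mb E[g(c_{k,i})\mid\text{history}]$ both lie in the interval $[\min_b g(b),\max_b g(b)]$, so it suffices to show $|g(b)-g(b')|\le a$ for any two admissible values $b,b'$. For this I would couple the two conditional distributions of $\vec x$ (given $c_{k,i}=b$, respectively $c_{k,i}=b'$): use one common uniform ordered selection of the remaining coordinates, drawn from the pool $R\cup\{*\}$ where $R:=I_k\setminus(\text{earlier coordinates of block }k\cup\{b,b'\})$ and the distinguished element $*$ is interpreted as $b'$ in the first scenario and as $b$ in the second, while all choices landing in $R$, and all choices in the other blocks, are taken identically. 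A short case analysis then shows that the two resulting vectors $\vec x^{(b)},\vec x^{(b')}$ either coincide or differ in exactly the two coordinates $b,b'$, at which their values are interchanged --- that is, they are obtained from one another by switching two entries inside $I_k$. In either case the hypothesis on $f$ gives $|f(\vec x^{(b)})-f(\vec x^{(b')})|\le a$ pointwise under the coupling, hence $|g(b)-g(b')|\le a$, and therefore $|X_p-X_{p-1}|\le a$.

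Given this bound, Azuma--Hoeffding applied to the $T$-step martingale $(X_p)_{p=0}^{T}$ yields
\[\Pr[|f(\vec x)-\mb E f(\vec x)|\ge t]\le 2\exp\!\left(-\frac{t^2}{2Ta^2}\right),\]
which is exactly the claimed inequality since $T=\ell_1+\cdots+\ell_m+\ell_1'+\cdots+\ell_m'$.

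I expect the main obstacle to be the coupling step used to establish the martingale difference bound: one must check that keeping the remaining draws ``the same'' across the two scenarios (with $b'$ and $b$ playing the role of the distinguished coordinate) genuinely defines a coupling of the correct conditional distributions, and that in every case --- in particular when the distinguished coordinate is later reused in a slot of the \emph{opposite} sign, turning a $\{+1,-1\}$ pair into its transpose --- the two outcomes differ by at most one admissible swap within a single block $I_k$. Everything else (verifying that the generative model produces the uniform law on $S$, reducing the martingale difference to the oscillation of $g$, and invoking Azuma--Hoeffding) is routine.
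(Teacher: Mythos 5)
Your proposal is correct and follows essentially the same route as the paper: both reveal the positions of the nonzero entries one at a time via a Doob martingale, bound each martingale difference by $a$ using a coupling that swaps the two candidate coordinates within a block (which is exactly an admissible switch for the Lipschitz hypothesis on $f$), and then apply the Azuma--Hoeffding inequality over the $\ell_1+\cdots+\ell_m+\ell_1'+\cdots+\ell_m'$ steps. The only cosmetic difference is that the paper fixes a specific exposure order (all $+1$ slots block by block, then all $-1$ slots), whereas you allow an arbitrary global ordering of slots; your more explicit treatment of the coupling via the pool $R\cup\{*\}$ correctly handles the case where the distinguished coordinate is later reused with the opposite sign.
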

\begin{proof}
We sample a uniformly random vector $\vec{x}\in S$ in $\ell:=\ell_1+\cdots+\ell_m+\ell_1'+\cdots+\ell_m'$ steps, as follows. In the first $\ell_1$ steps, we pick the $\ell_1$ indices $i\in I_1$ such that $x_i=1$ (at each step, pick an index $i\in I_1$ uniformly at random among the indices where $x_i$ is not yet defined, and define $x_i=1$). In the next $\ell_2$ steps we pick the $\ell_2$ indices $i\in I_2$ such that $x_i=1$, and so on. After $\ell_1+\cdots+\ell_m$ steps we have defined all the $1$-entries of $\vec{x}$. Now, we repeat the procedure (for $\ell_1'+\cdots+\ell_m'$ steps) for the $-1$-entries.

For $t=0,\ldots,\ell$, define $X_t$ to be the expectation of $f(\vec{x})$ conditioned on the coordinates of $\vec{x}$ defined up to step $t$. Then $X_0,\ldots,X_t$ is the Doob martingale associated to our process of sampling $\vec{x}$. Note that $X_0=\mb E f(\vec{x})$ and $X_\ell=f(\vec{x})$.

We claim that we always have $|X_t-X_{t-1}|\le a$ for $t=1,\ldots,\ell$. Indeed, let us condition on any outcomes of the first $t-1$ steps of our process of sampling $\vec{x}$. Now, for any two possible indices $i$ and $i'$ chosen the $t$-th step, we can couple the possible outcomes of $\vec{x}$ if $i$ is chosen in the $t$-th step with the possible outcomes of $\vec{x}$ if $i'$ is chosen in the $t$-th step, simply by switching the $i$-th and the $i'$-th coordinate. Using our assumption on $f$, this shows that for any two possible outcomes in the $t$-th step the corresponding conditional expectations differ by at most $a$. This implies $|X_t-X_{t-1}|\le a$, as claimed.

Now the inequality in the lemma follows from the Azuma--Hoeffding inequality for martingales (see for example \cite[Theorem~2.25]{JLR00}).
\end{proof}

\section{Small-ball probability for quadratic polynomials of Gaussians}\label{sec:anti-Gauss}

In this section we prove \cref{thm:gaussian-anticoncentration-upper-bound}, which we reproduce for the reader's convenience. For the sake of convenience in the proofs and statements, in this section the notation $a\lesssim b$ simply means that $a\le C b$ for some constant $C$ (i.e., there is no stipulation that $n$, the number of variables, be large).

\renewcommand{\gaussianformula}{\[\mc{L}(f(\vec Z),\eps) \lesssim_\eta \frac{\eps}{\sigma(f(\vec{Z}))}.\]}
\gaussian*

\begin{remark}\label{rem:robust-rank-eigenvalue-condition}
By \cref{thm:eckart-young}, the robust rank assumption in \cref{thm:gaussian-anticoncentration-upper-bound} is equivalent to the assumption that every subset $I\su [n]$ of size $|I|=n-2$ satisfies $\sum_{i\in I} \lambda_i^2\geq \eta(\lambda_1^2+\cdots+\lambda_n^2)$, where $\lambda_1,\ldots,\lambda_n$ denote the eigenvalues of $F$.
\end{remark}

We remark that for \emph{any} real random variable $X$, one can use Chebyshev's inequality to show that $\mc{L}(X,\eps) = \Omega(\eps/\sigma(X))$, so the bound in \cref{thm:gaussian-anticoncentration-upper-bound} is best-possible.

In the proof of \cref{thm:point-control}, we will actually need a slightly more technical \emph{non-uniform} version of \cref{thm:gaussian-anticoncentration-upper-bound} that decays away from the mean (at a high level, this is proved by combining \cref{thm:gaussian-anticoncentration-upper-bound} with the hypercontractive tail bound in \cref{thm:concentration-hypercontractivity}, via a ``splitting'' technique; for this splitting technique we need our rank assumption to be slightly stronger than in \cref{thm:gaussian-anticoncentration-upper-bound}). We will also need a \emph{lower} bound on the probability that $f(\vec Z)$ falls in a given interval of length $\varepsilon$, as long as this interval is relatively close to $\mb E f(\vec Z)$, and lies on ``the correct side'' of $\mb E f(\vec Z)$ (this lower bound requires no rank assumption).

\begin{theorem}\label{thm:gaussian-anticoncentration-technical}
Let $\vec Z = (Z_1,\ldots,Z_n)\sim\mc{N}(0,1)^{\otimes n}$ be a vector of independent standard Gaussian random variables. Consider a non-constant real quadratic polynomial $f(\vec Z)$ of $\vec Z$, which we may write as 
\[f(\vec Z)=\vec Z^\intercal F \vec Z+\vec f\cdot \vec Z+f_0\]
for some symmetric matrix $F\in\mb{R}^{n\times n}$, some vector $\vec f\in\mb{R}^n$ and some $f_0\in\mb{R}$.
\begin{enumerate}
    \item Suppose that $F$ is nonzero and \[\min_{\substack{\wt F\in\mb{R}^{n\times n}\\\on{rank}(\wt F)\le 3}}\frac{\|F-\wt F\|^2_{\mr F}}{\|F\|^2_\mr{F}}\ge \eta.\] Then for any $x\in\mb{R}$ and any $0\le \eps\le \sigma(f)$, we have
\[\Pr[f-\mb E f \in [x, x+ \eps]]\lesssim_{\eta}\frac{\eps}{\sigma(f)}\exp\left(-\Omega\left(\frac{|x|}{\sigma(f)}\right)\right).\]
    \item Let $\lambda_1, \ldots, \lambda_n$ be the eigenvalues of $F$. Suppose that $|\lambda_i|\leq \lambda_1$ for $i=1,\ldots,n$. Then for any $A>0$ and $0\le \eps\le \sigma(f)$, we have
\[\inf_{0\le x\le A\sigma(f)}\Pr[f-\mb E f\in[x,x+\eps]] \gtrsim_A \frac{\eps}{\sigma(f)}.\]
\end{enumerate}
\end{theorem}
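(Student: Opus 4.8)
The plan for part (1) is as follows. By rotating $\vec Z$ we may assume $F$ is diagonal, so that $f(\vec Z)-\mb Ef=\sum_i W_i$ is a sum of independent mean-zero variables $W_i=\lambda_i(Z_i^2-1)+\mu_iZ_i$, where $\lambda_1,\dots,\lambda_n$ are the eigenvalues of $F$. For $|x|\le\Lambda_\eta\,\sigma(f)$ (with $\Lambda_\eta$ a large constant) the claimed bound is, up to an $\eta$-dependent factor, just $\mc L(f(\vec Z),\eps)\lesssim_\eta\eps/\sigma(f)$, which is immediate from \cref{thm:gaussian-anticoncentration-upper-bound} (a matrix far from rank $\le3$ is in particular far from rank $\le2$). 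So assume $|x|>\Lambda_\eta\,\sigma(f)$ and, replacing $f$ by $-f$ if necessary (which leaves the robust rank of $F$ unchanged), assume $x>0$. The key step will be an exponential change of measure: for $s=\Theta_\eta(1/\sigma(f))>0$ let $\mb Q$ be the tilted measure $d\mb Q\propto e^{sf}\,d\mb P$. This is well defined because $\mb E_\mb P[e^{s(f-\mb Ef)}]\lesssim 1$ for $|s|\le c/\sigma(f)$, which is exactly what \cref{thm:concentration-hypercontractivity} gives at the level of the moment generating function. Under $\mb Q$ the vector $\vec Z$ is again Gaussian — with covariance $(I-2sF)^{-1}$ and shifted mean — so writing $\vec Z=(I-2sF)^{-1/2}\vec Z'+\vec m$ with $\vec Z'\sim\mc N(0,1)^{\otimes n}$ under $\mb Q$, the quantity $f$ is again a quadratic polynomial of $\vec Z'$, now with quadratic part $F_s=(I-2sF)^{-1/2}F(I-2sF)^{-1/2}$. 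Since $\|2sF\|_{\mr{op}}\le2s\lambda_1\lesssim1$, the eigenvalues of $F_s$ agree with those of $F$ up to bounded factors, so $F_s$ is far from rank $\le2$ with a comparable constant and $\sigma_\mb Q(f)\asymp\sigma(f)$; hence \cref{thm:gaussian-anticoncentration-upper-bound} applied under $\mb Q$ gives $\Pr_\mb Q[f-\mb Ef\in[x,x+\eps]]\le\mc L_\mb Q(f,\eps)\lesssim_\eta\eps/\sigma(f)$. Finally, on the event $\{f-\mb Ef\in[x,x+\eps]\}$ the Radon–Nikodym weight $\tfrac{\mb E_\mb P[e^{sf}]}{e^{sf}}$ is at most $e^{-sx}\mb E_\mb P[e^{s(f-\mb Ef)}]\lesssim e^{-sx}$, so
\[\Pr_\mb P[f-\mb Ef\in[x,x+\eps]]=\mb E_\mb Q\Big[\mbm{1}_{f-\mb Ef\in[x,x+\eps]}\cdot\tfrac{\mb E_\mb P[e^{sf}]}{e^{sf}}\Big]\lesssim e^{-sx}\,\Pr_\mb Q[f-\mb Ef\in[x,x+\eps]]\lesssim_\eta\frac\eps{\sigma(f)}\,e^{-\Omega_\eta(x/\sigma(f))}.\]
(The same decay could alternatively be obtained by shifting the contour in the Fourier inversion formula for the density of $f(\vec Z)$ into the strip of holomorphy of its characteristic function, whose width is $\gtrsim1/\sigma(f)$ by the same moment generating function bound; the paper's ``splitting'' terminology presumably refers to one of these routes.)

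For part (2), where no rank assumption is available, I would again rotate to make $F$ diagonal, so $f-\mb Ef=\sum_i W_i$ with $W_i=\lambda_i(Z_i^2-1)+\mu_iZ_i$. The hypothesis $|\lambda_i|\le\lambda_1$ says the largest eigenvalue of $F$ is $\lambda_1\ge0$; if $F=0$ then $f-\mb Ef\sim\mc N(0,\sigma(f)^2)$ and the conclusion is immediate, so assume $\lambda_1>0$. It suffices to show $f-\mb Ef$ has a density $p$ with $p(v)\gtrsim_A1/\sigma(f)$ for all $v\in[0,A\sigma(f)]$, since then $\Pr[f-\mb Ef\in[x,x+\eps]]=\int_x^{x+\eps}p\gtrsim_A\eps/\sigma(f)$. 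The plan is to peel off a small block $T$ of coordinates containing (the index of) the top eigendirection, condition on the Gaussians outside $T$, and lower-bound the resulting conditional density of $\sum_{i\in T}W_i$ at $v-h$, where $h=\sum_{i\notin T}W_i$ is the (now fixed) contribution of the other coordinates; one restricts to the event $\{|h|\le C_1\sigma(f)\}$, which has probability $\ge1/2$ by \cref{thm:concentration-hypercontractivity}, so that $v-h$ lies in an interval of length $O_A(\sigma(f))$ around the mean of $\sum_{i\in T}W_i$. The point of keeping the top eigendirection is that the corresponding $W_i$ (hence $\sum_{i\in T}W_i$) is a shifted, scaled noncentral $\chi^2$ that opens \emph{upward} — or, if $\lambda_1=0$, a Gaussian — so its density is supported on a half-line running to $+\infty$ and is $\gtrsim1/\sigma(f)$ throughout the relevant range; this is precisely where the eigenvalue hypothesis is used, since if the extreme eigenvalue were negative then $f-\mb Ef$ would be bounded above and the lower bound would fail for large $x$. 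Integrating the conditional estimate over the good event yields $p(v)\gtrsim_A1/\sigma(f)$ on $[0,A\sigma(f)]$.

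The main obstacle for part (1) is essentially just the idea itself: recognising that the $\eps/\sigma(f)$ bound of \cref{thm:gaussian-anticoncentration-upper-bound} can be ``upgraded'' to carry the exponential decay factor simply by tilting the Gaussian measure by $e^{sf}$, and then verifying the (routine, via simultaneous diagonalisation of $F$ and $I-2sF$) facts that tilting preserves the robust-rank hypothesis and the standard deviation up to constant factors and that the admissible tilt $|s|\lesssim1/\sigma(f)$ is governed by \cref{thm:concentration-hypercontractivity}. For part (2) I expect the delicate point to be the case analysis behind the single-block conditioning: a single coordinate block carries variance of scale $\asymp\sigma(f)$ only when no one coordinate is too small, and when the variance is spread over many coordinates one must instead enlarge the block and invoke a quantitative local limit theorem for the sum (or argue that $f-\mb Ef$ is already essentially Gaussian), while keeping the probability of the ``good event'' — and hence all implied constants — uniform in $x\in[0,A\sigma(f)]$, which requires some care with the skewness of $h$ and the location of the support boundary of $\sum_{i\in T}W_i$.
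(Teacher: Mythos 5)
Your part (1) is correct, but it takes a genuinely different route from the paper. The paper proves the non-uniform bound by a ``splitting'' argument: after diagonalizing $F$ it partitions the coordinates into five blocks $J_0,\ldots,J_4$ (with $J_0$ a singleton carrying the largest $\sigma_i$), chosen so that each complement $[n]\setminus J_h$ retains the robust-rank condition and (for $h\ge1$) a constant fraction of the variance; on the event $f-\mb E f\in[x,x+\eps]$ some block must have $|X_{J_h}|\ge|x|/6$, and the tail bound for that block (\cref{thm:concentration-hypercontractivity}, or \cref{lem:explicit-gaussian-2} for the singleton) is multiplied by the uniform bound of \cref{thm:gaussian-anticoncentration-upper-bound} for its complement. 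Your exponential tilt $d\mb Q\propto e^{sf}\,d\mb P$ with $s\asymp1/\sigma(f)$ replaces all of this bookkeeping, and the checks you list do go through: $2\lambda_i^2\le\sigma(f)^2$ forces $\|2sF\|_{\mr{op}}<1$ so the tilted measure is a genuine Gaussian and the MGF is bounded; $F_s$ commutes with $F$, so its eigenvalues are within bounded factors of those of $F$ and the Eckart--Young characterization (\cref{rem:robust-rank-eigenvalue-condition}) shows the robust-rank hypothesis survives with $\eta$ replaced by a comparable constant; and $\sigma_{\mb Q}(f)\asymp\sigma(f)$. This is arguably cleaner than the paper's argument and even gives an absolute constant in the exponent.

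For part (2) there is a genuine gap at the conditioning step. You restrict to the two-sided event $|h|\le C_1\sigma(f)$ and claim the conditional density of the block $\sum_{i\in T}W_i$ is $\gtrsim1/\sigma(f)$ at $v-h$ because $v-h$ lies in an interval of length $O_A(\sigma(f))$ around the block's mean. That claim is false: an upward-opening block such as $W=\lambda(Z^2-1)$ has density identically zero below $-\lambda$, so whenever $h>0$ the point $v-h$ can fall below the support (take $f=Z_1^2+Z_2^2$, $v=0$, $T=\{1\}$, $h=Z_2^2-1\in(1,2C_1]$), and that portion of your ``good event'' contributes nothing. What is needed is \emph{one-sided} control: $\Pr[-C\sigma(f)\le h\le0]\gtrsim1$, so that $v-h\in[v,\,v+C\sigma(f)]\subseteq[0,(A+C)\sigma(f)]$ stays on the correct side of the block's mean, where the upward-opening density really is bounded below. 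A mean-zero variable need not land in $(-\infty,0]$ with probability bounded away from $0$ in general; the paper gets this from the hypercontractive fourth-moment bound $\mb E[h^4]\le81\,\sigma(h)^4$ via a Paley--Zygmund-type inequality (\cref{lem:paley}), and this ingredient is missing from your write-up. Separately, the case analysis you defer (what to do when the top eigendirection carries negligible variance) is genuinely needed: the paper resolves it by either passing to a Fourier-analytic local CLT when all $\sigma_i$ are small (\cref{lem:gaussian-tricks}) or selecting a coordinate whose linear part dominates its quadratic part (case (2) of \cref{lem:explicit-gaussian}), and the hypothesis $|\lambda_i|\le\lambda_1$ is used precisely to guarantee that one of these options is always available.
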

\begin{remark}
Note that the infimum in (2) is only over nonnegative $x$ (this nonnegativity assumption corresponds to our implicit assumption that $\lambda_1\ge 0$). A two-sided bound is not possible in general, as the polynomial $f(\vec Z)=Z_1^2$ shows. 
Also, while the rank assumption in \cref{thm:gaussian-anticoncentration-upper-bound} (robustly having rank at least 3) was best-possible, we believe that the rank assumption in \cref{thm:gaussian-anticoncentration-technical}(1) (robustly having rank at least 4) can be improved; it would be interesting to investigate this further (e.g., one might try to prove \cref{thm:gaussian-anticoncentration-technical}(1) directly rather than deducing it from \cref{thm:gaussian-anticoncentration-upper-bound} via our splitting technique).

In addition, in \cref{thm:gaussian-anticoncentration-technical}(2), the quantitative bound for implicit constant hidden by ``$\gtrsim_A$'' is rather poor; our proof provides a dependence of the form $\exp(-\exp(O(A^2)))$. We believe that the correct dependence is $\exp(-O(A^2))$, and it may be interesting to prove this.
\end{remark}

By orthogonal diagonalization of $F$ and the invariance of the distribution of $\vec Z$ under orthonormal transformations, in the proofs of \cref{thm:gaussian-anticoncentration-upper-bound,thm:gaussian-anticoncentration-technical} we can reduce to the case where $f(\vec Z)=a_0+\sum_{i=1}^n (a_i Z_i+\lambda_iZ_i^2)$ for some $a_0,\ldots,a_n\in\mb{R}$. This is a sum of independent random variables, so we can proceed using Fourier-analytic techniques.

The rest of this section proceeds as follows. First, in \cref{sub:gaussian-fourier-estimates}, we prove \cref{lem:gaussian-tricks}, which encapsulates certain Fourier-analytic estimates that are effective when no individual term $a_i Z_i+\lambda_iZ_i^2$ contributes too much to the variance of $f(\vec Z)$ (essentially, these are the estimates one needs for a central limit theorem).

Second, in \cref{sub:upper-small-ball} we prove the uniform upper bound in \cref{thm:gaussian-anticoncentration-upper-bound}. In the case where no individual term contributes too much to the variance of $f(\vec Z)$ we use \cref{lem:gaussian-tricks}, and otherwise we need some more specialized Fourier-analytic computations.

Third, in \cref{sub:lower-small-ball} we prove the lower bound in \cref{thm:gaussian-anticoncentration-technical}(2). Again, we use \cref{lem:gaussian-tricks} in the case where no individual term contributes too much to the variance of $f(\vec Z)$, while in the case where one of the terms is especially influential we perform an explicit (non-Fourier-analytic) computation.

Then, in \cref{sub:upper-non-uniform} we deduce the non-uniform upper bound in \cref{thm:gaussian-anticoncentration-technical}(1) from \cref{thm:gaussian-anticoncentration-upper-bound}, using a ``splitting'' technique.

Finally, in \cref{sub:gaussian-quadratic-fourier} we prove an auxiliary technical estimate on characteristic functions of quadratic polynomials of Gaussian random variables, in terms of the ``rank robustness'' of the quadratic polynomial (which we will need in the proof of \cref{thm:short-interval}).

\subsection{Gaussian Fourier-analytic estimates}\label{sub:gaussian-fourier-estimates}

In this subsection we prove several Fourier-analytic estimates. First, we state a formula for the absolute value of the characteristic function of a univariate quadratic polynomial of a Gaussian random variable. One can prove this by direct computation, but we instead give a quick deduction from the formula for the characteristic function of a non-central chi-squared distribution (i.e., of a random variable $Z^2$ where $Z\sim\mc{N}(\mu,\sigma^2)$; see for example \cite{Pat49}).

\begin{lemma}\label{lem:char-gauss}
Let $W\sim \mc N(0,1)$ and let $X=a W+\lambda W^2$ for some $a,\lambda\in\mb{R}$. We have
\[|\varphi_X(t)| = \frac{\exp(-a^2t^2/(2+8\lambda^2t^2))}{(1+4\lambda^2t^2)^{1/4}}.\]
\end{lemma}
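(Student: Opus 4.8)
The plan is to complete the square and reduce to the well-known formula for the characteristic function of a (scaled, shifted) non-central chi-squared random variable. If $\lambda=0$ then $X=aW\sim\mc N(0,a^2)$, so $\varphi_X(t)=e^{-a^2t^2/2}$, which matches the claimed formula (the prefactor $(1+4\lambda^2t^2)^{1/4}$ equals $1$, and $a^2t^2/(2+8\lambda^2t^2)=a^2t^2/2$). So assume $\lambda\ne 0$ and write
\[X=aW+\lambda W^2=\lambda\left(W+\frac{a}{2\lambda}\right)^2-\frac{a^2}{4\lambda}=\lambda Z^2-\frac{a^2}{4\lambda},\]
where $Z:=W+a/(2\lambda)\sim\mc N(a/(2\lambda),1)$.

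Next I would invoke the standard formula for the characteristic function of $Z^2$ when $Z\sim\mc N(\mu,\sigma^2)$ (see for example \cite{Pat49}), namely
\[\varphi_{Z^2}(s)=\frac{1}{(1-2i\sigma^2 s)^{1/2}}\exp\left(\frac{i\mu^2 s}{1-2i\sigma^2 s}\right),\]
applied here with $\sigma^2=1$ and $\mu=a/(2\lambda)$. Since $X$ differs from $\lambda Z^2$ only by the deterministic constant $-a^2/(4\lambda)$, and multiplying a complex number by a factor of modulus $1$ does not change its absolute value, we get $|\varphi_X(t)|=|\varphi_{Z^2}(\lambda t)|$.

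It then remains to extract the modulus. The prefactor contributes $|(1-2i\lambda t)^{-1/2}|=(1+4\lambda^2t^2)^{-1/4}$; the choice of branch of the square root is irrelevant once we pass to absolute values. For the exponential factor we use $|e^{z}|=e^{\Re z}$ and rationalize:
\[\frac{i\mu^2\lambda t}{1-2i\lambda t}=\frac{i\mu^2\lambda t\,(1+2i\lambda t)}{1+4\lambda^2 t^2}=\frac{-2\mu^2\lambda^2 t^2+i\mu^2\lambda t}{1+4\lambda^2 t^2},\]
whose real part is $-2\mu^2\lambda^2 t^2/(1+4\lambda^2t^2)=-a^2t^2/(2+8\lambda^2t^2)$, using $\mu^2\lambda^2=a^2/4$. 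Multiplying the two contributions yields the claimed identity. There is no genuine obstacle here beyond careful bookkeeping with the complex arithmetic; the only point worth noting is that every quantity in the final formula depends on $\lambda$ (resp.\ $a$) only through $\lambda^2$ (resp.\ $a^2$), so the signs of $\lambda$ and $a$ play no role in the computation.
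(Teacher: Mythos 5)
Your proposal is correct and follows essentially the same route as the paper: complete the square to write $X=\lambda(W+a/(2\lambda))^2-a^2/(4\lambda)$, invoke the standard formula for the characteristic function of a non-central chi-squared variable (citing the same reference), and extract the modulus by rationalizing. The bookkeeping, including the separate treatment of $\lambda=0$, matches the paper's proof.
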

\begin{proof}
If $\lambda=0$, then $\varphi_{X}(t)=\varphi_{aW}(t)=\varphi_W(at)=\exp(-a^2t^2/2)$, as desired. So let us assume $\lambda\neq 0$. Note that $X=a W+\lambda W^2 = \lambda(W + a/(2\lambda))^2-a^2/(4\lambda)$ and thus
\[
|\varphi_X(t)| = |\varphi_{\lambda(W + a/(2\lambda))^2}(t)| =  |\varphi_{(W + a/(2\lambda))^2}(\lambda t)|. 
\]
Using the formula for the characteristic function of a non-central chi-squared distribution with $1$ degree of freedom and non-centrality parameter $(a/(2\lambda))^2$, we obtain
\[|\varphi_{(W + a/(2\lambda))^2}(\lambda t)| =\frac{\left|\exp\left(\frac{i\cdot a^2/(4\lambda^2)\cdot \lambda t}{1-2i\lambda t}\right)\right|}{|1-2i\lambda t|^{1/2}}
=\frac{\left|\exp\left(\frac{i\cdot a^2/(4\lambda^2)\cdot \lambda t\cdot (1+2i\lambda t)}{1+4\lambda^2 t^2}\right)\right|}{(1+4\lambda^2 t^2)^{1/4}}
=\frac{\exp\left(\frac{-a^2t^2}{2(1+4\lambda^2t^2)}\right)}{(1+4\lambda^2t^2)^{1/4}}.\qedhere\]
\end{proof}

The crucial estimates in this subsection are encapuslated in the following lemma.
\begin{lemma}\label{lem:gaussian-tricks}
There are constants $C_{\ref{lem:gaussian-tricks}},C_{\ref{lem:gaussian-tricks}}'>0$ such that the following holds. Let $W_1,\ldots,W_n\sim \mc{N}(0,1)$ be independent standard Gaussian random variables, and fix sequences $\vec a,\vec \lambda\in\mb{R}^n$ not both zero. Define random variables $X_1,\ldots,X_n$ and $X$ as well as nonnegative $\sigma_1,\ldots,\sigma_n, \sigma, \Gamma\in\mb{R}$ by
\[X_i = a_iW_i + \lambda_i (W_i^2-1),\quad X=\sum_{i=1}^n X_i,\quad \sigma_i^2 = \sigma(X_i)^2= a_i^2 + 2\lambda_i^2, \quad \sigma^2 = \sum_{i=1}^n\sigma_i^2,\quad \Gamma=\frac{\sigma^{3}}{\sum_{i=1}^n\sigma_i^3}.\]
\begin{enumerate}
    \item[(a)] If $\int_{-\infty}^{\infty}\prod_{i=1}^{n}|\varphi_{X_i}(t)|\,dt<\infty$, then $X$ has a continuous density function $p_X\colon\mb{R}\to \mb{R}_{\ge 0}$ satisfying
    \[\sup_{u\in\mb{R}}\bigg|p_{X}(u)-\frac{e^{-u^2/(2\sigma^2)}}{\sigma\sqrt{2\pi}}\bigg|\le C_{\ref{lem:gaussian-tricks}}\bigg( \frac{1}{\Gamma\sigma} + \int_{|t|\ge \Gamma/(32\sigma)}\prod_{i=1}^{n}|\varphi_{X_i}(t)|\,dt\bigg).\]
    \item[(b)] If $\sigma_i^2\le \sigma^2/4$ for all $i=1,\ldots, n$, then for any $K>0$, we have 
    \[\int_{|t|\ge K/\sigma}\;\prod_{i=1}^{n}|\varphi_{X_i}(t)|\,dt\le \frac{C_{\ref{lem:gaussian-tricks}}'}{K\sigma}.\]
\end{enumerate}
\end{lemma}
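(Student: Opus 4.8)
The plan is to treat parts (a) and (b) separately, each reducing to the explicit formula for $|\varphi_{X_i}(t)|$ provided by \cref{lem:char-gauss} (applied with $a=a_i$, $\lambda=\lambda_i$, and noting that subtracting the constant $\lambda_i$ does not change the absolute value of the characteristic function). From that formula we read off the pointwise bound
\[|\varphi_{X_i}(t)|\le \exp\!\left(-\frac{a_i^2t^2}{2+8\lambda_i^2t^2}\right)(1+4\lambda_i^2t^2)^{-1/4},\]
and in particular the two regimes: for $|t|\le 1/(2|\lambda_i|)$ (or all $t$ if $\lambda_i=0$) we get a clean Gaussian-type decay $|\varphi_{X_i}(t)|\le \exp(-c\sigma_i^2 t^2)$ for an absolute constant $c>0$ (since then $a_i^2t^2/(2+8\lambda_i^2t^2)\gtrsim a_i^2t^2$ and $(1+4\lambda_i^2t^2)^{-1/4}\le \exp(-c'\lambda_i^2t^2)$ for bounded $\lambda_i^2t^2$), while for all $t$ we at least have the polynomial decay $|\varphi_{X_i}(t)|\le (1+4\lambda_i^2t^2)^{-1/4}$. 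For part (a), I would run a standard Fourier-inversion/Esseen-type argument: since $\prod_i|\varphi_{X_i}|$ is integrable, $X$ has a continuous density given by the inversion formula \cref{eq:inversion}, and comparing with the Gaussian density $e^{-u^2/(2\sigma^2)}/(\sigma\sqrt{2\pi})$ (whose Fourier transform is $e^{-\sigma^2t^2/2}$) gives
\[\Big|p_X(u)-\frac{e^{-u^2/(2\sigma^2)}}{\sigma\sqrt{2\pi}}\Big|\le \frac{1}{2\pi}\int_{-\infty}^\infty\big|\varphi_X(t)-e^{-\sigma^2t^2/2}\big|\,dt.\]
I then split this integral at $|t|=\Gamma/(32\sigma)$. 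On the tail $|t|\ge \Gamma/(32\sigma)$ I bound $\int|e^{-\sigma^2t^2/2}|\,dt\lesssim 1/(\Gamma\sigma)\cdot e^{-\Omega(\Gamma^2)}\lesssim 1/(\Gamma\sigma)$ and $\int|\varphi_X(t)|\,dt=\int\prod_i|\varphi_{X_i}(t)|\,dt$ is exactly the second term in the claimed bound. On the central region $|t|\le \Gamma/(32\sigma)$ I would use the classical Berry--Esseen-type estimate: writing $\varphi_X(t)=\prod_i\varphi_{X_i}(t)$ and using a Taylor expansion of $\log\varphi_{X_i}(t)$ to second order with third-order error controlled by $\mb E|X_i|^3\lesssim\sigma_i^3$ (by hypercontractivity, \cref{thm:gauss-moment}, or directly from the explicit formula), one gets $|\varphi_X(t)-e^{-\sigma^2t^2/2}|\lesssim (\sigma^3 t^3/\Gamma)e^{-\sigma^2t^2/4}$ for $|t|$ in this range, and integrating yields a contribution $\lesssim 1/(\Gamma\sigma)$. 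The constant $1/32$ is just chosen so the Taylor remainder is genuinely dominated by the main term throughout the central region; I would not optimize it.

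For part (b), the extra hypothesis $\sigma_i^2\le\sigma^2/4$ for all $i$ is precisely what lets me show the product $\prod_i|\varphi_{X_i}(t)|$ has Gaussian-scale decay $\exp(-c\sigma^2 t^2)$ up to a point, and polynomial decay that is still summably integrable beyond that. Concretely, I would argue: for any $t$, let $S(t)=\{i:|t|\le 1/(2|\lambda_i|)\}$ (all $i$ with $\lambda_i=0$ automatically included); on $S(t)$ the factors contribute $\exp(-c\sum_{i\in S(t)}\sigma_i^2 t^2)$, and on the complement each factor is $\le(4\lambda_i^2t^2)^{-1/4}\le (\text{const})$, and in fact — since $i\notin S(t)$ forces $2\lambda_i^2t^2\ge 1/2$, hence $\sigma_i^2 t^2\ge 1/4$ — we also have $\sum_{i\notin S(t)}\sigma_i^2\le \sigma^2$ bounding the number of such indices, while each such factor is strictly less than $1$, say $\le (2)^{-1/2}$. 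Combining, $\prod_i|\varphi_{X_i}(t)|\le \exp(-c\sigma^2t^2)$ whenever at least half the variance (weighted) sits in $S(t)$; and when it does not, the complement indices alone give a bound decaying in $|t|$. Rather than a delicate case split, the cleanest route is: fix one index $j$; by $\sigma_j^2\le\sigma^2/4$ the "rest" has variance $\ge 3\sigma^2/4$, and using that at any $t$ at least a constant fraction of the total variance lies in the Gaussian-decay regime (because indices in the polynomial regime each carry $\sigma_i^2t^2\ge 1/4$, so there can be at most $\le \sigma^2 t^2 \cdot 4$ of the relevant variance-mass "wasted" — made precise, one shows $\sum_{i\in S(t)}\sigma_i^2 \ge \sigma^2/2$ once $t^2\ge $ some threshold, and for small $t$ it's automatic), one gets $\prod_i|\varphi_{X_i}(t)|\le \exp(-c\sigma^2 t^2/2)$ for all $t$ with, say, $|t|\le 1/\sigma$ or slightly beyond, after which one switches to $\prod \le (1+4\lambda_i^2t^2)^{-1/4}$ over enough indices to make $\int_{|t|\ge K/\sigma}(\cdots)\,dt\lesssim 1/(K\sigma)$. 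Integrating $\exp(-c\sigma^2t^2/2)$ over $|t|\ge K/\sigma$ gives $\lesssim e^{-cK^2/2}/(K\sigma)\lesssim 1/(K\sigma)$, as required.

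The main obstacle I anticipate is the bookkeeping in part (b): making rigorous the claim that "a constant fraction of the variance is always in the Gaussian-decay regime" requires carefully tracking how indices with large $|\lambda_i|$ behave as $t$ grows, and ensuring the bound is uniform over all configurations $(\vec a,\vec\lambda)$ satisfying $\sigma_i^2\le\sigma^2/4$. One clean way to sidestep subtle casework is to bound $\prod_i|\varphi_{X_i}(t)|$ by $\min_j \prod_{i\ne j}|\varphi_{X_i}(t)|$ and exploit that each such sub-product still has variance $\ge 3\sigma^2/4$, reducing to a Lindeberg-type statement; alternatively, one can directly estimate $\sum_i \big(-\log|\varphi_{X_i}(t)|\big)$ from below by $c\min\{\sigma^2t^2, \#\{i:\lambda_i\ne 0\}\cdot\text{(stuff)}\}$ and push through. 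For part (a), the only mild subtlety is choosing the Taylor-expansion cutoff consistently with the $\Gamma/(32\sigma)$ appearing in the statement, which is a matter of tracking constants rather than a conceptual difficulty. Throughout, all constants are absolute, consistent with the statement that no largeness of $n$ is assumed.
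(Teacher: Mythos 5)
Part (a) of your proposal is correct and is essentially the paper's argument: Fourier inversion, a split at $|t|=\Gamma/(32\sigma)$, the classical Berry--Esseen-type estimate (Petrov) on the central region with third absolute moments controlled by $\mb E|X_i|^3\asymp\sigma_i^3$ via hypercontractivity (so that $L\asymp 1/\Gamma$ and the Petrov range $|t|\le 1/(4L)$ covers $|t|\le\Gamma/(32\sigma)$), and the trivial triangle-inequality bound on the tail. No issues there.

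Part (b), however, has a genuine gap, and you have correctly identified where it is but not closed it. Your proposed dichotomy via $S(t)=\{i:|t|\le 1/(2|\lambda_i|)\}$ does not deliver the claim as stated: since $S(t)$ only shrinks as $|t|$ grows, the assertion that ``$\sum_{i\in S(t)}\sigma_i^2\ge\sigma^2/2$ once $t^2$ exceeds some threshold'' is backwards — for large $|t|$ the Gaussian-regime variance mass can be zero (e.g.\ all $\lambda_i$ equal), and one must then live entirely off the polynomial factors $(1+4\lambda_i^2t^2)^{-1/4}$, whose product need not decay fast enough index-by-index when the $\lambda_i$ are of very different sizes or when much of the variance sits in $a_i$'s attached to indices with $\lambda_i\neq 0$ (for such indices the exponential factor saturates at $\exp(-\Theta(a_i^2/\lambda_i^2))$, a constant in $t$). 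Your fallback suggestions (taking $\min_j$ over sub-products, or lower-bounding $\sum_i(-\log|\varphi_{X_i}|)$) are not worked out, and the bound $\#\{i\notin S(t)\}\le 4\sigma^2t^2$ you derive is an upper bound on the count of polynomial-regime indices, which points the wrong way. The paper avoids all of this casework with a single trick: after rescaling $\sigma=1$, apply H\"older's inequality with weights $\sigma_1^2,\ldots,\sigma_n^2$ (which sum to $1$) to get
\[\int_{|t|\ge K}\prod_{i=1}^n|\varphi_{X_i}(t)|\,dt\le\prod_{i=1}^n\bigg(\int_{|t|\ge K}|\varphi_{X_i}(t)|^{1/\sigma_i^2}\,dt\bigg)^{\sigma_i^2},\]
then use Bernoulli's inequality $(1+x)^r\ge 1+rx$ with $r=1/(4\sigma_i^2)\ge 1$ — this is exactly where the hypothesis $\sigma_i^2\le\sigma^2/4$ enters — to replace $(1+4\lambda_i^2t^2)^{1/(4\sigma_i^2)}$ by $1+\lambda_i^2t^2/\sigma_i^2$, and finally verify by an elementary two-case computation (according to whether $|\lambda_i|\ge|a_i|$ or not) that each single-index integral is $O(1/K)$. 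This reduces the multi-index interaction you were struggling with to $n$ independent one-dimensional estimates. Without this (or an equally effective) device, your part (b) is a plan rather than a proof.
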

\begin{remark}
Note that 
$\sigma^3 = \sum_{i=1}^{n}\sigma_i^2\cdot \sigma\ge \sum_{i=1}^{n}\sigma_i^3$ and therefore $\Gamma\ge 1$.
\end{remark}

The first part follows essentially immediately from the classical proof of the central limit theorem (see for example \cite{Pet75}). 
\begin{proof}[{Proof of \cref{lem:gaussian-tricks}(a)}]
First, note that we may assume that there are no indices $i$ with $\sigma_i=0$ (indeed, if $\sigma_i=0$, then $\lambda_i=a_i=0$ and we can just omit all such indices).  By rescaling, we may assume that $\sigma^2 = 1$. Note that $\varphi_{X}(t)=\prod_{i=1}^{n}\varphi_{X_i}(t)$, and hence $\int_{-\infty}^{\infty}|\varphi_{X}(t)|\,dt<\infty$. Also recall that the standard Gaussian distribution has density $u\mapsto e^{-u^2/2}/\sqrt{2\pi}$ and characteristic function $t\mapsto e^{-t^2/2}$. Thus, by the inversion formula~\cref{eq:inversion}, it suffices to show that
\begin{equation}\label{eq:integral-for-tricks}
\frac{1}{2\pi}\int_{-\infty}^{\infty}\bigg|\prod_{i=1}^{n}\varphi_{X_i}(t) - e^{-t^2/2}\bigg|\,dt\lesssim \frac1\Gamma + \int_{|t|\ge \Gamma/32}\;\prod_{i=1}^{n}|\varphi_{X_i}(t)|\,dt.
\end{equation}
Note that $\mb E [X_i]=0$ for $i=1,\ldots,n$, and let us write $L=(\sum_{i=1}^{n}\mb{E}[|X_i|^3])/(\sum_{i=1}^n \sigma_i^{2})^{3/2}=\sum_{i=1}^{n}\mb{E}[|X_i|^3]$. Then for $|t|\le 1/(4L)$, by \cite[Chapter~V,~Lemma~1]{Pet75} (which is a standard estimate in proofs of central limit theorems) we have
\[\bigg|\prod_{i=1}^{n}\varphi_{X_i}(t) - e^{-t^2/2}\bigg|=\left|\varphi_{X}(t) - e^{-t^2/2}\right|\le 16L\cdot |t|^3e^{-t^2/3}.\]
By H\"older's inequality and \cref{thm:gauss-moment} (hypercontractivity) we have $\sigma_i^{3}\le \mb{E}[|X_i|^{3}]\le 8\sigma_i^{3}$ for $i=1,\ldots,n$, so we obtain $1/\Gamma\leq L\leq 8/\Gamma$. Thus, the interval $|t|\le\Gamma/32$ contributes at most $\int_{-\Gamma/32}^{\Gamma/32}16L\cdot |t|^3e^{-t^2/3}\,dt\lesssim L\int_{-\infty}^{\infty}|t|^3e^{-t^2/3}\,dt \lesssim L\lesssim 1/\Gamma$ to the integral in \cref{eq:integral-for-tricks}. Therefore we obtain
\[\int_{|t|\ge {\Gamma}/{32}}\bigg|\prod_{i=1}^{n}\varphi_{X_i}(t) - e^{-t^2/2}\bigg|\,dt\le \int_{|t|\ge {\Gamma}/{32}}e^{-t^2/2} + \bigg|\prod_{i=1}^{n}\varphi_{X_i}(t)\bigg|\,dt\lesssim \frac{1}{\Gamma} + \int_{|t|\ge {\Gamma}/{32}} \bigg|\prod_{i=1}^{n}\varphi_{X_i}(t)\bigg|\,dt.\qedhere\]
\end{proof}

To prove \cref{lem:gaussian-tricks}(b), we use H\"older's inequality and \cref{lem:char-gauss}.

\begin{proof}[{Proof of \cref{lem:gaussian-tricks}(b)}]
As before we may assume that there are no indices $i$ with $\sigma_i=0$, and by rescaling we may assume that $\sigma^2 = 1$. Via \cref{lem:char-gauss}, we estimate 
\begin{align*}
\int_{|t|\ge K}\bigg|\prod_{i=1}^{n}\varphi_{X_i}(t)\bigg|\,dt
&\le \prod_{i=1}^{n}\bigg(\int_{|t|\ge K}|\varphi_{X_i}(t)|^{1/\sigma_i^2}\,dt\bigg)^{\sigma_i^2}=\prod_{i=1}^{n}\Bigg(\int_{|t|\ge K}\frac{\exp\Big(-{a_i^2t^2}/\left({(2+8\lambda_i^2t^2)\sigma_i^2}\right)\Big)}{(1+4\lambda_i^2t^2)^{1/(4\sigma_i^2)}}\,dt\Bigg)^{\sigma_i^2}\\
&\le\prod_{i=1}^{n}\Bigg(\int_{|t|\ge K}\frac{\exp\Big(-{a_i^2t^2}/\left({(2+8\lambda_i^2t^2)\sigma_i^2}\right)\Big)}{1+\lambda_i^2t^2/\sigma_i^2}\,dt\Bigg)^{\sigma_i^2}.
\end{align*}

In the first step we have used H\"older's inequality with weights $\sigma_1^2,\ldots,\sigma_n^2$ (which sum to 1) and in the final step we have used Bernoulli's inequality (which says that $(1+x)^r\ge 1+rx$ for $x\ge 0$ and $r\ge 1$; recall that we are assuming that  $4(a_i^2+2\lambda_i^2)=4\sigma_i^2\le 1$ for each $i$).

Since $\sum_{i=1}^n\sigma_i^2=1$, it now suffices to prove that for each $i=1,\ldots,n$ we have
\[\int_{|t|\ge K}\frac{\exp\Big(-{a_i^2t^2}/\left({(2+8\lambda_i^2t^2)\sigma_i^2}\right)\Big)}{1+\lambda_i^2t^2/\sigma_i^2}\,dt\lesssim \frac{1}{K}.\]

Fix some $i$. If $|\lambda_i|\ge |a_i|$, then $\lambda_i^2\geq \sigma_i^2/3$ and
\[\int_{|t|\ge K}\frac{\exp\Big(-{a_i^2t^2}/\left({(2+8\lambda_i^2t^2)\sigma_i^2}\right)\Big)}{1+\lambda_i^2t^2/\sigma_i^2}\,dt\le\int_{|t|\ge K}\frac{1}{1+t^2/3}\,dt\lesssim \frac{1}{K}.\]
Otherwise, if $|a_i|\ge |\lambda_i|$, we have $a_i^2\geq \sigma_i^2/3$, $\sigma_i^2\le 1$, and therefore 
\begin{align*}
\frac{\exp\Big(-{a_i^2t^2}/\left({(2+8\lambda_i^2t^2)\sigma_i^2}\right)\Big)}{1+\lambda_i^2t^2/\sigma_i^2}&\le \frac{\Big(1+{a_i^2t^2}/\left({(2+8\lambda_i^2t^2)\sigma_i^2}\right)\Big)^{-1}}{1+\lambda_i^2t^2/\sigma_i^2}\\
&\lesssim \frac{\big(1+{t^2}/({1+\lambda_i^2t^2})\big)^{-1}}{1+\lambda_i^2t^2/\sigma_i^2}\le \frac{\big(1+{t^2}/{(1+\lambda_i^2t^2)}\big)^{-1}}{1+\lambda_i^2t^2} = \frac{1}{1+(1+\lambda_i^2)t^2}.
\end{align*}
It follows that
\begin{align*}
&\int_{|t|\ge K}\frac{\exp\Big(-{a_i^2t^2}/\left({(2+8\lambda_i^2t^2)\sigma_i^2}\right)\Big)}{1+\lambda_i^2t^2/\sigma_i^2}\,dt \lesssim \int_{|t|\ge K}\frac{1}{1+(1+\lambda_i^2)t^2}\,dt \lesssim \frac{1}{K}. \qedhere
\end{align*}
\end{proof}

\subsection{Uniform anticoncentration}\label{sub:upper-small-ball}
In this subsection, we prove \cref{thm:gaussian-anticoncentration-upper-bound}. The crucial ingredient is the following Fourier-analytic estimate.

\begin{lemma}\label{lem:inductive-trick}
Recall the definitions and notation in the statement of \cref{lem:gaussian-tricks}, and fix a parameter $\eta>0$. Suppose that $n\ge 2$ and $\sum_{i\in I}\lambda_i^2\ge \eta \lambda_j^2$ for all $I\subseteq [n]$ with $|I|= n-2$ and all $j\in[n]$. Then 
\[\int_{|t|\ge 1/(32\sigma)}\prod_{i=1}^{n}|\varphi_{X_i}(t)|\,dt\lesssim_\eta \frac{1}{\sigma}.\]
\end{lemma}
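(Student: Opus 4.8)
The plan is to bound $\prod_{i=1}^n |\varphi_{X_i}(t)|$ pointwise using the explicit formula from \cref{lem:char-gauss}, and then integrate. Recall that for each $i$ we have $|\varphi_{X_i}(t)| = (1+4\lambda_i^2 t^2)^{-1/4}\exp(-a_i^2 t^2/(2+8\lambda_i^2 t^2))$, and in particular $|\varphi_{X_i}(t)|\le (1+4\lambda_i^2 t^2)^{-1/4}$. The hypothesis says that, no matter which two indices we discard, the remaining $\lambda_i^2$'s still sum to at least $\eta$ times any single one; in particular $\sum_i \lambda_i^2 \ge \eta\lambda_j^2$ for all $j$, so no single coordinate's $\lambda_i^2$ dominates, and moreover after removing the two largest, the sum of the rest is still $\Omega_\eta(\sum_i \lambda_i^2)$. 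Write $\Lambda^2 = \sum_{i=1}^n \lambda_i^2$; note $\Lambda^2 \le \sigma^2$. I would split into two regimes according to whether $\Lambda^2$ is comparable to $\sigma^2$ or much smaller.

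\textbf{Regime 1: the quadratic part carries a constant fraction of the variance}, say $\Lambda^2 \ge \sigma^2/\log^2(\cdots)$ — more precisely, it suffices to handle the case where $\Lambda^2$ is not too small. Here I exploit the $(1+4\lambda_i^2t^2)^{-1/4}$ factors. Order the $\lambda_i^2$ decreasingly; by hypothesis the indices $i\ge 3$ (all but the two largest) still have $\sum_{i\ge 3}\lambda_i^2 \gtrsim_\eta \Lambda^2$. I want to show $\prod_{i\ge 3}(1+4\lambda_i^2t^2)^{1/4} \gtrsim_\eta$ something growing like $|t|\Lambda$ or $(|t|\Lambda)^{3/4}$ for $|t|$ large. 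The clean inequality to use is: for nonnegative reals $c_i$ with $\sum c_i = c$, one has $\prod_i(1+c_i) \ge 1 + \max_i c_i \ge$ ... — but we need \emph{three} distinct coordinates to beat a power like $|t|^{3/2}$ needed for integrability against $dt/|t|^{3/2}$. This is exactly the point of the robust-rank-$3$ hypothesis: keeping three separate coordinates $i,j,k$ each with $\lambda^2 \gtrsim_\eta \Lambda^2/n$ is not enough, but one can argue that among $i\ge 3$ there are enough "spread out" coordinates, or more simply: for $|t|\ge 1/(32\sigma)$, use that $\prod_{i}(1+4\lambda_i^2 t^2)^{1/4}\ge \prod_{i\in J}(1+4\lambda_i^2 t^2)^{1/4}$ for any subset $J$, choose $J$ to consist of three indices whose $\lambda_i^2$ are each $\gtrsim_\eta \Lambda^2$ if they exist; if no such triple exists, then the mass of $\Lambda^2$ is spread among $\ge 4$ comparable coordinates and a convexity/AM–GM bound on $\prod(1+4\lambda_i^2t^2)$ gives growth $\gtrsim (c_\eta t^2\Lambda^2)^{?}$ with exponent $\ge 3/2$. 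Then $\int_{|t|\ge 1/(32\sigma)}(c_\eta t^2\Lambda^2)^{-3/4}\,dt \lesssim_\eta \Lambda^{-3/2}\cdot\sigma^{1/2}\lesssim_\eta 1/\sigma$ using $\Lambda\asymp_\eta\sigma$ in this regime.

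\textbf{Regime 2: the linear part dominates}, $\Lambda^2 \ll \sigma^2$, so $\sum_i a_i^2 = \sigma^2 - 2\Lambda^2 \asymp \sigma^2$. Here for $|t|$ not too large (say $|t|\le c/\Lambda$) the factor $\exp(-a_i^2t^2/(2+8\lambda_i^2t^2))$ behaves like $\exp(-a_i^2 t^2/2)$ up to constants, so $\prod_i|\varphi_{X_i}(t)|\lesssim \exp(-c'\sigma^2 t^2)$, whose integral over all of $\mb R$ is $\lesssim 1/\sigma$. For $|t|\ge c/\Lambda$ we are back to a situation where the $\lambda_i^2 t^2$ terms are large and we can reuse the argument of Regime 1 (now $|t|\Lambda\gtrsim 1$, so the product of $(1+4\lambda_i^2t^2)^{1/4}$ factors is large), bounding $\int_{|t|\ge c/\Lambda}(\text{polynomial growth})^{-1}dt \lesssim_\eta 1/\Lambda \lesssim 1/\sigma$ — wait, we need $\le 1/\sigma$, not $\le 1/\Lambda$, so one must be careful: combine the Gaussian decay from the linear terms with the polynomial decay from the quadratic terms, i.e. bound $\prod_i|\varphi_{X_i}(t)|\lesssim \exp(-c'(\sigma^2-2\Lambda^2)t^2/(1+C\Lambda^2 t^2))\cdot(\text{quadratic factor})$ and note the exponential alone already gives $\int \lesssim \sigma^{-1}$ over the range $|t|\lesssim 1/\Lambda$, while over $|t|\gtrsim 1/\Lambda$ the full product is at most $\exp(-c''\sigma^2/\Lambda^2)$ times an integrable tail, which is $\le \exp(-c''\sigma^2/\Lambda^2)/\Lambda \ll 1/\sigma$ since $\sigma/\Lambda\to\infty$.

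\textbf{Main obstacle.} The delicate point is the combinatorics/convexity in Regime 1: converting "robustly rank $\ge 3$" (i.e. $\sum_{i\in I}\lambda_i^2\ge\eta\lambda_j^2$ for $|I|=n-2$) into a uniform lower bound $\prod_{i}(1+4\lambda_i^2t^2)\gtrsim_\eta (t^2\Lambda^2)^{3/2}$ valid for \emph{all} $t$ with $|t|\gtrsim 1/\sigma$ — the case analysis between "three dominant coordinates" and "mass spread over many coordinates" needs to be done cleanly, probably by picking three disjoint index sets $J_1,J_2,J_3$ partitioning (most of) $[n]$ each carrying $\gtrsim_\eta \Lambda^2$ of the weight and applying $1+\sum_{i\in J_\ell}4\lambda_i^2t^2 \le \prod_{i\in J_\ell}(1+4\lambda_i^2t^2)$ to each, giving $\prod_i(1+4\lambda_i^2t^2)\ge\prod_{\ell=1}^3(1+c_\eta t^2\Lambda^2)$. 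Constructing such a partition from the hypothesis is the crux — one removes the largest coordinate into $J_1$, then greedily fills; the robustness guarantees that after removing any two coordinates enough weight remains, which is exactly what lets the greedy partition succeed with three parts. I also need to double-check the edge case $n=2$ allowed by the hypothesis (then the condition $\sum_{i\in\emptyset}\lambda_i^2\ge\eta\lambda_j^2$ forces $\lambda_1=\lambda_2=0$, so $X$ is linear Gaussian and the bound is trivial), and the case where $\sigma_i^2 > \sigma^2/4$ for some $i$, which is not excluded here and must be folded into Regime 1 or handled directly via \cref{lem:char-gauss}.
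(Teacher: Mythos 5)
Your strategy is sound and would yield a correct proof, but it is organized differently from the paper's. The shared core is the step you correctly identify as the crux: converting the hypothesis into a bound $\prod_i(1+4\lambda_i^2t^2)^{-1/4}\le(1+c_\eta\Lambda^2t^2)^{-3/4}$ where $\Lambda^2=\sum_i\lambda_i^2$. This does work, and the cleanest implementation is exactly your three-set partition: sort so that $|\lambda_1|\ge\cdots\ge|\lambda_n|$ and take the three residue classes mod $3$; the class $\{3,6,9,\dots\}$ carries weight $\sum_i\lambda_{3i}^2\ge\frac13\sum_{k\ge3}\lambda_k^2\ge\frac{\eta}{3(\eta+2)}\Lambda^2$ (using the hypothesis with $I=\{3,\dots,n\}$, $j=1$, plus $\Lambda^2\le2\lambda_1^2+\sum_{k\ge3}\lambda_k^2$), and the other two classes dominate it termwise, so no greedy case analysis is needed. (The paper phrases this same trick as $\prod_i(1+4\lambda_i^2t^2)^{-1/4}\le\prod_i(1+4\lambda_{3i}^2t^2)^{-3/4}\le(1+4t^2\sum_i\lambda_{3i}^2)^{-3/4}$.) Where you diverge is the surrounding case split: the paper splits on whether some single $\sigma_j^2\ge\sigma^2/4$, disposing of the ``no dominant coordinate'' case by citing \cref{lem:gaussian-tricks}(b) (a H\"older-with-weights argument), and in the dominant-coordinate case multiplying the cubed decay above by the single factor $\exp(-a_j^2t^2/(2+8\lambda_j^2t^2))$ from \cref{lem:char-gauss} to get the pointwise bound $(\sigma|t|)^{-3/2}$. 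You instead split on whether $\Lambda^2\gtrsim\sigma^2$, using only the quadratic factors in that regime and the full linear variance $\sum_ia_i^2=\sigma^2-2\Lambda^2$ in the other. Your route is self-contained (it does not lean on \cref{lem:gaussian-tricks}(b)) and makes the source of the $|t|^{-3/2}$ decay transparent; the paper's is shorter because it reuses machinery already proved. Three small points to pin down if you write yours up: the Regime 1 threshold must be a fixed constant ($\Lambda^2\ge\sigma^2/100$, say), since $\Lambda^2\ge\sigma^2/\log^2(\cdot)$ would leave a logarithmic loss in $\Lambda^{-3/2}\sigma^{1/2}$; the integrand should be written as $\min\{1,(c_\eta\Lambda^2t^2)^{-3/4}\}$ near $t=1/(32\sigma)$ (harmless since $|\varphi_{X_i}|\le1$); and in Regime 2 the large-$t$ tail bound $\exp(-c''\sigma^2/\Lambda^2)/\Lambda\lesssim1/\sigma$ should be justified by boundedness of $x\mapsto xe^{-c''x^2}$ rather than a limit. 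The worry about a coordinate with $\sigma_i^2>\sigma^2/4$ is actually moot in your scheme, since your two regimes are exhaustive and neither argument isolates a single coordinate.
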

\begin{proof}
We may assume without loss of generality that $|\lambda_1|\geq\cdots\geq |\lambda_n|$. By adding at most two terms with $a_i=\lambda_i=0$, we may assume $n$ is divisible by $3$. Note that if $\sigma_i^2\le \sigma^2/4$ for all $i\in [n]$, the result follows immediately from \cref{lem:gaussian-tricks}(b). Therefore it suffices to consider the case when there is an index $j$ such that $\sigma_j^2\ge \sigma^2/4$.

Note that the given condition implies $\sum_{k=1}^{n/3}\lambda_{3k}^2\ge \frac{1}{3}\sum_{k= 3}^{n}\lambda_k^2\ge \eta \lambda_j^2/3$. Now, \cref{lem:char-gauss} yields
\begin{align*}
\prod_{i=1}^{n}|\varphi_{X_i}(t)|&\le \exp\bigg(\frac{-a_j^2t^2}{2+8\lambda_j^2t^2}\bigg)\prod_{i=1}^{n}\frac{1}{(1+4\lambda_i^2t^2)^{1/4}}\le \exp\bigg(\frac{-a_j^2t^2}{2+8\lambda_j^2t^2}\bigg)\prod_{i=1}^{n/3}\frac{1}{(1+4\lambda_{3i}^2t^2)^{3/4}}\\
&\le \exp\bigg(\frac{-a_j^2t^2}{2+8\lambda_j^2t^2}\bigg)\bigg(1+4\sum_{i=1}^{n/3}\lambda_{3i}^2t^2\bigg)^{-3/4}\le \exp\bigg(\frac{-a_j^2t^2}{2+8\lambda_j^2t^2}\bigg)(1+\eta \lambda_j^2t^2)^{-3/4}\\
&\le \bigg(1 + \frac{a_j^2t^2}{2+8\lambda_j^2t^2}\bigg)^{-3/4}(1+\eta\lambda_j^2t^2)^{-3/4}\lesssim_{\eta} (\lambda_j^2t^2+a_j^2t^2)^{-3/4}\lesssim (\sigma_j |t|)^{-3/2}\lesssim (\sigma |t|)^{-3/2}.
\end{align*}

Thus we have 
\[\int_{|t|\ge 1/(32\sigma)}\prod_{i=1}^{n}|\varphi_{X_i}(t)|\,dt\lesssim_{\eta}\int_{|t|\ge 1/(32\sigma)}(\sigma |t|)^{-3/2}\,dt\lesssim 1/\sigma.\qedhere\]
\end{proof}

The proof of \cref{thm:gaussian-anticoncentration-upper-bound} is now immediate.
\begin{proof}[{Proof of \cref{thm:gaussian-anticoncentration-upper-bound}}]
By rescaling we may assume $\sigma(f)=1$. It suffices to show that the probability density function $p_{f-\mb E f}$ of $f-\mb Ef$ satisfies $p_{f-\mb E f}(u)\lesssim_{\eta} 1$ for all $u$.

Since $F$ is a real symmetric matrix, we can write $F = QD Q^{\intercal}$ where $D$ is a diagonal matrix with entries $\lambda_1,\ldots,\lambda_n$ and $Q$ is an orthogonal matrix. Let $\vec W=Q^{\intercal}\vec Z$, and note that $\vec W$ is also distributed as $\mc{N}(0,1)^{\otimes n}$ (since the distribution $\mc{N}(0,1)^{\otimes n}$ is invariant under orthogonal transformations). We have
\[f(\vec Z) = f_0+\vec{f}\cdot \vec{Z} + \vec{Z}^\intercal F\vec{Z} = f_0+\vec{f}\cdot (Q\vec{W}) + \vec{W}^\intercal Q^\intercal FQ\vec{W} = f_0+(Q^\intercal\vec{f})\cdot \vec{W} + \vec{W}^\intercal D \vec{W}.\]
Let $\vec{a} = (a_1,\ldots,a_n)=Q^\intercal\vec{f}$. We have
\[
f-\mb E f= \sum_{i=1}^n(a_iW_i+\lambda_i(W_i^2-1)).
\]
Let $\sigma_1,\ldots,\sigma_n\geq 0$ be such that $\sigma_i^2=a_i^2+2\lambda_i^2$, so $1=\sigma(f)^2 = \sigma_1^2+\cdots+\sigma_n^2$. Note that the assumption in the theorem statement implies $n\ge 3$, and combining the assumption with \cref{thm:eckart-young} yields
\[\eta\le \min_{\substack{\wt F\in\mb{R}^{n\times n}\\\on{rank}(\wt F)\le 2}}\frac{\|F-\wt F\|^2_{\mr F}}{\|F\|^2_\mr{F}}=\min_{\substack{I\su [n]\\|I|=n-2}} \frac{\sum_{i\in I}\lambda_i^2}{\lambda_1^2+\cdots+\lambda_n^2}.\]
Hence for any subset $I\subseteq[n]$ with $|I|= n-2$ and any $j\in [n]$ we obtain $\sum_{i\in I}\lambda_i^2\ge \eta(\lambda_1^2+\cdots+\lambda_n^2)\geq \eta\lambda_j^2$. Let $\Gamma$ be as in \cref{lem:gaussian-tricks} and recall that $\Gamma\ge 1$.

Now, by combining \cref{lem:gaussian-tricks}(a) and \cref{lem:inductive-trick}, we have that 
\begin{align*}
\sup_{u\in\mb{R}}p_{f}(u)=\sup_{u\in\mb{R}}p_{f-\mb E f}(u)\lesssim \frac{1}{\sqrt{2\pi}} + \frac{1}{\Gamma} + \int_{|t|\ge \Gamma/32}\prod_{i=1}^{n}|\varphi_{X_i}(t)|\,dt\lesssim_{\eta} 1. 
\end{align*}
By integrating over the desired interval, we obtain the bound in \cref{thm:gaussian-anticoncentration-upper-bound}.
\end{proof}

\subsection{Lower bounds on small-ball probabilities}\label{sub:lower-small-ball}
Let us now prove the lower bound in \cref{thm:gaussian-anticoncentration-technical}(2). Note that \cref{lem:gaussian-tricks}(b) does not apply when some $\sigma_i$ is especially influential; in that case we will use the following bare-hands estimate.

\begin{lemma}\label{lem:explicit-gaussian}
Fix $A'\geq 1$ and let $W\sim \mc N(0,1)$ and for some $a,\lambda\in \mathbb{R}$ (not both zero) let $X=aW+\lambda(W^2-1)$, so $\sigma(X)^2=a^2+2\lambda^2$. Suppose that
\begin{enumerate}
    \item $\lambda\ge 0$, or
    \item $\sigma(X)\ge 10A'\cdot |\lambda|$.
\end{enumerate}
Then for any $0\le u\le A'\sigma(X)$, we have $p_X(u)\gtrsim_{A'} 1/{\sigma(X)}$.
\end{lemma}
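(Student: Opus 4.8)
The plan is to obtain an explicit lower bound on the density $p_X(u)$ of $X = aW + \lambda(W^2-1)$ by reducing, via a completion of the square, to a shifted chi-squared random variable and then estimating the resulting density directly. Write $X = \lambda(W + a/(2\lambda))^2 - \lambda - a^2/(4\lambda)$ when $\lambda \ne 0$; so $X$ is an affine image of $(W+\mu)^2$ with $\mu = a/(2\lambda)$. The density of $Y := (W+\mu)^2$ is explicit: for $y > 0$,
\[
p_Y(y) = \frac{1}{\sqrt{2\pi y}}\cdot \frac{e^{-(\sqrt y - \mu)^2/2} + e^{-(\sqrt y + \mu)^2/2}}{2},
\]
and from this $p_X(u) = |\lambda|^{-1} p_Y\big((u + \lambda + a^2/(4\lambda))/\lambda\big)$ on the appropriate half-line. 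The case $\lambda = 0$ is immediate since then $X = aW$ is Gaussian with standard deviation $|a| = \sigma(X)$, so $p_X(u) = \sigma(X)^{-1}\phi(u/\sigma(X)) \gtrsim_{A'} \sigma(X)^{-1}$ uniformly for $|u| \le A'\sigma(X)$.

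First I would normalize: by rescaling we may assume $\sigma(X) = 1$, so $a^2 + 2\lambda^2 = 1$ and we want $p_X(u) \gtrsim_{A'} 1$ for $0 \le u \le A'$. In case (2), where $|\lambda| \le 1/(10A')$, the random variable $X$ is a small perturbation of the Gaussian $aW$ (with $|a|$ bounded below, since $a^2 = 1 - 2\lambda^2 \ge 1 - 1/(50 A'^2) \ge 1/2$). Here I would argue that $Y = (W + \mu)^2$, with $|\mu| = |a|/(2|\lambda|)$ large, has density comparable to a constant on the relevant range: the point $u$ corresponds to $y_0 = (u + \lambda + a^2/(4\lambda))/\lambda$, and one checks $\sqrt{y_0}$ stays within $O_{A'}(1)$ of $|\mu|$, so the exponential factor $e^{-(\sqrt{y_0} - |\mu|)^2/2}$ is $\gtrsim_{A'} 1$ while $1/\sqrt{2\pi y_0} \gtrsim_{A'} 1/|\mu| \asymp |\lambda| \cdot (\text{something})$—I need to track that the $|\lambda|^{-1}$ prefactor from the change of variables cancels the $1/\sqrt{y_0} \asymp 1/|\mu| \asymp |\lambda|/|a|$ decay, leaving a constant depending only on $A'$. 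This bookkeeping is the bulk of case (2).

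In case (1), where $\lambda \ge 0$ (and $\lambda$ may be large, even comparable to $1$), the support of $X$ is $[-\lambda - a^2/(4\lambda), \infty)$ — crucially the lower endpoint is negative (it is $\le -\lambda \le 0$), so every $u \in [0, A']$ lies in the interior of the support, bounded away from the singular endpoint. I would split into the subcase $\lambda \ge \lambda_0(A')$ for a suitable threshold (where the $\lambda W^2$ term dominates and $X/\lambda$ is essentially a recentered $\chi^2_1$, whose density is bounded below on any compact subinterval of $(0,\infty)$, and here $u/\lambda \in [0, A'/\lambda_0]$ together with the recentering keeps us in such a subinterval away from $0$) and the subcase $\lambda_0(A') \ge \lambda > 0$ bounded (then $a^2 = 1 - 2\lambda^2$ is bounded below, $\mu = a/(2\lambda)$ is bounded above, and one estimates $p_Y$ directly at the relevant $y_0 = O_{A'}(1)$, using that $y_0$ is bounded away from $0$ precisely because the endpoint $-\lambda - a^2/(4\lambda)$ is strictly negative so $u$ maps to $y_0 \ge (\text{something}) > 0$). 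The main obstacle is the uniformity in case (1) with small $\lambda$: as $\lambda \to 0^+$ the lower endpoint of the support tends to $-\infty$, $\mu \to \infty$, and $y_0 \to \infty$, so one must verify that the product of the $|\lambda|^{-1}$ Jacobian, the $1/\sqrt{2\pi y_0}$ factor, and the Gaussian exponential stays bounded below by a constant depending only on $A'$ — this is exactly the same cancellation as in case (2), and in fact case (1) with small $\lambda$ and case (2) can be treated together by a single computation showing $|\sqrt{y_0} - |\mu||$ is $O_{A'}(1)$ and $\sqrt{y_0} \asymp_{A'} |\mu| \asymp_{A'} |a|/|\lambda|$. I would consolidate these into a short case analysis and present the constant dependence as $\gtrsim_{A'}$ without optimizing it.
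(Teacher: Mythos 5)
Your plan is correct and the deferred bookkeeping does go through (e.g.\ after normalizing $\sigma(X)=1$ one checks $|\sqrt{y_0}-|\mu||\le(2|\lambda|+2A')/|a|=O(A')$ and $1/\sqrt{y_0}\gtrsim_{A'}|\lambda|/|a|$, which cancels the Jacobian $|\lambda|^{-1}$), and it is essentially the paper's proof: both evaluate the pushforward density of the quadratic $g(t)=at+\lambda(t^2-1)$ at a preimage $t$ of $u$ lying at distance $O_{A'}(1)$ from the origin. The paper avoids your three-way case analysis by locating such a preimage in $[-3A',3A']$ with the intermediate value theorem and bounding the derivative there uniformly via the identity $(g'(t))^2=4\lambda g(t)+4\lambda^2+a^2\le 4\sigma(X)|g(t)|+4\sigma(X)^2$, rather than estimating the explicit noncentral chi-squared density.
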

\begin{proof}
We may assume $a\ge 0$ (changing $a$ to $-a$ does not change the distribution of $X$). First note that the case $\lambda=0$ is easy, since then we have $\sigma(X)=a$ and $p_X(u)=e^{-(u/a)^2/2}/(\sqrt{2\pi}a)\gtrsim_{A'} 1/\sigma(X)$. So let us assume $\lambda\neq 0$ and define $g\colon\mathbb{R}\to\mathbb{R}$ by
\[g(t)=at+\lambda(t^2-1)=\lambda\cdot \left(t+\frac{a}{2\lambda}\right)^2-\lambda-\frac{a^2}{4\lambda}.\]
Then for all $t\in \mathbb{R}$ we have
\[(g'(t))^2=4\lambda^2\cdot  \left(t+\frac{a}{2\lambda}\right)^2=4\lambda\cdot g(t)+4\lambda^2+a^2\leq 4\sigma(X)\cdot |g(t)|+4\sigma(X)^2.\]
Hence, for any $t\in\mb{R}$ with $g(t)=u$, recalling $0\le u\le A'\sigma(X)$, we obtain $|g'(t)|\leq \sqrt{4(A'+1)\sigma(X)^2}\lesssim_{A'} \sigma(X)$.

We claim that we can find $t\in [-3A',3A']$ with $g(t)=u$. Indeed, in case (1), we have $g(0)=-\lambda\leq u$ and $g(2A'+1)\geq 2A'a+2A'\lambda\geq A'\sigma(X)\geq u$, and hence by the intermediate value theorem there exists $t\in [0,2A'+1]\su [-3A',3A']$ with $g(t)=u$. In case (2), observe that $a^2=\sigma(X)^2-2\lambda^2\geq 100 A'^2\cdot\lambda^2-2\lambda^2\geq 81A'^2\cdot\lambda^2$, so $a\geq 9A'\cdot|\lambda|$ and therefore $|\lambda(9A'^2-1)|\leq A'a$ and $\sigma(X)^2=a^2+2\lambda^2\leq 4a^2$. Hence $g(-3A')=-3A'a+\lambda(9A'^2-1)\leq -2A'a\leq 0\leq u$ and $g(3A')=3A'a+\lambda(9A'^2-1)\geq 2A'a\geq A'\sigma(X)\geq u$ and we can again conclude that there exists $t\in [-3A',3A']$ with $g(t)=u$.

Now, we have
\[p_X(u)=p_{g(W)}(g(t))\geq \frac{p_W(t)}{|g'(t)|}\gtrsim_{A'} \frac{e^{-(3A')^2/2}}{\sigma(X)}\gtrsim_{A'} \frac{1}{\sigma(X)}.\qedhere\]

\end{proof}

We need one more ingredient for the proof of \cref{thm:gaussian-anticoncentration-technical}(2): a variant of the Paley-Zygmund inequality which tells us that under a fourth-moment condition, random variables are reasonably likely to have small fluctuations in a given direction. We include a short proof; the result can also easily be deduced from \cite[Lemma~3.2(i)]{AGS04}.
\begin{lemma}\label{lem:paley}
Fix $B\ge 1$. If $X$ is a real random variable with $\mb{E}[X] = 0$ and $\sigma(X)>0$ satisfying  $\mb{E}[X^4]\le B\sigma(X)^4$, then
\[\Pr[-2\sqrt{B}\sigma(X) \le X\le 0]\ge 1/(5B).\]
\end{lemma}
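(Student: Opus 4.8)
The plan is to prove \cref{lem:paley} by a standard Paley--Zygmund-type argument, splitting into the contribution of the positive part and the negative part of $X$ and using that $\mb{E}[X]=0$ to force the negative part to carry a definite amount of mass. First I would normalize, assuming without loss of generality that $\sigma(X)=1$ (replace $X$ by $X/\sigma(X)$; all three quantities in the statement scale compatibly). Set $Y = -X$, so $\mb{E}[Y]=0$, $\mb{E}[Y^2]=1$, and $\mb{E}[Y^4]\le B$; the goal becomes $\Pr[0\le Y\le 2\sqrt B]\ge 1/(5B)$.

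The key step is to control $\mb{E}[Y\mathbbm{1}_{Y>0}]$ from below and $\mb{E}[Y\mathbbm{1}_{Y>0,\,Y>2\sqrt B}]$ from above. Since $\mb{E}[Y]=0$ we have $\mb{E}[Y\mathbbm{1}_{Y>0}] = \mb{E}[(-Y)\mathbbm{1}_{Y<0}] = \tfrac12\mb{E}|Y| \ge \tfrac12$, where the last inequality is $\mb{E}|Y|^3\le \mb{E}|Y|\cdot\mb{E}|Y|^4 / (\text{something})$ — more simply, by Cauchy--Schwarz $1 = \mb{E}[Y^2] = \mb{E}[|Y|\cdot |Y|] \le \mb{E}|Y|^{1/2}\cdot\mb{E}[|Y|^3]^{1/2}$... actually the cleanest route is the standard interpolation $\mb{E}[Y^2]\le \mb{E}[|Y|]^{2/3}\mb{E}[|Y|^4]^{1/3}$ (Hölder with exponents $3/2$ and $3$ applied to $|Y|^{2/3}\cdot|Y|^{4/3}$), which gives $\mb{E}|Y|\ge \mb{E}[Y^2]^{3/2}/\mb{E}[Y^4]^{1/2}\ge 1/\sqrt B$, hence $\mb{E}[Y\mathbbm{1}_{Y>0}]\ge 1/(2\sqrt B)$. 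On the other hand, by Cauchy--Schwarz and Markov, $\mb{E}[Y\mathbbm{1}_{Y>2\sqrt B}]\le \mb{E}[Y^2]^{1/2}\Pr[Y>2\sqrt B]^{1/2}\le \sqrt{1}\cdot\big(\mb{E}[Y^2]/(4B)\big)^{1/2} = 1/(2\sqrt B)$. Subtracting is too tight, so instead I would use a slightly larger cutoff or a sharper second bound: $\mb{E}[Y\mathbbm{1}_{Y>2\sqrt B}]\le \mb{E}[Y^4]^{1/4}\Pr[Y>2\sqrt B]^{3/4}\le B^{1/4}(1/(4B))^{3/4} = 1/(4^{3/4}B^{1/2})< 1/(2\sqrt B)$ still isn't enough margin; the right tradeoff is $\mb{E}[Y\mathbbm{1}_{Y>2\sqrt B}] = \int \mathbbm{1}_{Y>2\sqrt B} \cdot Y \le \frac{1}{2\sqrt B}\mb{E}[Y^2\mathbbm{1}_{Y>2\sqrt B}]$, and then bound $\mb{E}[Y^2\mathbbm{1}_{Y>2\sqrt B}]\le \mb{E}[Y^4]/(4B)\le 1/4$, giving $\mb{E}[Y\mathbbm{1}_{Y>2\sqrt B}]\le 1/(8\sqrt B)$. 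Therefore $\mb{E}[Y\mathbbm{1}_{0<Y\le 2\sqrt B}]\ge 1/(2\sqrt B) - 1/(8\sqrt B) = 3/(8\sqrt B)$.

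Finally I would convert this truncated-mean bound into a probability bound: since $Y\mathbbm{1}_{0<Y\le 2\sqrt B}\le 2\sqrt B\cdot\mathbbm{1}_{0<Y\le 2\sqrt B}$ pointwise, taking expectations gives $\Pr[0<Y\le 2\sqrt B]\ge \tfrac{1}{2\sqrt B}\cdot\tfrac{3}{8\sqrt B} = \tfrac{3}{16 B}\ge \tfrac{1}{5B}$. Translating back to $X = -Y$ yields $\Pr[-2\sqrt B\sigma(X)\le X\le 0]\ge 1/(5B)$ (the boundary point $Y=0$ only helps). The main obstacle — really the only delicate point — is arranging the constants in the truncation argument so that the final probability lower bound comes out cleanly at least $1/(5B)$; one must be slightly careful about which Hölder/Markov tradeoff to use for the upper tail term, but as sketched above using the $Y^2\mathbbm{1}_{Y>2\sqrt B}$ trick gives comfortable room. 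Alternatively, one can cite \cite[Lemma~3.2(i)]{AGS04} directly and skip the computation entirely.
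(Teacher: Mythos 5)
Your approach is genuinely different from the paper's: the paper proves the lemma in one line by exhibiting the pointwise polynomial majorization $-x(x+2\sqrt B)(x-\sqrt B)^2=(B-(x+\sqrt B)^2)(x-\sqrt B)^2\le 9B^2\mbm{1}_{-2\sqrt B\le x\le 0}$ and taking expectations, which (after normalizing $\sigma(X)=1$) gives $9B^2\Pr[-2\sqrt B\le X\le 0]\ge -\mb E[X^4]+3B\ge 2B$, i.e.\ a bound of $2/(9B)$. Your truncated-first-moment argument is structurally sound: the identity $\mb E[Y\mbm{1}_{Y>0}]=\tfrac12\mb E|Y|$, the interpolation $\mb E[Y^2]\le\mb E[|Y|]^{2/3}\mb E[Y^4]^{1/3}$ giving $\mb E[Y\mbm{1}_{Y>0}]\ge 1/(2\sqrt B)$, and the tail estimate $\mb E[Y\mbm{1}_{Y>2\sqrt B}]\le\mb E[Y^2\mbm{1}_{Y>2\sqrt B}]/(2\sqrt B)\le 1/(8\sqrt B)$ are all correct.

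However, the final numerical step fails: you assert $\tfrac{3}{16B}\ge\tfrac{1}{5B}$, but $3/16=0.1875<0.2=1/5$. So as written your argument only establishes $\Pr[-2\sqrt B\,\sigma(X)\le X\le 0]\ge 3/(16B)$, which is strictly weaker than the stated bound. This is precisely the ``delicate point'' you flagged, and none of the alternative tradeoffs you sketch close the gap: the other Hölder/Markov bounds on the tail term give the same or worse constants, and replacing the last pointwise bound $Y\mbm{1}_{0<Y\le2\sqrt B}\le 2\sqrt B\,\mbm{1}_{0<Y\le2\sqrt B}$ by Cauchy--Schwarz yields only $9/(64B)$. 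To obtain the constant $1/(5B)$ you would need either a sharper argument (such as the paper's polynomial majorization) or to fall back on citing \cite[Lemma~3.2(i)]{AGS04}, as you mention and as the paper itself notes is possible. (For what it is worth, a constant of $3/(16B)$ would still suffice for the lemma's one application in the proof of \cref{thm:gaussian-anticoncentration-technical}(2), where only some absolute positive constant is needed; but it does not prove the lemma as stated.)
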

\begin{proof}
By rescaling we may assume that $\sigma(X) = 1$. Note that then
\begin{align*}
9B^2\cdot \Pr[-2\sqrt{B}\le X\le 0] &=\mb{E}[9B^2\mbm{1}_{-2\sqrt{B}\le X\le 0}]\\
&\ge \mb{E}[-X(X+2\sqrt{B})(X-\sqrt{B})^2]\\
&=-\mb{E}[X^4]+3B\cdot \mb{E}[X^2]-2B^{3/2}\mb{E}[X]=-\mb{E}[X^4]+3B\ge 2B
\end{align*}
where we have used that $-x(x+2\sqrt{B})(x-\sqrt{B})^2=(B - (x+\sqrt{B})^2)(x-\sqrt{B})^2\le 9B^2\mbm{1}_{-2\sqrt{B}\le x\le 0}$ for all $x\in\mb{R}$. The result follows. 
\end{proof}

Now we prove \cref{thm:gaussian-anticoncentration-technical}(2).

\begin{proof}[Proof of \cref{thm:gaussian-anticoncentration-technical}(2)]
We may assume $\sigma(f)=1$. Borrowing the notation from the proof of \cref{thm:gaussian-anticoncentration-upper-bound}, we write
\[
f-\mb E f= \sum_{i=1}^n(a_iW_i+\lambda_i(W_i^2-1)),
\]
with $(W_1,\ldots,W_n)\sim \mathcal{N}(0,1)^{\otimes n}$, and $\sigma_i^2=a_i^2+2\lambda_i^2$ (then we have $1=\sigma^2=\sigma_1^2+\cdots+\sigma_n^2$). It now suffices to prove that for all $u\in [0,A+1]$ we have $p_{f-\mb E f}(u) \gtrsim_A 1$. Let $L$ be a large integer depending only on $A$ (such that $L\geq 2$ and $L\geq C_{\ref{lem:gaussian-tricks}}(1+32C_{\ref{lem:gaussian-tricks}}')\cdot 2\sqrt{2\pi}\cdot e^{(A+1)^2/2}$ for the constants $C_{\ref{lem:gaussian-tricks}}$ and $C_{\ref{lem:gaussian-tricks}}'$ in \cref{lem:gaussian-tricks}). We break into cases.

First, suppose $\max_i\sigma_i\le1/L$. In this case, we define $\Gamma=\sigma(f)^{3}/\sum_{i=1}^n\sigma_i^3=1/\sum_{i=1}^n\sigma_i^3$ and note that  $\sum_{i=1}^{n}\sigma_i^3\le (\max_i\sigma_i)(\sum_{i=1}^{n} \sigma_i^2)\le 1/L$, so $\Gamma\ge L$. We also have $\sigma_i^2\leq 1/L^2\leq 1/4$, so \cref{lem:gaussian-tricks}(b) applies. So by combining parts (a) and (b) of \cref{lem:gaussian-tricks}, for all $u\in [0,A+1]$ we obtain, as desired,
\[p_{f-\mb E f}(u)\ge \frac{e^{-u^2/2}}{\sqrt{2\pi}}-\frac{C_{\ref{lem:gaussian-tricks}}(1+32C_{\ref{lem:gaussian-tricks}}')}{\Gamma}\ge \frac{e^{-(A+1)^2/2}}{\sqrt{2\pi}}-\frac{C_{\ref{lem:gaussian-tricks}}(1+32C_{\ref{lem:gaussian-tricks}}')}{L}\ge \frac12\cdot \frac{e^{-(A+1)^2/2}}{\sqrt{2\pi}}\gtrsim_A 1.\]

Otherwise, there is $i^*\in [n]$ such that $\sigma_{i^*} \ge1/L$. We claim that then there is an index $j\in[n]$ satisfying at least one of the following two conditions:
\begin{itemize}
    \item[(1)] $\sigma_{j} \ge 1/(10(A+19)L^2)$ and $\lambda_{j}\geq 0$, or
    \item[(2)] $\sigma_{j} \ge 1/L$ and $10(A+19)L\cdot |\lambda_{j}|\le \sigma_{j}$.
\end{itemize}
Indeed, if $10(A+19)L\cdot |\lambda_{i^*}|\le \sigma_{i^*}$ we can simply take $j=i^*$ and (2) is satisfied. Otherwise we have $|\lambda_{i^*}|> \sigma_{i^*}/(10(A+19)L)\geq 1/(10(A+19)L^2)$ and the assumption in \cref{thm:gaussian-anticoncentration-technical}(2) yields $\lambda_1\geq |\lambda_{i^*}|\geq 1/(10(A+19)L^2)$. So in particular $\lambda_1\geq 0$ and $\sigma_1\geq \lambda_1\geq 1/(10(A+19)L^2)$ and we can take $j=1$ and (1) is satisfied.

Now, let $X_j=a_{j} W_{j}+\lambda_{j}(W_{j}^2-1)$ and let $X'=f-\mb E f-X_j=\sum_{i\neq j} (a_i W_i+\lambda_i(W_i^2-1))$ contain all terms of $f-\mb E f$ except the term $X_j$. By \cref{thm:gauss-moment} (hypercontractivity) we have $\mb E[(X')^4]\leq 81\sigma(X')^4$ and therefore \cref{lem:paley} shows that $-18\le -18\sigma(X')\le X'\le 0$ with probability at least $1/405$. 

We claim that we can apply \cref{lem:explicit-gaussian} to $X_j$ and $u\in [0,A+19]$, showing that $p_{X_j}(u)\gtrsim_A 1/\sigma_{j}\geq 1$. Indeed, in case (1) we have $0\leq u\leq 10(A+19)^2L^2\sigma_j$ and can apply case (1) of \cref{lem:explicit-gaussian} with $A'=10(A+19)^2L^2$, while in case (2) we have $0\leq u\leq (A+19)L\sigma_j$ and can apply case (2) of \cref{lem:explicit-gaussian} with $A'=(A+19)L$.

Therefore, for any $u\in [0,A+1]$ we obtain
\[p_{f-\mb E f}(u)=p_{X'+X_j}(u)\geq \int_{-18}^{0}  p_{X'}(y)p_{X_j}(u-y)\,dy\gtrsim_A \int_{-18}^{0}  p_{X'}(y)\,dy =\Pr[-18\leq X'\leq 0]\gtrsim 1.\qedhere\]
\end{proof}

\subsection{Non-uniform anticoncentration}\label{sub:upper-non-uniform}
In this subsection we prove \cref{thm:gaussian-anticoncentration-technical}(1), which is essentially a non-uniform version of \cref{thm:gaussian-anticoncentration-upper-bound}. We begin with a lemma giving non-uniform anticoncentration bounds for a quadratic polynomial of a single Gaussian variable, i.e., for one of the terms in our sum.

\begin{lemma}\label{lem:explicit-gaussian-2}
Let $W\sim \mc N(0,1)$ and for some $a,\lambda\in \mathbb{R}$ (not both zero) let $X=aW+\lambda(W^2-1)$, so $\sigma^2:=\sigma(X)^2=a^2+2\lambda^2$. Suppose we are given some $x\geq 10^3\sigma$ satisfying $|\lambda|\cdot x\leq a^2/10$. Then for each $u\in\mb{R}$ with $x/10\leq |u|\leq 2x$, we have \[p_X(u)\lesssim \frac{1}{|a|}\exp\left(-\frac{x}{\sigma}\right).\]
\end{lemma}
\begin{proof}
Define the function $g\colon\mathbb{R}\to\mathbb{R}$ by $g(t)=at+\lambda(t^2-1)$. As in the proof of \cref{lem:explicit-gaussian}, we can calculate $(g'(t))^2=4\lambda\cdot g(t)+4\lambda^2+a^2$ for all $t\in\mb{R}$. Now, consider some $u\in\mb{R}$ with $x/10\leq |u|\leq 2x$. There are at most two different $t\in\mb{R}$ with $g(t)=u$. For any such $t$, we have (using the assumption that $|\lambda|\cdot x\leq a^2/10$) \[(g'(t))^2\geq 4\lambda^2+a^2-4|\lambda|\cdot 2x\geq a^2/5\geq a^2/9.\]
We furthermore claim that any such $t$ must satisfy $|t|\geq x/(20|a|)$. Indeed, if $|t|< x/(20|a|)$, then (using that $x\geq 10^3\sigma\geq 10^3a$ and the assumption $|\lambda|\cdot x\leq a^2/10$)
\[|g(t)|=|at+\lambda(t^2-1)|\leq |a|\cdot \frac{x}{20|a|}+|\lambda|\cdot \max\left\{\frac{x^2}{400a^2},1\right\}\leq \frac{x}{20}+|\lambda|\cdot\frac{x^2}{400a^2}\leq \frac{x}{20}+\frac{x}{4000}<\frac{x}{10}.\]
As $|u|\geq x/10$, this contradicts $g(t)=u$. Thus any $t\in\mb{R}$ with $g(t)=u$ must indeed also satisfy $|t|\geq x/(20|a|)$. Now, we obtain (using again that $x\geq 10^3\sigma\geq 10^3a$)
\[p_X(u)=\sum_{\substack{t\in\mb{R}\\g(t)=u}} \frac{p_W(t)}{|g'(t)|}\leq 2\cdot \frac{1}{|a|/3}\cdot \exp\left(-\frac{x^2}{800a^2}\right)\lesssim \frac{1}{|a|}\exp\left(-\frac{x}{\sigma}\right).\qedhere\]
\end{proof}

Now, we prove \cref{thm:gaussian-anticoncentration-technical}(1). The main idea is to divide our random variable $f-\mb E f$ into independent parts, to take advantage of exponential tail bounds (by \cref{thm:concentration-hypercontractivity} or \cref{lem:explicit-gaussian-2}) for one of the parts, and anticoncentration bounds (by \cref{thm:gaussian-anticoncentration-upper-bound}) for the rest of the parts.

\begin{proof}[Proof of \cref{thm:gaussian-anticoncentration-technical}(1)]
By rescaling, we may assume $\sigma:=\sigma(f)=1$. If $|x|\leq 10^3=10^3\sigma(f)$, the desired bound follows from \cref{thm:gaussian-anticoncentration-upper-bound}. So we may assume that $|x|\geq 10^3\sigma(f)$. Also note that the assumption in \cref{thm:gaussian-anticoncentration-technical}(1) implies that $\eta\le 1$. Borrowing the notation from the proof of \cref{thm:gaussian-anticoncentration-upper-bound}, we write
\[
f-\mb E f= \sum_{i=1}^n(a_iW_i+\lambda_i(W_i^2-1)),
\]
with $(W_1,\ldots,W_n)\sim \mc N(0,1)^{\otimes n}$ and $\sigma_i^2=a_i^2+2\lambda_i^2$ (then we have $1=\sigma^2=\sigma_1^2+\cdots+\sigma_n^2$). We may assume that $|\lambda_1|\geq \cdots\geq|\lambda_n|$. Note that using \cref{thm:eckart-young} the assumption in \cref{thm:gaussian-anticoncentration-technical}(1) implies that for every subset $I\su [n]$ of size $|I|=n-3$ we have $\sum_{i\in I}\lambda_i^2\geq \eta(\lambda_1^2+\cdots+\lambda_n^2)$. In particular, $\sum_{i=4}^{n}\lambda_i^2\geq\eta(\lambda_1^2+\cdots+\lambda_n^2)$.

By adding at most three terms with $a_i=\lambda_i=0$, we may assume that $n\equiv 1 \pmod{4}$. For a subset $J\subseteq[n]$, let $X_{J}=\sum_{i\in J}(a_iW_i+\lambda_i(W_i^2-1))$
and $\sigma_{J}^{2}=\sum_{i\in J}\sigma_{i}^2=\sigma(X_{J})^2$. 

Let $i^*\in [n]$ be chosen such that $\sigma_{i^*}^2$ is maximal, and define $J_0=\{i^*\}$. We claim that we can find a partition of $[n]\setminus J_0=[n]\setminus \{i^*\}$ into four subsets $J_1,J_2,J_3,J_4$ satisfying the following conditions.
\begin{enumerate}
    \item[(a)] For $h=1,2,3,4$, we have $\sigma_{[n]\setminus J_h}^2\geq \eta/2$.
    \item[(b)] For any $h=0,\ldots,4$ and any subset $I\su [n]\setminus J_h$ of size $|I|=n-|J_h|-2$, we have $\sum_{i\in I}\lambda_i^2\geq (\eta/4)\cdot (\lambda_1^2+\cdots+\lambda_n^2)$.
\end{enumerate}
Indeed, we can build such a partition iteratively: let us divide $[n]\setminus \{i^*\}$ into $n/4$ quadruplets (starting with the four smallest indices, then the next four, and so on). Iteratively, for each quadruplet, distribute one element to each of $J_1,J_2,J_3,J_4$ in the following way. We assign the index $i$ in the quadruplet with the largest $\sigma_i^2$ to the set $J_h$ which had the smallest value of $\sigma_{J_h}^{2}$ at the end of the last step, we assign the index $i$ with the second-largest $\sigma_i^2$ to the set $J_h$ which had the second-smallest value of $\sigma_{J_h}^{2}$, and so on. One can check that this assignment process maintains the property that at the end of any step, the values $\sigma_{J_h}^{2}$ for $h=1,2,3,4$ differ by at most $\max_{i}\sigma_{i}^{2}=\sigma_{i^*}^2$. Hence $\sigma_{[n]\setminus J_1}^2\geq \sigma_{J_2}^2+\sigma_{i^*}^2\geq \sigma_{J_1}^2=1-\sigma_{[n]\setminus J_1}^2$, so $\sigma_{[n]\setminus J_1}^2\geq 1/2\geq\eta/2$. Analogously, one can show $\sigma_{[n]\setminus J_h}^2\geq \eta/2$ for $h=2,3,4$, so (a) is satisfied. To check (b), note that for each $h=0,\ldots,4$ the set $[n]\setminus J_h$ is missing either one element from each of the quadruplets considered during the construction (if $1\leq h\leq 4$) or is missing one element in total (if $h=0$). For a subset $I\su [n]\setminus J_h$ of size $|I|=n-|J_h|-2$, two additional elements are missing. Thus, for every $k=1,\ldots,n/4$ the set $I\su [n]$ is missing at most $k+2$ of the elements in $[4k]$. Thus, recalling that $|\lambda_1|\geq\cdots\geq|\lambda_n|$, we obtain
\[
\sum_{i\in I}\lambda_i^2 \ge \lambda_4^2+(\lambda_6^2+\lambda_7^2+\lambda_8^2)+(\lambda_{10}^2+\lambda_{11}^2+\lambda_{12}^2)+\cdots \geq \lambda_4^2+\lambda_8^2+\lambda_{12}^2+\cdots\ge \frac{1}{4}\sum_{i=4}^{n}\lambda_i^2\geq  (\eta/4)\cdot (\lambda_1^2+\cdots+\lambda_n^2).
\]
This establishes (b). Thus, the sets $J_1,\ldots,J_4$ indeed satisfy the desired conditions.

By our assumption $|x|\geq 10^3\sigma(f)$ and by $0\leq \eps\leq \sigma(f)$, we have $|y|\geq 0.999|x|\geq (5/6)\cdot |x|$ for all $y\in [x,x+\eps]$. Thus, whenever $f-\mb{E} f=\sum_{i=1}^n (a_iW_i+\lambda_i(W_i^2-1))=X_{J_0}+\cdots+X_{J_4}$ is contained in the interval $[x,x+\eps]$, we must have $|X_{J_h}|\geq |x|/6$ for at least one $h\in \{0,\ldots,4\}$. So, we have
\begin{equation}\label{eq:non-uniform-anticoncentration-first-bound}
\Pr[f-\mb{E} f\in [x,x+\eps]]
\leq \sum_{h=0}^{4}\Pr\Big[|X_{J_h}|\geq |x|/6\text{ and }X_{[n]\setminus J_h}\in [x-X_{J_h},x-X_{J_h}+\eps]\Big].
\end{equation}
For $h= 1,\ldots,4$, note that
\begin{align}
&\Pr\Big[|X_{J_h}|\geq |x|/6\text{ and }X_{[n]\setminus J_h}\in [x-X_{J_h},x-X_{J_h}+\eps]\Big]\notag\\
&\qquad\le \Pr[|X_{J_h}|\ge|x|/6]\cdot \mc L(X_{[n]\setminus J_h},\varepsilon)\lesssim_\eta \exp\left(-\frac{2}{2e}\cdot \frac{|x|}{6\sigma_{J_h}}\right)\cdot \frac{\varepsilon}{\sigma_{[n]\setminus J_h}} \lesssim_{\eta}\frac{\eps}{\sigma}\exp\left(-\Omega\left(\frac{|x|}{\sigma}\right)\right),\label{eq:non-uniform-anticoncentration-summand}
\end{align}
where in the second step we applied \cref{thm:concentration-hypercontractivity} to $X_{J_h}$ with $t=|x|/(6\sigma_{J_h})\geq |x|/(6\sigma)$ and  \cref{thm:gaussian-anticoncentration-upper-bound} to $X_{[n]\setminus J_h}$ (noting that the assumption of \cref{thm:gaussian-anticoncentration-upper-bound} is satisfied by condition (b), see also \cref{rem:robust-rank-eigenvalue-condition}), and in the last step we used that $\sigma_{[n]\setminus J_h}^2\geq \eta/2$ by condition (a).

We now distinguish two cases. First, let us assume that $\sigma_{[n]\setminus J_0}\ge \eta^2/(100|x|)$. In this case, similarly to \cref{eq:non-uniform-anticoncentration-summand}, we can bound (recalling that $\sigma=1$)
\begin{align*}
&\Pr\Big[|X_{J_0}|\geq |x|/6\text{ and }X_{[n]\setminus J_0}\in [x-X_{J_0},x-X_{J_0}+\eps]\Big]\\
&\qquad\lesssim_\eta \exp\left(-\frac{2}{2e}\cdot \frac{|x|}{6\sigma_{J_0}}\right)\cdot \frac{\varepsilon}{\sigma_{[n]\setminus J_0}} \lesssim_{\eta}\frac{\eps}{\sigma}\cdot \frac{|x|}{40\sigma}\cdot  \exp\left(-\frac{|x|}{20\sigma}\right)\le \frac{\eps}{\sigma}\exp\left(-\frac{|x|}{40\sigma}\right),
\end{align*}
where in the last step we used that $te^{-t}\le 1/e\le 1$ for all $t\in \mb{R}$ (specifically, we used this for $t=|x|/(40\sigma)$). Together with \cref{eq:non-uniform-anticoncentration-summand}, this enables us to bound all five summands on the right-hand side of \cref{eq:non-uniform-anticoncentration-first-bound}, implying the desired bound for $\Pr[f-\mb{E} f\in [x,x+\eps]]$.



It remains to consider the case that $\sigma_{[n]\setminus J_0}< \eta^2/(100|x|)$. Then we in particular have $\sigma_{i^*}^2=1-\sigma_{[n]\setminus J_0}^2\ge 1-\eta^4/(10^4|x|^2)\ge 1-\eta/2$. Furthermore, the assumption in \cref{thm:gaussian-anticoncentration-technical}(1) implies $\sum_{i\in [n]\sm \{i^*\}} \lambda_i^2\geq \eta (\lambda_1^2+\cdots+\lambda_n^2)$, and therefore $\lambda_{i^*}^2\leq (1-\eta)(\lambda_1^2+\cdots+\lambda_n^2)\leq (1-\eta)/2$ (recalling that $1=\sigma^2=\sum_{i=1}^{n}(a_i^2+2\lambda_i^2)$). Thus, we obtain $a_{i^*}^2=\sigma_{i^*}^2-2\lambda_{i^*}^2\geq (1-\eta/2)-(1-\eta)= \eta/2$. Our assumption also implies $\eta^4/(10^4|x|^2)> \sigma_{[n]\setminus J_0}^2 \ge\sum_{i\in [n]\sm \{i^*\}} \lambda_i^2 \ge \eta \lambda_{i*}^2$, meaning that $|\lambda_{i^*}|\cdot |x|\le \eta/100 \le a^2_{i^*}/10$.

Now, we observe
\begin{align*}
    \Pr[f-\mb{E} f\in [x,x+\eps]]&\leq \sum_{h=1}^{4}\Pr\Big[|X_{J_h}|\geq |x|/6\text{ and }X_{[n]\setminus J_h}\in [x-X_{J_h},x-X_{J_h}+\eps]\Big]\\
    &\qquad\qquad+\Pr\Big[|X_{[n]\setminus J_0}|\leq (4/6)|x|\text{ and }X_{J_0}\in [x-X_{[n]\setminus J_0},x-X_{[n]\setminus J_0}+\eps]\Big].
\end{align*}
Again, \cref{eq:non-uniform-anticoncentration-summand} gives an upper bound  for the summands for $h=1,\ldots,4$. To bound the last summand, let us fix any outcome of $X_{[n]\setminus J_0}$ with $|X_{[n]\setminus J_0}|\leq (4/6)|x|$. Then the probability that $X_{J_0}=a_{i^*}W_{i^*}+\lambda_{i^*}(W_{i^*}^2-1)$ lies in the interval $[x-X_{[n]\setminus J_0},x-X_{[n]\setminus J_0}+\eps]$ (which has length $\eps$ and is somewhere between $x/10$ and $2x$) is by \cref{lem:explicit-gaussian-2} bounded by
\[\Pr[X_{J_0}\in [x-X_{[n]\setminus J_0},x-X_{[n]\setminus J_0}+\eps]]\lesssim \frac{\eps}{|a_{i^*}|}\exp\left(-\frac{|x|}{\sigma_{J_0}}\right)\lesssim_\eta \frac{\eps}{\sigma}\exp\left(-\frac{|x|}{\sigma}\right),\]
where in the last step we used that $a_{i^*}^2\geq \eta/2$ (see above). Thus, we again obtain the desired bound for $\Pr[f-\mb{E} f\in [x,x+\eps]]$.
\end{proof}

\subsection{Control of Gaussian characteristic functions}\label{sub:gaussian-quadratic-fourier}
For later, we also record the fact that under a robust rank assumption, characteristic functions of certain ``quadratic'' functions of Gaussian random variables decay rapidly.

\begin{lemma}\label{lem:gaussian-high-t}
Fix a positive integer $r$. Let $\vec Z = (Z_1,\ldots,Z_n)\sim\mc{N}(0,1)^{\otimes n}$ be a vector of independent standard Gaussian random variables. Consider a real quadratic polynomial $f(\vec Z)$ of $\vec Z$, written as 
\[f(\vec Z)=\vec Z^\intercal F \vec Z+\vec f\cdot \vec Z+f_0\]
for some symmetric matrix $F\in\mb{R}^{n\times n}$, some vector $\vec f\in\mb{R}^n$ and some $f_0\in\mb{R}$. Let
\[s=\min_{\substack{\wt F\in\mb{R}^{n\times n}\\\on{rank} \wt F\le r}}\|F-\wt F\|_\mr F^2.\]
Then for any $\tau\in\mb{R}$, we have
\[|\varphi_{f(\vec Z)}(\tau)|=|\mb{E}[\exp(i\tau f(\vec{Z}))]|\lesssim_{r} \frac{1}{(1+\tau^2s)^{r/4}}.\]
\end{lemma}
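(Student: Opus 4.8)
The plan is to reduce, by orthogonal diagonalization, to a sum of independent one-variable quadratic functions of Gaussians, bound each factor of the characteristic function via \cref{lem:char-gauss}, and then show that the resulting product of factors $(1+4\lambda_i^2\tau^2)^{-1/4}$ is small using a combinatorial averaging argument over residue classes (in the spirit of the ``keep every third eigenvalue'' trick in the proof of \cref{lem:inductive-trick}). We may assume $r<n$, since otherwise $s=0$ (taking $\wt F=F$) and the claim is trivial.

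First I would write $F=QDQ^\intercal$ with $Q$ orthogonal and $D=\on{diag}(\lambda_1,\dots,\lambda_n)$, ordered so that $|\lambda_1|\ge\cdots\ge|\lambda_n|$; since $\vec W:=Q^\intercal\vec Z$ is again distributed as $\mc N(0,1)^{\otimes n}$, we may write $f(\vec Z)=f_0+(Q^\intercal\vec f)\cdot\vec W+\sum_{i=1}^n\lambda_iW_i^2$, which up to an additive constant is a sum of independent terms of the form $a_iW_i+\lambda_iW_i^2$. Hence $|\varphi_{f(\vec Z)}(\tau)|=\prod_{i=1}^n|\varphi_{a_iW_i+\lambda_iW_i^2}(\tau)|$, and \cref{lem:char-gauss} (discarding the exponential factor, which is at most $1$) bounds this by $\prod_{i=1}^n(1+4\lambda_i^2\tau^2)^{-1/4}$. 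So it suffices to prove $\prod_{i=1}^n(1+4\lambda_i^2\tau^2)\gtrsim_r(1+\tau^2 s)^r$.

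Next, \cref{thm:eckart-young} identifies $s=\min_{|I|=n-r}\sum_{i\in I}\lambda_i^2$, and since $\lambda_1^2\ge\cdots\ge\lambda_n^2$ this equals $\sum_{i=r+1}^n\lambda_i^2$ (I would set $\lambda_i:=0$ for $i>n$ to avoid edge cases). Then I would partition $[n]$ into the $r$ residue classes $I_j=\{i\in[n]:i\equiv j\pmod r\}$ for $j=1,\dots,r$, and observe that for each integer $k\ge0$ the index $kr+j$ satisfies $kr+j\le(k+1)r+1$, so $\lambda_{kr+j}^2\ge\lambda_i^2$ for every $i\in\{(k+1)r+1,\dots,(k+2)r\}$ and hence $\lambda_{kr+j}^2\ge\tfrac1r\sum_{i=(k+1)r+1}^{(k+2)r}\lambda_i^2$; summing over $k$ gives $\sum_{i\in I_j}\lambda_i^2\ge\tfrac1r\sum_{i\ge r+1}\lambda_i^2=s/r$ for each $j$. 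Combining $\prod_{i\in I_j}(1+4\lambda_i^2\tau^2)\ge 1+4\tau^2\sum_{i\in I_j}\lambda_i^2\ge 1+4\tau^2 s/r$ over the $r$ classes yields $\prod_{i=1}^n(1+4\lambda_i^2\tau^2)\ge(1+4\tau^2 s/r)^r$, and finally $(1+4y/r)^r\gtrsim_r(1+y)^r$ for all $y\ge0$ (since $1+4y/r\ge(4/r)(1+y)$ when $r\ge4$, and $1+4y/r\ge 1+y$ when $r\le4$), which completes the argument.

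I do not expect a serious obstacle here. The one step requiring a little care is the residue-class estimate $\sum_{i\in I_j}\lambda_i^2\ge s/r$, where one must line up the monotonicity of $\lambda_i^2$ with the shifted blocks $\{(k+1)r+1,\dots,(k+2)r\}$ correctly (this is the analogue, for general $r$, of the step $\sum_{k}\lambda_{3k}^2\ge\tfrac13\sum_{k\ge3}\lambda_k^2$ in the proof of \cref{lem:inductive-trick}). Everything else is the routine diagonalize-and-factor reduction together with elementary inequalities.
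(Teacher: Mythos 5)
Your proof is correct and follows essentially the same route as the paper's: orthogonally diagonalize, factor the characteristic function and bound each factor via \cref{lem:char-gauss}, identify $s=\sum_{i>r}\lambda_i^2$ via \cref{thm:eckart-young}, and then combine the factors within residue classes mod $r$ using $\prod(1+x_i)\ge 1+\sum x_i$ together with the monotonicity of the $\lambda_i^2$. The only cosmetic difference is that you lower-bound each residue class's contribution by $s/r$ separately, whereas the paper bounds all $r$ classes by the worst one (the class of multiples of $r$); both yield $(1+4\tau^2 s/r)^{-r/4}$ and hence the claim.
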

\begin{proof}
Let $\lambda_1,\ldots,\lambda_n$ be the eigenvalues of $F$, ordered such that $|\lambda_1|\ge\cdots\ge|\lambda_n|$. By \cref{thm:eckart-young}, we have $s=\sum_{j=r+1}^n\lambda_j^2$.

As in the proof of \cref{thm:gaussian-anticoncentration-upper-bound}, we write $f(\vec Z)-\mb{E}[f(\vec Z)]=\sum_{j=1}^{n}(a_jW_j + \lambda_i(W_j^2-1))$, where $(W_1,\ldots,W_n)\sim \mc{N}(0,1)^{\otimes n}$ are independent standard Gaussians. From \cref{lem:char-gauss}, recall that
\[|\mb{E}\exp(i\tau (a_j W_j+\lambda_j(W_j^2-1)))|=|\mb{E}\exp(i\tau (a_j W_j+\lambda_jW_j^2))| \le \frac{1}{(1+4\lambda_j^2\tau^2)^{1/4}}.\]
for $j=1,\ldots,n$. We then deduce
\begin{align*}
|\mb{E}[\exp(i\tau f(\vec{Z}))]| &= \prod_{j=1}^{n}|\mb{E}[\exp(i\tau (a_jW + \lambda_j(W_j^2-1)))]|
\le \prod_{j=1}^{n}\frac{1}{(1+4\lambda_j^2\tau^2)^{1/4}}\\
&\le \prod_{j=1}^{r}\left(1+4\tau^2\sum_{t=0}^{\lfloor{(n-j)/r}\rfloor}\lambda_{j + rt}^2\right)^{-1/4}\le \left(1+4\tau^2\sum_{t=0}^{\lfloor{(n-r)/r}\rfloor}\lambda_{r+rt}^2\right)^{-r/4}\\
&\le \left(1+\frac{4\tau^2}r\sum_{j=r+1}^{n}\lambda_{j}^2\right)^{-r/4}\lesssim_{r} \frac{1}{(1+\tau^2s)^{r/4}}.\qedhere
\end{align*}
\end{proof}

\section{Small-ball probability via characteristic functions}\label{sec:fourier-analysis}
Recall that \emph{Esseen's inequality} (\cref{thm:esseen}) states that $\mc L(X,\eps)\lesssim \eps \int_{-2/\varepsilon}^{2/\varepsilon}|\varphi_X(t)|\,dt$ for any real random variable $X$. We will need a ``relative'' version of Esseen's inequality, as follows.

\begin{lemma}\label{lem:esseen-upper-crude}
Let $X,Y$ be real random variables. For any $\varepsilon>0$ we have
\[\mc L(X,\varepsilon)\lesssim \mc L(Y,\varepsilon)+\varepsilon\int_{-2/\varepsilon}^{2/\varepsilon} |\varphi_X(t)-\varphi_Y(t)|\,dt.\]
\end{lemma}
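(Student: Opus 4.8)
The plan is to prove \cref{lem:esseen-upper-crude} by combining the ordinary Esseen inequality (\cref{thm:esseen}) with a smoothing/comparison trick: rather than applying Esseen directly to $X$, we first convolve $X$ with a small auxiliary random variable so that we can compare to $Y$. Actually, the cleanest route is to avoid the na\"ive step $|\varphi_X|\le |\varphi_X-\varphi_Y| + |\varphi_Y|$ (which would only give $\mc L(X,\eps) \lesssim \eps\int|\varphi_X-\varphi_Y| + \eps\int|\varphi_Y|$, and the latter integral is not directly controlled by $\mc L(Y,\eps)$). Instead I would go through a smoothed version of the Esseen argument.

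Here is the approach in more detail. Recall the standard proof of Esseen's inequality: one picks a fixed ``bump'' density $g$ (supported near the origin, with $\wh g$ supported in $[-2,2]$, say via the Fej\'er kernel so $\wh g \ge 0$), rescales it to width $\eps$, and writes, for the rescaled density $g_\eps(u) = \eps^{-1}g(u/\eps)$, the bound $\Pr[|X-x|\le \eps] \lesssim (X * g_\eps)(\text{near }x) \cdot \eps$ — more precisely one bounds the small-ball probability of $X$ by a constant times $\eps$ times the sup of the density of $X + \eps G$ where $G$ has density $g$. The density of $X+\eps G$ at a point $u$ is $\frac1{2\pi}\int e^{-itu}\varphi_X(t)\wh g(\eps t)\,dt$, and since $\wh g(\eps t)$ is supported on $|t|\le 2/\eps$ and bounded by $1$, this is at most $\frac1{2\pi}\int_{-2/\eps}^{2/\eps}|\varphi_X(t)|\,dt$. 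The key realization for the relative version is to split $\varphi_X = (\varphi_X - \varphi_Y) + \varphi_Y$ \emph{inside this integral representation of the density}: the $(\varphi_X-\varphi_Y)$ part contributes at most $\frac1{2\pi}\int_{-2/\eps}^{2/\eps}|\varphi_X(t)-\varphi_Y(t)|\,dt$ to the density (hence $\lesssim \eps\int_{-2/\eps}^{2/\eps}|\varphi_X-\varphi_Y|$ to the small-ball probability), while the $\varphi_Y$ part is exactly (a constant times) the density of $Y + \eps G$, whose sup is $\lesssim \mc L(Y,\eps)/\eps$ by running the first half of the Esseen argument in reverse — i.e.\ the density of $Y+\eps G$ is controlled by the L\'evy concentration of $Y$ because $g_\eps$ is an approximate identity of width $\eps$, so $(Y*g_\eps)(u) \lesssim \eps^{-1}\sup_x \Pr[|Y-x|\le O(\eps)] = \eps^{-1}\mc L(Y, O(\eps))$, and one absorbs the $O(\eps)$ vs $\eps$ discrepancy into the implied constant (or chooses $g$ supported in $[-1,1]$ from the start).

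So the steps, in order, are: (1) fix the bump function $g$ (Fej\'er-type, so $\wh g\ge 0$, $\wh g$ supported in $[-2,2]$, $\wh g(0)=1$, $g\ge 0$, $\int g = 1$, and $g$ bounded below by a positive constant on $[-1,1]$); (2) show $\mc L(X,\eps)\lesssim \eps\,\|p_{X+\eps G}\|_\infty$ where $G\sim g$ independent of $X$ — this is the elementary half of Esseen; (3) write $p_{X+\eps G}(u) = \frac1{2\pi}\int e^{-itu}\varphi_X(t)\wh g(\eps t)\,dt$ and split the integrand's $\varphi_X$ into $\varphi_Y + (\varphi_X-\varphi_Y)$; (4) bound the $\varphi_Y$-term by $p_{Y+\eps G}(u) \lesssim \eps^{-1}\mc L(Y,\eps)$ (again the elementary half of Esseen, used in the convenient direction that $g$ being an approximate identity means convolving can only spread mass out by $O(\eps)$); (5) bound the difference term trivially by $\frac1{2\pi}\int_{-2/\eps}^{2/\eps}|\varphi_X-\varphi_Y|$; (6) combine. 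Each of these is standard; there is no serious obstacle. The only mildly delicate point — and thus the ``main obstacle'' such as it is — is being careful in step (4) that the density of $Y+\eps G$ really is bounded by a constant times $\eps^{-1}\mc L(Y,\eps)$ rather than by $\eps^{-1}\mc L(Y, C\eps)$ for some $C>1$; this is handled by choosing $g$ supported in $[-1,1]$ (so that $Y+\eps G$ puts, at any location, mass at most $\|g\|_\infty$ per unit length coming from $Y$-mass within distance $\eps$), at the mild cost of enlarging the Fourier support of $g$ from $[-2,2]$ to, say, $[-2,2]$ still achievable by the Fej\'er kernel $g(u)= c(\sin(u/2)/(u/2))^2$-type construction — or, even simpler, by noting $\mc L(Y,C\eps)\lesssim_C \mc L(Y,\eps)$ always holds (cover an interval of length $2C\eps$ by $O(C)$ intervals of length $2\eps$), so the constant $C$ can be absorbed into the $\lesssim$ anyway, which trivializes the point entirely.

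One remark: this lemma is exactly the ``relative Esseen'' inequality alluded to after \cref{thm:esseen}, and the more refined variants (with decay away from the mean, and with local-smoothness weights) mentioned in the introduction will presumably be proved by the same smoothing scheme but with a more carefully chosen, non-uniform bump function; for \cref{lem:esseen-upper-crude} itself the uniform bump suffices.
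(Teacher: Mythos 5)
Your proposal is correct and is essentially the paper's proof in dual notation: the paper takes $\psi=\mbm 1_{[-1,1]}*\mbm 1_{[-1,1]}$ and uses the Fej\'er-type function $f=\hat\psi$, $f(t)=(2\sin t/t)^2$, as a majorant of $\mbm 1_{[-1,1]}$, compares $\mb E[f(X-x)]$ with $\mb E[f(Y-x)]$ via the Fourier representation (cut off to $[-2,2]$ by the support of $\psi$), and then bounds $\mb E[f(Y-x)]\le\sum_{j}\tfrac{8}{j^2+1}\Pr[|Y-x-j|\le 1]\lesssim\mc L(Y,1)$ -- which is exactly your $p_{X+\eps G}$ computation with $g$ proportional to $f$. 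One correction to your ``delicate point'': a kernel $g$ cannot be supported in $[-1,1]$ while $\hat g$ is supported in $[-2,2]$ (the Fej\'er kernel is compactly supported only on the Fourier side), and for a kernel with unbounded support the absorption $\mc L(Y,C\eps)\lesssim_C\mc L(Y,\eps)$ alone does not suffice; the correct and standard fix, which is what the paper does, is the dyadic summation using $g(u)\lesssim 1/(1+u^2)$ displayed above, and it drops straight into your step (4).
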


In the proof of \cref{lem:esseen-upper-crude} we use the Fourier transform: for a function $f\in L^1(\mb{R})$ we write
\[\hat{f}(\xi) = \int_{-\infty}^{\infty}e^{-it\xi}f(t)\,dt.\]

\begin{proof}[Proof of \cref{lem:esseen-upper-crude}]
By rescaling it suffices to prove the claim when $\eps = 1$. Let us abbreviate the second summand on the right-hand side of the desired inequality by $I := \int_{-2}^2|\varphi_X(t)-\varphi_Y(t)|\,dt$. Furthermore, let $\psi = \mbm{1}_{[-1,1]}\ast\mbm{1}_{[-1,1]}$ (where $\ast$ denotes convolution); note that $0\le \psi(t)\le 2$ for all $t$, and the support of $\psi$ is inside the interval $[-2,2]$. Let $f(t) =\hat{\psi}(t) = (\widehat{\mbm{1}_{[-1,1]}}(t))^2$; we compute
\[f(t) =\bigg(\int_{-1}^{1}e^{-itx}\,dx\bigg)^2 =\bigg(\frac{2\sin t}{t}\bigg)^2.\]
for $t\neq 0$ and $f(0)=2^2$. Note that for $|t|\le 1$ we have $f(t)\ge  1$, and for all $t\in\mb{R}$ we have $f(t)\le\min\{4,4/t^2\}\leq 8/(t^2+1)$. By the formula for the Fourier transform and the triangle inequality, for any $x\in\mb{R}$ we have
\begin{align*}
|\mb{E}[f(X-x)-f(Y-x)]| &= \bigg|\mb{E}\int_{-\infty}^{\infty}\psi(\theta)(e^{-i\theta(X-x)}-e^{-i\theta(Y-x)})\,d\theta\bigg|\\
&\le \int_{-\infty}^{\infty}\psi(\theta)\big|\mb{E}\big[e^{-i\theta(X-x)}-e^{-i\theta(Y-x)}\big]\big|\,d\theta\\
&=\int_{-\infty}^{\infty}\psi(-t)|\varphi_X(t)-\varphi_Y(t)|\,dt\le 2\int_{-2}^2|\varphi_X(t)-\varphi_Y(t)|\,dt=2I.  
\end{align*}

Now, note that for any $s\in\mb{R}$ we have
\[\Pr[|X-s|\le 1]=\mb E[\mbm 1_{|X-s|\le 1}]\le \mb{E}[f(X-s)]\le \mb{E}[f(Y-s)] + |\mb{E}[f(X-s)-f(Y-s)]|\le \mb{E}[f(Y-s)]+2I,\]
and therefore
\begin{align}
    \Pr[|X-s|\le 1]\le  \mb{E}[f(Y-s)]+2I&\le \sum_{j\in\mb{Z}}\frac{8}{j^2+1}\Pr[|Y-s-j|\le 1] + 2I\label{eq:esseen-first-step}\\
    &\le \mc L(Y,1)\sum_{j\in\mb Z}\frac8{j^2+1}+2I\le 40\cdot \mc L(Y,1)+2I.\notag
\end{align}
Thus, $\mc L(X,1)\le 40\cdot \mc L(Y,1)+2I\lesssim L(Y,1)+I$, as desired.
\end{proof}

Next, we will need a slightly more sophisticated exponentially decaying \emph{non-uniform} version of \cref{lem:esseen-upper-crude}.

\begin{lemma}\label{lem:esseen-upper}
Let $X,Y$ be real random variables. Suppose that for some $0<\eta<1$ and $0<\eps\le\sigma$ we have
\[\Pr[|Y-x|\le \eps]\le \frac{\eps}{\eta\sigma}\exp(-\eta|x|/\sigma)\]
for all $x\in\mb{R}$.
Then for all $x\in\mb{R}$,
\[\Pr[|X-x|\le \eps]\lesssim \frac{\eps^2}{x^2 + \sigma^2}+ \frac{\eps}{\eta\sigma}\exp(-\eta|x|/(2\sigma))+\eps\int_{-2/\eps}^{2/\eps}|\varphi_X(t)-\varphi_Y(t)|\,dt.\]
\end{lemma}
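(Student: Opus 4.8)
The plan is to mimic the proof of \cref{lem:esseen-upper-crude}, but with a test function $f$ that decays at a prescribed exponential rate rather than merely polynomially, so that the decaying hypothesis on $Y$ can be exploited. Concretely, I would take $f$ to be (a rescaled version of) the Fourier transform of a smooth bump $\psi$ supported in $[-2/\eps,2/\eps]$ with $\psi\ge 0$, $\psi(\theta)\lesssim 1$, and $f(t)=\hat\psi(t)\gtrsim 1$ for $|t|\le\eps$. The key point is to arrange that $f(t)$ decays like $\eps\exp(-c\eta|t|/\sigma)$ (up to the polynomial floor $\eps^2/(t^2+\sigma^2)$ which is unavoidable from the compact support of $\psi$): this can be done by convolving $\mbm 1_{[-1/\eps,1/\eps]}$ with a suitable analytic bump (e.g.\ a scaled version of $e^{-1/(1-x^2)}$, or a Beurling–Selberg–type majorant) whose Fourier transform is positive and decays exponentially at rate governed by $\eta/\sigma$. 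Actually a cleaner route is to take $\psi$ itself to be a fixed smooth nonnegative bump and set $f_\sigma(t)= c\,\eps^{-1}\hat\psi(t\eps)$ combined with a second ``Gaussian-like'' piece of width $\sigma/\eta$; I would pick whichever concrete $f$ makes the two bounds $f(t)\gtrsim\mathbbm 1_{|t|\le\eps}$ and $f(t)\lesssim \eps^2/(t^2+\sigma^2)+(\eps/(\eta\sigma))\exp(-\eta|t|/(2\sigma))$ both hold, with $\hat f=\psi$ supported in $[-2/\eps,2/\eps]$ and $|\psi|\lesssim \eps$.

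With such an $f$ in hand, the argument proceeds exactly as before. First, by Fourier inversion and the triangle inequality,
\[
\bigl|\mb E[f(X-x)-f(Y-x)]\bigr|\le \int_{-2/\eps}^{2/\eps}|\psi(-t)|\,|\varphi_X(t)-\varphi_Y(t)|\,dt\lesssim \eps\int_{-2/\eps}^{2/\eps}|\varphi_X(t)-\varphi_Y(t)|\,dt,
\]
which produces the third term in the conclusion. Second, since $f(t)\gtrsim 1$ on $|t|\le\eps$, we get $\Pr[|X-x|\le\eps]\lesssim \mb E[f(X-x)]\le \mb E[f(Y-x)]+|\mb E[f(X-x)-f(Y-x)]|$. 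It remains to bound $\mb E[f(Y-x)]$ using the decay hypothesis on $Y$: since $f(t)\lesssim \eps^2/(t^2+\sigma^2)+(\eps/(\eta\sigma))\exp(-\eta|t|/(2\sigma))$, I would partition $\mb R$ into intervals of length $2\eps$ centered at points $x+2\eps j$ ($j\in\mb Z$), bound $\mb E[f(Y-x)]\le \sum_j \sup_{|t-2\eps j|\le\eps} f(t)\cdot \Pr[|Y-x-2\eps j|\le\eps]$, and plug in $\Pr[|Y-x-2\eps j|\le\eps]\le (\eps/(\eta\sigma))\exp(-\eta|x+2\eps j|/\sigma)$. The sum over $j$ of $\sup f$ times this probability splits into the ``polynomial floor'' contribution, which (after summing the geometric-type series and using $\eps\le\sigma$) is $\lesssim \eps^2/(x^2+\sigma^2)$, and the ``exponential'' contribution, which sums to $\lesssim (\eps/(\eta\sigma))\exp(-\eta|x|/(2\sigma))$ — here the loss from $\exp(-\eta|x|/\sigma)$ to $\exp(-\eta|x|/(2\sigma))$ is exactly what absorbs the cross terms $\exp(-\eta|x|/(2\sigma))\exp(-\eta\cdot 2\eps|j|/\sigma)$ when $2\eps|j|\le |x|$, while for $2\eps|j|>|x|$ one uses the decay of $f$ directly. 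Combining the three contributions gives the claimed bound.

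The main obstacle is the construction of the test function $f$: one needs a nonnegative $\psi$ with compact support $[-2/\eps,2/\eps]$ whose Fourier transform $f=\hat\psi$ is itself pointwise comparable to the target envelope $\eps^2/(t^2+\sigma^2)+(\eps/(\eta\sigma))\exp(-\eta|t|/(2\sigma))$ from above and to $\mathbbm 1_{|t|\le\eps}$ from below, uniformly in the parameters $\eps,\sigma,\eta$. Getting a positive Fourier transform with genuine exponential decay from a compactly supported function is delicate (true exponential decay is impossible for compactly supported $\psi$, which is why the polynomial floor $\eps^2/(t^2+\sigma^2)$ must appear); the standard fix is to take $\psi$ to be, say, a scaled convolution power of an indicator or a Fejér-type kernel modulated so that $\hat\psi$ has the right shape on the relevant scale $|t|\lesssim \sigma/\eta$ and only crosses over to polynomial decay for $|t|\gtrsim \sigma/\eta$. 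Once $f$ is fixed, the rest is the routine ``sum over a grid of translates'' estimate from the proof of \cref{lem:esseen-upper-crude}, with the geometric series now having ratio bounded away from $1$ because of the exponential factor. I would spend the bulk of the write-up pinning down $\psi$ and verifying its two-sided bounds, and treat the summation step as essentially mechanical.
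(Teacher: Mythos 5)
Your overall architecture (a relative Esseen inequality via a nonnegative test function $\psi$ with $f=\hat\psi$, followed by summing over a grid of translates and feeding in the decay hypothesis on $Y$) is the right one, and your grid-summation step is essentially what the paper does. However, the construction you plan to spend the bulk of your effort on --- a test function $f=\hat{\psi}$ with $\psi\ge 0$ supported in $[-2/\eps,2/\eps]$ satisfying both $f(t)\gtrsim \mbm{1}_{|t|\le\eps}$ and $f(t)\lesssim \eps^2/(t^2+\sigma^2)+(\eps/(\eta\sigma))\exp(-\eta|t|/(2\sigma))$ --- does not exist. Evaluate both bounds at $t=0$: the first forces $f(0)\gtrsim 1$ with an absolute constant, while the second gives $f(0)\lesssim \eps^2/\sigma^2+\eps/(\eta\sigma)$, which can be made arbitrarily small (take $\eps=1$, $\eta=1/2$, $\sigma\to\infty$). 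Since the implied constants in the lemma are absolute, no choice of $\psi$ reconciles these. This is not a cosmetic issue: the term $\eps^2/(x^2+\sigma^2)$ in your summation step is exactly the piece that relies on this impossible envelope. With any admissible compactly-supported $\psi$ the polynomial floor of $f$ is at best of order $\eps^2/(t^2+\eps^2)$, and your grid sum then only yields $\eps^2/(x^2+\eps^2)$, which is weaker than claimed when $\eps\ll\sigma$.

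The repair is that no new test function is needed. The paper reuses the function $f$ from \cref{lem:esseen-upper-crude} (so $f(t)\le 8/(t^2+1)$ after normalizing $\eps=1$) together with a dichotomy on $x$. If $|x|\le\sigma$ then $\exp(-\eta|x|/(2\sigma))\gtrsim 1$, so the second term of the conclusion already dominates $\mc L(Y,\eps)\le\eps/(\eta\sigma)$ and \cref{lem:esseen-upper-crude} suffices as is. If $|x|\ge\sigma$, split the grid sum $\sum_j \Pr[|Y-x-j|\le 1]/(j^2+1)$ according to whether $|x+j|\ge |x|/2$ (where the hypothesis on $Y$ supplies the factor $\exp(-\eta|x|/(2\sigma))$) or $|x+j|<|x|/2$ (which forces $|j|\ge|x|/2$, so the weight is at most $1/((x/2)^2+1)$ and the probabilities sum to $O(1)$ by bounded overlap of the intervals); the resulting $1/(x^2+1)$ is then $\lesssim\eps^2/(x^2+\sigma^2)$ precisely because $|x|\ge\sigma$. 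In short, all the exponential decay in the conclusion comes from $Y$, and the $\sigma^2$ in the denominator comes from the case split, not from the test function.
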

\begin{proof}
As in \cref{lem:esseen-upper-crude}, we may assume that $\eps = 1$, and let us again write $I := \int_{-2}^2|\varphi_X(t)-\varphi_Y(t)|\,dt$. Note that the assumption in the lemma statement implies $\mc L(Y,1)\le 1/(\eta\sigma) \le e\cdot 1/(\eta\sigma)\cdot \exp(-\eta/2)$. So if $|x|\le \sigma$, the desired bound follows from \cref{lem:esseen-upper-crude}. Otherwise, if $|x|\ge \sigma$, then \cref{eq:esseen-first-step} implies
\begin{align*}\Pr[|X-x|\le 1]&\lesssim \sum_{j\in\mb{Z}}\frac{\Pr[|Y-x-j|\le 1]}{j^2+1} + I\\
&=\sum_{\substack{j\in\mb{Z}\\|j+x|\ge|x|/2}}\frac{\Pr[|Y-x-j|\le 1]}{j^2+1} + \sum_{\substack{j\in\mb{Z}\\|j+x|< |x|/2}}\frac{\Pr[|Y-x-j|\le 1]}{j^2+1} + I\\
&\le \sup_{\substack{y\in\mb{R}\\|y|\ge |x|/2}}\Pr[|Y-y|\le 1]\cdot  \sum_{j\in\mb Z}\frac1{j^2+1}+\sum_{\substack{j\in\mb{Z}\\|j-(-x)|<|x|/2}}\frac{\Pr[|Y-x-j|\le 1]}{(x/2)^2+1} + I,\\
&\lesssim \frac{\eps}{\eta\sigma}\exp(-\eta|x|/(2\sigma))+\frac{1}{x^2+1}+I
\end{align*}
from which the desired result follows (using that $x^2+1\geq x^2\gtrsim x^2+\sigma^2$ since we assumed $|x|\ge\sigma$).
\end{proof}

It turns out that these ideas are not only useful for anticoncentration; we can also derive \emph{lower bounds} on the probability that $X$ is close to some point $x$, given local control over the behavior of $Y$ near $x$.

\begin{lemma}\label{lem:esseen-lower}
There is an absolute constant $C_{\ref{lem:esseen-lower}}$ such that the following holds. Let $X,Y$ be real random variables, and suppose $Y$ is continuous with a density function $p_Y$. Let $\varepsilon>0$ and $x\in\mb{R}$ and suppose that $K\ge 1$ and $R\geq 4$ are such that $p_Y(y_1)/p_Y(y_2)\le K$ for all $ y_1,y_2\in [x-R\eps,x+ R\eps]$. Then
\[\Pr[|X-x|\le 10^4K\eps]\ge \frac{1}{8}\Pr[|Y-x|\le \eps]-C_{\ref{lem:esseen-lower}}\bigg(R^{-1}\mc{L}(Y,\eps)+\eps\int_{-2/\eps}^{2/\eps}|\varphi_Y(t)-\varphi_X(t)|\,dt\bigg).\]
\end{lemma}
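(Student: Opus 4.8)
The plan is to mirror the Esseen argument used for the upper bounds in `lem:esseen-upper-crude`, but now exploiting the smoothing kernel in the reverse direction: a low-pass filter applied to $X$ can only *decrease* mass at a point by a controlled amount, while the Fourier-side error $|\varphi_X-\varphi_Y|$ is again integrated against a compactly supported weight. Concretely, as in the proof of `lem:esseen-upper-crude` I would rescale to $\eps=1$, set $\psi=\mbm 1_{[-1,1]}\ast\mbm 1_{[-1,1]}$, and work with $f=\hat\psi$, so $f(t)=(2\sin t/t)^2$, which satisfies $f\ge 1$ on $[-1,1]$ and $0\le f(t)\le 8/(t^2+1)$ everywhere, and $\int f = 2\pi\psi(0)=2\pi\cdot 2$. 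The key identity, exactly as before, is that for every real $s$
\[
\bigl|\mb E[f(X-s)] - \mb E[f(Y-s)]\bigr| \le 2\int_{-2}^2 |\varphi_X(t)-\varphi_Y(t)|\,dt =: 2I.
\]
So $\mb E[f(X-x)] \ge \mb E[f(Y-x)] - 2I$, and since $f\le 8/(t^2+1)$, for any $M\ge 1$ we have $\Pr[|X-x|\le M]\ge \tfrac18 \mb E[f(X-x)] - 8\sum_{|j|>M/2}(j^2+1)^{-1}\cdot\mc L(X,1)\cdot(\text{const})$ — wait, this last move brings back $\mc L(X,\cdot)$, which we are trying to lower bound, so that tail truncation has to be handled differently: I would instead use $f(t)\le 8/(t^2+1)$ only to bound $\mb E[f(Y-x)-\mbm 1_{|Y-x|\le 1}]$ from below by a negative quantity controlled by $\mc L(Y,1)$ and $R$, and bound $\mb E[f(X-x)]$ from below *without* truncating, via $\Pr[|X-x|\le M]\ge \tfrac{M^2}{M^2+8}\,\mb E[f(X-x)\mbm 1_{|X-x|\le M}]\cdot(\dots)$ — more cleanly: $\mb E[f(X-x)] \le \tfrac{8}{M^2+1}+ 4\Pr[|X-x|\le M]$ is false in the wrong direction, so instead use $\mb E[f(X-x)]\le 4\Pr[|X-x|\le M] + 8/(M^2/4+1)\cdot(\text{sum of }\Pr\text{ over dyadic shells})$, and the shell sum telescopes against $f$'s decay to give $\mb E[f(X-x)]\le 4\Pr[|X-x|\le M] + O(1/M)$. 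Taking $M=10^4K$ yields $\Pr[|X-x|\le 10^4K]\gtrsim \mb E[f(X-x)] - O(1/M)$.

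The genuinely new ingredient — and the reason for the hypothesis that $p_Y(y_1)/p_Y(y_2)\le K$ on $[x-R,x+R]$ — is the lower bound $\mb E[f(Y-x)]\ge c\,\Pr[|Y-x|\le 1]$. This is where I expect the main (though not deep) work to be. One writes
\[
\mb E[f(Y-x)] = \int f(y-x)p_Y(y)\,dy \ge \int_{|y-x|\le R} f(y-x)p_Y(y)\,dy.
\]
On the inner region $|y-x|\le R$ we know $p_Y$ is within a factor $K$ of its value on $[x-1,x+1]$, hence $p_Y(y)\ge K^{-1}\,\inf_{|z-x|\le 1}p_Y(z)\ge \tfrac12 K^{-1}\Pr[|Y-x|\le 1]$ on a majority of that unit interval — more carefully, $\Pr[|Y-x|\le 1]=\int_{|z-x|\le 1}p_Y \le 2\sup_{|z-x|\le 1}p_Y$, so there is a point $z_0$ with $p_Y(z_0)\ge \tfrac12\Pr[|Y-x|\le 1]$, and then $p_Y(y)\ge \tfrac1{2K}\Pr[|Y-x|\le 1]$ for all $|y-x|\le R$. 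Plugging this in, $\mb E[f(Y-x)]\ge \tfrac1{2K}\Pr[|Y-x|\le 1]\int_{|t|\le R}f(t)\,dt$, and since $\int_{|t|\le R}f \ge \int_{|t|\le R}\min\{1, 1/t^2\}\,dt \ge \int_{-1}^1 f \ge 2$ (using $R\ge 4\ge 1$), we get $\mb E[f(Y-x)]\ge K^{-1}\Pr[|Y-x|\le 1]$, up to an absolute constant. Combining: $\Pr[|X-x|\le 10^4K]\gtrsim K^{-1}\Pr[|Y-x|\le 1] - O(1/M) - O(I) = K^{-1}\Pr[|Y-x|\le 1] - O(1/(10^4K)) - O(I)$, and rescaling $1/M = 1/(10^4K)$ back — hmm, I need the $R^{-1}\mc L(Y,\eps)$ term rather than $1/K$ in the final bound. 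The fix is that the $O(1/M)$ tail-truncation error on the $X$ side should instead be routed through $\mc L(Y,1)$ by noting $\mb E[f(X-x)]$ differs from $\mb E[f(Y-x)]$ by $O(I)$ and $\mb E[f(Y-x)\mbm 1_{|Y-x|>M}]\le 8\sum_{|j|>M/2}(j^2+1)^{-1}\mc L(Y,1) = O(\mc L(Y,1)/M)$ with $M\asymp R$; choosing the truncation radius to be $\Theta(R\eps)$ rather than $\Theta(M\eps)$ and absorbing the $R\ge 4$ hypothesis there produces exactly the $R^{-1}\mc L(Y,\eps)$ error term in the statement, while the $\tfrac18$ constant on $\Pr[|Y-x|\le\eps]$ comes from $f\ge 1$ on the unit interval together with the factor-$2$ losses above (one would track constants to land at $\tfrac18$, adjusting the $10^4 K$ window size as slack).

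So the ordering is: (i) rescale to $\eps=1$ and set up $\psi,f$; (ii) the Fourier identity $|\mb E f(X-x)-\mb E f(Y-x)|\le 2I$; (iii) lower bound $\mb E[f(Y-x)]\ge \tfrac1{2K}\Pr[|Y-x|\le 1]$ using the density-ratio hypothesis on $[x-R,x+R]$ and $\int_{-1}^1 f\ge 2$; (iv) upper bound the tail $\mb E[f(Y-x)\mbm 1_{|Y-x|>cR}]\lesssim \mc L(Y,1)/R$ via the $8/(t^2+1)$ decay summed over integer shifts; (v) lower bound $\Pr[|X-x|\le 10^4K]$ in terms of $\mb E[f(X-x)\mbm 1_{|X-x|\le cR}]$, again using the decay of $f$ to show the truncation costs only $O(\mc L(Y,1)/R + I)$ after swapping $X$ for $Y$ up to $2I$; (vi) assemble. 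The main obstacle is purely bookkeeping: arranging the various constants and the two different truncation radii ($10^4 K\eps$ for the conclusion, $\Theta(R\eps)$ for controlling tails) so that the density-ratio bound is applied only on $[x-R\eps,x+R\eps]$ where it is valid, and so the final error is genuinely $R^{-1}\mc L(Y,\eps)+\eps\int|\varphi_Y-\varphi_X|$ with no residual $\mc L(X,\cdot)$ term — there is no conceptual difficulty, but one must be careful that nothing circular sneaks in.
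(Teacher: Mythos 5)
Your overall skeleton (the kernel $f=\widehat{\psi}$ with $\mbm{1}_{[-1,1]}\le f\le 8/(t^2+1)$, the identity $|\mb{E}f(X-x)-\mb{E}f(Y-x)|\le 2I$, truncating the $X$-side expectation at radius $10^4K\eps$, and converting the tail back to $Y$) is the same as the paper's, but two steps are wrong or missing. First, you deploy the density-ratio hypothesis in the wrong direction: you use it to prove $\mb{E}[f(Y-x)]\ge\tfrac{1}{2K}\Pr[|Y-x|\le 1]$. This is both weaker than the trivial bound $\mb{E}[f(Y-x)]\ge\Pr[|Y-x|\le 1]$ (which follows immediately from $f\ge\mbm{1}_{[-1,1]}$) and, more importantly, it puts a spurious factor $K^{-1}$ in front of the main term, so your argument cannot produce the $K$-independent constant $\tfrac18$ in the statement. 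The correct role of the hypothesis $p_Y(y_1)/p_Y(y_2)\le K$ is to \emph{upper}-bound the mid-range tail contribution: after converting $\mb{E}[f(X-x)\mbm{1}_{|X-x|>10^4K}]\le\sum_{|j|\ge 9999K}\tfrac{8}{j^2+1}\Pr[|X-x-j|\le 1]$ back to $Y$, the terms with $|j|,|k|\le(R-1)/2$ are bounded by $K\Pr[|Y-x|\le 1]$ each via the density ratio, and the sum $\sum_{|j|\ge 9999K}(j^2+1)^{-1}=O(1/K)$ cancels that factor $K$ — this cancellation is precisely why the window must have radius proportional to $K$, and it is what delivers a loss of only $\tfrac18\Pr[|Y-x|\le 1]$ rather than a constant multiple of it.

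Second, your proposed ``swap $X$ for $Y$ up to $2I$'' for the truncated expectation is not justified as written: the Fourier identity controls $\mb{E}[f(X-x)]-\mb{E}[f(Y-x)]$, not $\mb{E}[f(X-x)\mbm{1}_{|X-x|>M}]-\mb{E}[f(Y-x)\mbm{1}_{|Y-x|>M}]$, since the truncated kernel is no longer the transform of a compactly supported function. What is needed (and what the paper does) is to apply the forward Esseen-type bound
\[\Pr[|X-x-j|\le 1]\le\sum_{k\in\mb{Z}}\frac{8}{k^2+1}\Pr[|Y-x-j-k|\le 1]+2I\]
to each term of the tail sum separately, yielding a double sum over $j,k$ that is then split according to whether $\max\{|j|,|k|\}$ exceeds $(R-1)/2$; the far range gives the $R^{-1}\mc{L}(Y,\eps)$ error and the near range gives the $\tfrac18\Pr[|Y-x|\le 1]$ loss as above. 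The remaining confusion in your write-up (the several abandoned attempts at bounding $\mb{E}[f(X-x)\mbm{1}_{|X-x|>M}]$ by $O(1/M)$ or by dyadic shells) disappears once this term-by-term conversion is in place, and no residual $\mc{L}(X,\cdot)$ or additive $O(1/K)$ term survives.
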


The reader may think of $K$ as a constant (in our applications of this lemma, we will take $K=2$). We remark that it would be possible to state a cruder version of this lemma with no assumption on the density $p_Y$. This would be sufficient to prove a version of \cref{thm:short-interval} where $B$ also depends on $A$ and $H$ (in addition to depending on $C$), but this would not be enough for the proof of \cref{thm:point-control} (for technical reasons discussed in \cref{rem:dependence-B}).

\begin{proof}
It again suffices to prove the claim when $\eps = 1$. Let the function $f$ and $I:= \int_{-2}^2|\varphi_X(t)-\varphi_Y(t)|\,dt$ be as in the proof of \cref{lem:esseen-upper-crude}, 
and recall that $\mbm 1_{[-1,1]}(t)\le f(t)\le \min\{4,4/t^2\}\le 8/(t^2+1)$ for all $t\in\mb{R}$ and furthermore $|\mb E[f(X-x)]-\mb E[f(Y-x)]|\le 2I$. We have
\begin{align}
\Pr[|X-x|\le 10^4K]
&\ge\frac{1}{4}\mb{E}[f(X-x)\mbm{1}_{|X-x|\le 10^4K}]=\frac{1}{4}\mb{E}[f(X-x)]-\frac{1}{4}\mb{E}[f(X-x)\mbm{1}_{|X-x|> 10^4K}]\notag\\
&\ge\frac{1}{4}\mb{E}[f(Y-x)] - \frac{I}{2} -\frac{1}{4}\mb{E}[f(X-x)\mbm{1}_{|X-x|> 10^4K}]\notag\\
&\ge \frac{1}{4}\Pr[|Y-x|\le 1]-\frac{I}{2}-\sum_{\substack{j\in\mb Z\\|j|\ge 9999K}}\frac{2}{j^2+1}\Pr[|X-x-j|\le 1].\label{eq:esseen-lower}
\end{align}
As in \cref{eq:esseen-first-step}, we have 
\[\Pr[|X-x-j|\le 1]\le \sum_{k\in\mb{Z}}\frac{8}{k^2+1}\Pr[|Y-x-j-k|\le 1] + 2I,\]
so
\begin{align*}
&\sum_{\substack{j\in\mb Z\\|j|\ge 9999K}}\frac{2}{j^2+1}\Pr[|X-x-j|\le 1]\le 16\sum_{\substack{j,k\in\mb Z\\|j|\ge 9999K}}\frac{\Pr[|Y-x-j-k|\le 1]}{(j^2+1)(k^2+1)}+2\left(\sum_{j\in\mb Z}\frac{2}{j^2+1}\right)I\\
&\qquad\qquad\le 16\sum_{\substack{j,k\in\mb Z\\9999K\le|j|\le (R-1)/2\\|k|\le (R-1)/2}}\frac{K\Pr[|Y-x|\le 1]}{(j^2+1)(k^2+1)} + 16\sum_{\substack{j,k\in \mathbb{Z}\\\max\{|j|,|k|\}> (R-1)/2}}\frac{\mc{L}(Y,1)}{(j^2+1)(k^2+1)} + 20I\\
&\qquad\qquad\le 16K\cdot \Pr[|Y-x|\le 1]\cdot 5\cdot \frac{2}{9999K-1} + 16\cdot 2\cdot 5\cdot \frac{2}{(R-3)/2}\cdot \mc{L}(Y,1) + 20I\\
&\qquad\qquad\le \frac{1}{8}\Pr[|Y-x|\le 1]+O(R^{-1})\cdot \mc{L}(Y,1)+O(I),
\end{align*}
where we used that $\sum_{j\in\mb Z} 1/(j^2+1)\leq 5$ and $\sum_{j\in\mb Z, |j|\geq T} 1/(j^2+1)\leq 2\sum_{j\in\mb Z, j\geq T} 1/(j(j-1)) \le  2/(T-1)$ for $T > 1$. Plugging this into \cref{eq:esseen-lower} gives the desired result.
\end{proof}

\section{Characteristic function estimates based on linear cancellation}\label{sec:initial-fourier}
Consider $X$ as in \cref{thm:short-interval}, and let $X^*=(X-\mb E X)/\sigma(X)$.
When $t$ is not too large, we can prove estimates on $\varphi_{X^*}(t)$ purely using the linear behavior of $X$ (treating the quadratic part as an ``error term''). In this section we prove two different results of this type.

First, when $t$ is very small, there is essentially no cancellation in $\varphi_{X^*}(t)$, and we have the following crude estimate. Roughly speaking, we use the simple observation (from \cref{subsec:multiple-scales}) that $X$ can be interpreted as a sum of independent random variables (a ``linear part''), plus a ``quadratic part'' with negligible variance. We can then use standard estimates for characteristic functions of sums of independent random variables.

\begin{lemma}\label{prop:lowest-t}
Fix $\eps, H > 0$. Let $G$ be an $n$-vertex graph with density at least $\varepsilon$, and consider $e_0\in\mb{R}$ and a vector $\vec{e}\in\mb{R}^{V(G)}$ with $0\le e_v\le Hn$ for all $v\in V(G)$. Let $U\subseteq V(G)$ be a random vertex subset obtained by including each vertex with probability $1/2$ independently, and let $X = e(G[U]) + \sum_{v\in U}e_v + e_0$. Let $X^*=(X-\mb E X)/\sigma(X)$, and let $Z\sim \mc N(0,1)$ be a standard normal random variable. Then, for all $t\in\mb{R}$, we have
\[|\varphi_{X^*}(t)-\varphi_Z(t)|\lesssim_{\eps,H}|t|n^{-1/2}.\]
\end{lemma}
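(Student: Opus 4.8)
The plan is to use the decomposition of $X$ into its ``linear part'' and ``quadratic part'' as in \cref{eq:fourier-walsh}, and to estimate the characteristic function of the linear part by a standard central limit theorem argument while treating the quadratic part as a small error. Writing $\vec x\in\{-1,1\}^{V(G)}$ with $x_v=1$ if $v\in U$ and $x_v=-1$ otherwise, we have $X=\mb E X + L + Q$ where $L=\tfrac12\sum_v(e_v+\tfrac12\deg_G(v))x_v$ and $Q=\tfrac14\sum_{uv\in E(G)}x_ux_v$. Here $L$ is a sum of independent Rademacher terms, $\on{Var}(L)=\tfrac14\sum_v(e_v+\tfrac12\deg_G(v))^2$, and since $G$ has density at least $\eps$ we have $\sum_v\deg_G(v)=2e(G)\ge \eps n(n-1)$, so $\on{Var}(L)\gtrsim_\eps n^3$; also $\on{Var}(L)\lesssim_{\eps,H} n^3$ since $e_v\le Hn$ and $\deg_G(v)\le n$. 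On the other hand $\on{Var}(Q)=\tfrac1{16}e(G)\le n^2$. Since $L$ and $Q$ are uncorrelated (by \cref{eq:boolean-variance}, as they involve disjoint Fourier-Walsh characters), $\sigma(X)^2=\on{Var}(L)+\on{Var}(Q)\asymp_{\eps,H} n^3$, in particular $\sigma(X)\gtrsim_{\eps,H} n^{3/2}$.

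Next I would compare $\varphi_{X^*}$ with $\varphi_{L^*}$, where $L^*=(L-\mb E L)/\sigma(X)$ (note the normalization is by $\sigma(X)$, not $\sigma(L)$). Since $X^*-L^*=(Q-\mb E Q)/\sigma(X)$, for any $t$ we have
\[|\varphi_{X^*}(t)-\varphi_{L^*}(t)|=\left|\mb E\left[e^{itL^*}\left(e^{it(Q-\mb E Q)/\sigma(X)}-1\right)\right]\right|\le |t|\,\frac{\mb E|Q-\mb E Q|}{\sigma(X)}\le |t|\,\frac{\sigma(Q)}{\sigma(X)}\lesssim_{\eps,H} |t|\,\frac{n}{n^{3/2}}=|t|n^{-1/2},\]
using $|e^{is}-1|\le|s|$ and Cauchy--Schwarz. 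It then remains to bound $|\varphi_{L^*}(t)-\varphi_Z(t)|$. Since $L^*$ is a normalized sum of independent mean-zero bounded random variables, this is a standard Berry--Esseen-type estimate: writing $L-\mb E L=\sum_v r_v x_v$ with $r_v=\tfrac12(e_v+\tfrac12\deg_G(v))$, we have $|r_v|\lesssim_H n$ and $\sum_v r_v^2=\on{Var}(L)\asymp_{\eps,H} n^3$, so the third-moment ratio is $\sum_v|r_v|^3/(\sum_v r_v^2)^{3/2}\lesssim_{\eps,H} n\cdot n^3/n^{9/2}=n^{-1/2}$. By the classical estimate $|\varphi_{L^*}(t)-e^{-t^2\on{Var}(L)/(2\sigma(X)^2)}|\lesssim |t|^3 e^{-ct^2}\cdot(\text{third-moment ratio})$ valid for $|t|$ up to order (third-moment ratio)$^{-1}$ (e.g.\ \cite[Chapter~V, Lemma~1]{Pet75}, exactly as used in the proof of \cref{lem:gaussian-tricks}(a)), combined with $|e^{-t^2\on{Var}(L)/(2\sigma(X)^2)}-e^{-t^2/2}|\le \tfrac{t^2}2\cdot\tfrac{\on{Var}(Q)}{\sigma(X)^2}\lesssim_{\eps,H} t^2 n^{-1}\le |t|n^{-1/2}$ in that range, we get $|\varphi_{L^*}(t)-\varphi_Z(t)|\lesssim_{\eps,H}|t|n^{-1/2}$ for $|t|\lesssim_{\eps,H} n^{1/2}$.

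Finally, for the remaining range $|t|\gtrsim_{\eps,H} n^{1/2}$, the claimed bound $|\varphi_{X^*}(t)-\varphi_Z(t)|\lesssim_{\eps,H}|t|n^{-1/2}$ is trivial: the right-hand side is $\gtrsim_{\eps,H} 1$, while $|\varphi_{X^*}(t)|\le 1$ and $|\varphi_Z(t)|\le 1$ give $|\varphi_{X^*}(t)-\varphi_Z(t)|\le 2$. Combining the three pieces (the $Q$-error bound, the Berry--Esseen bound on $L^*$ for small $t$, and the trivial bound for large $t$) yields the lemma. The main obstacle, such as it is, is bookkeeping the normalization: one must be careful that $L^*$ is normalized by $\sigma(X)$ rather than $\sigma(L)$, so the Gaussian one is comparing to has variance $\on{Var}(L)/\sigma(X)^2$ slightly below $1$, and this discrepancy (of relative size $\on{Var}(Q)/\sigma(X)^2\lesssim_{\eps,H} n^{-1}$) must be absorbed into the error term; but this is exactly of the required order. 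No genuinely hard step arises — the content is entirely the standard CLT estimate plus the observation that the quadratic part has negligible variance.
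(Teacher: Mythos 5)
Your proof is correct and follows essentially the same route as the paper's: the Fourier--Walsh split into linear and quadratic parts, the bound $|t|\,\sigma(Q)/\sigma(X)\lesssim_{\eps,H}|t|n^{-1/2}$ for the quadratic contribution, and the Petrov estimate \cite[Chapter~V,~Lemma~1]{Pet75} for the linear part, with the trivial bound covering $|t|\gtrsim n^{1/2}$. The only (immaterial) difference is how the normalization mismatch is absorbed — you compare the two Gaussians $e^{-t^2\on{Var}(L)/(2\sigma(X)^2)}$ and $e^{-t^2/2}$ directly, whereas the paper substitutes $u=t\sigma_1/(2\sigma(X))$ and uses the bounded derivative of $\varphi_Z$; both give an error of the required order.
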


We remark that on its own \cref{prop:lowest-t} implies a central limit theorem (stating that $X$ is asymptotically Gaussian) by L\'evy’s continuity theorem (see for example \cite[Theorem~3.3.17]{Dur19}).

\begin{proof}
Define the random vector $\vec x\in \{-1,1\}^{V(G)}$ by taking $x_v=1$ if $v\in U$, and $x_v=-1$ if $v\notin U$ (so $x_v$ for $v\in V(G)$ are independent Rademacher random variables). Then, we compute
\begin{align*}
X &= e_0+\frac{e(G)}{4}+\frac{1}{2}\sum_{v\in V(G)} e_v+\frac{1}{2}\sum_{v\in V(G)}\Big(e_v+\frac{1}{2}\deg_G(v)\Big)x_v + \frac{1}{4}\sum_{uv\in E(G)}x_ux_v\\
&=\mb E X+\frac{1}{2}\sum_{v\in V(G)}\left(e_{v}+\frac{1}{2}\deg_{G}(v)\right)x_{v}+\frac{1}{4}\sum_{uv\in E(G)}x_{u}x_{v},
\end{align*}
as in \cref{eq:fourier-walsh}. Defining $d_v = e_v+\deg_G(v)/2$ for $v\in V(G)$, we deduce that
\[X-\mb{E}X=\frac{1}{2}\vec{d}\cdot\vec{x}+\frac{1}{4}\sum_{uv\in E(G)}x_ux_v.\]
That is to say, $X-\mb E X$ has a ``linear part'' $\frac{1}{2}\vec d\cdot \vec x$ and a ``quadratic part'' $\frac{1}{4}\sum_{uv\in E(G)}x_ux_v$. Recalling \cref{eq:boolean-variance}, we have $\sigma(X)^2 = \frac{1}{4}\snorm{\vec{d}}_2^2 + \frac{1}{16}e(G)\geq\frac{1}{4}\snorm{\vec{d}}_2^2\ge \frac{1}{4}\snorm{\vec{d}}_1^2/n \gtrsim_\varepsilon n^3$ (here we are using our density assumption as well as the assumption that $e_v\geq 0$ for all $v\in V(G)$).

First, we compare $X^*=(X-\mb E X)/\sigma(X)$ to its linear part $(\vec d\cdot \vec x)/(2\sigma(X))$. For all $t\in \mathbb{R}$, we have  $|\exp(it)-1|\le|t|$ and therefore
\begin{align}
\Big|\varphi_{X^*}(t)&-\mb{E}[e^{it(\vec{d}\cdot\vec{x})/(2\sigma(X))}]\Big|\le \mb{E}\Big|\exp\Big(\frac{it}{4\sigma(X)}\sum_{uv\in E(G)}x_ux_v\Big)-1\Big|\le\frac{|t|}{4\sigma(X)}\mb{E}\Big|\sum_{uv\in E(G)}x_ux_v\Big|\notag\\
&\le\frac{|t|}{4\sigma(X)}\left(\mb{E}\Big[\Big(\sum_{uv\in E(G)}x_ux_v\Big)^2\Big]\right)^{1/2}=\frac{|t|}{4\sigma(X)}\cdot e(G)^{1/2}\leq \frac{|t|}{\Omega_{\eps}(n^{3/2})}\cdot n\lesssim_\eps |t|n^{-1/2}.\label{eq:low-t-linearization}
\end{align}

Next, the linear part can be handled as in a standard proof of a quantitative central limit theorem (c.f.~\cref{lem:gaussian-tricks}). Let $\sigma_1 = \sigma(\vec{d}\cdot\vec{x})=\snorm{\vec{d}}_2$ and $\Gamma = (\sum_{v\in V(G)}d_v^2)^{3/2}/\sum_{v\in V(G)}d_v^3\gtrsim_H \snorm{\vec{d}}_2^3/n^4\gtrsim_{\varepsilon,H} n^{1/2}$ (recalling that $\snorm{\vec{d}}_2^2 \gtrsim_\varepsilon n^3$), and note that $\varphi_Z(u) = e^{-u^2/2}$. For $|u|\le\Gamma/4$, we have
\[\Big|\mb{E}[e^{iu(\vec{d}\cdot\vec{x})/\sigma_1}]-\varphi_Z(u)\Big|\le 16\Gamma^{-1}|u|^3e^{-u^2/3}\]
by \cite[Chapter~V,~Lemma~1]{Pet75}. This yields
\[\Big|\mb{E}[e^{iu(\vec{d}\cdot\vec{x})/\sigma_1}]-\varphi_Z(u)\Big|\lesssim_{\eps,H}|u|n^{-1/2}\]
for all $u\in\mb{R}$ (this is trivial for $|u|\ge \Gamma/4 \gtrsim_{\eps,H} n^{1/2}$). Taking $u = t\sigma_1/(2\sigma(X))$ and using $\sigma_1/(2\sigma(X)) = \snorm{\vec{d}}_2/(\snorm{\vec{d}}_2^2 + \frac{1}{4}e(G))^{1/2}=1-O_\eps(n^{-1})$, we have
\begin{equation}\label{eq:low-t-berry-esseen}
\Big|\mb{E}[e^{it(\vec{d}\cdot\vec{x})/(2\sigma(X))}]-\varphi_Z(t)\Big|\le
\Big|\mb{E}[e^{iu(\vec{d}\cdot\vec{x})/\sigma_1}]-\varphi_Z(u)\Big| + |\varphi_Z(u)-\varphi_Z(t)|
\lesssim_{\eps,H}|t|n^{-1/2}.
\end{equation}
Here, we used that the function $\varphi_Z(u)=e^{-u^2/2}$ has bounded derivative, and therefore $|\varphi_Z(u)-\varphi_Z(t)|\lesssim |u-t|=|\sigma_1/(2\sigma(X))-1|\cdot |t|=O_\eps(n^{-1}|t|)$. The desired inequality now follows from \cref{eq:low-t-linearization,eq:low-t-berry-esseen}.
\end{proof}

As mentioned above, \cref{prop:lowest-t} will be used for very small $t$. When $t$ is somewhat larger we will need a stronger bound which takes into account the interaction between the linear and quadratic parts of our random variable. Specifically, writing $Z_1$ and $Z_2$ for the linear and quadratic parts of our normalized random variable $X^*$, we show that  $e^{itZ_2}$ does not ``correlate adversarially'' with $e^{itZ_1}$, using an argument due to Berkowitz~\cite{Ber18}. Roughly speaking, the idea is as follows. Considering $\vec x\in \{-1,1\}^{V(G)}$ as in the proof of \cref{prop:lowest-t}, we can apply Taylor's theorem to the exponential function to approximate $e^{itZ_2}$ by a polynomial in $Z_2$, thereby approximating $\varphi_{X^*}(t)$ by a sum of terms of the form $\mb E[\prod_{i\in S}x_Se^{itZ_1}]$ (where the sets $S$ are rather small). Then, we observe that it is impossible for terms of the form $\prod_{i\in S}x_S$ to correlate in a pathological way with $e^{itZ_1}$, because all but $|S|$ of the terms in the ``linear'' random variable $Z_1$ are independent from $\prod_{i\in S}x_S$. We can use this observation to prove very strong upper bounds on the magnitude of each of our terms $\mb E[\prod_{i\in S}x_Se^{itZ_1}]$ (we do not attempt to understand any potential cancellation between these terms, but the resulting loss is not severe as there are not many choices of $S$).

In some range of $t$, the above idea can be used to prove a much stronger bound than in \cref{prop:lowest-t} (where we obtained a bound of $|t|n^{-1/2}$). However, na\"ively, this idea is only suitable in the regime $|t|\lesssim \sqrt n$, for two reasons. The first reason is that (one can compute that) the typical order of magnitude of $Z_2$ is about $1/\sqrt n$, so a Taylor series approximation for $e^{itZ_2}$ becomes increasingly ineffective as $|t|$ increases past $\sqrt n$. The second reason is that depending on the structure of our graph $G$ it is possible that $|\varphi_{Z_1}(\Theta(\sqrt n))|\gtrsim 1$, meaning that consideration of the linear part of $X^*$ simply does not suffice to prove our desired bound on $\varphi_{X^*}(t)$ (for example, this occurs when $\vec e=\vec 0$ and $G$ is regular).

In order to overcome the first of these issues, we restrict our attention to a small vertex subset $I$, taking advantage of the different way that the linear and quadratic parts scale (related ideas appeared previously in \cite{Ber16}) . Specifically, we condition on an outcome of the vertices sampled outside $I$, leaving only the randomness within $I$ (corresponding to the sequence $\vec x_I\in \{-1,1\}^I$). We then redefine $Z_1$ and $Z_2$ to be the linear and quadratic parts of the \emph{conditional} random variable $X^*$ (as a quadratic polynomial in $\vec x_I$). Dropping to a subset in this way significantly reduces the variance of $Z_2$, but may have a much milder effect on $Z_1$, in which case the above Taylor expansion techniques described above are effective.

The second issue is more fundamental, and is essentially the reason for the case distinction in our proof of \cref{thm:short-interval} (recall \cref{sub:additive-structure-dichotomy}). Specifically, the range of $t$ which we are able to consider depends on a certain RLCD (recall the definitions in \cref{sub:RLCD}).

\begin{lemma}\label{prop:linear-RLCD}
Fix $C,H > 0$ and $0<\gamma<1/4$, and let $L = \lceil 100/\gamma\rceil$. Then there is $\alpha=\alpha(C,H,\gamma)>0$ such that the following holds. Let $G$ be a $C$-Ramsey graph with $n$ vertices, where $n$ is sufficiently large with respect to $C, H$, and $\gamma$, and consider $e_0\in\mb{R}$ and a vector $\vec{e}\in\mb{R}^{V(G)}$ with $0\le e_v\le Hn$ for all $v\in V(G)$. Let $\vec{d}\in\mb{R}^{V(G)}$ be given by $d_v = e_v + \deg_G(v)/2$ for all $v\in V(G)$. Next, let $U\subseteq V(G)$ be a random vertex subset obtained by including each vertex with probability $1/2$ independently, and define $X = e(G[U]) + \sum_{v\in U}e_v+e_0$. Let $X^*=(X-\mb E X)/\sigma(X)$. Then for any $t\in\mb{R}$ with
\[n^{2\gamma}\le |t|\le \alpha\cdot \min\{n^{\gamma/2}\wh{D}_{L,\gamma}(\vec{d}),\; n^{1/2+\gamma/8}\}.\]
we have
\[|\varphi_{X^*}(t)|\lesssim_{C,H,\gamma}n^{-5}.\]
\end{lemma}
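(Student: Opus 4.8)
The plan is to adapt the decorrelation method of Berkowitz~\cite{Ber18} sketched before the statement. Let $\vec x\in\{-1,1\}^{V(G)}$ be the sign vector of $U$ and $\vec d=(d_v)_{v\in V(G)}$ with $d_v=e_v+\deg_G(v)/2$; as in the proof of \cref{prop:lowest-t} we have $X-\mathbb EX=\frac12\vec d\cdot\vec x+\frac14\sum_{uv\in E(G)}x_ux_v$ and $\sigma(X)^2=\frac14\|\vec d\|_2^2+\frac1{16}e(G)$, and combining \cref{thm:dense-ramsey} with Cauchy--Schwarz ($\|\vec d\|_2^2\ge(\sum_vd_v)^2/n\ge e(G)^2/n$) gives $\sigma(X)\asymp_{C,H}n^{3/2}$. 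Fix a set $I\subseteq V(G)$ with $|I|=\lceil n^{1-\gamma}\rceil$ attaining $D_L(\vec d_I/\|\vec d_I\|_2)=\wh D_{L,\gamma}(\vec d)$. A crucial preliminary point is that $\|\vec d_I\|_2$ is automatically of size $\Omega_C(n^{3/2-3\gamma/2}/\log n)$: since $G[I]$ is an induced subgraph of a $C$-Ramsey graph on $\ge n^{3/4}$ vertices, its independence number is $O_C(\log n)$, so Tur\'an's theorem gives $e(G[I])\gtrsim_C n^{2-2\gamma}/\log n$, whence $\sum_{u\in I}d_u\ge e(G[I])$ and one more application of Cauchy--Schwarz yields the claimed bound on $\|\vec d_I\|_2$. (This is what will make the linear cancellation below polynomially strong rather than $O(1)$.)

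Now fix $t$ in the stated range, set $K=\lceil 100/\gamma\rceil$, and condition on $\vec x_{V\setminus I}$. Conditionally, $X^*=c(\vec x_{V\setminus I})+Z_1+Z_2$, where $Z_2=\frac1{4\sigma(X)}\sum_{uv\in E(G[I])}x_ux_v$ and $Z_1=\frac1{\sigma(X)}\sum_{u\in I}(\frac12 d_u+\frac14\delta_u)x_u$ with $\delta_u=\sum_{v\notin I,\,uv\in E}x_v$. The vector $(\delta_u)_{u\in I}$ has $\mathbb E\|\vec\delta_I\|_2^2<|I|n$ and $\|\vec\delta_I\|_2^2$ is a multilinear degree-$2$ polynomial of $\vec x_{V\setminus I}$ with variance $O((\mathbb E\|\vec\delta_I\|_2^2)^2)$, so a variance bound together with \cref{thm:concentration-hypercontractivity} gives $\|\vec\delta_I\|_2\lesssim n^{1-\gamma/2}\sqrt{\log n}$ outside an event of probability $\le n^{-10}$, which we discard (bounding the conditional characteristic function trivially by $1$ there). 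Since $\operatorname{Var}(Z_2)=e(G[I])/(16\sigma(X)^2)\lesssim_C n^{-1-2\gamma}$ we have, using $|t|\le\alpha n^{1/2+\gamma/8}$, that $|t|\,\sigma(Z_2)\lesssim_C\alpha n^{-7\gamma/8}$; writing $e^{itZ_2}=\sum_{k<K}(itZ_2)^k/k!+E$ with $|E|\le|tZ_2|^K/K!$ and bounding $\mathbb E_{\vec x_I}|E|$ via hypercontractivity (\cref{thm:gauss-moment}), the error term contributes $\ll n^{-5}$ for this $K$. It remains to bound $\mathbb E_{\vec x_I}[Z_2^k e^{itZ_1}]$ for each $k<K$.

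Expanding $\big(\sum_{uv\in E(G[I])}x_ux_v\big)^k$ into $e(G[I])^k$ monomials, each equal (after using $x_u^2=1$) to $\pm\prod_{u\in S}x_u$ for some $S$ with $|S|\le 2k$, it suffices to bound $|\mathbb E_{\vec x_I}[\prod_{u\in S}x_u\,e^{itZ_1}]|$. Since the $x_u$ are independent and $\prod_{u\in S}x_u$ depends only on $\vec x_S$, this factors, and by \cref{eq:cos} it is at most $\exp\big(-\operatorname{dist}\big(\frac{t}{2\pi\sigma(X)}\vec d_I+\frac{t}{4\pi\sigma(X)}\vec\delta_I,\ \mathbb Z^I\big)^2+\frac{|S|}4\big)$. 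Set $\theta_0=|t|\,\|\vec d_I\|_2/(2\pi\sigma(X))$. On one hand, $|t|\le\alpha n^{\gamma/2}\wh D_{L,\gamma}(\vec d)$ and $\|\vec d_I\|_2\lesssim_H n^{3/2-\gamma/2}$ force $\theta_0<\wh D_{L,\gamma}(\vec d)=D_L(\vec d_I/\|\vec d_I\|_2)$ once $\alpha$ is small, so by \cref{def:LCD} the distance of $\frac{t}{2\pi\sigma(X)}\vec d_I$ to $\mathbb Z^I$ is at least $L\sqrt{\log_+(\theta_0/L)}$; on the other hand $|t|\ge n^{2\gamma}$ and the Tur\'an lower bound on $\|\vec d_I\|_2$ force $\theta_0\ge n^{\gamma/4}$, so this distance is $\gtrsim L\sqrt{\gamma\log n}$. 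The $\vec\delta_I$-perturbation shifts the distance by only $\frac{|t|}{4\pi\sigma(X)}\|\vec\delta_I\|_2\lesssim_C\alpha n^{-3\gamma/8}\sqrt{\log n}=o(1)$, dropping the $S$-coordinates costs $\le\sqrt{|S|}\le\sqrt{2K}$, and by the choice $L=\lceil 100/\gamma\rceil$ both losses are negligible against $L^2\gamma\log n$; hence $|\mathbb E_{\vec x_I}[\prod_{u\in S}x_u e^{itZ_1}]|\le n^{-400/\gamma}$, say. Plugging in gives $|\mathbb E_{\vec x_I}[Z_2^k e^{itZ_1}]|\le(e(G[I])/(4\sigma(X)))^k n^{-400/\gamma}$, and since $|t|\,e(G[I])/(4\sigma(X))\lesssim_C\alpha n^{1-15\gamma/8}$ and $k<K\asymp 1/\gamma$, each Taylor term is at most $n^{O(1/\gamma)}n^{-400/\gamma}\ll n^{-5}$. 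Summing the $K$ Taylor terms, adding the Taylor error and the $n^{-10}$ from the discarded event, and averaging over $\vec x_{V\setminus I}$ via $|\varphi_{X^*}(t)|\le\mathbb E_{\vec x_{V\setminus I}}|\mathbb E_{\vec x_I}[e^{itX^*}\mid\vec x_{V\setminus I}]|$, we obtain $|\varphi_{X^*}(t)|\lesssim_{C,H,\gamma}n^{-5}$.

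I expect the main obstacle to be the quantitative balance in the last paragraph: one must choose $I$ so that it is simultaneously an RLCD witness (so that the slack inequality in \cref{def:LCD} is available) \emph{and} has $\|\vec d_I\|_2$ polynomially close to its maximum (so that $\theta_0$, and hence $L\sqrt{\log_+(\theta_0/L)}$, is of order $\sqrt{\log n}$ rather than $O(1)$) --- here the Tur\'an-type edge bound for $G[I]$ is exactly what lets the RLCD witness itself do double duty, and without it the ``linear cancellation'' available from RLCD would be too weak. One must then check that the perturbation of the linear coefficients caused by conditioning on $\vec x_{V\setminus I}$, and the loss of up to $2k$ coordinates when expanding $Z_2^k$, are both negligible against the $L^2\gamma\log n$-sized cancellation; this is the place where the robustness built into the $L\sqrt{\log_+}$ term of the RLCD definition is indispensable. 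Everything else is routine Fourier-analytic bookkeeping.
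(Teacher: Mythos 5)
Your proposal is correct and follows essentially the same route as the paper's proof: choose an RLCD-witnessing set $I$, condition on $\vec x_{V(G)\setminus I}$, Taylor-expand $e^{itZ_2}$ to degree $O(1/\gamma)$, and bound each resulting term $\mb E[\prod_{v\in S}x_v\,e^{itZ_1}]$ via \cref{eq:cos} together with the $L\sqrt{\log_+(\theta/L)}$ lower bound from \cref{def:LCD}, checking that $\theta_0\gtrsim n^{\gamma/4}$ so the resulting cancellation is $\Omega(L^2\gamma\log n)$ in the exponent. Your substitutes for two of the paper's ingredients — Tur\'an plus the Ramsey independence-number bound in place of \cref{lem:two-norm-subsets-vec-d} to lower-bound $\|\vec d_I\|_2$ (the extra $\log n$ is harmless), and an $\ell_2$ concentration bound on $\vec\delta_I$ in place of a pointwise Chernoff bound on the $y_v$ — are both valid and do not change the structure of the argument.
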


Before proving \cref{prop:linear-RLCD}, we record a simple fact about the vector $\vec{d}$ in the lemma statement.

\begin{lemma}\label{lem:two-norm-subsets-vec-d}
Fix $C>0$ and let $G$ be a $C$-Ramsey graph with $n$ vertices, where $n$ is sufficiently large with respect to $C$. Consider a vector $\vec{e}\in\mb{R}_{\ge 0}^{V(G)}$ and define $\vec{d}\in\mb{R}^{V(G)}$ by $d_v = e_v + \deg_G(v)/2$ for all $v\in V(G)$. Then for any subset $I\su V(G)$ of size $|I|\geq \sqrt{n}$, we have $\snorm{\vec{d}_I}_2\gtrsim_C |I|^{3/2}$.
\end{lemma}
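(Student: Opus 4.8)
The plan is to use only two ingredients: the nonnegativity of $\vec e$, which forces $d_v\ge \deg_G(v)/2$ and hence reduces the problem to lower-bounding $\sum_{v\in I}\deg_G(v)^2$; and the fact that $G[I]$ inherits the Ramsey property (with a worse constant) from $G$, hence is dense by \cref{thm:dense-ramsey}. There is no real obstacle here; the only point needing a moment's care is the passage from edge count to $\ell_2$-norm of the degree sequence, which is a Cauchy--Schwarz step.

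First I would note that, since $|I|\ge\sqrt n$ and $n$ is sufficiently large with respect to $C$, the induced subgraph $G[I]$ is $(2C)$-Ramsey: any homogeneous subset of $G[I]$ of size $2C\log_2|I|$ would be a homogeneous subset of $G$ of size at least $2C\log_2\sqrt n=C\log_2 n$, contradicting the hypothesis that $G$ is $C$-Ramsey. Since $|I|\ge\sqrt n$ is large, \cref{thm:dense-ramsey} applies to $G[I]$ and yields $e(G[I])\ge \eps(2C)\binom{|I|}{2}\gtrsim_C |I|^2$.

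Next I would pass to degrees: since $e_v\ge 0$ we have $d_v\ge \deg_G(v)/2$, and since $\sum_{v\in I}\deg_G(v)\ge \sum_{v\in I}\deg_{G[I]}(v)=2e(G[I])\gtrsim_C |I|^2$, the Cauchy--Schwarz inequality gives
\begin{align*}
\snorm{\vec d_I}_2^2=\sum_{v\in I}d_v^2\;\ge\;\frac14\sum_{v\in I}\deg_G(v)^2\;\ge\;\frac{1}{4|I|}\Big(\sum_{v\in I}\deg_G(v)\Big)^2\;\gtrsim_C\;\frac{|I|^4}{|I|}=|I|^3.
\end{align*}
Taking square roots yields $\snorm{\vec d_I}_2\gtrsim_C |I|^{3/2}$, as required.
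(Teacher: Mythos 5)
Your proof is correct and follows essentially the same route as the paper: reduce to the density of the $(2C)$-Ramsey graph $G[I]$ via \cref{thm:dense-ramsey}, then pass from the edge count to the $\ell_2$-norm of the degree sequence by Cauchy--Schwarz, using $e_v\ge 0$ to drop the $e_v$ terms. The only difference is cosmetic (you work with $\deg_G(v)$ and the handshake identity, the paper directly with $\deg_{G[I]}(v)$), plus your welcome explicit justification that $G[I]$ is $(2C)$-Ramsey.
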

\begin{proof}
Note that $G[I]$ is a $(2C)$-Ramsey graph, so by \cref{thm:dense-ramsey} we have $e(G[I])\gtrsim_C |I|^2$. Thus,
\[\snorm{\vec{d}_I}_2^2= \sum_{v\in I}\left(e_v+\frac12 \deg_G(v)\right)^2\geq \sum_{v\in V}( \deg_{G[I]}(v)/2)^2\ge |I|\cdot \left(\frac{e(G[I])}{|I|}\right)^2 \gtrsim_C |I|^3.\qedhere\]
\end{proof}

Note that this lemma in particular implies that in the setting of \cref{prop:linear-RLCD} the vector $\vec{d}$ has fewer than $n^{1-\gamma}$ zero coordinates, meaning that $\wh{D}_{L,\gamma}(\vec{d})$ is well-defined (recall \cref{def:RLCD}).

In the proof of \cref{prop:linear-RLCD}, we will also use the following Taylor series approximation for the exponential function.

\begin{lemma}\label{lem:taylor}
For all $z\in\mb{C}$ and $K\in\mb{N}$, we have
\[\bigg|e^z - \sum_{j=0}^K\frac{z^j}{j!}\bigg| \le e^{\max\{0,\Re (z)\}}\frac{|z|^{K+1}}{K!}.\]
\end{lemma}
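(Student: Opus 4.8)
The plan is to apply the integral form of Taylor's theorem to the single-variable function $g\colon[0,1]\to\mb C$ given by $g(s)=e^{sz}$. First I would record that $g$ is smooth with $g^{(j)}(s)=z^je^{sz}$, so $g^{(j)}(0)=z^j$, and that Taylor's theorem with integral remainder (which for a $\mb C$-valued function of a real variable follows at once by splitting into real and imaginary parts, or directly by repeated integration by parts) gives
\[e^z=g(1)=\sum_{j=0}^K\frac{g^{(j)}(0)}{j!}+\frac{1}{K!}\int_0^1(1-s)^Kg^{(K+1)}(s)\,ds=\sum_{j=0}^K\frac{z^j}{j!}+\frac{z^{K+1}}{K!}\int_0^1(1-s)^Ke^{sz}\,ds.\]

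Next I would bound the remainder. For $s\in[0,1]$ we have $|e^{sz}|=e^{s\Re z}$, and since $s\Re z\le\max\{0,\Re z\}$ (treating the cases $\Re z\le 0$ and $\Re z>0$ separately), it follows that $|e^{sz}|\le e^{\max\{0,\Re z\}}$. Therefore
\[\bigg|e^z-\sum_{j=0}^K\frac{z^j}{j!}\bigg|=\frac{|z|^{K+1}}{K!}\bigg|\int_0^1(1-s)^Ke^{sz}\,ds\bigg|\le\frac{|z|^{K+1}}{K!}\,e^{\max\{0,\Re z\}}\int_0^1(1-s)^K\,ds=\frac{|z|^{K+1}}{(K+1)!}\,e^{\max\{0,\Re z\}},\]
which is in fact slightly stronger than the claimed inequality, since $(K+1)!\ge K!$.

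There is essentially no obstacle here; the only point requiring a word of justification is the validity of the integral remainder formula for a complex-valued integrand, which is handled by the real/imaginary-part reduction mentioned above. An equally painless alternative would be a short induction on $K$: differentiating $z\mapsto e^z-\sum_{j=0}^Kz^j/j!$ reduces the claim for $K$ to the claim for $K-1$ after integrating along the segment from $0$ to $z$ and using $\max\{0,s\Re z\}\le\max\{0,\Re z\}$ for $s\in[0,1]$; this avoids invoking Taylor's theorem at all but amounts to the same computation.
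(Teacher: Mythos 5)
Your proof is correct and is essentially the paper's own argument: both apply Taylor's theorem with integral remainder along the segment from $0$ to $z$ (the paper's $\int_0^z e^t(z-t)^K\,dt$ becomes your $z^{K+1}\int_0^1 e^{sz}(1-s)^K\,ds$ after the substitution $t=sz$) and bound $|e^{sz}|\le e^{\max\{0,\Re z\}}$. The only difference is that you retain the factor $\int_0^1(1-s)^K\,ds=1/(K+1)$, giving a marginally stronger bound than the stated one.
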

\begin{proof}
This follows from Taylor's theorem with the integral form for the remainder: note that
\[\bigg|\int_{0}^{z}e^{t}(z-t)^{K}\,dt\bigg|=|z|^{K+1}\bigg|\int_{0}^{1}e^{sz}(1-s)^{K}\,ds\bigg|\le e^{\max\{0,\Re (z)\}} |z|^{K+1}.\qedhere\]
\end{proof}

Now we prove \cref{prop:linear-RLCD}.

\begin{proof}[Proof of \cref{prop:linear-RLCD}]
Let us define $\vec x\in \{-1,1\}^{V(G)}$ by taking $x_v=1$ if $v\in U$, and $x_v=-1$ if $v\notin U$ (and note that then $\vec{x}$ is a vector of independent Rademacher random variables). As in the proof of \cref{prop:lowest-t}, we obtain $X-\mb{E}X=\frac{1}{2}\vec{d}\cdot\vec{x}+\frac{1}{4}\sum_{uv\in E(G)}x_ux_v$ and $\sigma(X) \gtrsim_C n^{3/2}$ (here, we used that by \cref{thm:dense-ramsey} the graph $G$ has density at least $\eps$ for some $\eps=\eps(C)>0$ only depending on $C$). We furthermore have $\sigma(X) = (\frac{1}{4}\snorm{\vec{d}}_2^2+\frac{1}{16}e(G))^{1/2}\lesssim_H n^{3/2}$.

By the definition of RLCD (\cref{def:RLCD}), there is a subset $I\subseteq V(G)$ of size $|I|=\lceil n^{1-\gamma}\rceil$ such that 
\[\wh{D}_{L,\gamma}(\vec{d})=D_L(\vec{d}_I/\snorm{\vec{d}_I}_2).\]

\medskip
\noindent\textit{Step 1: Reducing to the randomness of $\vec x_I$.}
The first step is to condition on a typical outcome of $\vec x_{V(G)\setminus I}\in \{-1,1\}^{V(G)\setminus I}$, so that we can work purely with the randomness of $\vec x_I\in \{-1,1\}^I$. Define the vector $\vec y\in \mathbb R^I$ by taking
\[y_v =\frac 14\sum_{\substack{u\in V(G)\setminus I\\uv\in E(G)}} x_u\]
for each $v\in I$. Also, let
\[Z_1=\Big(\frac{1}{2}\vec d_I+\vec y\Big)\cdot \vec x_I,\quad\quad Z_2=\frac{1}{4}\sum_{\substack{u,v\in I\\uv\in E(G)}}x_ux_v.\]
Note that $X-\mb E[X|\vec x_{V(G)\setminus I}]=Z_1+Z_2$. Using the fact that $|\mb E [e^{it (Y+c)}]|=|\mb E [e^{it Y}]|$ for any real random variable $Y$ and non-random $c\in\mb{R}$, we have
\begin{equation*}
|\varphi_{X^*}(t)|=|\mb{E}[e^{itX/\sigma(X)}]| \le \mb{E}|\mb{E}[e^{itX/\sigma(X)}|\vec{x}_{V(G)\setminus I}]| =\mb{E}\left|\mb{E}\left[\exp\left(\frac{it(Z_1+Z_2)}{\sigma(X)}\right)\middle|\vec{x}_{V(G)\setminus I}\right]\right|.
\end{equation*}

The inner expectation on the right-hand side always has magnitude at most 1. Since $\deg_G(v)\leq n$ for $v\in I$, with a Chernoff bound we see that with probability at least $1-\exp(-\Omega(n^{\gamma/4}))$ we have $|y_v|\le n^{1/2+\gamma/8}$ for all $v\in I$. Conditioning on a fixed outcome of $\vec x_{V(G)\setminus I}$ such that this is the case, it now suffices to show that
\begin{equation}\label{eq:to-show-linear-init}
\left|\mb{E}\left[\exp\left(\frac{it(Z_1+Z_2)}{\sigma(X)}\right)\right]\right|\lesssim_{C,H,\gamma}n^{-5}
\end{equation}
for all $t\in \mathbb{R}$ with $n^{2\gamma}\le |t|\le \alpha\cdot \min\{n^{\gamma/2}\wh{D}_{L,\gamma}(\vec{d}), n^{1/2+\gamma/8}\}$, where $\alpha=\alpha(C,H,\gamma)>0$ is chosen sufficiently small (in particular, we may assume $\alpha<1$).

\medskip
\noindent\textit{Step 2: Taylor expansion. }
Let $K = \lceil 10/\gamma\rceil$. By \cref{lem:taylor} we have
\begin{align}
\bigg|\mb{E}\bigg[\exp\!\bigg(\frac{it(Z_1+Z_2)}{\sigma(X)}\bigg)\bigg]\bigg| &= \bigg|\mb{E}\bigg[\exp\!\bigg(\frac{itZ_1}{\sigma(X)}\bigg)\exp\!\bigg(\frac{itZ_2}{\sigma(X)}\bigg)\bigg]\bigg|\notag\\
&\le \bigg|\mb{E}\bigg[\exp\!\bigg(\frac{itZ_1}{\sigma(X)}\bigg)\sum_{j=0}^{K}\frac{1}{j!}\bigg(\frac{itZ_2}{\sigma(X)}\bigg)^{j}\bigg]\bigg| + \mb{E}\bigg[\frac{1}{K!}\bigg(\frac{|tZ_2|}{\sigma(X)}\bigg)^{K+1}\bigg]\label{eq:sum-to-bound-linear-init}
\end{align}
Recalling that $|I|=\lceil n^{1-\gamma}\rceil$ and our assumption that $|t|\leq n^{1/2+\gamma/8}$, we have
\[\mb{E}[(tZ_2/\sigma(X))^2]=\frac{t^2}{\sigma(X)^2}\cdot \mb{E}[Z_2^2]\le \frac{t^2}{\sigma(X)^2}\cdot |I|^2\lesssim_{C} \frac{n^{1+\gamma/4}}{n^3}\cdot n^{2-2\gamma}= n^{-7\gamma/4}.\]
By \cref{thm:gauss-moment} (hypercontractivity), we deduce $\mb{E}[(|tZ_2|/\sigma(X))^{K+1}]\lesssim_{C,\gamma} n^{-7\gamma(K+1)/8}$. Thus, using that $(K+1)\gamma\geq 10$, we obtain
\begin{equation}\label{eq:error-term-bound}
\mb{E}\bigg[\frac{1}{K!}\bigg(\frac{|tZ_2|}{\sigma(X)}\bigg)^{K+1}\bigg]\lesssim_{C,\gamma}n^{-5}.
\end{equation}
Also, note that $\sum_{j=0}^{K}\frac{1}{j!}(itZ_2/\sigma(X))^{j}$ is a polynomial of degree $2K$ in $\vec x_I$. Noting that $x_v^2 = 1$ for all $v$, one can represent this polynomial as a linear combination of at most $|I|^{2K}<n^{2K}$ multilinear monomials $\prod_{v\in S}x_v$ with $|S|\leq 2K$. The coefficient of each such monomial has absolute value $O_{C,\gamma}(1)$, recalling that $|t|\le n^{1/2+\gamma/8}$ and $\sigma(X)=\Omega_C(n^{3/2})$ and $|I|=\lceil n^{1-\gamma}\rceil$ (and $K=\lceil 10/\gamma\rceil$). For the rest of the proof, our goal is now to show that for any set $S\subseteq I$ with $|S|\le 2K$ we have
\begin{equation}\label{eq:per-term-goal}
\bigg|\mb{E}\bigg[\exp\!\bigg(\frac{itZ_1}{\sigma(X)}\bigg)\prod_{v\in S}x_v\bigg]\bigg|\lesssim_{C,H,\gamma} n^{-5-2K}.
\end{equation}
The desired bound \cref{eq:to-show-linear-init} will then follow from \cref{eq:sum-to-bound-linear-init}, bounding the first summand by summing \cref{eq:per-term-goal} over all choices of $S$ and bounding the second summand via \cref{eq:error-term-bound}.

\medskip
\noindent\textit{Step 3: Relating to the LCD. } So let us fix some subset $S\subseteq I$ with $|S|\le 2K$. Let $\vec f=\frac{1}{2}\vec d_I+\vec y\in\mb{R}^I$, so $Z_1=\vec f\cdot \vec x_I$. Noting that $|x_v|\le 1$ for all $v\in I$, and using \cref{eq:cos}, we have
\begin{align}
\bigg|\mb{E}\bigg[\exp\!\bigg(\frac{itZ_1}{\sigma(X)}\bigg)\prod_{v\in S}x_v\bigg]\bigg|
&= \bigg|\mb{E}\bigg[\prod_{v\in I\setminus S}\exp\!\bigg(\frac{itf_vx_v}{2\sigma(X)}\bigg)\cdot \prod_{v\in  S}\exp\!\bigg(\frac{itf_vx_v}{2\sigma(X)}\bigg)x_v\bigg]\bigg|
\le \prod_{v\in I\setminus S} \bigg|\mb{E}\bigg[\exp\!\bigg(\frac{itf_vx_v}{2\sigma(X)}\bigg)\bigg]\bigg|\notag\\
&\le \exp\left(-\sum_{v\in I\setminus S}\norm{\frac{tf_v}{2\pi\sigma(X)}}_{\mb{R}/\mb{Z}}^2\right)\le\exp\left(|S|-\on{dist}\!\bigg(\frac{|t|\vec f}{2\pi\sigma(X)},\mb{Z}^{I}\bigg)^2\right).
\label{eq:summand-to-bound}
\end{align}
(Here we used that for any $\vec a\in\mb{R}^I$ we have $\sum_{v\in I\setminus S}\|a_v\|_{\mb{R}\setminus \mb Z}^2=\on{dist} (\vec a_{I\setminus S},\mb Z^{I\setminus S})^2\ge\on{dist}(\vec a_{I},\mb Z^{I})^2-|S|$.)

Since $|t|\le n^{1/2+\gamma/8}$ and $\sigma(X)=\Omega_C(n^{3/2})$ and we are conditioning on $\vec x_{V(G)\setminus I}$ such that $|y_v|\le n^{1/2+\gamma/8}$ for all $v\in I$, we have (using that $|I|=\lceil n^{1-\gamma}\rceil$)
\[\frac{|t|\snorm{\vec{y}}_2}{2\pi\sigma(X)}\lesssim_{C} \frac{n^{1/2+\gamma/8}\cdot (|I|^{1/2})\cdot n^{1/2+\gamma/8}}{n^{3/2}}\lesssim n^{-\gamma/4},\]
and therefore $|t|\snorm{\vec{y}}_2/(2\pi\sigma(X))\le 1$ for sufficiently large $n$. By our assumption $|t|\le \alpha n^{\gamma/2}\hat{D}_{L,\gamma}(\vec{d})=\alpha n^{\gamma/2}D_{L}(\vec{d}_I/\|\vec d_I\|_2)$, we have
\[\frac{|t|\snorm{\vec{d}_I}_2}{4\pi\sigma(X)}\lesssim_{C,H} \frac{\alpha n^{\gamma/2}D_{L}(\vec{d}_I/\|\vec d_I\|_2)\cdot |I|^{1/2}\cdot n}{n^{3/2}} \lesssim \alpha D_{L}(\vec{d}_I/\|\vec d_I\|_2).\]
Hence, by choosing $\alpha=\alpha(C,H,\gamma)>0$ to be sufficiently small in terms of $C$, $H$, and $\gamma$, for sufficiently large $n$ we obtain $|t|\snorm{\vec{d}_I}_2/(4\pi\sigma(X))<D_{L}(\vec{d}_I/\|\vec d_I\|_2)$ and therefore
\begin{align}
\on{dist}\!\bigg(\frac{|t|\vec f}{2\pi\sigma(X)},\mb{Z}^{I}\bigg)\ge \on{dist}\!\bigg(\frac{|t|(\vec d_I/2)}{2\pi\sigma(X)},\mb{Z}^{I}\bigg) - \frac{|t|\snorm{\vec{y}}_2}{2\pi\sigma(X)}&\ge\on{dist}\!\bigg(\frac{|t|\snorm{\vec{d}_I}_2}{4\pi\sigma(X)}\cdot \frac{\vec{d}_I}{\snorm{\vec{d}_I}}_2,\mb{Z}^{I}\bigg) - 1\notag\\
&\ge L\sqrt{\log_{+}\bigg(\frac{|t|\snorm{\vec{d}_I}_2}{4\pi L\sigma(X)}\bigg)} - 1\label{eq:lower-bound-from-LCD}
\end{align}
where we applied the definition of LCD (see \cref{def:LCD}). Now, $|t|\snorm{\vec{d}_I}_2/(4\pi L \sigma(X))\gtrsim_{C,H,\gamma} n^{\gamma/2}$, since $|t|\ge n^{2\gamma}$ and $\sigma(X)\lesssim_H n^{3/2}$ and $\snorm{\vec{d}_I}_2\gtrsim_C|I|^{3/2}\gtrsim n^{(3/2)-3\gamma/2}$ by \cref{lem:two-norm-subsets-vec-d}. Thus, for sufficiently large $n$, we have $|t|\snorm{\vec{d_I}}_2/(4\pi L \sigma(X))\ge n^{\gamma/4}$, and therefore the term \cref{eq:lower-bound-from-LCD} is at least $L\sqrt{\log_{+}(n^{\gamma/4})}-1\geq (L/2)\sqrt{\log_{+}(n^{\gamma/4})}$. Then, recalling that $L=\lceil100/\gamma\rceil$ and $K = \lceil 10/\gamma\rceil$ and $|S|\leq 2K$, it follows that
\[ \on{dist}\!\bigg(\frac{|t|\vec f}{2\pi\sigma(X)},\mb{Z}^{I}\bigg)^2 \ge \bigg(\frac{L}{2}\sqrt{\log_{+}(n^{\gamma/4})}\bigg)^2 \ge \frac{10^{4}}{4\gamma^2}\cdot \frac{\gamma}{4}\log n\ge (4K+5)\log n\ge |S|+(2K+5)\log n.\]
Combining this with \cref{eq:summand-to-bound}, we obtain the desired inequality \cref{eq:per-term-goal}.
\end{proof}

\section{Characteristic function estimates based on quadratic cancellation}\label{sec:high-fourier}
In \cref{sec:initial-fourier}, we proved some bounds on the characteristic function of a random variable $X$ of the form $X=e(G[U])+\sum_{v\in U}e_{v}+e_0$ purely using the linear part of $X$. In this section we prove a bound which purely uses the \emph{quadratic} part of $X$ (this will be useful for larger $t$).

In the setting and notation of \cref{sec:initial-fourier}, the regime where this result is effective corresponds to a range where $|t|$ is roughly between $n^{1/2+\Omega(1)}$ and $n^{3/2}$. However, the bounds in this section will need to be applied in two slightly different settings (recalling from \cref{sub:additive-structure-dichotomy} that the proof of \cref{thm:short-interval} bifurcates into two cases). To facilitate this, we consider random variables $X$ of a slightly different type than in \cref{sec:initial-fourier}: instead of studying the number of edges in a uniformly random vertex subset, we study the number of edges in a uniformly random vertex subset \emph{of a particular size}. We can interpret this as studying a conditional distribution, where we condition on an outcome of the number of vertices of our random subset (if desired, we can deduce bounds in the unconditioned setting simply by averaging over all possible outcomes).

We remark that in this setting where our random subset has a fixed size, it is no longer true that the standard deviation $\sigma(X)$ must have order of magnitude $n^{3/2}$. Indeed, the order of magnitude of $\sigma(X)$ depends on $\vec e$ and the degree sequence of $G$. Therefore, it is more  convenient to study the characteristic function of $X$ directly, instead of its normalized version $X^*=(X-\mb EX)/\sigma(X)$. To avoid confusion, we will use the variable name ``$\tau$'' instead of ``$t$'' when working with characteristic functions of random variables that have not been normalized (so, informally speaking, the translation is that $\tau=t/\sigma(X)$).

\begin{lemma}\label{thm:quadratic-decoupling}
Fix $C>0$ and $0<\eta<1/2$.
There is $\nu=\nu(C,\eta)>0$ such that the following holds. Let $G$
be a $C$-Ramsey graph with $n$ vertices, where $n$ is sufficiently large with respect to $C$ and $\eta$, and consider a vector $\vec{e}\in\mathbb{R}^{V(G)}$ and $e_0\in\mb{R}$. Consider $\ell\in\mb N$ with $\eta n\le\ell\le(1-\eta)n$, and let $U$ be a uniformly
random subset of $\ell$ vertices in $G$, and let $X=e(G[U])+\sum_{v\in U}e_{v}+e_0$.
Then for any $\tau\in\mb{R}$ with $n^{-1+\eta}\le|\tau|\le\nu$ we have
\[|\varphi_{X}(\tau)|\le n^{-5}.\]
\end{lemma}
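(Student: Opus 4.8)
The plan is to extract cancellation purely from the \emph{quadratic} part of $X$ via a decoupling argument, reducing the bound on $\varphi_X(\tau)$ to the slice anticoncentration estimate \cref{lem:slice-estimate}, whose hypothesis is then verified across the whole range $n^{-1+\eta}\le|\tau|\le\nu$ using a sharpened form of the richness of Ramsey graphs. For the decoupling step, I would introduce an independent copy $U'$ of the random $\ell$-set $U$, so that $|\varphi_X(\tau)|^2=\mb E_{U,U'}\,e^{i\tau(X(U)-X(U'))}$, and condition on the sets $W=U\cap U'$ and $S=U\triangle U'$ (note $|S|$ is even, and $|S|,|W|=\Theta_\eta(n)$ with probability $1-e^{-\Omega_\eta(n)}$ by \cref{lem:chernoff}). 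The remaining randomness is then a uniform balanced sign vector $\zeta\in\{\pm1\}^S$ recording the partition $S=(U\sm U')\sqcup(U'\sm U)$, and --- crucially using that the within-$S$ contribution $e(G[U\sm U'])-e(G[U'\sm U])$ \emph{telescopes} to $\tfrac12\sum_{s\in S}\deg_S(s)\zeta_s$, which is \emph{linear} in $\zeta$ --- a short computation gives $X(U)-X(U')=\sum_{s\in S}c_s\zeta_s$ with $c_s=e_s+\deg_W(s)+\tfrac12\deg_S(s)$. Thus $|\varphi_X(\tau)|^2\le \mb E_{S,W}\,\bigl|\mb E_\zeta\,e^{i\tau\sum_{s\in S}c_s\zeta_s}\bigr|$, and by \cref{lem:slice-estimate} the inner expectation is $\lesssim\exp(-\Omega_\eta(M\delta^2))$ as soon as one exhibits $M$ disjoint pairs $\{s,s'\}\su S$ with $\snorm{\tau(c_s-c_{s'})/(2\pi)}_{\mb R/\mb Z}\ge\delta$.

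It therefore suffices to show that, with probability $1-o(n^{-5})$ over $(S,W)$, there are $M=\Omega_\eta(n)$ such pairs with $\delta=n^{-1/2+\Omega_\eta(1)}$ (the precise value of $\delta$ depending on $\tau$); then $M\delta^2\ge n^{\Omega_\eta(1)}$ throughout the range $|\tau|\ge n^{-1+\eta}$, so the inner expectation is $\le\exp(-n^{\Omega_\eta(1)})\ll n^{-10}$ and the lemma follows after averaging. To produce the pairs I would first pass to a rich induced subgraph via \cref{lem:rich-subset}, and then argue: (i) for a typical linear-size set $S$, there are $\Omega(n)$ disjoint pairs $\{s,s'\}\su S$ whose neighbourhoods differ substantially off $S$, i.e.\ $|(N(s)\triangle N(s'))\sm S|=\Omega(n)$ --- since richness prevents too many vertices from having an atypically small or large intersection with any fixed linear-size set, the ``near-twin'' graph on $S$ is sparse, so its complement has an almost-perfect matching; (ii) writing $c_s-c_{s'}=\bigl(e_s-e_{s'}+\tfrac12(\deg_S(s)-\deg_S(s'))\bigr)+\bigl(\deg_W(s)-\deg_W(s')\bigr)$, the second summand is --- over the randomness of $W$, which conditioned on $S$ is a uniform subset of $V(G)\sm S$ of fixed size, independent of $\zeta$ --- a signed sum of $\Omega(n)$ slice indicators, hence anticoncentrated on scale $\sqrt n$; this forces $\snorm{\tau(c_s-c_{s'})/(2\pi)}_{\mb R/\mb Z}\ge\delta$ with probability bounded away from $0$, \emph{uniformly over the adversarial real shift} coming from $\vec e$ and $\deg_S$, with $\delta\asymp\min\{1,|\tau|\sqrt n\}$ (up to logarithmic factors at the boundary scales): for $|\tau|\gtrsim n^{-1/2}$ the $\sqrt n$-scale fluctuations wrap many times around the circle, so one even gets $\delta\asymp1$, while for smaller $|\tau|$ one obtains only $\delta\asymp|\tau|\sqrt n$, which is still enough given $\Omega(n)$ pairs (this is the ``cancellation on multiple scales''); (iii) a concentration argument over the randomness of $W$ (via \cref{lem:slice-concentration} or a second-moment estimate, after arranging the structure so the chosen pairs interact weakly) upgrades the constant-probability statement in (ii) to the required $1-o(n^{-5})$ bound.

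The hardest part, I expect, is the combinatorial and probabilistic core (i)--(iii): sharpening the Bukh--Sudakov / \cite{KS20,BS07} richness arguments so that the coefficient differences $c_s-c_{s'}$ are well-spread modulo $2\pi/\tau$ \emph{simultaneously for every} $\tau$ in the wide range $[n^{-1+\eta},\nu]$ and uniformly over the adversarial linear coefficients $e_v$, and --- most delicate --- obtaining a \emph{high-probability} (rather than merely constant-probability) bound over the choice of $W$. The latter is essential because $|\varphi_X|$ is only trivially bounded by $1$, while the dependence between the $\Omega(n)$ pairs (whose ``witness sets'' $(N(s)\triangle N(s'))\sm S$ necessarily overlap heavily, since there are $\asymp n$ of them inside an $n$-element universe) makes naive Lipschitz/martingale concentration insufficient, so a more careful argument exploiting near-orthogonality of the sign patterns is needed.

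The upper cutoff $|\tau|\le\nu$ is a genuine limitation of this approach: taking $\nu=\nu(C,\eta)$ small keeps $\tau/(2\pi)$ away from the integers, which is precisely what makes the circle-wrapping in (ii) succeed, and controlling $|\tau|$ up to $\pi$ would require structural information about $G$ that goes beyond richness. Likewise the lower cutoff $|\tau|\ge n^{-1+\eta}$ is needed so that $M\delta^2\gtrsim|\tau|^2n^2$ is a growing power of $n$; below scale $n^{-1}$ the quadratic part alone cannot produce polynomial decay, which is consistent with the fact that this estimate is complemented (for smaller $\tau$) by the linear-cancellation bounds of \cref{sec:initial-fourier}.
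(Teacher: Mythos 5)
Your decoupling step is different from the paper's and is correct: the paper partitions $V(G)=I\cup J$ and resamples only $\vec\xi_J$, whereas you symmetrize over a full independent copy $U'$ and observe that, conditional on $W=U\cap U'$ and $S=U\triangle U'$, the quadratic contribution $e(G[S_+])-e(G[S_-])$ telescopes to $\tfrac12\sum_{s\in S}\deg_S(s)\zeta_s$, leaving a genuinely linear function of the balanced sign vector $\zeta$. That identity checks out, the conditional distribution of $(\zeta,W)$ is as you claim, and your step (ii) (per-pair, constant-probability separation of $\tau(c_s-c_{s'})$ from $(2\pi)\mathbb{Z}$ at scale $\delta\asymp\min\{1,|\tau|\sqrt n\}$ via hypergeometric anticoncentration of $\deg_W(s)-\deg_W(s')$, uniformly over the shift from $\vec e$ and $\deg_S$) is sound and matches \cref{lem:circle}.

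The genuine gap is step (iii), and you have correctly diagnosed it without filling it. To conclude $|\varphi_X(\tau)|\le n^{-5}$ from $|\varphi_X(\tau)|^2\le\mb E_{S,W}|\mb E_\zeta(\cdots)|$ you need the ``many good pairs'' event to fail with probability $O(n^{-10})$ (not $o(n^{-5})$ as written, since the failure event contributes its full probability to the bound on $|\varphi_X|^2$). Neither of the tools you name can reach this: a second-moment argument on the number of good pairs gives failure probability at best $O(1/n)$ even if the pairs were independent, and a Lipschitz/martingale bound fails because a single swap in $W$ can flip the goodness of every pair whose margin is within $O(|\tau|)$ of the threshold, so no useful bounded-differences constant is available. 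The paper's resolution — which is the one substantive idea your proposal is missing — is an amplification via $q$-tuples with $q\asymp\log n$: \cref{lem:good-tuples} uses richness to produce $n^{1-\eta/3}$ \emph{disjoint} $q$-tuples whose neighborhoods are nested and pairwise substantially different, so that within a single tuple the $q-1$ events ``pair $(v_r,v_1)$ is bad'' can be bounded \emph{conditionally one at a time}, each by a constant less than $1$ using fresh randomness from $(J\cap N(v_r))\setminus(N(v_1)\cup\cdots\cup N(v_{r-1}))$ (\cref{lem:joint-probability}). Raising that constant to the power $q-1\asymp\log n$ makes each tuple fail with probability $n^{-12}$-ish, and Markov's inequality over the disjoint tuples then gives the required $1-O(n^{-12})$ guarantee of linearly many good disjoint pairs. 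Your telescoped coefficients $c_s$ are perfectly compatible with this device (one would run the nested-neighborhood construction against the randomness of $W$ rather than of $\vec\xi_J-\vec\xi_J'$), but without it, or an equivalent $\log n$-fold amplification, the proof does not close.
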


The proof of \cref{thm:quadratic-decoupling} depends crucially on \emph{decoupling} techniques. Generally speaking, such techniques allow one to reduce from dependent situations to independent ones (see \cite{PV96} for a book-length treatment). 
In our context, decoupling allows us to reduce the study of ``quadratic'' random variables to the study of ``linear'' ones. Famously, a similar approach was taken by Costello, Tao, and Vu~\cite{CTV06} to study singularity of random symmetric matrices.

To illustrate the basic idea of decoupling, consider an $n$-variable quadratic polynomial $f$ and a sequence of random variables $\vec \xi\in\mb{R}^n$. If $[n]=I\cup J$ is a partition of the index set into two subsets, then we can break $\vec\xi=(\xi_1,\ldots,\xi_n)$ into two subsequences $\vec\xi_{I}\in\mb{R}^I$ and $\vec\xi_{J}\in\mb{R}^J$ (and write $f(\vec\xi)=f(\vec\xi_{I},\vec \xi_{J})$). Let us assume that the random vectors $\vec\xi_{I}$ and $\vec\xi_{J}$ are independent. Now, if $\vec\xi_{J}'$ is an independent copy of $\vec\xi_{J}$, then $Y:=f(\vec\xi_{I},\vec\xi_{J})-f(\vec\xi_{I},\vec\xi_{J}')$, is a \emph{linear} polynomial in $\vec \xi_I$, after conditioning on any outcomes of $\vec \xi_J,\vec \xi_J'$ (roughly speaking, this is because ``the quadratic part in $\vec\xi_{I}$ gets cancelled out''). Then, for any $\tau\in\mb{R}$, we can use the inequality
\begin{align}
|\varphi_{f(\vec \xi)}(\tau)|^2=\left|\mb E e^{i\tau f(\vec\xi_I,\vec\xi_J)}\right|^{2}  &\le \mb E\left[\left|\mb E[e^{i\tau f(\vec\xi_{I},\vec\xi_{J})}\mid \vec\xi_{I}]\right|^2\right] = \mb E\left[\mb E[e^{i\tau (f(\vec\xi_{I},\vec\xi_{J})-f(\vec\xi_{I},\vec\xi_{J}'))}\mid \vec\xi_{I}]\right]\notag \\
& = \mb E\left[\mb E[e^{i\tau (f(\vec\xi_{I},\vec\xi_{J})-f(\vec\xi_{I},\vec\xi_{J}'))}\mid \vec\xi_{J},\vec\xi_{J}']\right]\notag\\
&\le \mb E\left[\left|\mb E[e^{i\tau (f(\vec\xi_{I},\vec\xi_{J})-f(\vec\xi_{I},\vec\xi_{J}'))}\mid \vec\xi_{J},\vec\xi_{J}']\right|\right].\label{eq:decoupling}
\end{align}
(This inequality appears as \cite[Lemma~3.3]{KS20b}; similar inequalities appear in \cite{Ber18,Ngu12}.) Crucially, the expression $\mb E[e^{i\tau (f(\vec\xi_{I},\vec\xi_{J})-f(\vec\xi_{I},\vec\xi_{J}'))}\mid \vec\xi_{J},\vec\xi_{J}']$ can be interpreted as an evaluation of the characteristic function of a linear polynomial in $\vec \xi_I$, which is easy to understand.

In general, \cref{eq:decoupling} incurs some loss (one generally obtains bounds which are about the square root of the truth). However, under certain assumptions about the degree-2 part of $f$, this square-root loss ``in Fourier space'' does not seriously affect the final bounds one gets ``in physical space''. Specifically, the first and third authors~\cite{KS20b} observed that it suffices to assume that the degree-2 part of $f$ ``robustly has high rank'', and observed that quadratic forms associated with Ramsey graphs always satisfy this robust high rank assumption (we will prove a similar statement in \cref{lem:ramsey-low-rank}).

Our proof of \cref{thm:quadratic-decoupling} will be closely related to the proof of the main result in \cite{KS20b}, although our approach is slightly different, as we need to take more care with quantitative aspects. In particular, instead of working with a qualitative robust-high-rank assumption we will directly make use of the fact that in any Ramsey graph, there are many disjoint tuples of vertices with very different neighborhoods (this can be interpreted as a particular sense in which the adjacency matrix of $G$ robustly has high rank).

\begin{lemma}\label{lem:good-tuples}
For any $C,\beta>0$, there is $\zeta=\zeta(C,\beta)>0$ such that the following holds for all sufficiently large $n$. Let $G$ be a $C$-Ramsey graph with $n$ vertices, and let $q=\lfloor\zeta \log n\rfloor$. Then there is a partition $V(G)=I\cup J$ and a collection $\mathcal{V}\subseteq I^{q}$ of at least $n^{1-\beta}$ disjoint $q$-tuples of vertices in $I$, such that for all $(v_1,\ldots,v_q)\in \mathcal{V}$ we have
\begin{equation}\label{eq:condition-q-tuple}
|J\setminus (N(v_1)\cup \cdots\cup N(v_r))|\geq n^{1-\beta}\quad\text{and}\quad |(J\cap N(v_r))\setminus (N(v_1)\cup\cdots\cup N(v_{r-1}))|\geq n^{1-\beta}
\end{equation}
for all $r=1,\ldots,q$.
\end{lemma}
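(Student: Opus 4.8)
The plan is to build the tuples greedily, one coordinate at a time within each tuple and one tuple at a time, using the ``richness'' of an appropriate induced subgraph to guarantee that at each stage only a few vertices are ``bad''. First I would apply \cref{lem:rich-subset} to $G$ with a suitable parameter $\alpha$ slightly below $1$ (say $\alpha=4/5$, the default) and $m=\lfloor \rho n\rfloor$, obtaining an induced subgraph $G'$ on vertex set $I\cup J$ of size at least $\rho n$ which is $(\delta,\rho,\alpha)$-rich for $\delta=\rho^{\rho}$ (a constant depending only on $C$). We split $V(G')$ arbitrarily into two halves $I$ and $J$ of size $\asymp_C n$ each, and set $V(G)\setminus V(G')$ aside (absorbing it into, say, $I$ — the conditions \cref{eq:condition-q-tuple} only constrain vertices $v_r$ chosen from the $q$-tuples, and the intersections are measured against $J$, so padding $I$ with extra vertices is harmless provided we only ever choose tuple-coordinates from $V(G')\cap I$). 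Since $|J|\asymp_C n$, all the threshold quantities $n^{1-\beta}$ will be comfortably below $\rho|J|$ once $\beta$ is small enough and $n$ is large.

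**Greedy construction of one tuple.** To construct a single $q$-tuple $(v_1,\dots,v_q)$, suppose $v_1,\dots,v_{r-1}$ have already been chosen (with $r\le q$), and also that we have already removed all vertices used in previously-constructed tuples — this is at most $q\cdot n^{1-\beta}\le n^{1-\beta/2}$ vertices, a negligible fraction of $I$. Write $W_{r-1} = J\setminus(N(v_1)\cup\cdots\cup N(v_{r-1}))$. The key point is an invariant: I would maintain that $|W_{r-1}|\ge |J|/2^{r-1}$, say. Given this, $|W_{r-1}|\ge |J|/2^{q}\ge n^{1-o(1)}\ge \delta|V(G')|$ provided $q=\lfloor \zeta\log n\rfloor$ with $\zeta$ small enough (this is exactly why $q$ must be only logarithmic: $2^{-q}=n^{-\zeta\log 2}$, which stays above $n^{-\beta/10}$ for small $\zeta$). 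By richness applied to the set $W:=W_{r-1}$, all but at most $|V(G')|^{\alpha}\le n^{\alpha}$ vertices $v\in V(G')$ satisfy both $|N(v)\cap W_{r-1}|> \rho|W_{r-1}|$ and $|W_{r-1}\setminus N(v)|>\rho|W_{r-1}|$. Choosing $v_r$ to be such a vertex, lying in $I\cap V(G')$ and not previously used (possible since $n^{\alpha}+n^{1-\beta/2}\ll |I|$), we get $|J\cap N(v_r)\setminus(N(v_1)\cup\cdots\cup N(v_{r-1}))| = |N(v_r)\cap W_{r-1}| > \rho|W_{r-1}|\ge \rho|J|/2^{q}\ge n^{1-\beta}$ (for $\zeta,\beta$ chosen appropriately), which is the second inequality in \cref{eq:condition-q-tuple}, and simultaneously $|W_r| = |W_{r-1}\setminus N(v_r)| = |W_{r-1}\setminus N(v_r)| > \rho|W_{r-1}|\ge |W_{r-1}|/2$ — wait, this needs $\rho>1/2$, which need not hold, so instead I would just track $|W_r|\ge \rho|W_{r-1}|$ directly, giving $|W_r|\ge \rho^{r}|J|$; then the condition ``$|W_r|\ge \delta|V(G')|$'' needed for the next application of richness reads $\rho^{r}|J|\ge \delta|V(G')|$, which holds for all $r\le q$ once $q=\lfloor\zeta\log n\rfloor$ with $\zeta$ small enough that $\rho^{q}=n^{-\zeta\log(1/\rho)}$ beats $\delta\cdot|V(G')|/|J| = \Theta_C(1)$ — actually $\rho^q\to 0$, so this fails. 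The correct fix is to demand the first inequality of \cref{eq:condition-q-tuple} only as $|W_r|\ge n^{1-\beta}$, and observe $\rho^q |J|\ge \rho^{\zeta\log n}\cdot n/2 = n^{1-\zeta\log(1/\rho)}\cdot(1/2)\ge n^{1-\beta}$ for $\zeta<\beta/\log(1/\rho)$; and for the \emph{richness application} we only need $|W_{r-1}|\ge \delta|V(G')|=\delta\rho n$ — which likewise holds since $\rho^{q}|J|\ge n^{1-\beta}$ and we can take $\beta$ small enough that $n^{1-\beta}\ge \delta\rho n$ fails... so in fact richness with a \emph{smaller} $\delta$-threshold is what's needed. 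This is the delicate bookkeeping point; see below.

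**Bookkeeping and the main obstacle.** The main obstacle — and the only real subtlety — is the mismatch between the \emph{constant} lower-density threshold $\delta m$ appearing in the definition of richness (\cref{def:rich}) and the \emph{polynomially small} set sizes $n^{1-\beta}$ and $\rho^r|J|$ that arise after peeling off $r\le q$ neighborhoods. The resolution is to invoke \cref{lem:rich-subset} not with a constant $\delta$ but to exploit that its conclusion gives a $((m/n)^{\rho},\rho,\alpha)$-rich subgraph; applying it with $m = n^{1-\beta'}$ for a small constant $\beta'$ gives an induced subgraph $G'$ on $\ge n^{1-\beta'}$ vertices which is $(n^{-\beta'\rho},\rho,\alpha)$-rich, so its richness threshold is itself polynomially small. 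Then in the greedy step, the requirement is $|W_{r-1}|\ge n^{-\beta'\rho}\cdot|V(G')|$, i.e.\ $\rho^{r}|V(G')|\ge n^{-\beta'\rho}|V(G')|$, i.e.\ $r\log(1/\rho)\le \beta'\rho\log n$; since $r\le q=\lfloor\zeta\log n\rfloor$ this holds for $\zeta<\beta'\rho/\log(1/\rho)$, a constant depending only on $C$ and $\beta$ (as $\rho=\rho(C)$). The number of bad vertices at each step is $|V(G')|^{\alpha}\le n^{\alpha}\le n^{4/5}$, and over all $q\cdot(\text{number of tuples})\le q n^{1-\beta}$ choices the total number of used vertices is $o(|I|)$, so the greedy process never gets stuck and produces $n^{1-\beta}$ disjoint tuples. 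Finally I would double-check the two inequalities of \cref{eq:condition-q-tuple} hold with the claimed bound $n^{1-\beta}$ (not merely $n^{1-\beta'}$): this forces choosing $\beta'$ somewhat larger than $\beta$, e.g.\ $\beta' = \beta/2$ together with $\zeta$ small enough that $\rho^{q}\cdot n^{1-\beta'} \ge n^{1-\beta}$, i.e.\ $\zeta\log(1/\rho)\le \beta/2$. Assembling these constraints, $\zeta=\zeta(C,\beta)$ is determined, completing the proof.
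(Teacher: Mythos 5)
Your final assembled argument is correct and is essentially the paper's proof: both apply \cref{lem:rich-subset} with $m=n^{1-\beta/2}$ so that the richness threshold $(m/n)^{\rho}$ is polynomially small, greedily build the tuples while maintaining $|W_r|\ge \rho^{r}|W_0|$, and choose $\zeta$ so that $\rho^{q}=n^{-\zeta\log(1/\rho)}$ stays above that threshold while the surviving sets stay above $n^{1-\beta}$. The only structural difference is that you fix $J$ as half of the rich subgraph up front, whereas the paper runs the construction inside the whole rich set $R$ and only afterwards takes $I$ to be the (tiny) set of vertices used in the tuples and $J=V(G)\setminus I$; two harmless slips to fix are that the default $\alpha$ in \cref{def:rich} is $1/5$ rather than $4/5$, and that $\beta'$ must be chosen \emph{smaller} than $\beta$ (as your concrete choice $\beta'=\beta/2$ indeed is).
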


\begin{proof}
By \cref{lem:rich-subset} (applied with $m=n^{1-\beta/2}$ and $\alpha = 1/5$), for some $\rho=\rho(C)$ with $0<\rho<1$ we can find a vertex subset $R\su V(G)$ with $|R|\ge n^{1-\beta/2}$, such that the induced subgraph $G[R]$
is $(n^{-\rho\beta/2},\rho)$-rich. Let us now define $\zeta=\beta\rho/(2\log(1/\rho))>0$, and let $q=\lfloor\zeta \log n\rfloor$.

We claim that for any subset $U\subseteq R$ of at size at least $|U|>n^{1/5}$, we can iteratively construct a $q$-tuple $(v_{1},\ldots,v_{q})\in U^q$ with
\begin{equation}\label{eq:condition-q-tuple-stronger}
|R\setminus (N(v_1)\cup\cdots\cup N(v_r))|\geq \rho^r|R| \quad\text{and}\quad |(R\cap N(v_r))\setminus (N(v_1)\cup\cdots\cup N(v_{r-1}))|\geq \rho^r|R|
\end{equation}
for $r=1,\ldots,q$. Indeed, for any $0\le k<q$, consider a $k$-tuple
$(v_{1},\ldots,v_{k})\in U^k$ satisfying \cref{eq:condition-q-tuple-stronger} for $r=1,\ldots,k$. Since $\rho^k\ge\rho^q\geq \rho^{\zeta\log n}= n^{-\rho\beta/2}$, we can apply the definition of $G[R]$ being $(n^{-\rho\beta/2},\rho)$-rich (see \cref{def:rich}) to the set $W:=R\setminus (N(v_1)\cup\cdots\cup N(v_k))$ of size $|W|\geq \rho^k|R|$, and conclude that there are at most $|R|^{1/5}\leq n^{1/5}$ vertices
$v\in U$ satisfying $|(R\cap N(v))\setminus (N(v_1)\cup\cdots\cup N(v_k))|=|N(v)\cap W|\le\rho|W|$ or $|R\setminus (N(v_1)\cup\cdots\cup N(v_k)\cup N(v))|=|W\setminus N(v)|\le\rho|W|$. Hence, as $|U|>n^{1/5}$, there exists a vertex $v_{k+1}\in U$ with $|(R\cap N(v_{k+1}))\setminus (N(v_1)\cup \cdots\cup N(v_k))|>\rho|W|\geq\rho^{k+1}|R|$ and $|R\setminus (N(v_1)\cup \cdots\cup N(v_{k+1}))|> \rho|W|\geq\rho^{k+1}|R|$. So we can indeed construct a $q$-tuple $(v_{1},\ldots,v_{q})\in U^q$ satisfying \cref{eq:condition-q-tuple-stronger} for $r=1,\ldots,q$.

By repeatedly applying the above claim, we can now greedily construct a collection $\mathcal{V}\su R^q$ of $\lceil n^{1-\beta}\rceil$ disjoint $q$-tuples of vertices in $R$ such that each such $q$-tuple $(v_1,\ldots,v_q)\in \mathcal{V}$ satisfies \cref{eq:condition-q-tuple-stronger} for $r=1,\ldots,q$ (indeed, as long as our collection $\mathcal{V}$ has size $|\mathcal{V}|<n^{1-\beta}$, the number of vertices appearing in some $q$-tuple in $\mathcal{V}$ is at most $q\cdot n^{1-\beta}<(\zeta\log n)\cdot n^{1-\beta}<n^{1-\beta/2}/2\leq |R|/2$, and hence there are at least $|R|/2>n^{1/5}$ vertices in $R$ remaining). Now, define $I$ to be the set of the $q\cdot\lceil n^{1-\beta}\rceil\le(\zeta\log n)\cdot 2n^{1-\beta}\le n^{1-\beta(1+\rho)/2}/2$ vertices appearing in the $q$-tuples in $\mathcal{V}$, and let $J=V(G)\setminus I$. We claim that now for every $(v_1,\ldots,v_q)\in \mathcal{V}$ and every $r=1,\ldots,q$ the desired conditions in \cref{eq:condition-q-tuple} follows from \cref{eq:condition-q-tuple-stronger}. Indeed, by \cref{eq:condition-q-tuple-stronger} the sets appearing in \cref{eq:condition-q-tuple} have size at least $\rho^r|R|-|R\cap I|\geq \rho^q\cdot n^{1-\beta/2}-|I|\geq n^{-\beta\rho/2}\cdot n^{1-\beta/2}-n^{1-\beta(1+\rho)/2}/2=n^{1-\beta(1+\rho)/2}/2\geq n^{1-\beta}$ (using that $\rho<1$ and $n$ is sufficiently large).
\end{proof}

Roughly speaking, the condition in \cref{eq:condition-q-tuple} states that $(v_1,\ldots,v_q)$ have very different neighborhoods. This allows us to obtain strong joint probability bounds on degree statistics, as follows.
\begin{lemma}\label{lem:joint-probability}
Fix $\eta>0$. In an $n$-vertex graph $G$, let $(v_{1},\ldots,v_{q})$ be a tuple of vertices satisfying \cref{eq:condition-q-tuple} (for all $r=1,\ldots,q$) for some vertex subset $J\su V(G)$ and some $0<\beta<1$. For some $\ell\in\mb N$ with $\eta n\le\ell\le(1-\eta)n$, let $U$ be a random subset of $\ell$ vertices of $G$. Consider any $\tau\in\mb{R}\setminus \{0\}$, any $0<\delta\leq 1/2$, and $\vec{x}\in\mb{R}^{q}$. Then
\[\Pr\left[\left\Vert\tau\deg_{U\cap J}(v_{r})-\tau\deg_{U\cap J}(v_{1})+x_{r}\right\Vert_{\mb{R}/\mb Z}<\delta\text{ for  }r=2,\ldots,q\right]\le\left(O_\eta\left(\frac{(|\tau|+\delta) (|\tau|+n^{-(1-\beta)/2})}{|\tau|}\right)\right)^{q-1}.\]
\end{lemma}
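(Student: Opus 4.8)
The strategy is to reveal the set $U\cap J$ one "layer'' at a time, indexed by the neighborhoods of $v_1,\dots,v_q$, and use the disjointness guaranteed by \cref{eq:condition-q-tuple} to show that at each step there is a fresh chunk of at least $n^{1-\beta}$ vertices of $J$ whose membership in $U$ has not yet been determined and which affects exactly the difference $\deg_{U\cap J}(v_r)-\deg_{U\cap J}(v_1)$ in the right way. Concretely, for $r=2,\dots,q$ set $A_r = (J\cap N(v_r))\setminus(N(v_1)\cup\cdots\cup N(v_{r-1}))$, so $|A_r|\ge n^{1-\beta}$, and note that the sets $A_2,\dots,A_q$ are pairwise disjoint and disjoint from $N(v_1)$; also $B=J\setminus(N(v_1)\cup\cdots\cup N(v_q))$ has $|B|\ge n^{1-\beta}$, so $|N(v_1)\cap J|\le n-(q-1)n^{1-\beta}$ leaves room. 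The point is that $\deg_{U\cap J}(v_r)-\deg_{U\cap J}(v_1)$ picks up a contribution of $+1$ for each vertex of $A_r\cap U$ that does not lie in $N(v_1)$ (which is all of $A_r$), while vertices lying in both $N(v_r)$ and $N(v_1)$, or in neither, cancel. So after conditioning on everything outside $\bigcup_{r=2}^q A_r$, the quantity $\deg_{U\cap J}(v_r)-\deg_{U\cap J}(v_1)$ becomes $|A_r\cap U| + c_r$ for a constant $c_r$, and the $r$-th event becomes $\|\tau|A_r\cap U| + (x_r+\tau c_r)\|_{\mb{R}/\mb Z}<\delta$.

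Next I would handle the randomness of the $|A_r\cap U|$'s. Since $U$ is a uniformly random $\ell$-subset with $\eta n\le \ell\le (1-\eta)n$, if I expose the sets $A_r\cap U$ one at a time (in the order $r=2,\dots,q$), then conditionally on the previously exposed data, $|A_r\cap U|$ is hypergeometrically distributed with parameters such that its "support is wide'': because $|A_r|\ge n^{1-\beta}$ and at least $\eta n/2$, say, vertices are still available both inside and outside $U$ at each stage (we have exposed at most $(q-1)|A_r|\le q n^{1-\beta} = o(n)$ coordinates so far), the conditional distribution of $|A_r\cap U|$ stochastically behaves like a hypergeometric with mean of order $|A_r|$ and spread of order $\sqrt{|A_r|}\gtrsim n^{(1-\beta)/2}$. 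The standard anticoncentration estimate for such a one-dimensional random variable $M$ — namely, for any $\tau\neq 0$, any shift $y$, and any $0<\delta\le 1/2$, one has $\Pr[\|\tau M + y\|_{\mb{R}/\mb Z}<\delta] \lesssim_\eta (|\tau|+\delta)\cdot(|\tau|+n^{-(1-\beta)/2})/|\tau|$ — then gives the single-step bound, and multiplying over $r=2,\dots,q$ (using that the events become conditionally independent once we expose the $A_r\cap U$ in order, or rather: bound the $r$-th conditional probability uniformly over the conditioning, then take the product) yields exactly the claimed $(q-1)$-th power.

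For the one-dimensional anticoncentration fact I would argue as follows. A hypergeometric random variable $M$ with population size $N\asymp n$, sample size $\asymp n$, and "marked'' set of size $s=|A_r|$ can be coupled so that $M$ dominates a $\mr{Bin}(\lfloor s/2\rfloor, c)$ for a constant $c=c(\eta)>0$ (as in the proof of \cref{lem:slice-estimate}), so it has a "many independent coin flips'' structure. Then $\|\tau M+y\|_{\mb{R}/\mb Z}<\delta$ can be controlled via the characteristic-function/Esseen approach: if $|\tau|\ge \pi$ the right-hand side is $\Omega_\eta(1)$ and there is nothing to prove, and if $|\tau| < \pi$, one uses $\mb{E}[e^{2\pi i k (\tau M+y)}]$ and the Fourier-type bound $|\mb{E}e^{i\theta M}|\le \exp(-\Omega_\eta(s)\,\|\theta/2\pi\|_{\mb{R}/\mb Z}^2)$ from \cref{eq:cos-0-1} (after exposing coordinates to reduce to a genuine Rademacher sum), summed against a suitable smooth kernel; alternatively, one directly uses that $M$ has a maximal point-probability of order $1/\sqrt s$ together with a local-CLT-type comparison. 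Either way one gets that $\Pr[\|\tau M + y\|<\delta]$ is at most roughly $\delta + |\tau| + 1/\sqrt{s}$ up to the factor $1/|\tau|$ appearing when $\delta$ or $1/\sqrt s$ dominates $|\tau|$, which rearranges to the stated form $(|\tau|+\delta)(|\tau|+n^{-(1-\beta)/2})/|\tau|$.

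\textbf{Main obstacle.} The bookkeeping of the layered conditioning is the delicate part: one must verify that, at each step, exposing $A_r\cap U$ leaves the conditional law of the \emph{next} $|A_{r+1}\cap U|$ still "spread out'' (hypergeometric with $\Omega_\eta(n)$ marked and unmarked vertices remaining), which relies on $q=O(\log n)$ and $|A_r|\ge n^{1-\beta}$ so that the total number of exposed vertices is $o(n)$ — and one must also correctly track the constants $c_r$ absorbed into the shift, and make sure the single-step bound is genuinely uniform over all conditioning so that the product over $r$ is legitimate. The other point requiring a little care is the range $|\tau|\ge \pi$ (or more generally $|\tau|$ not small), where the factor $1/|\tau|$ in the claimed bound makes it weaker than what one needs; there the bound is trivial since the probability of each event is bounded by $1$ and $(|\tau|+\delta)(|\tau|+n^{-(1-\beta)/2})/|\tau|\ge 1$, so this case is free.
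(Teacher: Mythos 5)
Your overall architecture matches the paper's: factor the joint probability as a chain of conditional probabilities over $r=2,\ldots,q$, reduce each step to one-dimensional anticoncentration of a hypergeometric count against the lattice $\frac{1}{\tau}\mb Z$, and use a single-step estimate of exactly the form of the paper's \cref{lem:circle}. However, your conditioning scheme has a genuine gap, and it comes from never really using the first half of \cref{eq:condition-q-tuple} (the largeness of $B=J\setminus(N(v_1)\cup\cdots\cup N(v_q))$). There are two readings of your exposure scheme and both fail. (i) If you first condition on all of $U\setminus\bigcup_r A_r$ and then expose $U\cap A_2,\ldots,U\cap A_q$ sequentially, then because $|U|=\ell$ is fixed, the total count $|U\cap\bigcup_r A_r|$ is already determined, and the conditional law of $|U\cap A_r|$ at step $r$ is $\mr{Hyp}(|A_r\cup\cdots\cup A_q|,|A_r|,m_r)$ where $m_r$ can drift to $0$ or to the full population; at the last step $r=q$ the population equals $A_q$ itself, so $|U\cap A_q|=m_q$ is \emph{deterministic} and the event $\mc E_q$ has conditional probability $0$ or $1$ — the single-step bound fails outright. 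Your claim that ``at least $\eta n/2$ vertices are still available both inside and outside $U$'' refers to the whole vertex set, but the relevant population after conditioning on the complement is only $\bigcup_{r'\ge r}A_{r'}$. (ii) If instead you only expose $U\cap A_2,\ldots,U\cap A_{r-1}$ without conditioning on the outside (which is what ``we have exposed at most $qn^{1-\beta}=o(n)$ coordinates'' suggests), then the hypergeometric is indeed spread out, but the shift $c_r$ is \emph{not} a constant: $\deg_{U\cap J}(v_r)$ also depends on $U\cap J\cap N(v_r)\cap(N(v_1)\cup\cdots\cup N(v_{r-1}))$ and $\deg_{U\cap J}(v_1)$ on $U\cap J\cap N(v_1)$, neither of which is determined by the exposed data, and both are correlated with $|U\cap A_r|$ through the fixed total $|U|=\ell$. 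So the event is not of the form $\|\tau M+y\|<\delta$ with $y$ fixed and $M$ a clean hypergeometric.

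The missing idea (and the paper's fix) is to pair the fresh neighbors with fresh \emph{non}-neighbors: for each $r$, choose $S\su J\setminus(N(v_1)\cup\cdots\cup N(v_{r-1}))$ consisting of exactly $\lceil n^{1-\beta}\rceil$ vertices of $(J\cap N(v_r))\setminus(N(v_1)\cup\cdots\cup N(v_{r-1}))$ and exactly $\lceil n^{1-\beta}\rceil$ vertices of $J\setminus(N(v_1)\cup\cdots\cup N(v_q))$ — this is precisely where the first condition of \cref{eq:condition-q-tuple} is needed. Conditioning on any outcome of $U\setminus S$ (which determines $\deg_{U\cap J}(v_j)$ for all $j<r$, hence whether $\mc E_2,\ldots,\mc E_{r-1}$ hold) and on the count $|U\cap S|$ lying in $[(\eta/2)|S|,(1-\eta/2)|S|]$ (a $1-n^{-\omega_\eta(1)}$ event absorbable into the bound), the remaining randomness is the split of $U\cap S$ between the neighbor half and the non-neighbor half, so $\deg_{U\cap S}(v_r)\sim\mr{Hyp}(|S|,|S|/2,|U\cap S|)$ is genuinely random and spread out, and your one-dimensional estimate applies with a now-determined shift. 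Your sketch of that one-dimensional estimate itself (via a coupling/characteristic-function argument or a local CLT for the hypergeometric) is fine and is essentially the paper's \cref{lem:circle}.
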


To prove \cref{lem:joint-probability} we will need the following estimate for hypergeometric distributions.

\begin{lemma}\label{lem:circle}
Fix $\eta>0$. For some even positive integer $k$, let  $Z\sim\mr{Hyp}(k,k/2,\ell)$ with $\eta k\le\ell\le(1-\eta)k$. Then for any $\tau\in\mb{R}\setminus\{0\}$, any $0<\delta\leq 1/2$ and $x\in\mb{R}$, we have
\[\Pr\left[\left\Vert\tau Z+x\right\Vert_{\mb{R}/\mb Z}\le\delta\right]\lesssim_{\eta}\frac{(|\tau|+\delta) (|\tau|+1/\sqrt{k})}{|\tau|}.\]
\end{lemma}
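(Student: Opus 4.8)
The plan is to prove \cref{lem:circle} by reducing the hypergeometric distribution $\mr{Hyp}(k,k/2,\ell)$ to a sum of roughly $\min\{|\tau|^{-1},k\}$-many ``fresh'' two-point increments, and then applying the standard Rademacher cancellation estimate \cref{eq:cos}. Concretely, one realizes $Z=|U\cap A|$ where $A$ is a fixed set of size $k/2$ and $U$ is a uniformly random $\ell$-subset of a ground set of size $k$. The first step is to expose $U$ in a structured way: pick disjoint pairs $\{a_1,b_1\},\ldots,\{a_M,b_M\}$ inside the ground set with $a_j\in A$ and $b_j\notin A$, where $M=\lceil c\min\{|\tau|^{-1},k\}\rceil$ for a suitable small constant $c=c(\eta)$ (note $M\le k/4$ say, using $|\tau|\ge$ something harmless, or else handle $|\tau|$ tiny separately where the bound is trivial since the right-hand side is $\gtrsim 1$). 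By exposing the intersection sizes $|U\cap\{a_j,b_j\}|$ one at a time, each is equal to $1$ with probability $\Omega_\eta(1)$ conditionally on the past (since $\eta k\le \ell\le (1-\eta)k$), so by a Chernoff bound (\cref{lem:chernoff}) with probability $1-\exp(-\Omega_\eta(M))$ there is a set $K$ of $\Omega_\eta(M)$ indices $j$ with $|U\cap\{a_j,b_j\}|=1$. We then expose everything except which of $a_j,b_j$ lies in $U$ for $j\in K$; conditionally, $Z=\sum_{j\in K}\mbm 1_{a_j\in U}+(\text{const})$ with the indicators i.i.d.\ uniform in $\{0,1\}$.

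The second step is the Fourier estimate: after this conditioning, $\tau Z+x$ is a sum of $|K|$ independent Rademacher-type terms each with ``weight'' $\tau$, plus a constant, so \cref{eq:cos-0-1} (equivalently the computation behind \cref{lem:slice-estimate}) gives
\[
|\mb E[e^{i(2\pi)(\tau Z+x)/(2\pi)}]|\le \exp\Big(-|K|\cdot\snorm{\tau}_{\mb R/\mb Z}^2\Big),
\]
wait — more carefully, one uses that $|\mb E e^{i\tau Z}|\le \exp(-|K|\,\snorm{\tau/(2\pi)}_{\mb R/\mb Z}^2 \cdot(2\pi)^2/\pi^2)$, i.e.\ an exponential bound in $|K|\min\{1,|\tau|^2\}$. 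Since $|K|=\Omega_\eta(M)=\Omega_\eta(\min\{|\tau|^{-1},k\})$, this shows the conditional characteristic function of $\tau Z$ is at most $\exp(-\Omega_\eta(\min\{|\tau|,\,k\tau^2\}))$. To convert a characteristic-function bound into the small-ball bound on $\|\tau Z+x\|_{\mb R/\mb Z}$, I would apply a version of Esseen's inequality on the circle: $\Pr[\|\tau Z+x\|_{\mb R/\mb Z}\le \delta]\lesssim \delta\sum_{|m|\lesssim 1/\delta}|\mb E e^{2\pi i m\tau Z}|$, and for each $m\ne 0$ repeat the argument above with $m\tau$ in place of $\tau$ to get geometric decay in $m$, giving a total bound $\lesssim_\eta \delta + \exp(-\Omega_\eta(\min\{|\tau|,k\tau^2\}))$ when $\delta\ge|\tau|$, and $\lesssim_\eta |\tau| + \exp(-\cdots)$ when $\delta<|\tau|$ (since then the event forces $Z$ into $O(1+\delta/|\tau|)=O(1)$ residues). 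Either way the bound is $\lesssim_\eta (|\tau|+\delta)\cdot(1+\exp(-\Omega_\eta(\min\{|\tau|,k\tau^2\}))/|\tau|)$.

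The final step is bookkeeping to match the stated form $\dfrac{(|\tau|+\delta)(|\tau|+1/\sqrt k)}{|\tau|}$. The point is that $\exp(-\Omega_\eta(\min\{|\tau|,k\tau^2\}))\lesssim_\eta |\tau|+1/\sqrt k$: indeed if $|\tau|\ge 1/\sqrt k$ then $\min\{|\tau|,k\tau^2\}\ge |\tau|$ wait that's not quite right either — if $|\tau|\le 1$ then $k\tau^2\le |\tau|$ iff $|\tau|\le 1/k$, hmm. Let me reorganize: for $|\tau|\le 1/k$ the RHS of the lemma is $\gtrsim 1$ so there's nothing to prove; for $1/k\le|\tau|\le 1$ we have $\min\{|\tau|,k\tau^2\}=k\tau^2\ge k\tau^2$, and one checks $\exp(-\Omega_\eta(k\tau^2))\lesssim_\eta 1/(k\tau)\cdot$(something) — more simply, $\exp(-\Omega_\eta(k\tau^2))\lesssim_\eta 1/(\sqrt k|\tau|)\le (1/\sqrt k)/|\tau|$, using $e^{-cx}\lesssim_c x^{-1/2}$; and for $|\tau|\ge 1$ one reduces mod $1$ to $\snorm{\tau}_{\mb R/\mb Z}$, but actually in that regime $Z\in\{0,\ldots,k\}$ and the estimate follows by a direct pigeonhole/equidistribution argument, or one simply notes $|\tau|+1/\sqrt k\gtrsim 1$. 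I expect the \textbf{main obstacle} to be this last conversion step — getting exactly the factor $(|\tau|+1/\sqrt k)$ rather than something slightly weaker — since it requires being careful about the three regimes $|\tau|\le 1/k$, $1/k\le|\tau|\le 1$, and $|\tau|\ge 1$, and in the middle regime one must trade the Gaussian-type decay $\exp(-\Omega(k\tau^2))$ against the polynomial target $1/(\sqrt k|\tau|)$ using $\sup_{x>0}x^{1/2}e^{-cx}<\infty$. Everything else (the exposure argument, the Chernoff step, the Esseen-on-the-circle step) is routine given the tools already assembled in \cref{sec:preliminaries}.
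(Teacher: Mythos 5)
Your proposal takes a genuinely different route from the paper, and it has real gaps. The paper's proof works entirely in physical space: after translating $x$ so that $x/\tau$ is within $1/|\tau|$ of $-\mb E Z$, it writes the event as a union over $i\in\mb Z$ of events $|Z+x/\tau-i/\tau|\le\delta/|\tau|$, notes that each such interval contains at most $1+2\delta/|\tau|$ integers, and bounds each point probability by a non-uniform local limit estimate $\Pr[Z-\mb E Z=y]\lesssim_\eta \exp(-\Omega_\eta(y^2/k))/\sqrt k$; the Gaussian decay then makes the sum over $i$ contribute a factor $O(1+|\tau|\sqrt k)$, giving exactly the stated bound. No Fourier analysis is needed.

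The main gap in your Fourier route is the Esseen step. The conditional characteristic function bound you obtain is of the form $|\mb E e^{2\pi i m\tau Z}|\le\exp(-c_\eta k\,\snorm{m\tau}_{\mb R/\mb Z}^2)$ (up to the conditioning error), and this does \emph{not} decay geometrically in $m$: it equals $1$ whenever $m\tau$ is close to an integer, which happens for roughly $1/(q\delta)$ values of $m\le 1/\delta$ when $\tau\approx p/q$. Controlling $\delta\sum_{|m|\le 1/\delta}|\mb E e^{2\pi i m\tau Z}|$ therefore requires a Diophantine/Bohr-set analysis of $\{m:\snorm{m\tau}_{\mb R/\mb Z}\le\eps\}$ that you have not carried out; the statement does appear to survive this analysis, but it is the actual content of the proof on your route, not a routine step. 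A second, related gap is the regime $1/k\le|\tau|\lesssim 1/\sqrt k$: there $k\tau^2=O(1)$, so the characteristic function at the fundamental frequency is $\Theta(1)$ and Fourier decay gives nothing; the factor $|\tau|+1/\sqrt k$ must come from counting the $O(1/(|\tau|\sqrt k))$ non-decaying low frequencies, and your "final step" bookkeeping does not produce it (as you half-notice, $\exp(-\Omega(k\tau^2))/|\tau|\lesssim 1/(\sqrt k\,|\tau|^2)$ overshoots the target $1/(\sqrt k\,|\tau|)$ by a factor $1/|\tau|$). Finally, the claim that the bound is trivial for $|\tau|\le 1/k$ is false: with $\delta=|\tau|=1/k$ the right-hand side is $\Theta(1/\sqrt k)$, not $\Omega(1)$; that regime still needs the uniform pointwise bound $\Pr[Z=j]\lesssim_\eta 1/\sqrt k$. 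Given that your argument ends up needing a local limit theorem for $Z$ anyway, the paper's direct approach is both shorter and avoids all of the resonance issues.
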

\begin{proof}
We may assume that $x\in[-\tau\mb{E}Z, -\tau\mb{E}Z +1]$, which implies that $x/\tau$ differs from $-\mb{E}Z$ by at most $1/|\tau|$. Note that the standard deviation of $Z$ is $\Theta_\eta(\sqrt{k})$; by direct computation or a non-uniform quantitative central limit theorem for the hypergeometric distribution (for example \cite[Theorem~2.3]{LCM06}), for any $y\in\mb{R}$ we have 
\[
\Pr[Z-\mb E Z=y]\lesssim_\eta \frac{\exp\left(-\Omega_{\eta}(y^2/k)\right)}{\sqrt{k}}.
\]
It follows that
\begin{align*}
\Pr\left[\left\Vert\tau Z+x\right\Vert_{\mb{R}/\mb Z}\le\delta\right] &\le\sum_{i\in\mb Z}\Pr\left[\left|Z+\frac{x}{\tau}-\frac{i}{\tau}\right|
\le \frac{\delta}{|\tau|}\right] \lesssim_\eta \sum_{i\in\mb Z} \sum_{\substack{j\in\mb{Z}\\|j+x/\tau-i/\tau|\leq \delta/|\tau|}} \!\!\!\!\!\!\!\frac{\exp\left(-\Omega_{\eta}((j-\mb EZ)^2/k)\right)}{\sqrt{k}}\\
&\lesssim_\eta\sum_{i\in\mb Z}\left(1+2\frac{\delta}{|\tau|}\right)\frac{\exp\left(-\Omega_{\eta}\left((\max\{0,|i/\tau|-(1+\delta)/|\tau|\})^2/k\right)\right)}{\sqrt{k}}\\
&\le \left(1+2\frac{\delta}{|\tau|}\right)\left(\sum_{\substack{i\in\mb Z\\|i|>4}}\frac{\exp\left(-\Omega_{\eta}\left(i^2/(4\tau^2k)\right)\right)}{\sqrt{k}}+\sum_{\substack{i\in\mb Z\\|i|
\le 4}}\frac{1}{\sqrt{k}}\right)\\
&\lesssim_\eta \frac{|\tau|+\delta}{|\tau|}\cdot \left(\frac{|\tau|\sqrt{k}}{\sqrt{k}}+\frac{1}{\sqrt{k}}\right)=\frac{(|\tau|+\delta) (|\tau|+1/\sqrt{k})}{|\tau|},
\end{align*}
where in the third step we used that for any $i\in\mb Z$ there are at most $1+2\delta/|\tau|$ integers $j\in\mb Z$ satisfying $|j+x/\tau-i/\tau|\leq \delta/|\tau|$, and for every such integer we have $|j-\mb EZ|\geq |i|/\tau-1/|\tau|-\delta/|\tau|$ (since $x/\tau$ differs from $-\mb{E}Z$ by at most $1/|\tau|$).
\end{proof}

From this we deduce \cref{lem:joint-probability}.

\begin{proof}[Proof of \cref{lem:joint-probability}]
For $r=2,\ldots,q$, let $\mathcal{E}_{r}$ be the event that $\|\tau\deg_{U\cap J}(v_{i})-\tau\deg_{U\cap J}(v_{1})+x_{i}\|_{\mb{R}/\mb Z}<\delta$. We claim that
\[\Pr[\mathcal{E}_{r}\,|\,\mathcal{E}_{2}\cap\cdots\cap\mathcal{E}_{r-1}]\lesssim_{\eta}\frac{(|\tau|+\delta) (|\tau|+n^{-(1-\beta)/2})}{|\tau|}.\]
for every  $r=2,\ldots,q$. This will suffice, since the desired probability in the statement of \cref{lem:joint-probability} is
\[\Pr[\mathcal{E}_{2}\cap\cdots\cap\mathcal{E}_{q}]=\prod_{r=2}^{q}\Pr[\mathcal{E}_{r}\,|\,\mathcal{E}_{2}\cap\cdots\cap\mathcal{E}_{r-1}].\]
Now fix $r\in \{2,\ldots,q\}$. By assumption both of the sets appearing in condition \cref{eq:condition-q-tuple} have size at least $\lceil n^{1-\beta}\rceil$. Inside each of these two sets, we choose some subset of size exactly $\lceil n^{1-\beta}\rceil$ and we define $S\su J\setminus (N(v_1)\cup\cdots\cup N(v_{r-1}))$ to be the union of these two subsets. Then $|S|=2\lceil n^{1-\beta}\rceil$ and $|S\cap N(v_r)|=\lceil n^{1-\beta}\rceil$. For the random set $U\su V(G)$ of size $\ell$, let us now condition on an outcome of $|U\cap S|$ such that $(\eta/2)|S|\le|U\cap S|\le(1-\eta/2)|S|$
(by a Chernoff bound for hypergeometric random variables, as in \cref{lem:chernoff}, this happens with probability $1-n^{-\omega_\eta(1)}\geq 1- ((|\tau|+\delta)/|\tau|)\cdot n^{-(1-\beta)/2}$),
and condition on any outcome of $U\setminus S$ (as $S$ is disjoint from  $N(v_1)\cup\cdots\cup N(v_{r-1})$, this determines
the value of $\deg_{U\cap J}(v_{j})$ for $j=1,\ldots,r-1$ and in particular determines whether the events
$\mathcal{E}_{j}$ hold for $j=2,\ldots,r-1$). Now, conditionally, $\deg_{ U\cap S}(v_{r})=|U\cap S\cap N(v_r)|$
has a hypergeometric distribution $\mr{Hyp}(|S|,|S|/2,|U\cap S|)$,
so the claim follows from \cref{lem:circle} (taking $x=\tau\deg_{(U\cap J)\setminus S}(v_{1})-\tau\deg_{U\cap J}(v_{1})+x_{r}$), recalling that $|S|=2\lceil n^{1-\beta}\rceil$.
\end{proof}

We are now ready to prove \cref{thm:quadratic-decoupling}.

\begin{proof}[Proof of \cref{thm:quadratic-decoupling}]
We apply \cref{lem:good-tuples} with $\beta=\eta/3$, obtaining a partition $V(G)=I\cup J$ and a collection $\mathcal{V}\su I^q$ of at least $n^{1-\eta/3}$ disjoint $q$-tuples of vertices in $I$, where $q=\lfloor \zeta\log n\rfloor$ with $\zeta=\zeta(C,\eta/3)>0$, such that each $q$-tuple $(v_1,\ldots,v_q)\in \mathcal{V}$ satisfies \cref{eq:condition-q-tuple} for $r=1,\ldots,q$. Let $A$ denote the adjacency matrix of $G$ and let $\vec \xi\in \{0,1\}^n$ be the characteristic vector of the random set $U$ (meaning $\vec{\xi}_v=1$ if $v\in U$, and $\vec{\xi}_v=0$ if $v\notin U$), so $\vec \xi\in\{0,1\}^n$ is a uniformly random vector with precisely $\ell$ ones. We define
\[f(\vec{\xi}):=X=e(G[U])+\sum_{v\in U}e_v+e_0=\frac{1}{2}\vec{\xi}^{\intercal}A\vec{\xi}+\vec{e}\cdot\vec{\xi}+e_0.\]
For the rest of the proof we condition on an outcome of $|U\cap I|$
satisfying $(\eta/2)|I|\le|U\cap I|\le(1-\eta/2)|I|$. By a Chernoff bound for hypergeometric random variables, as in \cref{lem:chernoff}, this occurs with probability $1-n^{-\omega_\eta(1)}$ (as $\eta n\leq\ell\leq (1-\eta)n$ and $|I|\geq n^{1-\eta/3}$), so the characteristic function for the random variable $X$ under this conditioning differs from the original characteristic function $\varphi_X$ by at most $n^{-\omega_\eta(1)}$. Hence it suffices to prove that $|\varphi_X(\tau)|\le n^{-6}$ (for $n^{-1+\eta}\le|\tau|\le\nu$) for our conditional random variable $X$.

Let $\vec \xi_I$ and $\vec \xi_J$ be the restrictions of $\vec \xi$ to the index sets $I$ and $J$. Having conditioned on $|U\cap I|$, these vectors $\vec{\xi}_{I}$ and $\vec{\xi}_{J}$
are independent from each other. Let $\vec{\xi}_{J}'$ be an independent
copy of $\vec{\xi}_{J}$; by \cref{eq:decoupling} we have
\begin{equation}\label{eq:decoupling-applied}
|\varphi_{X}(\tau)|^2=|\varphi_{f(\vec{\xi})}(\tau)|^2=\left|\mb E e^{i\tau f(\vec{\xi}_{I},\vec{\xi}_{J})}\right|^{2}\le\mb E\left[\left|\mb E[e^{i\tau(f(\vec{\xi}_{I},\vec{\xi}_{J})-f(\vec{\xi}_{I},\vec{\xi}_{J}'))}\mid\vec{\xi}_{J},\vec{\xi}_{J}']\right|\right].
\end{equation}

Now, we can write $f(\vec{\xi}_{I},\vec{\xi}_{J})-f(\vec{\xi}_{I},\vec{\xi}_{J}')=\sum_{i\in I}a_{i}\xi_{i}+b$, where $a_{i}=\sum_{j\in J}A_{i,j}(\xi_{j}-\xi_{j}')$ for each $i\in I$ and $b$ only depends on $\vec{\xi}_{J}$ and $\vec{\xi}_{J}'$ (but not on $\vec{\xi}_{I}$). Let $\delta=n^{-1/2+\eta/3}$.
\begin{claim}\label{claim:quadratic-decoupling-claim-in-proof}
    With probability at least $1-n^{-12}/2$ the outcome of $(\vec{\xi}_{J},\vec{\xi}_{J}')$ is such that
    \[\left\Vert\tau a_{i}/(2\pi)-\tau a_{i'}/(2\pi)\right\Vert_{\mb{R}/\mb Z}\ge\delta\]
    for at least $\left|\mathcal{V}\right|/2\geq n^{1-\eta/3}/2$ disjoint pairs $(i,i')\in I^{2}$.
\end{claim}
Assuming \cref{claim:quadratic-decoupling-claim-in-proof}, it follows from \cref{lem:slice-estimate} that with probability at least $1-n^{-12}/2$, the outcome of $\vec{\xi}_{J}$ and $\vec{\xi}_{J}'$ is such that
\begin{align*}
\left|\mb E[e^{i\tau(f(\vec{\xi}_{I},\vec{\xi}_{J})-f(\vec{\xi}_{I},\vec{\xi}_{J}'))}\mid\vec{\xi}_{J},\vec{\xi}_{J}']\right|&=\left|\mb E[e^{i\tau\left(\sum_{i\in I} a_i\xi_i+b\right)}\mid\vec{\xi}_{J},\vec{\xi}_{J}']\right|=\left|\mb E[e^{i\sum_{i\in I} \tau a_i\xi_i}\mid\vec{\xi}_{J},\vec{\xi}_{J}']\right|\lesssim e^{-\Omega_{\eta}(n^{\eta/3})}.
\end{align*}
For sufficiently large $n$, the right-hand side is bounded by $n^{-12}/2$. Noting that the expectation on the left-hand side is bounded by $1$ for all outcomes of $\vec{\xi}_{J}$ and $\vec{\xi}_{J}'$, we can conclude that the right-hand side of \cref{eq:decoupling-applied} is bounded by $n^{-12}$ and therefore $|\varphi_{X}(\tau)|\leq n^{-6}$ for sufficiently large $n$, as desired. It remains to prove \cref{claim:quadratic-decoupling-claim-in-proof}.

\begin{proof}[Proof of \cref{claim:quadratic-decoupling-claim-in-proof}]
    Let us also condition on any outcome of $\vec{\xi}_{J}'$. Say that a $q$-tuple $(v_{1},\ldots,v_{q})\in\mathcal{V}$ is \emph{bad} if no pair $(v_r, v_1)\in I^2$ with $r\in\{2,\ldots,q\}$ has the property in the claim. In other words, $(v_{1},\ldots,v_{q})$ is bad if for all $r=2,\ldots,q$ we have $\left\Vert\tau a_{v_r}/(2\pi)-\tau a_{v_1}/(2\pi)\right\Vert_{\mb{R}/\mb Z}<\delta$. 

For any $q$-tuple $(v_{1},\ldots,v_{q})\in\mathcal{V}$ we can bound the probability that $(v_{1},\ldots,v_{q})$ is bad by applying \cref{lem:joint-probability} with $x_{r}=-(\tau/(2\pi))\sum_{j\in J}(A_{v_r,j}-A_{v_1,j})\xi_{j}'$ for $r=2,\ldots,q$ (recall that $(v_{1},\ldots,v_{q})$ satisfies \cref{eq:condition-q-tuple}), obtaining
\begin{align*}
\Pr[(v_{1},\ldots,v_{q})\text{ is bad}] & =\Pr\left[\left\Vert\tau a_{v_r}/(2\pi)-\tau a_{v_1}/(2\pi)\right\Vert_{\mb{R}/\mb Z}<\delta\text{ for }r=2,\ldots,q\right]\\
& =\Pr\left[\left\Vert(\tau/(2\pi))\deg_{U\cap J}(v_{r})-(\tau/(2\pi))\deg_{U\cap J}(v_{1})+x_{r}\right\Vert_{\mb{R}/\mb Z}<\delta\text{ for }r=2,\ldots,q\right]\\
 & \le\left(O_\eta\left(\frac{(|\tau/(2\pi)|+\delta) (|\tau/(2\pi)|+n^{-(1-\beta)/2})}{|\tau/(2\pi)|}\right)\right)^{q-1}\\
 &\le\left(O_\eta\left(\frac{(|\tau|+n^{-1/2+\eta/3}) (|\tau|+n^{-1/2+\eta/6})}{|\tau|}\right)\right)^{q-1}\le \left(O_\eta(\nu+n^{-\eta/2})\right)^{\lfloor\zeta\log n\rfloor-1},
\end{align*}
using that $n^{-1+\eta}\le|\tau|\le\nu$. Now, if $\nu$ is sufficiently small with respect to $C$ and $\eta$ (and consequently also sufficiently small with respect to $\zeta$), we deduce that $\Pr[(v_{1},\ldots,v_{q})\text{ is bad}]\leq 1/(4n^{12})$. Hence the expected number of bad tuples $(v_{1},\ldots,v_{q})\in\mathcal{V}$ is at most  $|\mathcal{V}|/(4n^{12})$. Thus, by Markov's inequality, with probability at least $1-n^{-12}/2$ there are at most $|\mathcal{V}|/2$ bad $q$-tuples in $\mathcal{V}$. When this is the case, among each of the at least $|\mathcal{V}|/2$ different $q$-tuples $(v_{1},\ldots,v_{q})\in\mathcal{V}$ that are not bad we can find a pair $(v_r, v_1)\in I^2$ with the desired property that $\left\Vert\tau a_{v_r}/(2\pi)-\tau a_{v_1}/(2\pi)\right\Vert_{\mb{R}/\mb Z}\geq\delta$. Since the $q$-tuples in $\mathcal{V}$ are all disjoint, this gives at least $|\mathcal{V}|/2$ disjoint pairs in $I^2$ with this property, thus proving the claim.\end{proof}
As we saw earlier, this finishes the proof of \cref{thm:quadratic-decoupling}.\end{proof}

\section{Short interval control in the additively unstructured case}\label{sec:short-interval-unstructured}
Now we can combine the characteristic function estimates in \cref{sec:initial-fourier,sec:high-fourier} to prove \cref{thm:short-interval} in the ``additively unstructured'' case (recall the outline in \cref{sub:additive-structure-dichotomy}), defined as follows. This definition is chosen so that the term $\wh{D}_{L,\gamma}(\vec{d})$ appearing in \cref{prop:linear-RLCD} is large, meaning that \cref{prop:linear-RLCD} can be applied to a wide range of $|t|$.

\begin{definition}\label{def:additively-structured}
Fix $0<\gamma<1/4$, consider a graph $G$ with $n$ vertices and a vector $\vec e\in\mb{R}_{\ge 0}^{V(G)}$, and let $d_v=e_v+\deg_G(v)/2$ for all $v \in V(G)$.
Say that $(G,\vec e)$ is \emph{$\gamma$-unstructured} if $\wh{D}_{L,\gamma}(\vec{d})\ge n^{1/2}$, for $L=\lceil 100/\gamma\rceil$. Otherwise, $(G,\vec e)$ is \emph{$\gamma$-structured}.
\end{definition}

From now on we fix $\gamma=10^{-4}$. For our proof of \cref{thm:short-interval}, we split into two cases, depending on whether $(G,\vec e)$ is $\gamma$-structured. In this section we will prove \cref{thm:short-interval} in the case where $(G,\vec e)$ is $\gamma$-unstructured. Eventually (in \cref{sec:short-interval-structured}) we will handle the case where $(G,\vec e)$ is $\gamma$-structured, i.e., where $\wh{D}_{L,\gamma}(\vec{d})< n^{1/2}$. While the arguments in this section work for any constant $0<\gamma<1/4$, the proof of the $\gamma$-structured case in \cref{sec:short-interval-structured} requires $\gamma$ to be sufficiently small (this is why we define $\gamma=10^{-4}$).

\begin{proof}[Proof of \cref{thm:short-interval} in the $\gamma$-unstructured case]
Fix $C,H>0$, let $G$ and $\vec{e}\in\mb{R}^{V(G)}$ and $e_0\in\mb{R}$ be as in \cref{thm:short-interval}, and assume that $(G,\vec e)$ is $\gamma$-unstructured and that $n$ is sufficiently large with respect to $C$ and $H$. Recall that $U$ is a uniformly random subset of $V(G)$ and $X=e(G[U])+\sum_{v\in U} e_v+e_0$, and also recall (e.g.~from the proof of \cref{prop:linear-RLCD}) that $\sigma(X)=\Theta_{C,H}(n^{3/2})$. Let $Z\sim \mc{N}(\mb E X,\sigma(X))$ be a Gaussian random variable with the same mean and variance as $X$.

First note that for any $\tau\in\mb{R}$, \cref{prop:lowest-t} implies
\[|\varphi_X(\tau)-\varphi_Z(\tau)|=\big|\varphi_{(X-\mb{E} X)/\sigma(X)}(\tau\sigma(X))-\varphi_{(Z-\mb{E} X)/\sigma(X)}(\tau\sigma(X))\big |\lesssim_{C,H}|\tau|\sigma(X)n^{-1/2}\lesssim_{C,H}|\tau|n\]
(noting that the graph $G$ has density at least $\Omega_C(1)$ by \cref{thm:dense-ramsey}). Then, note that since $|\varphi_Z(\tau)|=\exp(-\sigma(X)^2\tau^2/2)$, for $|\tau|\ge n^{2\gamma}/\sigma(X)$ we have $|\varphi_Z(\tau)|\le\exp(-n^{4\gamma}/2)$. Furthermore, in \cref{prop:linear-RLCD} we have $\wh{D}_{L,\gamma}(\vec{d})\ge n^{1/2}$ by our assumption that $(G,\vec e)$ is $\gamma$-unstructured. Hence for $\alpha=\alpha(C,H,\gamma)>0$ as in \cref{prop:linear-RLCD}, we obtain that $|\varphi_X(\tau)|=|\varphi_{(X-\mb{E} X)/\sigma(X)}(\tau\sigma(X))|\lesssim_{C,H,\gamma} n^{-5}$ for $n^{2\gamma}/\sigma(X)\le |\tau|\le \alpha n^{1/2+\gamma/8}/\sigma(X)$.

Let $\nu=\nu(C,\gamma/9)>0$ be as in \cref{thm:quadratic-decoupling}. Note that by a Chernoff bound we have $n/4\le |U|\le 3n/4$ with probability $1-e^{-\Omega(n)}$. If we condition on such an outcome of $|U|$, then for $n^{-1+\gamma/9}\le |\tau|\le \nu$, \cref{thm:quadratic-decoupling} shows that the conditional characteristic function of $X$ is bounded in absolute value by $n^{-5}$ (assuming that $n$ is sufficiently large). It follows that for this range of $|\tau|$ we have $|\varphi_X(\tau)|\lesssim_{C,H} n^{-5}+e^{-\Omega(n)}\lesssim n^{-5}$.

Recalling that $\sigma(X)=\Theta_{C,H}(n^{3/2})$ (and therefore $n^{-1+\gamma/9}\le \alpha n^{1/2+\gamma/8}/\sigma(X)$ for sufficiently large $n$), we can conclude that for $n^{2\gamma}/\sigma(X)\le |\tau|\le \nu$ we have $|\varphi_X(\tau)|\lesssim_{C,H} n^{-5}$ and $|\varphi_X(\tau)-\varphi_Z(\tau)|\lesssim_{C,H} n^{-5}+\exp(-n^{4\gamma}/2)\lesssim n^{-5}$. Hence, defining $\varepsilon=2/\nu>0$ (which only depends on $C$), we obtain
\[
\int_{-2/\varepsilon}^{2/\varepsilon}|\varphi_X(\tau)-\varphi_Z(\tau)|\,d\tau\lesssim_{C,H} \int_{-n^{2\gamma}/\sigma(X)}^{n^{2\gamma}/\sigma(X)}|\tau|n\,d\tau+2\nu\cdot n^{-5}\lesssim_{C,H}n^{4\gamma-2}.
\]
Let $B=B(C)=10^4\cdot 2\varepsilon$. For the upper bound in \cref{thm:short-interval}, note that by \cref{lem:esseen-upper-crude} for all $x\in\mb{R}$ we have (using that $\mathcal{L}(Z,\eps)\le 2\eps/\sigma(X)\lesssim_{C,H} n^{-3/2}$ as $p_Z(u)\le 1/\sigma(X)$ for all $u\in\mb{R}$)
\[\Pr[|X-x|\leq B]\le 2\cdot 10^4\cdot \mathcal{L}(X,\eps)\lesssim \mathcal{L}(Z,\eps)+\eps \int_{-2/\varepsilon}^{2/\varepsilon}|\varphi_X(\tau)-\varphi_Z(\tau)|\,d\tau\lesssim_{C,H} n^{-3/2}.\]
For the lower bound in \cref{thm:short-interval}, fix some $A>0$. We can apply \cref{lem:esseen-lower} with $K=2$ and any fixed $R\geq 4$ (which we will chose sufficiently large in terms of $C, H, \gamma$, and $A$). Indeed, note that for any fixed $A>0$ and $R\geq 4$, for $x\in\mb Z$ with $|x-\mb{E}X|\leq An^{3/2}$ and $y_1,y_2\in [x-R\eps,x+R\eps]$, we have that $p_Z(y_1)/p_Z(y_2)\le \exp(-((y_1-\mb{E}X)^2-(y_2-\mb{E}X)^2)/(2\sigma(X)^2))\le\exp(2R\eps\cdot 4An^{3/2}/\Theta_{C,H}(n^3))\le 2$ if $n$ is sufficiently large with respect to $C, H, A$, and $R$. Hence \cref{lem:esseen-lower} yields
\begin{align*}
\Pr[|X-x|\leq B]&\geq \frac{1}{8}\Pr[|Z-x|\le \eps]-C_{\ref{lem:esseen-lower}}\bigg(R^{-1}\mc{L}(Z,\eps)+\eps\int_{-2/\eps}^{2/\eps}|\varphi_Y(\tau)-\varphi_Z(\tau)|\,d\tau\bigg)\\
&\ge \eps\cdot \frac{\exp(-A^2n^3/(2\sigma(X)^2))}{8\sqrt{2\pi}\sigma(X)}-\frac{C_{\ref{lem:esseen-lower}}}{R}\cdot \frac{2\eps}{\sigma(X)}-C_{\ref{lem:esseen-lower}}\cdot O_{C,H}(n^{4\gamma-2})\gtrsim_{C,H,A} n^{-3/2},
\end{align*}
if $R$ is chosen to be large enough with respect to $C, H$, and $A$ (recall again that $\sigma(X)=\Theta_{C,H}(n^{3/2})$).
\end{proof}

\section{Robust rank of Ramsey graphs}\label{sec:ramsey-robust-rank}
In \cite{KS20b}, the first and third authors observed that the adjacency matrix of a Ramsey graph is far from any matrix with rank $O(1)$. We will need a much stronger version of this fact: the adjacency matrix of a Ramsey graph is far from all matrices built out of a small number of rank-$O(1)$ ``blocks'' (in the proof of \cref{thm:short-interval}, these blocks will correspond to the buckets of vertices with similar values of $d_v$). 
Recall that $\snorm{M}_\mr{F}^2$ is the sum of the squares of the entries of $M$.

\begin{lemma}\label{lem:ramsey-low-rank}
Fix $0<\delta<1$, $C>0$, $r\in\mb{N}$ and consider a $C$-Ramsey graph $G$ on $n$ vertices with adjacency matrix $A$. Suppose we are given a partition $V(G) = I_1\cup\cdots\cup I_{m}$, with $|I_1|=\cdots=|I_m|$ and $n^\delta/2\le m\le 2n^\delta$. Then, for any $B\in\mb{R}^{n\times n}$ with $\on{rank}(B[I_j\!\times\! I_k])\le r$ for all $j,k\in [m]$, we have $\snorm{A-B}_{\mr{F}}^2\gtrsim_{C,r,\delta} n^2$.
\end{lemma}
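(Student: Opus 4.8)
The strategy is to reduce the ``block low-rank'' approximation statement to the already-known fact (from \cite{KS20b}) that the adjacency matrix of a Ramsey graph is far in Frobenius norm from any single matrix of rank $O(1)$. The obstruction is that $B$ globally may have rank as large as $rm$, which is polynomial in $n$, so we cannot apply the single-block result directly to $B$. Instead, the plan is to find a \emph{single pair of blocks} $I_j, I_k$ (or a small union of blocks) on which $A$ is forced, by the Ramsey property, to be far from any rank-$r$ matrix, and then use the fact that $\|A-B\|_\mr F^2 \ge \|A[I_j\times I_k] - B[I_j\times I_k]\|_\mr F^2$.

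First I would record the relevant single-block statement: by the argument of \cite{KS20b} (or by a direct argument via \cref{thm:dense-ramsey} applied to induced subgraphs and a counting/pigeonhole argument on which blocks are dense), there is a constant $c = c(C,r) > 0$ such that for \emph{any} induced subgraph $G[W]$ of $G$ which is itself $C'$-Ramsey for a suitable $C' = C'(C,\delta)$, and for any $n^\delta/2 \le m \le 2n^\delta$, at least a constant fraction of the pairs $(j,k) \in [m]^2$ satisfy $\|A[I_j \times I_k] - M\|_\mr F^2 \ge c\,|I_j||I_k|$ for every $M$ of rank at most $r$. Here I would use the fact that each $G[I_j]$ has size $|I_j| = n/m \asymp n^{1-\delta} \ge \sqrt n$, so each $G[I_j]$ is $(C/(1-\delta))$-Ramsey, and more generally bipartite-type density control between most pairs of blocks follows from \cref{thm:dense-ramsey} applied to unions of blocks. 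The cleanest route is probably: a binary matrix that is far (in relative Frobenius norm) from every rank-$\le r$ real matrix must have a ``homogeneous-rectangle-free'' structure quantitatively, but conversely, a Ramsey-type density condition on all large induced subgraphs forces quantitative distance from low rank. I would lean on the version of this already present in \cite{KS20b} (stated there for the whole matrix) and localize it.

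The main technical step, and the one I expect to be the principal obstacle, is converting ``far from rank $\le r$ on many individual blocks'' into ``far from any block-rank-$r$ matrix $B$ globally'', \emph{with the right power of $n$}. The point is that we need $\|A-B\|_\mr F^2 \gtrsim_{C,r,\delta} n^2$, i.e.\ a constant fraction of all $\binom n2$ entries' worth of discrepancy, not merely one block's worth (which would only give $\gtrsim (n^{1-\delta})^2 = n^{2-2\delta}$). So localizing to a single pair of blocks is \emph{not} enough. Instead I would argue: for each pair $(j,k)$ with $A[I_j\times I_k]$ far from rank $\le r$, the constraint $\on{rank}(B[I_j\times I_k]) \le r$ forces $\|A[I_j\times I_k] - B[I_j\times I_k]\|_\mr F^2 \ge c\,|I_j||I_k| \asymp n^2/m^2$. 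Summing over the constant fraction $\gtrsim m^2$ of such pairs gives $\|A-B\|_\mr F^2 \ge \sum_{(j,k)\text{ good}} \|A[I_j\times I_k]-B[I_j\times I_k]\|_\mr F^2 \gtrsim m^2 \cdot n^2/m^2 = \Omega(n^2)$, as desired. Thus the real work is establishing that a constant fraction of block-pairs are ``good'' (far from rank $\le r$). For this I would show that if too few pairs were good, then by a union/averaging argument one could assemble a large induced subgraph (a union of $\Theta(m)$ blocks) on which $A$ is close to a bounded-rank matrix, contradicting the rank-robustness of Ramsey graphs from \cite{KS20b}; alternatively one argues directly that each block-pair between two distinct blocks $I_j, I_k$ with $j\ne k$ has $G[I_j\cup I_k]$ being $O_\delta(C)$-Ramsey hence of density bounded away from $0$ and $1$ both inside and across, and a binary matrix whose row-set splits into two halves each of bounded-away density, with bounded-away cross density, is quantitatively far from rank $\le r$ (this last is a short linear-algebra lemma: a rank-$r$ approximation would imply a homogeneous $t\times t$ submatrix of size $t \gtrsim_r \log(\text{side})$ which violates the Ramsey property). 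I would isolate that linear-algebra lemma and prove the main statement by the counting argument above.
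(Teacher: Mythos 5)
Your global architecture (show that a constant fraction of the block-pairs $(j,k)$ have $A[I_j\!\times\! I_k]$ far from every rank-$r$ matrix, then sum $\|(A-B)[I_j\!\times\! I_k]\|_{\mr F}^2\gtrsim (n/m)^2$ over those pairs) is sound, and is essentially the contrapositive of what the paper does. The gap is that neither of your two routes to the crux claim works. Route (a) is circular: if many block-pairs are individually close to rank $\le r$, the assembled approximant on a union of $\Theta(m)$ blocks is a \emph{block}-low-rank matrix of global rank up to $\Theta(rm)$, not a bounded-rank matrix, so the single-matrix rank-robustness of \cite{KS20b} does not apply --- closing exactly that block-vs-global gap is the content of the lemma. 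Route (b) rests on a false premise: the Ramsey property of $G[I_j\cup I_k]$ (via \cref{thm:dense-ramsey}) bounds the density of the whole induced subgraph away from $0$ and $1$, but it does \emph{not} bound the \emph{cross} density between $I_j$ and $I_k$ away from $0$ and $1$. A nearly complete or nearly empty bipartite pattern between two disjoint linear-sized sets is perfectly compatible with $G$ being Ramsey (a homogeneous bipartite rectangle is not a clique or independent set), so a single off-diagonal block being close to low rank yields no contradiction. Note also that falling back on applying the single-matrix result to each $G[I_j\cup I_k]$ cannot rescue you: the resulting farness could live entirely in the diagonal blocks, and after accounting for the $(m-1)$-fold overcounting of diagonal blocks one only gets $\|A-B\|_{\mr F}^2\gtrsim n^2/m = n^{2-\delta}$.

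The missing idea is to play off $D=D(C,\delta)$ blocks against each other. If all $\binom D2$ pairs among $D$ blocks are close to rank $\le r$, one first upgrades each real approximant to a \emph{binary} rank-$\le r$ approximant (\cref{prop:rank-close}), decomposes each into $2^r\times 2^r$ homogeneous rectangles (\cref{lem:rank-partition}), refines to common sub-blocks $I_j'\subseteq I_j$ of size $|I_j|/2^{r(D-1)}$, and records in an auxiliary graph $H$ on $[D]$ whether each bipartite pattern $A[I_j'\!\times\! I_k']$ is nearly all ones or nearly all zeros. Ramsey's theorem gives a homogeneous set of $\ge\frac12\log_2 D$ blocks in $H$, and the union of the corresponding $I_j'$ is an induced subgraph of polynomial size whose density is within $4\rho + 2/\log_2 D$ of $0$ or $1$ --- the within-block edges are diluted by the factor $1/|S|\le 2/\log_2 D$, which is why $D$ must be large --- contradicting \cref{thm:dense-ramsey}. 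Without this multi-block Ramsey step (or some substitute for it), the claim that even one off-diagonal block-pair is far from rank $\le r$ remains unproven.
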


The proof of \cref{lem:ramsey-low-rank} has several ingredients, including the fact that if a binary matrix is close to a low-rank matrix, then it is actually close to a \emph{binary} low-rank matrix. Note that for binary matrices $A,Q$, the squared Frobenius norm $\snorm{A-Q}_{\on{F}}^2$ can be interpreted as the \emph{edit distance} between $A$ and $B$: the minimum number of entries that must be changed to obtain $B$ from $A$.

\begin{proposition}\label{prop:rank-close}
Fix $r\in\mb{N}$. Consider a binary matrix $A\in\{0,1\}^{n\times n}$ and a real matrix $B\in \mathbb{R}^{n\times n}$ such that $\on{rank} B\leq r$ and $\snorm{A-B}_{\on{F}}^2\leq \eps n^2$ for some $\eps>0$. Then there is a binary matrix $Q\in \{0,1\}^{n\times n}$ with $\on{rank} Q\leq r$ and $\snorm{A-Q}_{\on{F}}^2\le C_r \sqrt \eps n^2$, for some $C_r$ depending only on $r$.
\end{proposition}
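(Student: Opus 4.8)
\textbf{Proof proposal for \cref{prop:rank-close}.}
The plan is to argue by induction on the rank bound $r$, peeling off one rank-$1$ ``block structure'' at a time. For the base case $r=0$ we have $B=0$, so $\snorm A_{\mathrm F}^2\le \eps n^2$; since $A$ is binary, $\snorm A_{\mathrm F}^2$ equals the number of $1$-entries, and we may simply take $Q=0$. For the inductive step, assume the statement for $r-1$ and suppose $\on{rank}B\le r$ with $\snorm{A-B}_{\mathrm F}^2\le \eps n^2$. The idea is to use the low-rank structure to find a large \emph{combinatorially structured} piece of $A$: a large set of rows that are nearly constant on a large set of columns, and on which we can replace the relevant entries by a genuine rank-$1$ binary matrix (an all-ones or all-zeros rectangle), while controlling the error incurred.

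The key steps, in order, are as follows. First I would use the hypothesis $\snorm{A-B}_{\mathrm F}^2\le\eps n^2$ to say that most rows $a_i$ of $A$ are within squared-$\ell_2$-distance $O(\sqrt\eps\, n)$ of the corresponding row $b_i$ of $B$ (by Markov, all but $O(\sqrt\eps\, n)$ rows have row-error at most $O(\sqrt\eps\, n)$). Restricting to this ``good'' set $S$ of rows (with $|S|\ge(1-O(\sqrt\eps))n$), the submatrix $B[S\!\times\![n]]$ still has rank at most $r$, so its rows lie in an at-most-$r$-dimensional subspace; by a pigeonhole/covering argument on this subspace (each coordinate of a row of $B$ restricted to $S$ is within $O(1)$ of a $0/1$ value for most columns), I would find a large set $T\subseteq[n]$ of columns and a large set $S'\subseteq S$ of rows such that all rows of $B$ indexed by $S'$ agree, up to small $\ell_2$ perturbation, on the columns $T$ — i.e.\ $B[S'\!\times\!T]$ is close in Frobenius norm to a rank-$1$ matrix $\vec u\vec v^\intercal$ whose entries are (close to) a single repeated pattern $\vec v\in\{0,1\}^T$. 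Concretely one wants $|S'|\cdot|T|\ge\Omega(n^2)$ and $\snorm{A[S'\!\times\!T]-\mathbf 1_{S'}\vec v^\intercal}_{\mathrm F}^2\le O(\sqrt\eps)\,|S'|\,|T|$ after rounding $\vec v$ entrywise. Second, I would \emph{subtract off} this structured block: define a new binary matrix $A'$ agreeing with $A$ outside $S'\times T$ and equal to $\mathbf 1_{S'}\vec v^\intercal$ on $S'\times T$, and a new real matrix $B'$ (of rank at most $r-1$) obtained from $B$ by subtracting a suitable rank-$1$ matrix supported so as to kill the dominant direction on $S'\times T$ and extended by $0$. One checks $\snorm{A'-B'}_{\mathrm F}^2\le C_r'\sqrt\eps\,n^2$. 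Third, apply the inductive hypothesis (with a slightly worse $\eps$, namely $C_r'\sqrt\eps$, which gives $(C_r'\sqrt\eps)^{1/2}=C_r''\eps^{1/4}$ — so one needs to track that iterating the square roots still yields a single power of $\sqrt\eps$; in fact the cleaner route is to prove the statement with the exponent $1/2$ replaced throughout by $2^{-r}$ for the $r$-step version and then observe $r$ is a constant, so $\eps^{2^{-r}}\lesssim_r \sqrt\eps$ whenever $\eps$ is bounded — or, better, normalize so that the final constant absorbs this). Finally, reassemble: $Q$ is $A'$-corrected by the inductive $Q'$ on the complement, plus the rank-$1$ binary block $\mathbf 1_{S'}\vec v^\intercal$ on $S'\times T$; since $Q$ is a sum of a rank-$\le r-1$ binary matrix and a rank-$1$ binary matrix, and \emph{a priori} rank is subadditive, $\on{rank}Q\le r$ — one must be a little careful that the block placement doesn't inflate the rank, which is why the block should be taken on the full support consistent with $\vec v$, i.e.\ $Q$ should literally have the form ``rank-$\le r-1$ binary matrix, corrected on a rectangle to match a repeated $0/1$ column pattern''; organizing $Q$ as having constant columns $\vec v$ on a set of rows keeps the rank accounting transparent.

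The main obstacle I anticipate is the \emph{binary rounding inside the low-rank subspace}: extracting from ``$B[S'\!\times\!T]$ is close to a rank-$1$ real matrix'' the conclusion ``$A[S'\!\times\!T]$ is close to a rank-$1$ \emph{binary} matrix'' requires that the real rank-$1$ approximant itself be (close to) binary-structured on a large rectangle, and this is exactly where one pays the loss from $\eps$ to $\sqrt\eps$. The cleanest way to handle it is probably a direct argument: since $A$ is $0/1$, for any row $b_i$ of $B$ the truncation/rounding $\tilde b_i := \mathbf 1[b_i\ge 1/2]$ satisfies $\snorm{a_i-\tilde b_i}_1 \le \snorm{a_i-\tilde b_i}_0 \le 4\snorm{a_i-b_i}_2^2$ (an entry of $a_i$ and $\tilde b_i$ can disagree only if $|a_i-b_i|\ge 1/2$ at that coordinate), so one never needs to round $B$ itself — one rounds \emph{per row} and then argues, via the low-dimensionality of the row space of $B$ and a covering-number estimate at scale $\Theta(1)$ in $\ell_\infty$, that the rounded rows fall into only $O_r(1)$ distinct ``types'' on all but $O(\sqrt\eps)$ of the columns, so the largest type occupies an $\Omega(1)$-fraction of the rows and columns and furnishes the desired rank-$1$ binary rectangle. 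Making the covering/pigeonhole step quantitatively give a rectangle of area $\Omega_r(n^2)$ with total rounding error $O_r(\sqrt\eps\,n^2)$, uniformly in $n$ (recall the convention in this section that $\lesssim$ carries no largeness-of-$n$ hypothesis), is the technical heart of the argument.
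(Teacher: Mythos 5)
Your proposal has genuine gaps, and the two most serious ones are structural rather than quantitative. First, the induction does not close. Peeling off a single homogeneous rectangle $S'\times T$ (one ``row type'') does not reduce the rank of what remains: the rows of $A$ outside $S'$ are still only close to rows of $B$, which has rank $r$, not $r-1$, and modifying $B$ by a rank-one matrix supported on $S'\times T$ and extended by zero does not in general produce a matrix of rank $r-1$ (subtracting a rank-one matrix only lowers the rank when it is aligned with the row and column spaces of $B$, and the zero-extension destroys that alignment). Likewise your reassembly step fails the rank accounting: if $Q$ agrees with the inductive output $Q'$ off $S'\times T$ and with $\mathbf 1_{S'}\vec v^\intercal$ on it, the correction is a matrix supported on $S'\times T$ of rank up to $1+\on{rank}(Q'[S'\!\times\!T])$, so you only get $\on{rank}Q\le 2r-1$. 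The correct structure of a rank-$r$ binary matrix is a $2^r\times 2^r$ grid of homogeneous rectangles obtained simultaneously (at most $2^r$ row types and $2^r$ column types, because rows agreeing on the coordinates of a maximal nonsingular minor must be identical) --- this is \cref{lem:rank-partition} in the paper --- and that grid is not reachable by $r$ successive rank-one peels. Second, the ``technical heart'' you identify is genuinely broken as stated: the rounded rows $\mathbf 1[b_i\ge 1/2]$ of a real rank-$r$ matrix are determined by which cell of an arrangement of $n$ hyperplanes in the $r$-dimensional row space the row lies in, and such an arrangement has up to $\Theta(n^r)$ cells, so the rounded rows can fall into polynomially many types and the largest type need not contain an $\Omega_r(1)$ fraction of the rows. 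Your $O_r(1)$ covering claim is only true for the rank-$r$ \emph{binary} matrix you are trying to construct, not for $B$. (There is also the quantitative issue you flag yourself: $\eps^{2^{-r}}$ is \emph{larger} than $\sqrt\eps$ as $\eps\to 0$, so the iterated square roots cannot be absorbed into $C_r$; the induction would only prove the exponent $2^{-r}$.)

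Your per-row rounding observation --- that $a_{ij}$ and the rounding of $b_{ij}$ can disagree only where $|a_{ij}-b_{ij}|\ge 1/2$, so there are at most $O(\eps n^2)$ such ``bad'' entries --- is the right starting point and is essentially where the paper begins. But the paper then extracts combinatorial structure in a different way: it colors an entry red if $|A_{ij}-B_{ij}|^2>c_r$ and green otherwise, and observes that since a nonsingular $(r+1)\times(r+1)$ binary matrix is $\Omega_r(1)$-far in $\ell_\infty$ from every singular real matrix, \emph{every all-green $(r+1)\times(r+1)$ submatrix of $A$ is singular}. This converts ``real low rank plus binary closeness'' into a purely combinatorial hypothesis on $A$ itself, after which \cref{lem:rank-init} builds $Q$ in one shot (no induction on $r$): locate a maximal nonsingular all-green $\ell\times\ell$ minor with $\ell\le r$, clean up rows and columns with too many red entries, partition the remaining rows and columns into at most $2^r$ types by their entries against that minor, and repair each red entry using a green entry from the same type-block; the $\sqrt\eps$ loss enters because one needs every surviving type-block (of area $\asymp\eta^2 n^2$) to contain a green entry. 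If you want to salvage your plan, the step to replace is the pigeonhole on rounded rows of $B$: you need the singularity of green minors (or something equivalent) to force the few-types structure on $A$.
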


We remark that it is possible to give a more direct proof of a version of \cref{prop:rank-close} with dramatically worse quantitative aspects (i.e., replacing $\sqrt \varepsilon$ by a function that decays extremely slowly as $\varepsilon\to 0$), using a bipartite version of the \emph{induced graph removal lemma} (see for example \cite[Theorem~3.2]{Con13}). For the application in this paper, quantitative aspects are not important, but we still believe our elementary proof and the strong bounds in \cref{prop:rank-close} are of independent interest (induced removal lemmas typically require the so-called \emph{strong regularity lemma}, which is notorious for its terrible quantitative aspects). Our proof of \cref{prop:rank-close} relies on the following lemma.

\begin{lemma}\label{lem:rank-init}
Fix $r\in\mb{N}$. Let $\eta>0$, and let $A\in\{0,1\}^{n\times n}$ be a binary matrix where every entry is colored either red or green, in such a way that fewer than $\eta^2/(10\cdot 2^r)^2\cdot n^2$ entries are red. Suppose that every $(r+1)\times (r+1)$ submatrix of $A$ consisting only of green entries is singular. Then there exists a binary matrix $Q\in\{0,1\}^{n\times n}$ with $\on{rank} Q\leq r$ which differs from $A$ in at most $\eta \cdot n^2$ entries.
\end{lemma}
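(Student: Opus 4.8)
\textbf{Proof proposal for \cref{lem:rank-init}.}

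The plan is to iteratively build up a set of rows and columns that ``certifies'' the low rank. First I would fix a maximal collection of indices $i_1 < \cdots < i_s$ and $j_1 < \cdots < j_s$ (with $s \le r$) such that the $s \times s$ submatrix $A[\{i_1,\ldots,i_s\}\times\{j_1,\ldots,j_s\}]$ has all entries green and is nonsingular (if there is no green entry equal to $1$ anywhere, then after recoloring we can essentially take $A$ to be close to the zero matrix; more precisely $A$ has at most $\eta^2/(10\cdot 2^r)^2 \cdot n^2 \le \eta n^2$ nonzero entries, and $Q = 0$ works). The hypothesis that every $(r+1)\times(r+1)$ all-green submatrix is singular guarantees $s \le r$. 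Write $R = \{i_1,\ldots,i_s\}$ and $S = \{j_1,\ldots,j_s\}$, and let $P = A[R\times S]$, an invertible $s\times s$ matrix over $\mb{R}$.

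Next I would define the rank-$\le s \le r$ matrix $Q$ by the Schur-complement-type formula $Q = A[{[n]}\times S]\, P^{-1}\, A[R\times{[n]}]$; concretely $Q_{k\ell} = A[\{k\}\times S]\, P^{-1}\, A[R\times\{\ell\}]$. This $Q$ agrees with $A$ on all of $R\times{[n]}$ and ${[n]}\times S$ (this is the standard fact that $Q$ ``extends'' the chosen invertible block). The key claim is that for most pairs $(k,\ell)$, the entry $Q_{k\ell}$ is actually the \emph{integer} $A_{k\ell}$, not just some real number. Indeed, consider the $(s+1)\times(s+1)$ submatrix of $A$ on rows $R\cup\{k\}$ and columns $S\cup\{\ell\}$. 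If all of its entries are green, then by hypothesis (since $s+1 \le r+1$, padding with more green rows/columns if $s < r$ — here I would need to be slightly careful, but maximality of $s$ already forces this submatrix to be singular) it is singular, and expanding the determinant along the last row/column shows $A_{k\ell} = A[\{k\}\times S]\,P^{-1}\,A[R\times\{\ell\}] = Q_{k\ell}$. So $Q_{k\ell} \ne A_{k\ell}$ can only happen when one of the entries in this $(s+1)\times(s+1)$ submatrix is red, i.e. when row $k$ or column $\ell$ contains a red entry in the columns $S$ or rows $R$ respectively, \emph{or} the entry $A_{k\ell}$ itself is red, \emph{or} some entry $A_{i_a \ell}$ or $A_{k j_b}$ is red. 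The number of such ``bad'' pairs $(k,\ell)$ is at most (number of red entries) $\times$ $(2s+1) \cdot n \le 2^r \cdot n \cdot (\text{\#red})$, which using $\#\text{red} < \eta^2/(10\cdot 2^r)^2 \cdot n^2$ is at most $\eta^2 n^3/(100\cdot 2^r)$ — wait, this overcounts; I should instead observe that a pair $(k,\ell)$ is bad only if row $k$ or column $\ell$ meets a red entry, and the number of rows (resp.\ columns) meeting some red entry is at most $\#\text{red} < \eta^2/(10\cdot 2^r)^2\cdot n^2$, so the number of bad pairs is at most $2\cdot \eta^2/(10\cdot 2^r)^2 \cdot n^2 \cdot n$. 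That is still not $O(n^2)$.

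Let me reconsider: the correct bound is that $Q_{k\ell} \ne A_{k\ell}$ forces one of the \emph{entries of the $(s+1)\times(s+1)$ submatrix} to be red, and that submatrix lies in rows $R\cup\{k\}$, columns $S\cup\{\ell\}$. The red entries in rows $R$ (there are $\le r$ such rows, contributing $\le rn$ red entries total, but actually the total red count bounds this) lie in at most $\#\text{red}$ columns; the red entries in columns $S$ lie in at most $\#\text{red}$ rows; and the red entry could be $A_{k\ell}$ itself. So the set of bad pairs is contained in $(\text{rows meeting a red entry in columns } S) \times {[n]}$ union ${[n]} \times (\text{columns meeting a red entry in rows } R)$ union $\{(k,\ell): A_{k\ell} \text{ red}\}$, which has size at most $\#\text{red}\cdot n + n\cdot \#\text{red} + \#\text{red} \le 3n\cdot\#\text{red}$. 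For this to be $\le \eta n^2$ I would need $\#\text{red} \le \eta n/3$, which is \emph{not} what the hypothesis gives. So the threshold $\eta^2/(10\cdot 2^r)^2$ must be doing something cleverer — presumably one must first delete the $O(\#\text{red})$ rows and columns containing red entries \emph{in the selected blocks} and recurse, or one chooses $R, S$ to avoid red entries. The main obstacle, then, is exactly this bookkeeping: one should first pass to a large sub-configuration with no red entries in a carefully chosen ``pivot'' region, and the quantitative threshold $\eta^2/(10\cdot 2^r)^2$ is tuned so that iterating the pivot-selection $r$ times (each step potentially using up a $2^r$-ish factor and each pivot row/column knocking out at most $\sqrt{\#\text{red}}\cdot$-many problematic others) still leaves $\le \eta n^2$ total discrepancies. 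So my plan is: (1) handle the trivial all-green-zero case; (2) greedily select a maximal green invertible block $R\times S$, but \emph{while doing so} discard the (few) rows and columns carrying red entries that would interfere, losing only $\sqrt{\#\text{red}}\cdot n \lesssim \eta n^2/(10\cdot 2^r)$ entries per pivot and at most $r$ pivots; (3) define $Q$ by the Schur formula and use the singularity hypothesis (Cramer/determinant expansion) to show $Q = A$ off the discarded part, concluding $\snorm{A-Q}_{\mr F}^2 \le \eta n^2$. The determinant-expansion argument in step (3) is routine; the real work is the careful accounting in step (2).
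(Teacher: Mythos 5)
There is a genuine gap here — in fact two. The first you identify yourself: the naive count of pairs $(k,\ell)$ for which the bordered $(s+1)\times(s+1)$ submatrix contains a red entry is of order $n\cdot\#\mathrm{red}$, which is far too large, and your proposed repair is only a sketch. The paper's actual repair is a multi-stage cleanup that you would need to carry out in full: first delete (i.e.\ zero out and recolor green) every row and column containing more than $\eta/(10\cdot 2^{2r})\cdot n$ red entries (there are at most $(\eta/10)n$ of each, by the hypothesis on $\#\mathrm{red}$); only then choose the maximal green nonsingular block; then, for each of the at most $(\eta/10)n$ red entries remaining in the pivot rows (resp.\ columns), zero out its entire column (resp.\ row). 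After these steps the pivot rows and columns are entirely green, at a total cost of $O(\eta)n^2$ modified entries. This accounting is exactly where the threshold $\eta^2/(10\cdot 2^r)^2$ is spent, and it is not routine bookkeeping that can be waved at.

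The second gap is one you do not notice, and it is fatal to the Schur-complement route as stated: the matrix $Q_{k\ell}=A[\{k\}\times S]\,P^{-1}\,A[R\times\{\ell\}]$ is a \emph{real} matrix. At the good positions it equals $A_{k\ell}\in\{0,1\}$, but at the bad positions its entries are arbitrary rationals (with denominator $\det P$), so $Q\notin\{0,1\}^{n\times n}$. The lemma requires a \emph{binary} $Q$ — that is the entire point of \cref{prop:rank-close}, which upgrades proximity to a real low-rank matrix to proximity to a binary one — and you cannot simply overwrite the bad entries with binary values without destroying the rank bound. The paper avoids this by never leaving the binary world: after the cleanup it partitions the non-pivot rows and columns into at most $2^r$ classes each according to their $\{0,1\}$-pattern in the pivot block, zeroes out the small classes, and then replaces each surviving red entry $A_{j,i}$ by the value of a green entry $A_{j',i'}$ with $j'$ in the same row-class as $j$ and $i'$ in the same column-class as $i$ (such a green entry exists because the classes are large and the red entries are few). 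Rank $\le r$ is then verified by showing that every $(\ell+1)\times(\ell+1)$ submatrix bordering the pivot block is singular, because it coincides with an all-green (hence singular) submatrix of the cleaned-up matrix. If you want to salvage your approach, you would need either this block-partition device or some other mechanism for producing binary values at the bad positions while certifying the rank.
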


\begin{proof}
For $\ell\in\mb N$, let us call an $\ell\times \ell$ submatrix of some matrix green if all its $\ell^2$ entries are green.

First, consider all rows and columns of $A$ that contain at least $\eta/(10\cdot 2^{2r})\cdot n$ red entries. There can be at most $(\eta/10)\cdot n$ such rows and at most $(\eta/10)\cdot n$ such columns. Let us define a new matrix $A_1\in\{0,1\}^{n\times n}$ where we replace each of these rows by an all-zero row and each of these columns by an all-zero column, and where we re-color all elements in these replaced rows and columns green. Note that then $A_1$ and $A$ differ in at most $(2\eta/10)\cdot n^2$ entries, and $A_1$ still has the property that each green $(r+1)\times (r+1)$ submatrix is singular. Furthermore, each row and column in $A_1$ contains at most $\eta/(10\cdot 2^{2r})\cdot n$ red entries.

Now choose $\ell$ maximal such that $A_1$ contains a non-singular green $\ell\times\ell$ submatrix. Clearly, $\ell\leq r$, and without loss of generality we assume that the $\ell\times \ell$ submatrix $A_1[\,[\ell]\!\times\! [\ell]\,]$ in the top-left corner of $A_1$ is non-singular and green. By the choice of $\ell$, every green $(\ell+1)\times(\ell+1)$ submatrix in $A_1$ is singular.

Now, in the first $\ell$ rows of $A_1$ there are at most $\ell\cdot \eta/(10\cdot 2^{2r})\cdot n\leq (\eta/10)n$ red entries. For each of these red entries in the first $\ell$ rows of $A_1$, let us replace its entire column by green zeroes (i.e., an all-zero column with all entries colored green). Similarly, in the first $\ell$ columns of $A_1$ there are at most $(\eta/10)n$ red entries, and for each of these red entries let us replace its entire row by green zeroes. We obtain a new matrix $A_2\in \{0,1\}^{n\times n}$ differing from $A_1$ in at most $(2\eta/10)\cdot n^2$ entries. In this matrix $A_2$ it is still true that each green $(\ell+1)\times(\ell+1)$ submatrix in $A_1$ is singular, but that $A_2[\,[\ell]\!\times\![\ell]\,]$ is non-singular. Furthermore, $A_2$ has no red entries anywhere in the first $\ell$ rows or first $\ell$ columns.

Next, consider the set of columns of $A_2\in\{0,1\}^{n\times n}$ with indices in $\{\ell+1,\ldots,n\}$. There is a partition $\{\ell+1,\ldots,n\}=I_1\cup\cdots\cup I_{2^r}$ such that for each $k=1,\ldots,2^r$, the columns of $A_2$ with indices in $I_k$ all agree in their first $\ell$ rows. For each $k=1,\ldots,2^r$ with $|I_k|\leq \eta/(10\cdot 2^r)\cdot n$, let us replace all columns with indices in $I_k$ by green all-zero columns. Similarly, there is a partition $\{\ell+1,\ldots,n\}=J_1\cup\cdots\cup J_{2^r}$ such that the rows with indices in the same set $J_k$ all agree in their first $\ell$ columns. For each $k=1,\ldots,2^r$ with $|J_k|\leq\eta/(10\cdot 2^r)\cdot n$, replace all rows with indices in $J_k$ with green all-zero rows. In this way, we obtain a new matrix $A_3\in\{0,1\}^{n\times n}$ differing from $A_2$ in at most $(2\eta/10)\cdot n^2$ entries. Still, all green $(\ell+1)\times(\ell+1)$ submatrices in $A_3$ are singular, $A_3[\,[\ell]\!\times\![\ell]\,]$ is non-singular, and all entries in the first $\ell$ rows and in the first $\ell$ columns of $A_3$ are green.

Finally, define the matrix $Q\in\{0,1\}^{n\times n}$ by replacing the red entries in $A_3$ as follows. For each red entry $(j,i)$ in $A_3$ we have $j\in J_k$ and $i\in I_{k'}$ for some $k$ and $k'$ such that $|J_k|, |I_{k'}|> \eta/(10\cdot 2^r)\cdot n$. So, the submatrix $A_3[J_k\!\times\! I_{k'}]$ of $A_3$ must contain at least one green entry (since $A_3$ has fewer than $\eta^2/(10\cdot 2^r)^2\cdot n^2$ red entries). Let us now replace the red $(j,i)$-entry in $A_3$ by some green entry in $A_3[J_k\!\times\! I_{k'}]$. Replacing all red entries in this way, we obtain a matrix $Q\in \{0,1\}^{n\times n}$ differing from $A_3$ in at most $\eta^2/(10\cdot 2^r)^2\cdot n^2\leq (\eta/10)\cdot n^2$ entries.

All in all, $Q$ differs from $A$ in at most $(7\eta/10)\cdot n^2\leq \eta\cdot n^2$ entries. The $\ell\times\ell$ submatrix $Q[\,[\ell]\!\times\![\ell]\,]$ is still non-singular. We claim that whenever we extend this $\ell\times \ell$ submatrix in $Q$ to an $(\ell+1)\times(\ell+1)$ submatrix by taking an additional row $j\in\{\ell+1,\ldots,n\}$ and an additional column $i\in\{\ell+1,\ldots,n\}$, the resulting $(\ell+1)\times(\ell+1)$ submatrix of $Q$ is singular. If the $(j,i)$-entry in $A_3$ is green, then this $(\ell+1)\times(\ell+1)$ submatrix of $Q$ agrees with the corresponding submatrix in $A_3$, which is green and therefore singular. If the $(j,i)$-entry in $A_3$ is red, then the $(j,i)$-entry in $Q$ agrees with some green $(j',i')$-entry in $A_3$ where $j,j'\in J_k$ and $i,i'\in I_{k'}$ for some $k,k'$. Hence the desired $(\ell+1)\times(\ell+1)$ submatrix of $Q$ agrees with the $(\ell+1)\times(\ell+1)$ submatrix  $A_3[\,([\ell]\cup \{i'\})\!\times\!([\ell]\cup \{j'\})\,]$ of $A_3$, which is green and therefore singular. Hence we have shown that all $(\ell+1)\times(\ell+1)$ submatrices of $Q$ that contain $Q[\,[\ell]\!\times\![\ell]\,]$ are singular. Since $Q[\,[\ell]\!\times\![\ell]\,]$ is non-singular, this implies that $\on{rank} Q = \ell\leq r$.
\end{proof}

Now we are ready to prove \cref{prop:rank-close}.

\begin{proof}[Proof of \cref{prop:rank-close}]
Choose some $0<c_r<1$ depending only on $r$ such that\footnote{For the sake of giving explicit bounds, note that we can take any $c_r<(2^{-r}/(r!\cdot r^2))^2$. Indeed, note that any matrix $S\in\{0,1\}^{(r+1)\times (r+1)}$ which is non-singular has $|\!\det(S)|\ge 1$. Suppose there is a matrix $T$ such that  $\det(T)= 0$ and $\snorm{S-T}_\infty<c_r^{1/2}$. This implies that $\snorm{T}_{\infty}\le 2$ and therefore switching entries of $S$ and $T$ one by one changes the determinant by at most $r!\cdot 2^r \cdot c_r^{1/2}<r^{-2}$. As we switch $r^2$ entries and $\det(S)\ge 1$ while $\det(T) = 0$, we obtain a contradiction.}
\[c_r<\inf\{\snorm{S-T}_\infty^2\colon S\in \{0,1\}^{(r+1)\times (r+1)} \text{ non-singular},~T\in \mathbb{R}^{(r+1)\times(r+1)} \text{ singular}\},\]
where $\snorm{S-T}_{\infty}$ denotes the maximum absolute value $|(S-T)_{i,j}|$ among the entries of $S-T$.

Let $A$ and $B$ be matrices as in the lemma statement. Let us color each entry $A_{i,j}$ of $A$ red if $|A_{i,j}-B_{i,j}|^2> c_r$, and green otherwise. Then, as $\snorm{A-B}_{\on{F}}^2\leq\eps n^2$, there are fewer than $\eps n^2/c_r$ red entries in $A$. Furthermore, as $\on{rank} B\leq r$, by the choice of $c_r$, every $(r+1)\times (r+1)$ submatrix of $A$ consisting only of green entries must be singular. Thus, taking $C_r=10\cdot 2^r/\sqrt{c_r}$ the desired statement follows from \cref{lem:rank-init} with $\eta=(10\cdot 2^r)\sqrt{\eps/c_r}$.
\end{proof}

We also need the simple fact that low-rank binary matrices can be partitioned into a small number of homogeneous parts. This essentially corresponds to a classical bound on the log-rank conjecture. 

\begin{lemma}\label{lem:rank-partition}
Fix $r\in\mb{N}$, and let $s=2^r$. For any binary matrix $Q\in\{0,1\}^{n\times n}$ with $\on{rank} Q\leq r$, we can find partitions $P_1\cup\cdots\cup P_{s}$ and $R_1\cup\cdots\cup R_{s}$ of $[n]$, such that for all $i,j\in [s]$, the submatrix $Q[P_i\!\times\! R_j]$ consists of only zeroes, or only ones.
\end{lemma}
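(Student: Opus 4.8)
The plan is to exploit the factorization of $Q$ through a small set of spanning rows (and, symmetrically, columns). Write $r'=\on{rank} Q\le r$, and choose row indices $a_1,\dots,a_{r'}$ so that the corresponding rows of $Q$ form a basis of the row space of $Q$; let $M\in\{0,1\}^{r'\times n}$ be the submatrix of $Q$ with these rows. Since every row of $Q$ is a real linear combination of the rows of $M$, there is a matrix $L\in\mb{R}^{n\times r'}$ with $Q=LM$. In particular, for every column index $c\in[n]$, the $c$-th column of $Q$ equals $L$ times the $c$-th column of $M$.

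Now define $\phi\colon[n]\to\{0,1\}^{r'}$ by letting $\phi(c)$ be the $c$-th column of $M$, and partition $[n]$ according to the value of $\phi$. This produces at most $2^{r'}\le 2^r=s$ nonempty classes; padding with empty sets, write this partition as $R_1\cup\cdots\cup R_s$. The key point is that if $\phi(c)=\phi(c')$ then the $c$-th and $c'$-th columns of $Q$ coincide (both equal $L\phi(c)$). Hence within each class $R_j$, all columns of $Q$ are identical (as vectors in $\{0,1\}^n$). Applying the exact same argument to $Q^\intercal$ (i.e.\ choosing a basis of the column space of $Q$ and grouping rows by their restriction to those columns), we obtain a partition $P_1\cup\cdots\cup P_s$ of $[n]$ into at most $s$ classes such that within each class $P_i$, all rows of $Q$ are identical.

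Finally, fix $i,j\in[s]$ and any two entries $(p,q),(p',q')\in P_i\times R_j$. Since rows $p$ and $p'$ of $Q$ are identical we have $Q_{p,q}=Q_{p',q}$, and since columns $q$ and $q'$ of $Q$ are identical we have $Q_{p',q}=Q_{p',q'}$; thus $Q_{p,q}=Q_{p',q'}$. Therefore $Q[P_i\!\times\! R_j]$ is constant, i.e.\ consists of only zeroes or only ones, as required. (The degenerate case $r=0$, where $Q$ is the zero matrix and $s=1$, is covered by taking $M$ to be the empty $0\times n$ matrix.) There is no real obstacle here beyond noticing the factorization $Q=LM$ and that it forces columns with equal $\phi$-value to be genuinely equal; the only bookkeeping point is that we allow empty parts in the two partitions so that both can be taken to have exactly $s=2^r$ parts.
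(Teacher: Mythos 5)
Your proof is correct and follows essentially the same route as the paper: both arguments show that a rank-$r$ binary matrix has at most $2^r$ distinct rows and at most $2^r$ distinct columns (the paper by noting that rows are determined by their entries in the columns of a non-singular $r'\times r'$ submatrix, you by the dual observation that columns are determined via the factorization $Q=LM$ through a row basis), and then group identical rows and identical columns. The two mechanisms are transposes of one another, so there is nothing substantive to distinguish them.
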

\begin{proof}
First, we claim that the matrix $Q$ has most $2^r$ different row vectors: indeed, let $r'=\on{rank} Q\leq r$ and suppose without loss of generality that the submatrix $Q[\,[r']\!\times\! [r']\,]$ is non-singular. Then each row of $Q$ can be expressed as a linear combination of the first $r'$ rows, and any two rows of $Q$ which agree in the first $r'$ entries must be given by the same linear combination. Hence there can be at most $2^r=s$ different row vectors in the matrix $Q$, and we obtain a partition $[n]=P_1\cup\cdots\cup P_{s}$ such that any two rows with indices in the same set $P_i$ are identical.

Similarly, there is a partition $[n]=P_1\cup\cdots\cup P_{s}$ such that any two columns with indices in the same set $R_j$ are identical. Now, for all $i,j\in [s]$, all entries of the submatrix $Q[P_i\!\times\! R_j]$ must be identical to each other, i.e., must be either all zeroes or all ones.
\end{proof}

Apart from \cref{prop:rank-close,lem:rank-partition}, in our proof of \cref{lem:ramsey-low-rank} we will also use the fact that  every $n$-vertex graph has a clique or independent set of size at least $\frac12\log n$ (this is a quantitative version of Ramsey's theorem proved by Erd\H{o}s and Szekeres~\cite{ES35}, as mentioned in the introduction).

\begin{proof}[Proof of \cref{lem:ramsey-low-rank}]
By \cref{thm:dense-ramsey} there exists some $\alpha=\alpha(C,\delta)>0$ such that every $2C/(1-\delta)$-Ramsey graph on sufficiently many vertices has density at least $\alpha$ and at most $1-\alpha$. Fix a sufficiently large integer $D=D(C,\delta)$ such that $1/\log_2 D<\alpha/4$, and choose $\eps=\eps(C,r,\delta)>0$ small enough such that $\sqrt{\eps}<1/D^2$ and  $\eps^{1/4}<\alpha/(2^{2rD+1}C_r)$, where $C_r$ is the constant in \cref{prop:rank-close}. It suffices to prove that we have $\|A-B\|^2_\mr F\ge \varepsilon n^2$ if $n$ is sufficiently large with respect to $C,\delta$, and $r$. So let us assume for contradiction that $\|A-B\|^2_\mr F< \varepsilon n^2$.

Note that $\sum_{1\le k<j\le m}\big\|(A-B)[I_j\!\times\! I_k]\big\|_{\mr F}^2\le \|A-B\|_{\mr F}^2\le  \varepsilon n^2$, so there can be at most $\sqrt{\eps}m^2$ pairs $(j,k)$ with $1\le j<k\le m$ such that $\big\|(A-B)[I_j\!\times\! I_k]\big\|_{\mr F}^2\ge \sqrt{\eps}(n/m)^2$. Hence a uniformly random subset of $[m]$ of size $D$ contains such a pair $(j,k)$ with probability at most $\binom{D}{2}\cdot\sqrt{\eps}<1$. Thus, there exists a subset of $[m]$ of size $D$ not containing any such pair $(j,k)$, and we may assume without loss of generality that $[D]$ is such a subset. Then for any $1\le j<k\le D$ we have $\big\|(A-B)[I_j\!\times\! I_k]\big\|_{\mr F}^2< \sqrt{\eps}(n/m)^2=\sqrt{\eps}\cdot |I_j|\cdot |I_k|$.

For any $1\le j<k\le D$, by \cref{prop:rank-close} (recalling that $\operatorname{rank}(B[I_j\!\times\! I_k])\le r$) we can find a binary matrix $Q^{(j,k)}\in \{0,1\}^{I_j\!\times\! I_k}$ with $\operatorname{rank}(Q^{(j,k)})\le r$ and $\|A[I_j\!\times\! I_k]-Q^{(j,k)}\|_\mr F^2\le  C_r \eps^{1/4} (n/m)^2$. Now, by \cref{lem:rank-partition}, we can find partitions of $I_j$ and $I_k$ into $2^r$ parts each, such that the corresponding $(2^r)^2$ submatrices of $Q^{(j,k)}$ each consist either only of zeroes or only of ones. Let us choose such partitions for all pairs $(j,k)$ with $1\le j<k\le D$, and for each of the sets $I_1,\ldots,I_D$, let us take a common refinement of the $D-1$ partitions of that set. This way, for each of the sets $I_1,\ldots,I_D$ we obtain a partition into $2^{r(D-1)}$ parts in such a way that for all $1\le j<k\le D$ each of the submatrices of $Q^{(j,k)}$ induced by the partitions of $I_j$ and $I_k$ consist either only of zeroes or only of ones.

For each $j=1,\ldots,D$, inside one of the parts of this partition of $I_j$, we can now choose a subset $I_j'\su I_j$ of size $|I_j'|=\lceil |I_j|/2^{r(D-1)}\rceil=\lceil n/(2^{r(D-1)}m)\rceil$. Then for all $1\le j<k\le D$, the submatrix $Q^{(j,k)}[I_j',I_k']$ consists either only of zeroes or only of ones. Consider the graph $H$ on the vertex set $[D]$ where for $1\le j<k\le D$ we draw an edge if all entries of $Q^{(j,k)}[I_j',I_k']$ are one (and we don't draw an edge if all entries are zero). Then, by Ramsey's theorem (specifically, Erd\H os and Szekeres' classical bound~\cite{ES35}), this graph $H$ must have a clique or independent set $S\su [D]$ of size $|S|\geq(\log_2 D)/2$. Without loss of generality assume that $S=\{1,\ldots,|S|\}$. Let us now consider the induced subgraph of the original graph $G$ on the vertex set $I_1'\cup\cdots\cup I_{|S|}'$.

If $S=\{1,\ldots,|S|\}$ is an independent set in $H$, then for all $1\leq j<k\leq |S|$ the matrix $Q^{(j,k)}[I_j'\!\times\!I_k']$ is all-zero, so $A[I_j\!\times\! I_k]\in \{0,1\}^{I_j\!\times\! I_k}$ can contain at most $C_r \eps^{1/4} (n/m)^2$ ones (since $\|A[I_j\!\times\! I_k]-Q^{(j,k)}\|_\mr F^2\le  C_r \eps^{1/4} (n/m)^2$). In other words, for all $1\leq j<k\leq |S|$ the graph $G[I_1'\cup\cdots\cup I_{|S|}']$ has at most $C_r \eps^{1/4} (n/m)^2\leq C_r \eps^{1/4}\cdot 2^{2r(D-1)}\cdot |I_j'|\cdot |I_k'|\leq (\alpha/2)\cdot |I_j'|\cdot |I_{k}'|$ edges between $I_j'$ and $I_k'$. As $|I_1'|=\cdots=|I_{|S|}'|$, the edges within the sets $I_1,\ldots,I_k$ also contribute at most $1/|S|\leq 2/\log_2 D<\alpha/2$ to the density of $G[I_1'\cup\cdots\cup I_{|S|}']$. Thus, the graph $G[I_1'\cup\cdots\cup I_{|S|}']$ has density less than $\alpha$, but it is a $2C/(1-\delta)$-Ramsey graph since $|I_1'\cup\cdots\cup I_{|S|}'|\geq n/(2^{r(D-1)}m)\geq n^{1-\delta}/2^{r(D-1)+1}\geq n^{(1-\delta)/2}$. This is a contradiction.

Similarly, if $S=\{1,\ldots,|S|\}$ is a clique in $H$, then for all $1\leq j<k\leq |S|$ the matrix $Q^{(j,k)}[I_j',I_k']$ is an all-ones matrix, and we can perform a similar calculation for the number of non-edges in $G[I_1'\cup\cdots\cup I_{|S|}']$. We find that $G[I_1'\cup\cdots\cup I_{|S|}']$ has density greater than $1-\alpha$, which is again a contradiction.
\end{proof}

\section{Lemmas for products of Boolean slices}\label{sec:coupling-lemmas}
In this section we study products of Boolean slices (that is, we consider random vectors $\vec x\in\{-1,1\}^n$ whose index set is divided into ``buckets'', uniform among all vectors with a particular number of ``1''s in each bucket). The main outputs we will need from this section are summarized in the following lemma. Namely, for a ``well-behaved'' quadratic polynomial $f$, a Gaussian vector $\vec z$ and a vector $\vec x$ sampled from an appropriate product of slices, we can compare $f(\vec x)$ with $f(\vec z)$. Our assumptions on $f$ are certain bounds on the coefficients, and that our polynomial is in a certain sense ``balanced'' within each bucket.

\begin{lemma}\label{lem:transfer-summary}
Fix $0<\delta<1/4$. Suppose we are given a partition $[n] = I_1\cup\cdots\cup I_{m}$, with $|I_1|=\cdots=|I_m|$ and $n^\delta/2\le m\le 2n^\delta$, where $n$ is sufficiently large with respect to $\delta$. Consider a symmetric matrix $F\in\mb{R}^{n\times n}$, a vector $\vec f\in\mb{R}^{n}$ and a real number $f_0$ satisfying the following conditions:
\begin{enumerate}
    \item[(a)] $\snorm{\vec{f}}_\infty\le n^{1/2+3\delta}$.
    \item[(b)] $|F_{i,j}|\le 1$ for all $i,j\in [n]$.
    \item[(c)] For each $k=1,\ldots,m$, the sum of the entries in $\vec f_{I_k}$ is equal to zero.
    \item[(d)] For all $k,h\in [m]$, in the submatrix $F[I_k\!\times\! I_h]$ of $F$ all row and column sums are zero.
\end{enumerate}
Consider a sequence $(\ell_1,\ldots,\ell_m)\in\mb{N}^{m}$
with $|\ell_{k}-|I_k|/2|\le\sqrt{n^{1-\delta}}\log n$
for $k=1,\ldots,m$. Then, let $\vec{x}\in\{-1,1\}^{n}$ be a uniformly
random vector such that $\vec{x}_{I_{k}}$ has exactly $\ell_{k}$
ones for each $k=1,\ldots,m$, and let $\vec z\sim\mc{N}(0,1)^{\otimes n}$ be a vector of independent standard Gaussian random variables. Define $X=f_0+\vec f\cdot\vec{x}+\vec{x}^{\intercal}F\vec{x}$ and $Z=f_0+\vec f\cdot\vec{z}+\vec{z}^{\intercal}F\vec{z}$. Then the following three statements hold.
\begin{enumerate}
\item $\mb{E} X=f_0+\sum_{i=1}^{n}F_{i,i}+O(n^{3/4+4\delta})$ and $\mb{E} Z=f_0+\sum_{i=1}^{n}F_{i,i}$.
\item $\sigma(X)^2=2\|F\|_{\mr F}^2+\|\vec f\|_2^2+O(n^{7/4+7\delta})$  and $\sigma(Z)^2=2\|F\|_{\mr F}^2+\|\vec f\|_2^2$.
\item For any $\tau\in\mb{R}$ we have \[|\varphi_X(\tau)-\varphi_Z(\tau)|\lesssim |\tau|^4\cdot  n^{3+12\delta}+|\tau|\cdot n^{3/4+4\delta}.\]
\end{enumerate}
\end{lemma}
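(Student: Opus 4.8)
\textbf{Proof proposal for \cref{lem:transfer-summary}.}

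The plan is to prove all three statements via a single coupling argument, comparing the Boolean-slice vector $\vec x$ with the Gaussian vector $\vec z$. The point of conditions (c) and (d) is precisely that the linear form $\vec f\cdot\vec x$ and the ``off-diagonal'' part of the quadratic form $\vec x^\intercal F\vec x$ are orthogonal to the functions $\sum_{i\in I_k}x_i$ (the coordinates of the random variable we implicitly conditioned on), so that the distribution of $\vec x$ is not too far from that of $\vec z$ when tested against $f$. First I would handle statements (1) and (2) by a direct computation: expanding $X=f_0+\vec f\cdot\vec x+\sum_{i}F_{i,i}x_i^2+\sum_{i\ne j}F_{i,j}x_ix_j=f_0+\sum_i F_{i,i}+\vec f\cdot\vec x+\sum_{i\ne j}F_{i,j}x_ix_j$ (using $x_i^2=1$), one takes expectations term-by-term. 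On each slice $I_k$, the expectation of $x_i$ is $(2\ell_k-|I_k|)/|I_k|=O(n^{-1/2}\log n)$ per coordinate; combined with (a), the linear contribution to $\mb EX-f_0-\sum_iF_{i,i}$ is $O(n\cdot n^{1/2+3\delta}\cdot n^{-1/2}\log n/\ldots)$ — one has to be a bit more careful, but condition (c) kills the ``main'' term of the linear sum so that only the deviations $\ell_k-|I_k|/2$ contribute, giving the claimed $O(n^{3/4+4\delta})$ bound; similarly condition (d) kills the main term of $\mb E[\sum_{i\ne j}F_{i,j}x_ix_j]$. The variance computation for (2) is the same kind of second-moment bookkeeping, using \cref{eq:boolean-variance} for the Gaussian side exactly and controlling the slice-vs-product discrepancy on the Boolean side; again conditions (c),(d) ensure the error terms are only $O(n^{7/4+7\delta})$ rather than of the same order as $\|F\|_\mr F^2$.

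The real content is statement (3), the characteristic function comparison. Here I would use a hybrid/Lindeberg-swapping argument, bucket by bucket: replace the slice-distributed blocks $\vec x_{I_k}$ one at a time by Gaussian blocks $\vec z_{I_k}$, and estimate the change in $\mb E[e^{i\tau X}]$ at each swap. Since $X$ is a quadratic polynomial, one Taylor-expands $e^{i\tau X}$ to fourth order in the contribution of a single block; the zeroth through third order terms match in expectation between a slice-block and a Gaussian-block by a moment-matching computation (this is where conditions (c) and (d) are essential — they guarantee that the first and second moments of the relevant linear and bilinear functionals of $\vec x_{I_k}$ agree, up to the small deviations coming from $\ell_k\ne|I_k|/2$, with those of $\vec z_{I_k}$), and the fourth-order remainder is bounded using (a), (b) and the hypercontractive moment bound \cref{thm:gauss-moment}. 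The magnitude bound $|F_{i,j}|\le 1$ and $\|\vec f\|_\infty\le n^{1/2+3\delta}$, together with block sizes $|I_k|\asymp n^{1-\delta}$ and $m\asymp n^\delta$ blocks, are what turn the per-swap error of roughly $|\tau|^4\cdot(\text{block-size contribution})^4$ into the stated $|\tau|^4 n^{3+12\delta}$, and the leftover first-order discrepancy from the imperfect slice balance into $|\tau|\cdot n^{3/4+4\delta}$.

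The main obstacle I expect is controlling the Lindeberg swap cleanly enough that the error does not accumulate above $|\tau|^4 n^{3+12\delta}$: one must keep careful track of which monomials of $X$ touch the block $I_k$ being swapped (there are the linear terms on $I_k$, the diagonal terms, the $I_k\times I_k$ off-diagonal terms, and the $I_k\times I_h$ cross terms for $h\ne k$, the last of which couple to the already-swapped or not-yet-swapped blocks), and show that the ``frozen'' coefficients multiplying the block's variables have appropriately bounded $L^2$ or $L^4$ norms — this requires a concentration estimate (e.g.\ \cref{lem:slice-concentration} or a Chernoff/hypercontractivity bound) to say that, with overwhelming probability, those frozen linear coefficients are not much larger than their typical size. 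A secondary technical nuisance is that the slice distribution is not a product measure, so even the moment-matching at each order is only approximate; this is exactly where conditions (c) and (d) do their work, converting what would be $O(1)$-per-coordinate discrepancies into discrepancies governed by the tiny imbalance $|\ell_k-|I_k|/2|\le\sqrt{n^{1-\delta}}\log n$. I would organize the section so that these per-bucket moment computations are isolated as separate lemmas before assembling them into the hybrid argument.
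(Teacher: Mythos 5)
Your treatment of (1) and (2) by direct moment computation on the product of slices is workable (condition (c) makes the linear contribution to $\mb EX$ vanish exactly, and (d) kills the main quadratic term, leaving only the small slice-correlation corrections), though the paper instead gets (1)--(3) all at once from a coupling: it shows (\cref{lem:slice-transfer,lem:transfer-rad}) that $\vec x$ can be coupled to a fully independent Rademacher vector $\vec y$ so that $|X-Y|\le n^{3/4+4\delta}$ except with probability $\exp(-(\log n)^2/8)$, reducing everything to exact computations for $Y$ plus the Mossel--O'Donnell--Oleszkiewicz invariance principle (\cref{thm:invariance}) for $|Y-Z|$.

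The genuine gap is in your proof of (3). A block-by-block Lindeberg swap cannot give the claimed $|\tau|^4 n^{3+12\delta}$ term. When you swap an entire block $I_k$ and Taylor-expand $e^{i\tau X}$ to fourth order in that block's contribution $\Delta_k$, the remainder is of order $\tau^4\,\mb E|\Delta_k|^4\gtrsim \tau^4(\mb E\Delta_k^2)^2$, and $\mb E\Delta_k^2$ contains (among other things) the cross-term variance $\asymp\|F[I_k\times I_k^c]\|_{\mr F}^2\asymp |I_k|\cdot n= n^{2-\delta}$ for a dense $F$. So the per-block remainder is at least $\tau^4 n^{4-2\delta}$, and summing over $m\asymp n^{\delta}$ blocks gives at least $\tau^4 n^{4-\delta}$ --- larger than the claimed $\tau^4 n^{3+12\delta}$ by roughly a factor of the block size $n^{1-\delta}$. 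The point of the coordinate-wise MOO bound is that the error is $\sum_t \on{Inf}_t[g]^2$, i.e.\ the sum of \emph{squared individual influences} ($\le n\cdot n^{2+12\delta}$), not the square of a block's total influence; a block swap cannot recover this because the coordinates inside a block are exchangeable but not independent, so you cannot descend to single-coordinate swaps without first breaking the slice constraint. This loss is not cosmetic: in the application (proof of \cref{clm:conditional-point-probability}) one integrates $|\varphi_X(\tau)-\varphi_Z(\tau)|$ over $|\tau|\le n^{-0.99}$ and needs the result to be at most $n^{-1.2}$, whereas $\int_{|\tau|\le n^{-0.99}}\tau^4 n^{4-\delta}\,d\tau\approx n^{-0.95-\delta}$ already exceeds that. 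To repair your argument you would either need a genuine slice invariance principle (as in the Filmus--Mossel line of work cited in the paper, which handles the within-block dependence by a more refined exchangeable-pairs/martingale argument), or adopt the paper's route of first passing from the product of slices to the independent Rademacher cube via a coupling that costs only the additive $n^{3/4+4\delta}$ (hence the $|\tau|\cdot n^{3/4+4\delta}$ term), and only then invoking a coordinate-wise invariance principle.
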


We will apply this lemma in the additively structured case of our proof of \cref{thm:short-interval}. In that proof, we will use \cref{lem:decomposition-abstract} to partition (most of) the vertices of our graph into ``buckets'', where vertices in the same bucket have similar values of $d_v$ (for the vector $\vec{d}$ defined in \cref{def:additively-structured}). This choice of buckets will ensure that (a) holds, for a conditional random variable obtained by conditioning on the number of vertices in each bucket (the resulting conditional distribution is a product of slices).

We also remark that the precise form of the right-hand side of the inequality in (3) is not important; we only need that $\int_{|\tau|\le n^{-0.99}}|\varphi_X(\tau)-\varphi_Z(\tau)|\,d\tau$ is substantially smaller than $1/\sigma(X)$ (for small $\delta$).

\cref{lem:transfer-summary} can be interpreted as a type of \emph{Gaussian invariance principle}, comparing quadratic functions of products of slices to Gaussian analogs. There are already some invariance principles available for the Boolean slice (see \cite{FM19,FKMW18}), and it would likely be possible to prove \cref{lem:transfer-summary} by repeatedly applying results from \cite{FM19,FKMW18} to the individual factors of our product of slices. However, for our specific application it will be more convenient to deduce \cref{lem:transfer-summary} from a Gaussian invariance principle for products of Rademacher random variables.

Indeed, we will first compare $X$ to its ``independent Rademacher analog'' (i.e., to the random variable $Y$ defined as $Y=f_0+\vec f\cdot\vec{y}+\vec{y}^{\intercal}F\vec{y}$, where $\vec{y}\in \{-1,1\}^n$ is uniformly random). In order to do this, we will first show that for different choices of the sequence $(\ell_1,\ldots,\ell_m)$, we can closely couple the resulting random variables $X$ (essentially, we just randomly ``flip the signs'' of an appropriate number of entries in each $I_k$). Note that the ``balancedness'' conditions (c) and (d) in \cref{lem:transfer-summary} ensure that the expected value of $X$ does not depend strongly on the choice of $(\ell_1,\ldots,\ell_m)$.

\begin{lemma}\label{lem:slice-transfer}
Fix $0<\delta<1/4$, and consider a partition $[n]=I_1\cup\cdots\cup I_m$ as in \cref{lem:transfer-summary}, as well as a symmetric matrix $F\in\mb{R}^{n\times n}$, a vector $\vec f\in\mb{R}^{n}$ and a real number $f_0$ satisfying conditions (a--d). Assume that $n$ is sufficiently large with respect to $\delta$. 

Consider sequences $(\ell_1,\ldots,\ell_m), (\ell_1',\ldots,\ell_m')\in\mb{N}^{m}$ with $|\ell_{k}-|I_k|/2|\le\sqrt{n^{1-\delta}}\log n$ and $|\ell_{k}'-|I_k|/2|\le\sqrt{n^{1-\delta}}\log n$ for $k=1,\ldots,m$. Then, let $\vec{x}\in\{-1,1\}^{n}$ be a uniformly random vector such that $\vec{x}_{I_{k}}$ has exactly $\ell_{k}$ ones for each $k=1,\ldots,m$ and let $\vec{x}'\in\{-1,1\}^{n}$ be a uniformly random vector such that $\vec{x}'_{I_{k}}$ has exactly $\ell_{k}'$ ones for each $k=1,\ldots,m$. Let $X= f_0+\vec f\cdot\vec{x}+\vec{x}^{\intercal}F\vec{x}$ and $X'=f_0+\vec f\cdot\vec{x}'+\vec{x}'^{\intercal}F\vec{x}'$. Then we can couple $\vec{x}$ and $\vec{x}'$ such that $|X-X'|\le n^{3/4+4\delta}$
with probability at least $1-\exp(-n^{\delta/2})$.
\end{lemma}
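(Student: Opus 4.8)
The plan is to build the coupling one bucket at a time. Fix $k$; write $I = I_k$, $\ell = \ell_k$, $\ell' = \ell_k'$, and $N = |I_k| = \lceil n/m\rceil$. Without loss of generality $\ell \ge \ell'$. We will couple $\vec x_I$ and $\vec x'_I$ so that they differ in exactly $\ell - \ell'$ coordinates: namely, sample $\vec x_I$ uniformly with $\ell$ ones, pick a uniformly random set of $\ell - \ell'$ of the $1$-coordinates of $\vec x_I$, and flip them to $-1$ to obtain $\vec x'_I$. A short check shows $\vec x'_I$ is then uniform with $\ell'$ ones, and $|\{v \in I : x_v \ne x'_v\}| = \ell-\ell' \le 2\sqrt{n^{1-\delta}}\log n$. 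Doing this independently across the $m$ buckets gives a coupling in which $\vec x$ and $\vec x'$ differ in a set $T \subseteq [n]$ with $|T \cap I_k| \le 2\sqrt{n^{1-\delta}}\log n$ for every $k$, and in particular $|T| \le 2n^{1/2+\delta/2}\cdot\log n\cdot$ (number of buckets with a nonzero flip) $\le 2mn^{(1-\delta)/2}\log n \le 4n^{(1+\delta)/2}\log n$.

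Next I estimate $|X - X'|$ on this coupling. Writing $\vec x' = \vec x + \vec w$ where $\vec w$ is supported on $T$ with entries in $\{-2,0,2\}$, expand
\[
X' - X = \vec f \cdot \vec w + \vec w^\intercal F \vec w + 2\vec x^\intercal F \vec w.
\]
The first term is bounded by $\|\vec f\|_\infty \|\vec w\|_1 \le n^{1/2+3\delta}\cdot 2|T| \lesssim n^{1+7\delta/2}\log n$ — but this crude bound is too weak; here I use condition (c), that $\vec f_{I_k}$ sums to zero, together with the fact that within each bucket the flipped coordinates are a uniformly random subset of the $1$-coordinates, to argue that $\vec f \cdot \vec w = \sum_k \vec f_{I_k}\cdot \vec w_{I_k}$ is a sum of $m$ independent mean-$O(1)$ (in fact the per-bucket conditional mean is $O(\|\vec f_{I_k}\|_\infty)$, but averages against the zero-sum condition to something much smaller) contributions each bounded by $O(n^{1/2+3\delta}\cdot\sqrt{n^{1-\delta}}\log n)$, so by a Hoeffding/Azuma bound (\cref{lem:slice-concentration} applied to the flips, or directly Azuma on the step-by-step flip process) it concentrates around its mean with a tail of $\exp(-n^{\delta/2})$; and the mean itself is $O(n^{3/4+4\delta})$ using condition (c). The quadratic term $\vec w^\intercal F\vec w$ is at most $4|T|^2\max|F_{i,j}| \lesssim n^{1+\delta}\log^2 n$, which is comfortably within $n^{3/4+4\delta}$? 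No — it is not, $n^{1+\delta}$ exceeds $n^{3/4+4\delta}$. The point is that condition (d) (all row and column sums of each block $F[I_k\times I_h]$ vanish) must be invoked again: $\vec w^\intercal F\vec w$ and the cross term $\vec x^\intercal F\vec w$ both get the same treatment as $\vec f\cdot\vec w$, namely a martingale concentration argument (revealing the flips one at a time, or one bucket at a time) whose increments are controlled using the vanishing block sums, giving fluctuations of order $\sqrt{|T|}\cdot\mathrm{poly}\log$ times the entrywise bound, i.e.\ $\lesssim n^{3/4+\delta/2}\log^{O(1)} n \le n^{3/4+4\delta}$, with a Gaussian tail $\exp(-n^{\delta/2})$.

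So the structure is: (i) construct the bucketwise flip coupling; (ii) write $X' - X = \vec f\cdot\vec w + \vec w^\intercal F\vec w + 2\vec x^\intercal F\vec w$; (iii) for each of the three pieces, apply \cref{lem:slice-concentration} (or the Azuma--Hoeffding bound underlying it) to the randomness of the flips together with the randomness of $\vec x$, where the bounded-difference constant per swap is $O(\mathrm{poly}\log n)$ thanks to conditions (a)--(d), and the expectations are $O(n^{3/4+4\delta})$ thanks to (c) and (d); (iv) union-bound the three tail events. I expect the main obstacle to be step (iii) for the bilinear term $2\vec x^\intercal F\vec w$: here $\vec w$ depends on $\vec x$ (we flip $1$-coordinates of $\vec x$), so the randomness is genuinely coupled, and one must set up the martingale carefully — e.g.\ fixing $T$ first (it is determined by which coordinates get flipped, but conditioning on the \emph{locations} $T$ while re-randomizing the $\pm1$ pattern on $[n]\setminus T$ and the "which were $1$" structure) — and verify that each elementary swap changes $\vec x^\intercal F\vec w$ by only $O(\mathrm{poly}\log n)$, which is where the zero-block-sum hypothesis (d) does the real work, exactly as vanishing linear coefficients would in a classical Lindeberg swap. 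Getting the dependence bookkeeping right (that the total number of swaps over all buckets is $\le n^{(1+\delta)/2}\log n$, so that $\ell\cdot a^2$ in \cref{lem:slice-concentration} comes out to $n^{3/2+O(\delta)}$ and the threshold $t = n^{3/4+4\delta}$ gives exponent $n^{\Omega(\delta)}$) is routine once the per-swap bound is in hand.
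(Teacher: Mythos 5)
Your coupling is genuinely different from the paper's. You flip exactly $|\ell_k-\ell_k'|$ uniformly random $1$-coordinates of $\vec x_{I_k}$ in each bucket, so $\vec x$ and $\vec x'$ differ on a set $T$ of size $O(n^{(1+\delta)/2}\log n)$ that is \emph{correlated with} $\vec x$. The paper instead fixes, in each bucket, a uniformly random ``buffer'' set $R_k$ of size $\Theta(\sqrt{n^{1-\delta}}\log n)$, forces $\vec x$ and $\vec x'$ to agree exactly (half ones) outside $R=\bigcup_k R_k$, and samples $\vec x_{R}$ and $\vec x'_{R}$ \emph{independently}. This reduces everything to bounding a single localized polynomial $g_R(\vec x)=\sum_{i\in R}f_ix_i+\sum_{(i,j)\in R^2}F_{i,j}x_ix_j+2\sum_{i\notin R}\sum_{j\in R}F_{i,j}x_ix_j$ for each of the two vectors separately, which sidesteps the dependence between the disagreement set and $\vec x$ that you correctly identify as your main obstacle. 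Your route is workable, but the conditioning bookkeeping for the bilinear term ($T$ chosen among the $1$-coordinates of $\vec x$) is the hardest part of your writeup and is the part the paper's construction is designed to avoid.

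Two of your intermediate claims are wrong as stated, though both are repairable. First, condition (d) does \emph{not} make the per-swap increment of $\vec x^\intercal F\vec w=-2\sum_{v\in T}(F\vec x)_v$ polylogarithmic: moving one flip location from $v$ to $v'$ changes this sum by $2|(F\vec x)_v-(F\vec x)_{v'}|$, which is generically of order $\sqrt n$ even when all block row/column sums vanish. What (d) actually buys is that $\mb E[(F\vec x)_v]=0$ and hence (via \cref{lem:slice-concentration} and a union bound) $\max_v|(F\vec x)_v|\le n^{1/2+\delta}$ with high probability; after conditioning on that event the Azuma increment is $O(n^{1/2+\delta})$, and $\sqrt{|T|}\cdot n^{1/2+\delta}\approx n^{3/4+O(\delta)}$ still fits under $n^{3/4+4\delta}$ --- but the margin is a factor $n^{O(\delta)}$, not the comfortable $n^{1/4}$ your sketch suggests, and (d) is doing its work in the conditioning step, not in the swap increments. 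Second, your tail arithmetic does not reach $\exp(-n^{\delta/2})$ if you condition on events of the form $\max_v|(F\vec x)_v|\lesssim\sqrt n\log n$: each such event fails with probability $\exp(-\Theta((\log n)^2))$, which dominates the claimed failure probability. You must replace every $\mathrm{polylog}$ threshold by an $n^{\Theta(\delta)}$ threshold (as the paper does, e.g.\ $n^{1/4+\delta}$ with failure probability $\exp(-\Omega(n^{\delta}))$) so that all conditioning events and all Azuma tails land at $\exp(-n^{\Omega(\delta)})$. A related caution: applying Hoeffding at the bucket level with per-bucket bound $O(n^{1+5\delta/2}\log n)$, as one reading of your treatment of $\vec f\cdot\vec w$ suggests, gives a fluctuation of order $\sqrt m\cdot n^{1+5\delta/2}\gg n^{3/4+4\delta}$; the concentration must be run at the level of individual flips, with increment $O(\snorm{\vec f}_\infty)$.
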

\begin{proof}
Let us couple the random vectors $\vec{x}$ and $\vec{x}'$ in the following way. First, independently for each $k=1,\ldots,m$, let us choose a uniformly random subset $R_k\su I_k$ of size $|I_k|-2\lfloor |I_k|/2 - \sqrt{n^{1-\delta}}\log n\rfloor$. Note that then $|I_k\setminus R_k|$ is even and $2\sqrt{n^{1-\delta}}\log n\le |R_k|\le 3\sqrt{n^{1-\delta}}\log n$. We also have $0\le \ell_k-|I_k\setminus R_k|/2\le |R_k|$ and $0\le \ell_k'-|I_k\setminus R_k|/2\le |R_k|$. Let us now sample $\vec{x}_{R_k}\in \{-1,1\}^{R_k}$ by taking a uniformly random vector with exactly $\ell_{k}-|I_k\setminus R_k|/2$ ones, and independently let us sample $\vec{x}_{R_k}'\in\{-1,1\}^{R_k}$ by taking a uniformly random vector with exactly $\ell_{k}'-|I_k\setminus R_k|/2$ ones. Furthermore, let us sample a random vector in $\{-1,1\}^{I_k\setminus R_k}$ with exactly $|I_k\setminus R_k|/2$ ones and define both of $\vec{x}_{I_k\setminus R_k}$ and $\vec{x}_{I_k\setminus R_k}'$ to agree with this vector. After doing this for all $k=1,\ldots,m$, we have defined $\vec{x}$ and $\vec{x}'$ with the appropriate number of ones in each index set $I_k$. For convenience, write $R=R_1\cup\cdots\cup R_k$.

We now need to check that $|X-X'|\le n^{3/4+4\delta}$
with probability at least $1-\exp(-n^{\delta/2})$. Since $\vec{x}$ and $\vec{x}'$ agree in all coordinates outside $R$, all terms that do not involve coordinates in $R$ cancel out in $X-X'$. We may therefore write $X-X'=g_R(\vec{x})-g_R(\vec{x}')$, where (using that $F$ is symmetric)
\begin{equation}\label{eq:def-of-g-R}
g_R(\vec{x}):=\sum_{i\in R}f_ix_i + \sum_{\substack{(i,j)\in [n]\\i\in R \text{ or }j\in R}} F_{i,j}x_ix_{j}=\sum_{i\in R}f_ix_i+\sum_{(i,j)\in R^2} F_{i,j}x_ix_{j}+2\sum_{i\not\in R}\sum_{j\in R} F_{i,j}x_ix_{j}.
\end{equation}
(and similarly for $g_R(\vec{x}')$). It suffices to prove that with probability at least $1-\exp(-n^{\delta/2})/2$ we have $|g_R(\vec{x})|\le n^{3/4+4\delta}/2$ (then the same holds analogously for $|g_R(\vec{x'})|$ and overall we obtain $|X-X'|=|g_R(\vec{x})-g_R(\vec{x}')|\le n^{3/4+4\delta}$ with probability at least $1-\exp(-n^{\delta/2})$).

Let us first consider the first two summands on the right-hand side of \cref{eq:def-of-g-R}. Their expectation is
\begin{equation}\label{eq:expectation-first-two-summands-g-R}
\mb E\left[\sum_{i\in R}f_ix_i+\sum_{(i,j)\in R^2} F_{i,j}x_ix_{j}\right]=\sum_{i=1}^{n}f_i\cdot\mb E[\mbm 1_{i\in R}x_i]+\sum_{i=1}^{n}\sum_{j=1}^{n} F_{i,j}\cdot\mb E[\mbm 1_{i,j\in R}x_ix_j].
\end{equation}
Now note that for each $k=1,\ldots,m$, the expectation $\mb E[\mbm 1_{i\in R}x_i]$ is the same for all indices $i\in I_k$. Since $\sum_{i\in I_k} f_i=0$ by condition (c), this means that the first summand on the right-hand side of \cref{eq:expectation-first-two-summands-g-R} is zero. For the second summand in \cref{eq:expectation-first-two-summands-g-R}, note that for any $k,h\in [m]$ the expectation $\mb E[\mbm 1_{i,j\in R}x_ix_j]$ has  the same value $E_{k,h}$ for all indices $i\in I_k$ and $j\in I_h$ with $i\neq j$. For all $i\in I_k$ and $j\in I_h$, the magnitude of this expectation is at most $\Pr[i\in R]\le 3\sqrt{n^{1-\delta}}\log n/|I_k|\leq n^{-1/2+\delta}$ (noting that $|I_k|=n/m\geq n^{1-\delta}/2$). By (d) we have $\sum_{i\in I_k}\sum_{j\in I_h} F_{i,j}=0$, and so we can conclude that
\[\left|\mb E\left[\sum_{i\in R}f_ix_i+\sum_{(i,j)\in R^2} F_{i,j}x_ix_{j}\right]\right|=\left|\sum_{k=1}^{m}\sum_{i\in I_k} F_{i,i}(\mb E[\mbm 1_{i\in R}x_i^2]-E_{k,k})\right|\leq \sum_{i=1}^{n} |F_{i,i}| \cdot 2n^{-1/2+\delta}\le 2n^{1/2+\delta},\]
where in the last step we used (b).
Furthermore, note that
\begin{equation}\label{eq:first-two-summands-g-R}
\sum_{i\in R}f_ix_i+\sum_{(i,j)\in R^2} F_{i,j}x_ix_{j}=\vec{f}\cdot \vec{x}_R+\vec{x}_R^{\intercal}F\vec{x}_R,
\end{equation}
where here by slight abuse of notation we consider $\vec{x}_R$ as a vector in $\{-1,0,1\}^n$ given by extending $\vec{x}_R\in \{-1,1\}^R$ by zeroes for the coordinates outside $R$. Note that this describes a random vector in $\{-1,0,1\}^n$ such that for each set $I_k$ for $k=1,\ldots,m$, exactly $\ell_k\leq n^{1/2}$ entries are $1$, exactly $|I_k|-2\lfloor |I_k|/2 - \sqrt{n^{1-\delta}}\log n\rfloor-\ell_k\leq 3\sqrt{n^{1-\delta}}-\ell_k\log n\le n^{1/2}-\ell_k$ entries are $-1$, and the remaining entries are $0$. Note that for any two outcomes of such a random vector differing by switching two entries, the resulting values of $\vec{f}\cdot \vec{x}_R+\vec{x}_R^{\intercal}F\vec{x}_R$ differ by at most $5n^{1/2+3\delta}$ (indeed, by (a) the linear term $\vec{f}\cdot \vec{x}_R$ differs by at most $4\snorm{f}_\infty\leq 4n^{1/2+3\delta}$, and by (b) the term $\vec{x}_R^{\intercal}F\vec{x}_R$ differs by at most $8|R|\leq n^{1/2+3\delta}$). Thus, we can apply \cref{lem:slice-concentration} and conclude that with probability at least $1-2\exp(-n^{3/2+8\delta}/(16\cdot 2m\cdot n^{1/2}\cdot 25n^{1+6\delta}))\geq 1-2\exp(-n^{\delta}/800)$ the quantity in \cref{eq:first-two-summands-g-R} differs from its expectation by at most $n^{3/4+4\delta}/4$. Given the above bound for this expectation, we can conclude that with probability at least $1-2\exp(-n^{\delta}/800)$,
\begin{equation}\label{eq:first-two-summands-g-R-conc}
\left|\sum_{i\in R}f_ix_i+\sum_{(i,j)\in R^2} F_{i,j}x_ix_{j}\right|\le n^{3/4+4\delta}/3.
\end{equation}
It remains to bound the third summand on the right-hand side of \cref{eq:def-of-g-R}.

In order to do so, we first claim that with probability at least $1-2n\exp(-n^{\delta}/256)$ for each $i=1,\ldots,n$ we have $|\sum_{j\in R}2F_{i,j} x_j|\le n^{1/4+\delta}$. Indeed, for any fixed $i$, the sum $\sum_{j\in R}2F_{i,j}x_j$ can be interpreted as a linear function (with coefficients bounded by $2$ in absolute value by (b)) of a random vector in $\{-1,0,1\}^n$ such that for each set $I_k$ for $k=1,\ldots,m$, exactly $\ell_k\leq n^{1/2}$ entries are $1$, exactly $|I_k|-2\lfloor |I_k|/2 - \sqrt{n^{1-\delta}}\log n\rfloor-\ell_k\le n^{1/2}-\ell_k$ entries are $-1$, and the remaining entries are $0$. So for each $i=1,\ldots,n$, by \cref{lem:slice-concentration} (noting that $\mb{E}[\sum_{j\in R}F_{i,j} x_j]=0$ by (d)) we have $|\sum_{j\in R}F_{i,j} x_j|\le n^{1/4+\delta}$ with probability at least $1-2\exp(-n^{1/2+2\delta}/(2m\cdot n^{1/2}\cdot 8^2))\geq 1-2\exp(-n^{\delta}/256)$. 

Let us now condition on an outcome of $R$ and $\vec{x}_R$ such that we have $|\sum_{j\in R}2F_{i,j} x_j|\le n^{1/4+\delta}$ for $i=1,\ldots,n$. Note that
\[2\sum_{i\not\in R}\sum_{j\in R} F_{i,j}x_ix_{j}=\sum_{i\not\in R}\left(\sum_{j\in R}2F_{i,j} x_j\right)x_i.\]
Subject to the randomness of the coordinates outside $R$ (which are chosen to be half $1$ and half $-1$ inside each set $I_k\setminus R_k$ for $k=1,\ldots,m$), the expectation of this quantity is $0$ (since for each individual $x_i$ with $i\in R$ we have $\mb{E}x_i=0$). Furthermore, this quantity can be interpreted as a linear function of the entries $x_i$ with $i\not\in R$, with coefficients bounded in absolute value by $n^{1/4+\delta}$. Thus, by \cref{lem:slice-concentration} we have $|2\sum_{i\not\in R}\sum_{j\in R} F_{i,j}x_ix_{j}|\leq n^{3/4+3\delta}$ with probability at least $1-2\exp(-n^{3/2+6\delta}/(2n\cdot 16n^{1/2+2\delta})\geq 1-2\exp(-n^{\delta})$.

Combining this with \cref{eq:first-two-summands-g-R-conc} and \cref{eq:def-of-g-R}, we conclude that $|g_R(\vec{x})|\leq n^{3/4+4\delta}/2$ with probability at least $1-2(n+2)\exp(-n^{\delta}/800)\geq 1-\exp(-n^{\delta/2})/2$.
\end{proof}

The following lemma gives a comparison between the random variable $X$ in \cref{lem:transfer-summary} and its ``independent Rademacher analog''. This lemma is a simple consequence of \cref{lem:slice-transfer}, since a uniformly random vector $\vec{y}\in\{-1,1\}^n$ can be interpreted as a mixture of different Boolean slices.

\begin{lemma}\label{lem:transfer-rad}
Fix $0<\delta<1/4$, and consider a partition $[n]=I_1\cup\cdots\cup I_m$ as in \cref{lem:transfer-summary}, as well as a symmetric matrix $F\in\mb{R}^{n\times n}$, a vector $\vec f\in\mb{R}^{n}$ and a real number $f_0$ satisfying conditions (a--d). Assume that $n$ is sufficiently large with respect to $\delta$. 

Consider a sequence $(\ell_1,\ldots,\ell_m)\in\mb N^{m}$
with $|\ell_{k}-|I_k|/2|\le\sqrt{n^{1-\delta}}\log n$ and 
for $k=1,\ldots,m$, and let $\vec{x}\in\{-1,1\}^{n}$ be a uniformly
random vector such that $\vec{x}_{I_{k}}$ has exactly $\ell_{k}$
ones for each $k=1,\ldots,m$. Furthermore let $\vec{y}\in\{-1,1\}^{n}$ be a uniformly random vector (with independent coordinates). Let $X=\vec f_0+f\cdot\vec{x}+\vec{x}^{\intercal}F\vec{x}$
and $Y=f_0+\vec f\cdot\vec{y}+\vec{y}^{\intercal}F\vec{y}$. Then we can couple $\vec{x}$ and $\vec{y}$ such that $|X-Y|\le n^{3/4+4\delta}$
with probability at least $1-\exp(-(\log n)^2/8)$.
\end{lemma}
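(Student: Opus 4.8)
The plan is to realize the uniformly random vector $\vec y\in\{-1,1\}^n$ as a mixture of the product-of-slices distributions that appear in \cref{lem:slice-transfer}, and then apply that lemma conditionally on which slice we land in. Concretely, I would sample $\vec y$ in two stages: first let $L_k$ denote the number of $+1$'s among the coordinates of $\vec y$ in $I_k$, so that $L_1,\ldots,L_m$ are independent with $L_k\sim\mr{Bin}(|I_k|,1/2)$; then, conditionally on $(L_1,\ldots,L_m)$, the vector $\vec y$ is uniformly distributed among all vectors in $\{-1,1\}^n$ having exactly $L_k$ ones in each $I_k$. This is exactly the type of distribution appearing in \cref{lem:slice-transfer}, so the only thing preventing a direct application is that $(L_1,\ldots,L_m)$ need not satisfy the deviation hypothesis $|L_k-|I_k|/2|\le\sqrt{n^{1-\delta}}\log n$ — but this happens only with tiny probability.

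Indeed, since $|I_k|=n/m\in[n^{1-\delta}/2,\,2n^{1-\delta}]$, applying the Chernoff bound (\cref{lem:chernoff}) to each $L_k$ and taking a union bound over the $m\le 2n^\delta$ buckets shows that the event $\mc G$ that $|L_k-|I_k|/2|\le\sqrt{n^{1-\delta}}\log n$ holds for all $k=1,\ldots,m$ has probability at least $1-\exp(-(\log n)^2/4)$ once $n$ is large; here one uses that a deviation of $\sqrt{n^{1-\delta}}\log n$ is of order $\log n$ standard deviations, so each tail bound is of order $\exp(-\Omega((\log n)^2))$ with a constant comfortably beating the $n^\delta=\exp(\delta\log n)$ loss from the union bound.

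On the event $\mc G$, I would condition on any fixed admissible outcome of $(L_1,\ldots,L_m)$ and apply \cref{lem:slice-transfer} with the given sequence $(\ell_1,\ldots,\ell_m)$ and with $(\ell_1',\ldots,\ell_m')=(L_1,\ldots,L_m)$. This produces a coupling of $\vec x$ with a vector $\vec x'$ that is uniform among vectors with $L_k$ ones in each $I_k$ — i.e. $\vec x'$ has precisely the conditional law of $\vec y$ given $(L_1,\ldots,L_m)$ — under which $|X-X'|\le n^{3/4+4\delta}$ with conditional probability at least $1-\exp(-n^{\delta/2})$. Gluing these conditional couplings over the outcomes of $(L_1,\ldots,L_m)$ lying in $\mc G$ (and coupling $\vec x,\vec y$ arbitrarily off $\mc G$) yields a coupling of $\vec x$ and $\vec y$ for which $|X-Y|\le n^{3/4+4\delta}$ on an event of probability at least $\Pr[\mc G]\cdot(1-\exp(-n^{\delta/2}))\ge 1-\exp(-(\log n)^2/8)$, using that $\exp(-n^{\delta/2})$ is far smaller than $\exp(-(\log n)^2)$ for large $n$. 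The only points requiring (routine) care are tracking the constants in the Chernoff–union-bound step so that $\Pr[\mc G^c]$ beats $\exp(-(\log n)^2/8)$, and verifying that the two-stage description of $\vec y$ makes $\vec x'$ genuinely equidistributed with $\vec y$ conditionally on $(L_1,\ldots,L_m)$, so that the coupling supplied by \cref{lem:slice-transfer} transports to the desired coupling of $X$ and $Y$; there is no substantive obstacle beyond this bookkeeping.
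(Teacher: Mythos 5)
Your proposal is correct and follows essentially the same route as the paper's proof: realize $\vec y$ as a mixture over independent binomial slice counts $L_k\sim\mr{Bin}(|I_k|,1/2)$, use a Chernoff bound plus a union bound over the $m\le 2n^{\delta}$ buckets to show the counts satisfy the deviation hypothesis of \cref{lem:slice-transfer} except with probability $\exp(-\Omega((\log n)^2))$, and then glue the conditional couplings from that lemma (coupling arbitrarily on the bad event). The bookkeeping you flag is exactly what the paper does, and the final bound $\exp(-(\log n)^2/6)+\exp(-n^{\delta/2})\le\exp(-(\log n)^2/8)$ goes through as you describe.
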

\begin{proof}
For $k=1,\ldots,m$, consider independent binomial random variables $\ell_k'\sim\mr{Bin}(|I_k|,1/2)$. We can sample $\vec{y}$ by taking a random vector in $\{-1,1\}^n$ with exactly $\ell_{k}'$ ones among the entries with indices in $I_k$ for each $k=1,\ldots,m$. Note that altogether this gives precisely a uniformly random vector in $\{-1,1\}^{n}$.

We now need to define the desired coupling of $\vec{x}$ and $\vec{y}$. By the Chernoff bound (see \cref{lem:chernoff}), with probability at least $1-4n^\delta\cdot\exp(-(\log n)^2/4)\le 1-\exp(-(\log n)^2/6)$ we have $|\ell_{k}'-|I_k|/2|\le\sqrt{n^{1-\delta}}\log n$ for $k=1,\ldots,m$ (here, we used that $m\leq 2n^\delta$ and $|I_k|=n/m\le 2n^{1-\delta}$). Whenever this is the case, then by \cref{lem:slice-transfer} we can couple $\vec x$ and $\vec{y}$ in such a way that we have $|X-Y|\le n^{3/4+4\delta}$ with probability at least $1-\exp(-n^{\delta/2})$. Otherwise, let us couple $\vec x$ and $\vec{y}$ arbitrarily.

Now, the overall probability of having $|X-Y|\le n^{3/4+4\delta}$ is at least $1-\exp(-(\log n)^2/6)-\exp(-n^{\delta/2})\geq 1-\exp(-(\log n)^2/8)$, as desired.
\end{proof}

In order to obtain the comparison of the characteristic functions of $X$ and $Z$ in \cref{lem:transfer-summary}(3), we will use \cref{lem:transfer-rad} to relate $X$ to $Y$. It then remains to compare the characteristic functions of $Y$ and $Z$. To do so, we use the \emph{Gaussian invariance principle} of Mossel, O'Donnell, and Oleszkiewicz \cite{MOO10}. The version stated in \cref{thm:invariance} below is a special case of \cite[(11.29)]{O14}.

\begin{definition}\label{def:influence}
Given a multilinear polynomial $g(x_1,\ldots,x_n) = \sum_{S\subseteq [n]} a_S\prod_{i\in S}x_i$, for $t=1,\ldots,n$ the \emph{influence} of the variable $x_t$ is defined as
\[\on{Inf}_t[g] = \sum_{\substack{S\su [n]\\t\in S}} a_S^2.\]
\end{definition}

\begin{theorem}\label{thm:invariance}
Let $g$ be an $n$-variable multilinear polynomial of degree at most $k$. Let $\vec{y}\in \{-1,1\}^n$ be a uniformly random vector (i.e., a vector of independent Rademacher random variables), and let $\vec{z}\sim\mc{N}(0,1)^{\otimes n}$ be a vector of independent standard Gaussian random variables. Then for any four-times-differentiable function $\psi\colon\mb{R}\to\mb{R}$, we have \[\Big|\mb{E}[\psi(g(\vec{y}))-\psi(g(\vec{z}))]\Big|
\le \frac{9^k}{12}\cdot \snorm{\psi^{(4)}}_{\infty}\sum_{t=1}^n\on{Inf}_t[g]^{2}.\]
\end{theorem}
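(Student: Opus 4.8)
The plan (the paper itself derives \cref{thm:invariance} by citing \cite[(11.29)]{O14}, but here is the proof one would give directly) is the \emph{Lindeberg replacement argument}. Introduce the hybrid vectors $\vec h^{(j)}=(y_1,\dots,y_j,z_{j+1},\dots,z_n)$ for $j=0,\dots,n$, so $\vec h^{(0)}=\vec z$ and $\vec h^{(n)}=\vec y$, and telescope: $\mb E[\psi(g(\vec y))]-\mb E[\psi(g(\vec z))]=\sum_{j=1}^{n}\big(\mb E[\psi(g(\vec h^{(j)}))]-\mb E[\psi(g(\vec h^{(j-1)}))]\big)$. For each $j$ the vectors $\vec h^{(j)}$ and $\vec h^{(j-1)}$ differ only in their $j$-th coordinate (which is $y_j$ versus $z_j$), and this coordinate is independent of the common remaining coordinates $\vec h_{\sim j}:=(y_1,\dots,y_{j-1},z_{j+1},\dots,z_n)$. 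I would bound the $j$-th term by $\tfrac{1}{6}\,9^{k-1}\,\snorm{\psi^{(4)}}_\infty\on{Inf}_j[g]^2$ and sum; since $\tfrac{1}{6}9^{k-1}\le\tfrac{1}{12}9^k$, this yields the claimed bound.

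To estimate the $j$-th term, write $g$ as an affine function of its $j$-th variable, $g(\vec x)=A_j(\vec x_{\sim j})+x_j\,B_j(\vec x_{\sim j})$, where $A_j$ is multilinear of degree $\le k$ and $B_j=\sum_{S\ni j}a_S\prod_{i\in S\setminus\{j\}}x_i$ is multilinear of degree $\le k-1$; orthonormality of the monomials gives $\mb E[B_j(\vec h_{\sim j})^2]=\sum_{S\ni j}a_S^2=\on{Inf}_j[g]$. Condition on $\vec h_{\sim j}$, so that $A:=A_j(\vec h_{\sim j})$ and $B:=B_j(\vec h_{\sim j})$ are fixed reals, and Taylor-expand $s\mapsto\psi(A+sB)$ to third order with the integral-form remainder: $\psi(A+sB)=\sum_{r=0}^{3}\tfrac{(sB)^r}{r!}\psi^{(r)}(A)+R(s)$ with $|R(s)|\le\tfrac{1}{24}\snorm{\psi^{(4)}}_\infty|sB|^4$. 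Taking expectations over $s=y_j$ and over $s=z_j$ and subtracting, the terms of orders $r=0,1,2,3$ cancel because $y_j$ and $z_j$ have matching first three moments ($\mb E[y_j^r]=\mb E[z_j^r]$ for $r\le 3$), leaving only the remainders. Hence $\big|\mb E[\psi(g(\vec h^{(j)}))-\psi(g(\vec h^{(j-1)}))\mid\vec h_{\sim j}]\big|\le\tfrac{1}{24}\snorm{\psi^{(4)}}_\infty(\mb E[y_j^4]+\mb E[z_j^4])|B|^4=\tfrac{1}{6}\snorm{\psi^{(4)}}_\infty|B|^4$, using $\mb E[y_j^4]=1$ and $\mb E[z_j^4]=3$.

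Taking expectation over $\vec h_{\sim j}$, it remains to bound $\mb E[B_j(\vec h_{\sim j})^4]$. Since the coordinates of $\vec h_{\sim j}$ are independent — each either Rademacher or standard Gaussian — and $B_j$ is multilinear of degree $\le k-1$, the $(2\!\to\!4)$ hypercontractive moment bound (\cref{thm:gauss-moment}, whose noise-operator proof tensorizes across independent coordinates of \emph{either} of these two types) yields $\mb E[B_j(\vec h_{\sim j})^4]\le(\sqrt 3)^{4(k-1)}\big(\mb E[B_j(\vec h_{\sim j})^2]\big)^2=9^{k-1}\on{Inf}_j[g]^2$. Summing over $j$ gives $\big|\mb E[\psi(g(\vec y))-\psi(g(\vec z))]\big|\le\tfrac{9^{k-1}}{6}\snorm{\psi^{(4)}}_\infty\sum_{j=1}^{n}\on{Inf}_j[g]^2$, which implies the stated inequality.

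The only genuine subtlety is the hypercontractivity step in the \emph{mixed} Rademacher/Gaussian model; everything else is a routine Taylor expansion together with the telescoping bookkeeping. If one prefers to rely only on the single-measure version of \cref{thm:gauss-moment} exactly as stated, one can instead expand $g(\vec z)$ in terms of Gaussians from the outset and replace each Gaussian by a suitably normalized sum of many independent Rademachers, pass to the limit via the central limit theorem, and run the same replacement argument in the purely Rademacher setting; this costs only a harmless change of absolute constant.
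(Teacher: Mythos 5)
Your proof is correct: the paper does not prove \cref{thm:invariance} itself but cites it from \cite[(11.29)]{O14}, and your Lindeberg replacement argument (hybrid vectors, third-order Taylor expansion with matching first three moments, then $(2\!\to\!4)$ hypercontractivity for the derivative polynomial on the mixed Rademacher/Gaussian product, which you correctly justify) is precisely the proof given in that reference. In fact your bookkeeping yields the slightly better constant $9^{k}/54$ in place of $9^{k}/12$, which of course implies the stated inequality.
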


As a simple consequence of \cref{thm:invariance}, we obtain the following lemma.

\begin{lemma}\label{lem:invar}
Fix $0<\delta<1/4$. Consider a vector $\vec{f}\in\mb{R}^n$ with $\snorm{\vec{f}}_{\infty}\le n^{1/2+3\delta}$ and a matrix $F\in\mb{R}^{n\times n}$ with entries bounded in absolute value by 1, as well as a real number $f_0$. Assume that $n$ is sufficiently large with respect to $\delta$.

Let $\vec{y}\in\{-1,1\}^n$ be a uniformly random vector, and let $\vec{z}\sim\mc{N}(0,1)^{\otimes n}$ be a vector of independent standard Gaussian random variables. Let $Y = f_0+\vec{f}\cdot \vec{y} + \vec{y}^\intercal F\vec{y}$ and $Z = f_0+\vec{f}\cdot \vec{z} + \vec{z}^\intercal F\vec{z}$. Then for any four-times-differentiable function $\psi\colon\mb{R}\to \mb{R}$, we have
\[\Big|\mb{E}[\psi(Y)-\psi(Z)]\Big|
\lesssim  \snorm{\psi^{(4)}}_{\infty}\cdot n^{3+12\delta} + \snorm{\psi'}_{\infty}\cdot n^{1/2}.\]
\end{lemma}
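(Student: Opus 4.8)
\textbf{Proof proposal for \cref{lem:invar}.}

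The plan is to apply \cref{thm:invariance} to the multilinear polynomial $g$ that represents the quadratic form $Y$, and then carefully control the influences. First I would set $g(\vec x) = f_0 + \vec f\cdot \vec x + \sum_{i\ne j} F_{i,j} x_i x_j$, obtained from $\vec x^\intercal F\vec x$ by discarding the diagonal (which contributes a constant $\sum_i F_{i,i}$ since $x_i^2=1$ on the hypercube) and collecting the off-diagonal terms. Note $g$ has degree at most $k=2$, so the factor $9^k/12$ in \cref{thm:invariance} is an absolute constant. The issue is that $g(\vec y)$ and $Y$ agree, but $g(\vec z)$ differs from $Z=f_0+\vec f\cdot\vec z+\vec z^\intercal F\vec z$, because on the Gaussian side $z_i^2 \ne 1$; the difference is $\sum_{i=1}^n F_{i,i}(z_i^2-1)$. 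So I would split $|\mb E[\psi(Y)-\psi(Z)]| \le |\mb E[\psi(g(\vec y))-\psi(g(\vec z))]| + |\mb E[\psi(g(\vec z))-\psi(Z)]|$, bound the first term via \cref{thm:invariance}, and bound the second term using the mean-value inequality $|\psi(g(\vec z))-\psi(Z)| \le \snorm{\psi'}_\infty \cdot |g(\vec z)-Z| = \snorm{\psi'}_\infty \cdot |\sum_i F_{i,i}(z_i^2-1)|$, taking expectations and using $\mb E|\sum_i F_{i,i}(z_i^2-1)| \le (\on{Var}\sum_i F_{i,i}(z_i^2-1))^{1/2} = (2\sum_i F_{i,i}^2)^{1/2} \le \sqrt{2n}$ (using $|F_{i,i}|\le 1$), which gives the $\snorm{\psi'}_\infty\cdot n^{1/2}$ term.

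For the main term, I need $\sum_{t=1}^n \on{Inf}_t[g]^2 \lesssim n^{3+12\delta}$. The Fourier coefficients of $g$ are: the empty-set coefficient $f_0$ (irrelevant to influences), the singleton coefficients $f_t$ for $t\in[n]$, and the pair coefficients $a_{\{i,j\}} = 2F_{i,j}$ for $i\ne j$ (since $F$ is symmetric and both orderings $x_ix_j$, $x_jx_i$ contribute $F_{i,j}$). Hence $\on{Inf}_t[g] = f_t^2 + \sum_{j\ne t} (2F_{t,j})^2 \le \snorm{\vec f}_\infty^2 + 4n \le n^{1+6\delta} + 4n \lesssim n^{1+6\delta}$, using $\snorm{\vec f}_\infty \le n^{1/2+3\delta}$ and $|F_{t,j}|\le 1$ so $\sum_{j\ne t}(2F_{t,j})^2 \le 4n$. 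Therefore $\sum_{t=1}^n \on{Inf}_t[g]^2 \le n \cdot (n^{1+6\delta})^2 \cdot (\text{const}) \lesssim n^{3+12\delta}$, as required. Plugging into \cref{thm:invariance} gives $|\mb E[\psi(g(\vec y))-\psi(g(\vec z))]| \lesssim \snorm{\psi^{(4)}}_\infty \cdot n^{3+12\delta}$.

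I do not anticipate a serious obstacle here; the only point requiring a little care is correctly accounting for the diagonal terms of $F$ when passing between the hypercube and Gaussian settings (this is the source of the $\snorm{\psi'}_\infty\cdot n^{1/2}$ error term, which cannot be absorbed into the invariance-principle bound and must be handled separately via the crude Lipschitz estimate above). Combining the two bounds yields $|\mb E[\psi(Y)-\psi(Z)]| \lesssim \snorm{\psi^{(4)}}_\infty\cdot n^{3+12\delta} + \snorm{\psi'}_\infty \cdot n^{1/2}$, completing the proof.
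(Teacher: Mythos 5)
Your proposal is correct and follows essentially the same route as the paper's proof: zero out the diagonal of $F$ (absorbing it as a constant on the hypercube), bound the influences of the resulting degree-2 multilinear polynomial by $O(n^{1+6\delta})$ to apply \cref{thm:invariance}, and handle the Gaussian-side discrepancy $\sum_i F_{i,i}(z_i^2-1)$ via a Lipschitz bound and Cauchy--Schwarz. The only nitpick is that your displayed formula for $g$ omits the constant $\sum_i F_{i,i}$ even though your subsequent claims ($g(\vec y)=Y$, difference equal to $\sum_i F_{i,i}(z_i^2-1)$) assume it is included; this is a harmless notational slip.
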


\begin{proof}
Let $F'$ be obtained from $F$ by setting each diagonal entry to zero. Define the multilinear polynomial $g$ by $g(\vec{x}) = f_0+\vec{f}\cdot \vec{x} + \vec{x}^\intercal F'\vec{x} + \sum_{i}F_{i,i}$, and let $Y' = g(\vec y)$ and $Z'=g(\vec z)$. Note that $\on{Inf}_t[g]\le (n^{1/2+3\delta})^2 + n \le 2n^{1+6\delta}$ for $t=1,\ldots,n$, so $\sum_{t=1}^{n}\on{Inf}_t[g]^{2}\le 4n^{3+12\delta}$. \cref{thm:invariance} then implies that
\[\Big|\mb{E}[\psi(Y')-\psi(Z')]\Big|\le 27 \snorm{\psi^{(4)}}_{\infty}\cdot n^{3+12\delta}.\]
Furthermore, we always have $y_i^2 = 1$ for $i=1,\ldots,n$, meaning that $Y' = Y$ and in particular $\mb{E}[\psi(Y')-\psi(Y)]=0$. By the Cauchy--Schwarz inequality, we also have
\[|\mb{E}[\psi(Z')-\psi(Z)]|\le \mb{E}|\psi(Z')-\psi(Z)|\le \snorm{\psi'}_{\infty}\cdot \mb{E}|Z'-Z|\le  \snorm{\psi'}_{\infty}\cdot (\mb{E}[(Z'-Z)^2])^{1/2}\le 2\snorm{\psi'}_{\infty}n^{1/2},\]
where we used $\mb{E}[(Z'-Z)^2]=\mb{E}[(F_{1,1}(z_1^2-1)+\cdots+F_{n,n}(z_n^2-1))^2]=2|F_{1,1}|^2+\cdots+2|F_{n,n}|^2\le 2n$ in the last step. Combining these estimates gives the desired result. 
\end{proof}

Let us now prove \cref{lem:transfer-summary}.

\begin{proof}[Proof of \cref{lem:transfer-summary}]
We may assume that $n$ is sufficiently large with respect to $\delta$. Let $\vec{y}\in\{-1,1\}^n$ be a uniformly random vector and define $Y=f_0+\vec f\cdot\vec{y}+\vec{y}^{\intercal}F\vec{y}$. By \cref{lem:transfer-rad} we can couple $\vec{x}$ and $\vec{y}$ such that $|X-Y|\le n^{3/4+4\delta}$ with probability at least $1-\exp(-(\log n)^2/8)$.

We can now compute $\mb{E}Y=\mb{E}Z=f_0+\sum_{i=1}^{n}F_{i,i}$. Furthermore, since $|X-Y|\lesssim n^{2}$ always holds, we have $|\mb{E}X-\mb{E}Y|\le \mb{E}|X-Y|\lesssim n^{3/4+4\delta}+\exp(-(\log n)^2/8)\cdot n^{2}\lesssim n^{3/4+4\delta}$ and therefore $\mb{E}X=f_0+\sum_{i=1}^{n}F_{i,i}+O(n^{3/4+4\delta})$. This proves (1).

Note that $Y-\mb{E}Y=\vec{f}\cdot\vec{y}+\sum_{i<j} 2F_{i,j}y_iy_j$ (here we are using that $y_i^2=1$ and that $F$ is symmetric). Therefore \cref{eq:boolean-variance} gives $\sigma(Y)^2=\snorm{\vec{f}}_2^2+\sum_{i<j} 4F_{i,j}^2=2\snorm{F}_{\mr F}^2+\snorm{\vec{f}}_2^2-2\sum_{i=1}^{n}F_{i,i}^2=2\snorm{F}_{\mr F}^2+\snorm{\vec{f}}_2^2+O(n)$ (and so in particular $\sigma(Y)^2\lesssim n^{2+6\delta}$). Furthermore (using the Cauchy--Schwarz inequality), we have
\begin{align*}
|\sigma(X)^2-\sigma(Y)^2|&=\left|\mb{E}\left[(X-\mb{E}X)^2-(Y-\mb{E}Y)^2\right]\right|\le \mb{E}\left[|X-Y-\mb{E}X+\mb{E}Y|\cdot |X+Y-\mb{E}X-\mb{E}Y|\right]\\
&\le\left(\mb{E}\left[(|X-Y|+|\mb{E}X-\mb{E}Y|)^2\right]\right)^{1/2}\cdot \left(\mb{E}\left[(|X-\mb{E}X|+|Y-\mb{E}Y|)^2\right]\right)^{1/2}\\
&\le\left(\mb{E}\left[(|X-Y|+O(n^{3/4+4\delta}))^2\right]\right)^{1/2}\cdot \left(2\mb{E}\left[|X-\mb{E}X|^2\right]+2\mb{E}\left[|Y-\mb{E}Y|^2\right]\right)^{1/2}\\
&\lesssim \left(\mb{E}[|X-Y|^2] +\mb{E}|X-Y|\cdot O(n^{3/4+4\delta})+O(n^{3/2+8\delta})\right)^{1/2}\cdot \left(\sigma(X)^2+\sigma(Y)^2\right)^{1/2}\\
&\lesssim \left(n^{3/2+8\delta}+\exp(-(\log n)^2/8)\cdot n^{4} +O(n^{3/2+8\delta})\right)^{1/2}\cdot (\sigma(X)+\sigma(Y))\\
&\lesssim n^{3/4+4\delta}\cdot (\sigma(X)+\sigma(Y)).
\end{align*}
Hence $|\sigma(X)-\sigma(Y)|\lesssim n^{3/4+4\delta}$ and in particular $\sigma(X)\leq \sigma(Y)+O(n^{3/4+4\delta})\lesssim n^{1+3\delta}$. Thus, we obtain $|\sigma(X)^2-\sigma(Y)^2|= |\sigma(X)-\sigma(Y)| (\sigma(X)+\sigma(Y))\lesssim n^{3/4+4\delta} \cdot n^{1+3\delta}=n^{7/4+7\delta}$. This gives $\sigma(X)^2=\sigma(Y)^2+O(n^{7/4+7\delta})=2\snorm{F}_{\mr F}^2+\snorm{\vec{f}}_2^2+O(n^{7/4+7\delta})$.

To finish the proof of (2), we observe that $Z-\mb{E}Z=\vec{f}\cdot\vec{z}+\sum_{i=1}^{n} F_{i,i}(z_i^2-1)+\sum_{i<j} 2F_{i,j}z_iz_j$, so we can compute $\sigma(Z)^2=\snorm{\vec{f}}_2^2+\sum_{i=1}^{n} 2F_{i,i}^2+\sum_{i<j}(2F_{i,j})^2=2\snorm{F}_{\mr F}^2+\snorm{\vec{f}}_2^2$.

For (3), consider some $\tau\in\mb{R}$. We have
\begin{align*}
|\varphi_Y(\tau)-\varphi_Z(\tau)|&=\Big|\mb{E}[\exp(i\tau Y)-\exp(i\tau Z)]\Big|=\Big|\mb{E}[\cos(\tau Y)+i\sin(\tau Y)-\cos(\tau Z)-i\sin(\tau Z)]\Big|\\
&\le \Big|\mb{E}[\cos(\tau Y)-\cos(\tau Z)]\Big|+\Big|\mb{E}[\sin(\tau Y)-\sin(\tau Z)]\Big|\Big|\lesssim |\tau|^4\cdot n^{3+12\delta} + |\tau|\cdot n^{1/2},
\end{align*}
where in the last step we applied \cref{lem:invar} to the functions $u\mapsto\cos(\tau u)$ and $u\mapsto\sin(\tau u)$.
We furthermore have
\[ |\varphi_X(\tau)-\varphi_Y(\tau)|=\Big|\mb{E}[\exp(i\tau X)-\exp(i\tau Y)]\Big|\le \mb{E}\Big[|\exp(i\tau X)-\exp(i\tau Y)|\Big] \le |\tau|\cdot \mb{E}[|X-Y|]\lesssim |\tau| \cdot n^{3/4+4\delta},\]
using that the absolute value of the derivative of the function $u\mapsto \exp(i\tau u)$ is bounded by $|\tau|$. Combining these two bounds using the triangle inequality gives (3).
\end{proof}

\section{Short interval control in the additively structured case}\label{sec:short-interval-structured}
Recall the definition of $\gamma$-structuredness from \cref{def:additively-structured}, and recall that in \cref{sec:short-interval-unstructured} we fixed $\gamma=10^{-4}$ and proved \cref{thm:short-interval} in the case where $(G,\vec e)$ is $\gamma$-unstructured. In this section, we finally prove \cref{thm:short-interval} in the complementary case where $(G,\vec e)$ is $\gamma$-structured.

As outlined in \cref{sec:outline}, the idea is as follows. First, we apply  \cref{lem:decomposition-abstract} to the vector $\vec{d}$ in \cref{def:additively-structured} to divide the vertex
set into ``buckets'' such that the $d_v$ in each bucket have similar values.  We encode the number of vertices in each bucket as a vector $\vec \Delta$; if we condition on an outcome of $\vec \Delta$ then we can use
the machinery developed in the previous sections
to prove upper and lower bounds on the conditional small-ball
probabilities of $X$. Then, we need to average
these estimates over $\vec \Delta$. For this averaging, it is important that our conditional small-ball probabilities decay as we vary $\vec \Delta$ (this is where we need the non-uniform anticoncentration estimates in \cref{thm:gaussian-anticoncentration-technical}(1) and \cref{lem:esseen-upper}).

This section mostly consists of combining ingredients from previous sections, but there are still a few technical difficulties remaining. Chief among these is the fact that, as we vary the numbers of vertices in each bucket, the conditional expected value and variance of $X$ fluctuate fairly significantly. We need to keep track of these fluctuations and ensure that they do not correlate adversarially with each other.

\begin{proof}[Proof of \cref{thm:short-interval} in the $\gamma$-structured case]
Recall that $G$ is a $C$-Ramsey graph with $n$ vertices, $e_0\in\mb{R}$ and $\vec{e}\in\mb{R}^{V(G)}$ is a vector satisfying $0\leq e_v\leq Hn$ for all $v\in V(G)$, and that $U\su V(G)$ is a uniformly random vertex subset and $X=e(G[U])+\sum_{v\in U}e_v+e_0$. We may assume that $n$ is sufficiently large with respect to $C,H$, and $A$.

\medskip
\noindent\hypertarget{step:bucketing}{\textit{Step 1: Bucketing setup.}}
As in \cref{def:additively-structured}, define $\vec{d}\in\mb{R}^{V(G)}$ by $d_v=e_v+\deg_G(v)/2$ for all $v\in V(G)$. We are assuming that $(G,\vec e)$ is $\gamma$-structured, meaning that $\wh{D}_{L,\gamma}(\vec{d})\le n^{1/2}$, where $L=\lceil 100/\gamma\rceil=10^6$ (recall that $\gamma=10^{-4}$).

Note that $\snorm{\vec{d}}_\infty\le (H+1)n$. Furthermore, for any subset $S\su V(G)$ of size $|S|=\lceil n^{1-\gamma}\rceil$, we have $\snorm{\vec{d}_S}_2\gtrsim_H n^{3/2-3\gamma/2}$ by \cref{lem:two-norm-subsets-vec-d} and therefore in particular $\snorm{\vec{d}_S}_2\ge n^{3/2-2\gamma}$. Thus, we can apply \cref{lem:decomposition-abstract} and obtain a partition $V(G)=R\cup (I_1\cup\cdots\cup I_m)$ and real numbers $\kappa_1,\ldots,\kappa_m\geq 0$ with $|R|\le n^{1-\gamma}$ and $|I_1|=\cdots=|I_m|=\lceil n^{1-2\gamma}\rceil$ such that $|d_v-\kappa_k|\leq n^{1/2+4\gamma}$ for all $k=1,\ldots,m$ and $v\in I_k$. Let $V=I_{1}\cup\cdots\cup I_{m}=V(G)\setminus R$.

Since $|R|\le n^{1-\gamma}$, we have $2n/3\leq |V|\le n$ (i.e., $|V|$ is of order $n$) and thus furthermore $|V|^{2\gamma}/2\le n^{2\gamma}/2\le m\le 2^{1-2\gamma}n^{2\gamma}\leq 2|V|^{2\gamma}$ (which means that we can apply \cref{lem:ramsey-low-rank,lem:transfer-summary} to the partition $V=I_1\cup\cdots\cup I_m$).

In the next step of the proof, we will condition on an outcome of $U\cap R$, and from then on we will only use the randomness of $U\cap (I_1\cup\cdots\cup I_m)=U\cap V$.

\medskip
\noindent\hypertarget{step:condition-garbage}{\textit{Step 2: Conditioning on an outcome of $U\cap R$.}}
Recall that $U\su V(G)$ is a random subset obtained by including each vertex with probability $1/2$ independently. Let $x_{v}=1$ if $v\in U$ and $x_{v}=-1$ if $v\notin U$, so the $x_{v}$ are independent Rademacher random variables. Then, as in \cref{eq:fourier-walsh} and the proof of \cref{prop:lowest-t}
our random variable $X=e(G[U])+\sum_{v\in U}e_v+e_0$ can be expressed as
\begin{equation}\label{eq:X-first-expression}
\mb E X+\frac{1}{2}\sum_{v\in V(G)}\left(e_{v}+\frac{1}{2}\deg_{G}(v)\right)x_{v}+\frac{1}{4}\sum_{uv\in E(G)}x_{u}x_{v}=\mb E X+\frac{1}{2}\sum_{v\in V(G)}d_vx_{v}+\frac{1}{4}\sum_{uv\in E(G)}x_{u}x_{v}.
\end{equation}
Let us now write $\vec{x}$ for the vector $(x_{v})_{v\in V}$; we emphasize that this does not include the indices in $R$. We first
rewrite \cref{eq:X-first-expression} as a quadratic polynomial in $\vec{x}$
(where we view the random variables $x_{u}$ for $u\in R=V(G)\setminus V$
as being part of the coefficients of this quadratic polynomial). To this
end, let $M\in \{0,1\}^{V\times V}$ be the adjacency matrix of $G[V]$, and also define
\[y_{v}=d_{v}+\frac{1}{2}\sum_{\substack{u\in R\\uv\in E(G)}}x_{u}~\text{for }v\in V\quad\quad\text{and}\quad\quad E=\mb E X+\frac{1}{2}\sum_{v\in R}d_vx_v+\frac{1}{4}\sum_{uv\in E(G[R])}x_{u}x_{v}.\]
Then
\begin{equation}\label{eq:R-gone-formula}
X=E+\frac{1}{2}\vec{y}\cdot\vec{x}+\frac{1}{8}\vec{x}^{\intercal}M\vec{x}.
\end{equation}
Since $|R|\le n^{1-\gamma}$, and $0\leq d_v\leq Hn+n/2\leq (H+1)n$ for all $v\in V(G)$, \cref{thm:concentration-hypercontractivity} (concentration via hypercontractivity) in combination with \cref{eq:boolean-variance} shows that with probability at least $1-\exp(-\Omega_H(n^{\gamma/2}))$ (over the randomness of $x_u$ for $u\in R$) we have 
\[\bigg|\sum_{\substack{u\in R\\uv\in E(G)}}x_{u}\bigg|\le n^{1/2}~\text{for each }v\in V,\qquad\qquad\bigg|\sum_{uv\in E(G[R])}x_{u}x_{v}\bigg|\le n, \qquad\qquad \bigg|\sum_{v\in R}d_vx_{v}\bigg|\le n^{3/2}/2,\]
which implies that $|E-\mb E X|\le n^{3/2}$ and $|y_{v}-d_v|\le n^{1/2}$
for all $v\in V$. For the rest of the proof, we implicitly condition
on an outcome of $U\cap R$ satisfying these properties, and we treat $E$ and $\vec{y}=(y_{v})_{v\in V}$ as being non-random objects.
 
Note that $\snorm{\vec{y}}_\infty\leq Hn+n/2+n^{1/2}\leq (H+2)n$ and $\snorm{\vec{y}}_2\geq \snorm{\vec{d}_V}_2-\snorm{\vec{y}-d_V}_2\geq \snorm{\vec{d}_V}_2-n$. Furthermore, we have $\snorm{\vec{d}_V}_2\gtrsim_C n^{3/2}$ by \cref{lem:two-norm-subsets-vec-d} and therefore $\snorm{\vec{y}}_2\gtrsim_C n^{3/2}$.

\medskip
\noindent\hypertarget{step:rewriting-Delta}{\textit{Step 3: Rewriting $X$ via bucket intersection sizes.}}
Recall that we have a partition $V=I_1\cup\cdots\cup I_m$ into ``buckets'' with $|I_1|=\cdots=|I_{m}|=|V|/m$ and $|V|^{2\gamma}/2\leq m\leq 2|V|^{2\gamma}$. Let $I\in\mb{R}^{V\times V}$ be the identity matrix, and let $Q\in\mb{R}^{V\times V}$ be the symmetric matrix defined by taking $Q_{u,v}=1/|I_{k}|=m/|V|$ for $u,v$ in the same bucket $I_{k}$, and $Q_{u,v}=0$ otherwise. 
Multiplying a vector $\vec{v}\in\mb{R}^V$ by this matrix $Q$ has the effect of averaging the entries of $\vec{v}$ over each of the buckets $I_k$, and hence $(I-Q)\vec{v}$ has the property that for $k=1,\ldots,m$ the sum of the entries in $\vec{v}_{I_k}$ is zero.

Let us define $\vec{\Delta}\in\mb{R}^V$ by $\vec{\Delta}=Q\vec{x}$, so for any $k=1,\ldots,m$ and any $v\in I_k$ we have
\[\Delta_v=\frac{1}{|I_{k}|}\sum_{u\in I_{k}}x_{u}=\frac{2}{|I_{k}|}\left(|U\cap I_{k}|-\frac{|I_{k}|}{2}\right).\]
Hence, $\vec{\Delta}$ encodes the sizes of the intersections $|U\cap I_{k}|$ for $k=1,\ldots,m$. In our analysis of the random variable $X$, we will condition on an outcome of $\vec{\Delta}$ and apply \cref{lem:transfer-summary} to study $X$ conditioned on $\vec{\Delta}$. However, the vector $\vec{y}$ and the matrix $M$ appearing in \cref{eq:R-gone-formula} do not satisfy conditions (a), (c), and (d) in \cref{lem:transfer-summary}. So, we need to modify the representation of $X$ in \cref{eq:R-gone-formula}.

Define $M^{*}=\frac{1}{8}(I-Q)M(I-Q)$ and $\vec{w}^{*}_{\vec{\Delta}}=\frac{1}{2}(I-Q)(\vec{y}+\frac12M\vec{\Delta})$. Then (recalling that $Q$ is symmetric)
\begin{align}
X&=E+\frac{1}{2}\vec{y}\cdot\vec{x}+\frac{1}{8}\vec{x}^{\intercal}M\vec{x}\notag\\
&=E+\frac{1}{2}(I-Q)\vec{y}\cdot\vec{x}+\frac{1}{2}\vec{y}\cdot (Q\vec{x})+\frac{1}{8}\vec{x}^{\intercal}(I-Q)M(I-Q)\vec{x}+\frac{1}{4}\vec{x}^{\intercal}(I-Q)MQ\vec{x}+\frac{1}{8}\vec{x}^{\intercal}QMQ\vec{x}\notag\\
&=\left(E+\frac{1}{2}\vec{y}\cdot\vec{\Delta}+\frac{1}{8}\vec{\Delta}^{\intercal}M\vec{\Delta}\right)+\vec{w}^{*}_{\vec{\Delta}}\cdot\vec{x}+\vec{x}^{\intercal}M^{*}\vec{x}.\label{eq:normalized-X-formula}
\end{align}
Furthermore, $M^{*}$ has the property that for all $k,h\in [m]$, in the submatrix $M^*[I_k\!\times\! I_h]$ all row and column sums are zero, and $\vec{w}^{*}_{\vec{\Delta}}$
has the property that for each $k=1,\ldots,m$, the sum of entries in $(\vec{w}^{*}_{\vec{\Delta}})_{I_{k}}$
is equal to zero. Also note that since $M$ has entries in $\{0,1\}$, all entries of $(I-Q)MQ$ and hence all entries of $M^*$ have absolute value at most $1$. Thus, $\vec{w}^{*}_{\vec{\Delta}}$ and $M^*$ satisfy conditions (b)--(d) in \cref{lem:transfer-summary}.

Also, since $M^{*}$ is defined in terms of the adjacency matrix of a Ramsey graph, \cref{lem:ramsey-low-rank} tells us that it must have large Frobenius norm. Indeed,
\begin{equation}\label{eq:M-F}
\|M^{*}\|_{\mr F}^{2}=\frac{1}{64}\|M-(MQ+QM-QMQ)\|_{\mr F}^{2}\gtrsim_{C} n^{2}
\end{equation}
by \cref{lem:ramsey-low-rank} applied with $\delta=2\gamma=2\cdot 10^{-4}$ and $r=3$ (here we are using that $M$ is the adjacency matrix of the $(2C)$-Ramsey graph $G[V]$ of size $|V|\gtrsim n$, and we are using that the matrix $B=MQ+QM-QMQ\in\mb{R}^{V\times V}$ has the property that $\operatorname{rank}B[I_{k}\!\times\! I_{h}]\leq 3$ for all $k,h\in[m]$). 

\medskip
\noindent\hypertarget{step:condition-Delta}{\textit{Step 4: Conditioning on bucket intersection sizes.}}
By a Chernoff bound, with probability at least $1-2n^{2\gamma}\cdot n^{-\omega(1)}=1-n^{-\omega(1)}$ we have $\big||U\cap I_k|-|I_k|/2\big|\le\sqrt{|I_k|}(\log n)/2=\sqrt{|V|/m}\cdot (\log n)/2$ for $k=1,\ldots,m$, or equivalently $|\Delta_{v}|\le\sqrt{m/|V|}\log n$ for all $v\in V$.

We furthermore claim that with probability $1-n^{-\omega(1)}$ we have $\|\vec{w}^{*}_{\vec{\Delta}}\|_{\infty}\le n^{1/2+5\gamma}$. Indeed, recall that $\vec{w}^{*}_{\vec{\Delta}}=\frac{1}{2}(I-Q)(\vec{y}+\frac12M\vec{\Delta})$ and (from \hyperlink{step:condition-garbage}{Step 2}) $|y_{v}-d_v|\le n^{1/2}$ for all $v\in V$.
Recall from the choice of buckets in \hyperlink{step:bucketing}{Step 1} that for all $k=1,\ldots,m$ and $v\in I_k$, we have $|d_v-\kappa_k|\leq n^{1/2+4\gamma}$, implying that $|y_{v}-\kappa_k|\le 2n^{1/2+4\gamma}$. In particular, we obtain $|y_{v}-y_u|\le 4n^{1/2+4\gamma}$ for all $u,v\in V$ that are in the same bucket $I_k$. Hence $\snorm{(I-Q)\vec{y}}_\infty\leq 4n^{1/2+4\gamma}$. Furthermore, since all entries of $(I-Q)MQ$ have absolute value at most $1$, \cref{thm:concentration-hypercontractivity} (concentration via hypercontractivity) shows that with probability at least $1-n\cdot n^{-\omega(1)}=1-n^{-\omega(1)}$ we have $\|(I-Q)M\vec{\Delta}\|_{\infty}=\|(I-Q)MQ\vec x\|_{\infty}\le\sqrt{n}\log n$, which now implies $\|\vec{w}^{*}_{\vec{\Delta}}\|_{\infty}\le n^{1/2+5\gamma}$ as claimed.

Let us say that an outcome of $\vec{\Delta}$ is \emph{near-balanced} if $\|\vec{w}^{*}_{\vec{\Delta}}\|_{\infty}\le n^{1/2+5\gamma}$ and $|\Delta_{v}|\le\sqrt{m/|V|}\log n$ for all $v\in V$. We have just shown that $\vec{\Delta}$ is near-balanced with probability $1-n^{-\omega(1)}$. Note that for near-balanced $\vec{\Delta}$ we in particular have $\|\vec{w}^{*}_{\vec{\Delta}}\|_{\infty}\le|V|^{1/2+6\gamma}$ and $\big||U\cap I_k|-|I_k|/2\big|\le\sqrt{|V|/m}\cdot(\log n)/2\le \sqrt{|V|^{1-2\gamma}}\log|V|$ for $k=1,\ldots,m$.  If we condition on a near-balanced outcome of $\vec{\Delta}$ (which is equivalent to conditioning on the bucket intersection sizes $|U\cap I_k|$ for $k=1,\ldots,m$), then we are in a position to apply \cref{lem:transfer-summary} with $\delta=2\gamma=2\cdot 10^{-4}$. Together with the machinery in \cref{sec:fourier-analysis,sec:high-fourier,sec:ramsey-robust-rank,sec:anti-Gauss} we can then obtain upper and lower bounds for the probability that, conditioning on our outcome of $\vec\Delta$, the random variable $X$ lies in some short interval\footnote{
Our upper and lower bounds for this probability differ by a constant factor. As suggested by one of the anonymous referees, one may wonder whether in this setting it would also be possible to characterize this probability for short intervals asymptotically (up to a $1+o(1)$ factor), potentially even asymptotically characterising the conditional point probabilities of the form $\Pr[X=x|\vec \Delta]$ (proving a \emph{local limit theorem} conditional on the outcome of $\Delta$). While one might be able to asymptotically characterize conditional small-ball probabilities of the form $\Pr[|X-x|\le B|\vec \Delta]$ when $B\to \infty$ as $n\to\infty$ by adapting the arguments in this paper, characterising point probabilities (or probabilities for bounded-length intervals) would likely require significant new ideas.}

To state such upper and lower bounds, let us write $E_{\vec{\Delta}}=\mb E[X|\vec{\Delta}]$ and define $\sigma_{\vec{\Delta}}\geq 0$ to satisfy $\sigma_{\vec{\Delta}}^{2}=\on{Var}[X|\vec{\Delta}]$. By \cref{lem:transfer-summary}(2), for near-balanced $\vec{\Delta}$ we have $\sigma_{\vec{\Delta}}^{2}=2\|M^{*}\|_{\mr F}^{2}+\|\vec{w}^{*}_{\vec{\Delta}}\|_{2}^{2}+O(n^{7/4+14\gamma})$, implying that  $\sigma_{\vec{\Delta}}\ge \|M^{*}\|_{\mr F}\gtrsim_C n$ by \cref{eq:M-F}.

\begin{claim}\label{clm:conditional-point-probability}
There is a constant $B=B(C)>0$
such that the following holds for any fixed near-balanced outcome of $\vec{\Delta}$.
\begin{enumerate}
\item For any $x\in\mb Z$ we have
\[
\Pr\left[|X-x|\le B\middle|\vec{\Delta}\right]\lesssim_{C}\frac{\exp\left(-\Omega_{C}\left(|x-E_{\vec{\Delta}}|/\sigma_{\vec{\Delta}}\right)\right)+n^{-0.1}}{\sigma_{\vec{\Delta}}}
\]
\item There is a sign $s\in\{-1,1\}$, depending only on $M^*$,
such that for any fixed $A>0$ and any $x\in\mb Z$ satisfying
$3n\le s(x-E_{\vec{\Delta}})\le A\sigma_{\vec{\Delta}}$, we have
\[
\Pr\left[|X-x|\le B\middle|\vec{\Delta}\right]\gtrsim_{C,A}\frac{1}{\sigma_{\vec{\Delta}}}.
\]
\end{enumerate}
\end{claim}
We defer the proof of \cref{clm:conditional-point-probability} until
the end of the section (specifically, we will prove it in \cref{sub:proofs-of-claims}). The proof combines the machinery from \cref{sec:fourier-analysis,sec:high-fourier,sec:ramsey-robust-rank,sec:anti-Gauss,sec:coupling-lemmas}.

\medskip
\noindent\hypertarget{step:conditional-E-sigma}{\textit{Step 5: Estimating the conditional mean and variance.}}
We wish to average the estimates in \cref{clm:conditional-point-probability}
over different near-balanced outcomes of $\vec{\Delta}$. To this end, we need to understand how the conditional mean and variance $E_{\vec{\Delta}}=\mb E[X|\vec{\Delta}]$ and $\sigma_{\vec{\Delta}}^{2}=\on{Var}[X|\vec{\Delta}]$ depend on $\vec{\Delta}$ (recall that we already fixed an outcome for $U\cap R$ in \hyperlink{step:condition-garbage}{Step 2}, which in particular fixes $E$ and $\vec{y}$). Most importantly, $E_{\vec{\Delta}}$ positively correlates with the coordinates of $\vec{\Delta}$: recall that $\vec{\Delta}$ encodes the number of vertices of our random set $U$ in each bucket, so naturally if we take more vertices we are likely to increase the number of edges we end up with. However, there are also certain (lower order, nonlinear) adjustments that we need to take into account. In this subsection we will define ``shift'' random variables $E_{\mathrm{shift}(1)},E_{\mathrm{shift}(2)}$ and $\sigma_\mr{shift}$ depending on $\vec \Delta$. We then show that these shift random variables control the dependence of $E_{\vec{\Delta}}$ and $\sigma_{\vec{\Delta}}$ on $\vec \Delta$.

Let $E_{\mathrm{shift}(1)}=\frac{1}{2}\vec{y}\cdot\vec{\Delta}$
and $E_{\mathrm{shift}(2)}=\frac{1}{8}\vec{\Delta}^{\intercal}M\vec{\Delta}$. Recalling \cref{eq:normalized-X-formula}, by \cref{lem:transfer-summary}(1) (applied with $\delta=2\gamma$) we have $E_{\vec{\Delta}}=\mb E[X|\vec{\Delta}]=E+E_{\mathrm{shift}(1)}+E_{\mathrm{shift}(2)}+\sum_{v\in V} M^*_{v,v}+O(n^{3/4+8\gamma})$ if $\vec{\Delta}$ is near-balanced. Recalling $\gamma=10^{-4}$ and that all entries of $M^*$ have absolute value at most $1$, we obtain
\begin{equation}\label{eq:cond-EX}
\left|E_{\vec{\Delta}}-E-E_{\mathrm{shift}(1)}-E_{\mathrm{shift}(2)}\right|\le 2n.
\end{equation}
for all near-balanced $\vec{\Delta}$ (i.e., $E_{\vec{\Delta}}$ is ``shifted'' by about $E_{\mathrm{shift}(1)}+E_{\mathrm{shift}(2)}$ from $E$).

Recall that $\|\vec y\|_2\gtrsim_C n^{3/2}$ and $\|\vec y\|_\infty\le (H+2)n$ from the end of \hyperlink{step:condition-garbage}{Step 2}. Furthermore, we observed that $\|(I-Q)\vec y\|_\infty\le 4n^{1/2+4\gamma}$ in \hyperlink{step:condition-Delta}{Step 4}, which implies $\|(I-Q)\vec y\|_2\le 4n^{1+4\gamma}$. Thus we obtain $\|Q\vec y\|_2\ge\|\vec y\|_2-\|(I-Q)\vec y\|_2\gtrsim_C n^{3/2}$ and $\|Q\vec y\|_\infty\le (H+2)n$. Roughly speaking, this means $Q\vec y$ behaves like a vector where every entry has magnitude around $n$, and we can apply the Berry--Esseen theorem to $E_{\mathrm{shift}(1)}=\frac12\vec y\cdot \vec \Delta=\frac12 (Q\vec y)\cdot \vec x=\sum_{v\in V}(\frac12Q\vec y)_v x_v$ (the Berry--Esseen theorem is a quantitative central limit theorem for sums of independent but not necessarily identically distributed random variables; see for example \cite[Chapter~V,~Theorem~3]{Pet75}). Indeed, let $Z\sim \mc N(0,(\frac12\|Q\vec y\|_2)^2)$; the Berry--Esseen theorem shows that for any interval $[a,b]\subseteq\mb{R}$, we have
\begin{equation}\label{eq:Eshift-CLT}
|\Pr[E_{\mathrm{shift}(1)}\in[a,b]]-\Pr[Z\in[a,b]]|\lesssim_{C,H} 1/\sqrt{n}.
\end{equation}
In particular, for every interval $[a,b]\subseteq \mb{R}$ of length $b-a\geq \|M^{*}\|_{\mr F}$, we have
\begin{equation}
\Pr[E_{\mathrm{shift}(1)}\in [a,b]]\lesssim_{C,H} \frac{b-a}{n^{3/2}}\label{eq:Eshift-CLT-upper-bound}
\end{equation}
(recalling that $\|M^{*}\|_{\mr F}\gtrsim_C n$ by \cref{eq:M-F}).

Recall from \hyperlink{step:condition-Delta}{Step 4} that for near-balanced $\vec{\Delta}$ we have $\sigma_{\vec{\Delta}}^{2}=2\|M^{*}\|_{\mr F}^{2}+\|\vec{w}^{*}_{\vec{\Delta}}\|_{2}^{2}+O(n^{7/4+14\gamma})= 2\|M^{*}\|_{\mr F}^{2}+\|\frac{1}{2}(I-Q)\vec{y}+\frac14(I-Q)M\vec{\Delta}\|_{2}^{2}+O(n^{7/4+14\gamma})$ (using the definition of $\vec{w}^{*}_{\vec{\Delta}}$ in \hyperlink{step:rewriting-Delta}{Step 3}). Let us now define  $\sigma\geq 0$ to satisfy $\sigma^2=2\|M^{*}\|_{\mr F}^{2}+\|\frac{1}{2}(I-Q)\vec{y}\|_{2}^{2}$. Note that $\sigma$ does not depend on $\vec{\Delta}$ (in a moment we will define $\sigma_{\mr{shift}}$ to bound the deviation of $\sigma_{\vec\Delta}$ from $\sigma$). Also note that we have $\sigma\ge\|M^{*}\|_{\mr F}\gtrsim_C n$ (recalling \cref{eq:M-F}) and $\sigma^2\leq 2n^2+4n^{2+8\gamma}\le n^{2.1}$, meaning that $\sigma\le n^{1.05}$.

Finally, let us define $\sigma_{\mathrm{shift}}=\|\frac{1}{4}(I-Q)M\vec{\Delta}\|_{2}$. Using the inequality $\snorm{\vec{v}+\vec{w}}_2^2\le 2\snorm{\vec{v}}_2^2+2\snorm{\vec{w}}_2^2$ for any vectors $\vec{v},\vec{w}\in\mb{R}^V$, as well as \cref{eq:M-F} (recalling that $\gamma=10^{-4}$), for any near-balanced $\vec{\Delta}$ we have
\[\sigma_{\vec{\Delta}}^{2}\leq 4\|M^{*}\|_{\mr F}^{2}+2\Big\|\frac{1}{2}(I-Q)\vec{y}\Big\|_{2}^{2}+2\Big\|\frac{1}{4}(I-Q)M\vec{\Delta}\Big\|_{2}^2=2\sigma^2+2\sigma_{\mathrm{shift}}^2.\]
Similarly (using $\snorm{\vec{v}-\vec{w}}_2^2\ge \frac12\snorm{\vec{v}}_2^2-\snorm{\vec{w}}_2^2$),
\[\sigma_{\vec{\Delta}}^{2}\geq \|M^{*}\|_{\mr F}^{2}+\frac{1}{2}\Big\|\frac{1}{2}(I-Q)\vec{y}\Big\|_{2}^{2}-\Big\|\frac{1}{4}(I-Q)M\vec{\Delta}\Big\|_{2}^2=\frac{1}{2}\sigma^2-\sigma_{\mathrm{shift}}^2.\]
Therefore, for every near-balanced $\vec{\Delta}$, we must have $\sigma_{\vec{\Delta}}\le 2\sigma_{\mathrm{shift}}$ or $\sigma/2\le \sigma_{\vec{\Delta}}\le 2\sigma$ (indeed, if $\sigma_{\mathrm{shift}}^2\leq \sigma_{\vec{\Delta}}^2/4$, then $\sigma_{\vec{\Delta}}^{2}/2\leq 2\sigma^2$ and $(5/4)\sigma_{\vec{\Delta}}^{2}\geq\sigma^2/2$).

\medskip
\noindent\textit{Step 6: Controlling correlations of the shifts.}
In order to average the estimates in \cref{clm:conditional-point-probability} over the different outcomes of $\vec{\Delta}$, we need to ensure that the ``shifts'' $\sigma_{\mathrm{shift}},E_{\mathrm{shift}(1)},E_{\mathrm{shift}(2)}$
(each of which are determined by $\vec{\Delta}$) do not correlate adversarially with each other. More specifically, we need that the
quantities $\sigma_{\mathrm{shift}},E_{\mathrm{shift}(2)}$ do not
correlate very strongly with $E_{\mathrm{shift}(1)}$, as shown in the following claim.

\begin{claim}\label{clm:crucial-claim}
Let $[a,b]\subseteq \mb{R}$ be an interval of length $b-a\ge \|M^{*}\|_{\mr F}$. Then
\[\mb E\left[(E_{\mathrm{shift}(2)}^{2}+\sigma_{\mathrm{shift}}^{2})\mbm 1_{E_{\mathrm{shift}(1)}\in [a,b]}\right]\lesssim_{C,H} n^{1/2}(b-a).\]
\end{claim}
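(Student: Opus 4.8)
The plan is to strip away the vertex-level randomness and work only with the \emph{bucket variables} $\delta_{k}:=\tfrac1{|I_{k}|}\sum_{u\in I_{k}}x_{u}$ for $k=1,\dots,m$, which are independent, centred, bounded by $1$ in absolute value, and of typical size $|I_{k}|^{-1/2}\asymp n^{-1/2+\gamma}$. Writing $Y_{k}:=\sum_{v\in I_{k}}y_{v}$ and $e_{k,h}:=\sum_{u\in I_{k},\,v\in I_{h}}M_{u,v}$, one checks that $E_{\mathrm{shift}(1)}=\tfrac12\sum_{k}Y_{k}\delta_{k}$ is a linear form, while $E_{\mathrm{shift}(2)}=\tfrac18\sum_{k,h}e_{k,h}\delta_{k}\delta_{h}$ and $\sigma_{\mathrm{shift}}^{2}=\vec x^{\intercal}\bigl(\tfrac1{16}QM(I-Q)MQ\bigr)\vec x$ are quadratic forms in $\vec\delta$, equivalently degree-$2$ polynomials in the Rademacher vector $\vec x$. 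The structural inputs I would use are: all buckets have equal size $|I_{k}|=|V|/m\asymp n^{1-2\gamma}$; $\sum_{k}Y_{k}^{2}/|I_{k}|=\|Q\vec y\|_{2}^{2}\gtrsim_{C}n^{3}$ while $|Y_{k}|\le|I_{k}|\,\snorm{\vec y}_{\infty}\lesssim_{H}n^{2-2\gamma}$, so that a set $K^{*}$ of $\Omega(m)$ buckets has $|Y_{k}|\asymp_{C,H}n^{2-2\gamma}$; $0\le e_{k,h}\le|I_{k}||I_{h}|$ with $\sum_{h}e_{k,h}\le|I_{k}||V|$; all entries of $(I-Q)MQ$ are $\le1$ in absolute value and $\|QMQ\|_{\mathrm F}\lesssim n$; and $\|M^{*}\|_{\mathrm F}^{2}\gtrsim_{C}n^{2}$, so that $b-a\ge\|M^{*}\|_{\mathrm F}\asymp_{C}n$.

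From these inputs, the variance formula \cref{eq:boolean-variance} gives $\mb E[E_{\mathrm{shift}(2)}^{2}]\lesssim_{C,H}n^{2}$ and $\mb E[\sigma_{\mathrm{shift}}^{2}]\lesssim_{C,H}n^{2}$, and the hypercontractive concentration inequality \cref{thm:concentration-hypercontractivity} applied to the degree-$2$ polynomials $E_{\mathrm{shift}(2)}$ and $\sigma_{\mathrm{shift}}^{2}$ yields the sub-exponential tail bound $\Pr[W\ge sn^{2}]\le\exp(-\Omega(\sqrt s))$ for $s\ge1$, where $W:=E_{\mathrm{shift}(2)}^{2}+\sigma_{\mathrm{shift}}^{2}$. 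The same hypercontractive estimates give the moment generating function bounds $\mb E[e^{\pm\lambda E_{\mathrm{shift}(2)}}]=e^{O_{C,H}(\lambda n+\lambda^{2}n^{2})}$ for $0<\lambda\le c/n$ and $\mb E[e^{\lambda\sigma_{\mathrm{shift}}^{2}}]=O(1)$ for $0<\lambda\le c/n^{2}$.

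The main step is then to split $W$ at a threshold of order $n^{2}$. The bounded part contributes at most $O_{C,H}(n^{2})\cdot\Pr[E_{\mathrm{shift}(1)}\in[a,b]]\lesssim_{C,H}n^{2}\cdot\tfrac{b-a}{n^{3/2}}=n^{1/2}(b-a)$ by \cref{eq:Eshift-CLT-upper-bound}. For the unbounded part, bounding $W$ crudely gives only $O(n^{2})$, which does not suffice when $b-a\asymp n$, so I need the joint estimate
\[\Pr\bigl[E_{\mathrm{shift}(1)}\in[a,b]\ \text{and}\ W\ge sn^{2}\bigr]\lesssim_{C,H}\frac{b-a}{n^{3/2}}\exp\bigl(-\Omega(\sqrt s)\bigr)\qquad(s\ge1),\]
after which integrating gives $\mb E[W\mbm 1_{W\ge c'n^{2}}\mbm 1_{E_{\mathrm{shift}(1)}\in[a,b]}]\lesssim_{C,H}n^{1/2}(b-a)$ and hence the claim. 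To prove the joint estimate one uses exponential tilting: for the event $|E_{\mathrm{shift}(2)}|\ge t$ with $t=n\sqrt{s/2}$ one writes $\Pr[E_{\mathrm{shift}(1)}\in[a,b],\,|E_{\mathrm{shift}(2)}|\ge t]\le e^{-\lambda t}\,\mb E\bigl[\mbm 1_{E_{\mathrm{shift}(1)}\in[a,b]}(e^{\lambda E_{\mathrm{shift}(2)}}+e^{-\lambda E_{\mathrm{shift}(2)}})\bigr]$ with $\lambda\asymp1/n$, and analogously for $\sigma_{\mathrm{shift}}^{2}$ with $\lambda\asymp1/n^{2}$; combined with the MGF bounds above, what remains is to show that these $O(1)$-size exponential tilts of the (product) measure preserve the anticoncentration of $E_{\mathrm{shift}(1)}$ at scale $\|M^{*}\|_{\mathrm F}\asymp n$, i.e.\ that $\mc L_{\mathrm{tilted}}(E_{\mathrm{shift}(1)},\|M^{*}\|_{\mathrm F})\lesssim_{C,H}n^{-3/2}\|M^{*}\|_{\mathrm F}$. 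Via Esseen's inequality (\cref{thm:esseen}) under the tilted weight this reduces to a bound on $|\mb E[\exp(itE_{\mathrm{shift}(1)}\pm\lambda E_{\mathrm{shift}(2)})]|$ for $|t|\lesssim1/n$ (and similarly with $\sigma_{\mathrm{shift}}^{2}$); here one conditions on $(\delta_{k})_{k\notin K^{*}}$ on the overwhelmingly likely event that these are all of typical size, so that on $\bigcup_{k\in K^{*}}I_{k}$ the tilt becomes an $O(1)$-bounded weight while $E_{\mathrm{shift}(1)}$ is a sum of $\Theta(n)$ independent Rademacher variables with coefficients of order $n$, whence \cref{eq:cos-0-1} supplies the required decay of the characteristic function.

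The main obstacle is exactly this last point: showing that the exponential tilts by the quadratic forms $E_{\mathrm{shift}(2)}$ and $\sigma_{\mathrm{shift}}^{2}$ do not spoil the \emph{local} anticoncentration of the linear form $E_{\mathrm{shift}(1)}$, uniformly over the relevant range of the tilting parameter. This is where one must genuinely use the ``spread'' of the degree sequence (that $\Omega(m)$ buckets carry a coefficient $Y_{k}$ of the maximal order $n^{2-2\gamma}$, so that $E_{\mathrm{shift}(1)}$ restricted to those buckets involves $\Theta(n)$—not merely $\Theta(n^{1-2\gamma})$—Rademacher variables) together with the equal-bucket-size normalization; a more naive argument that isolates a single bucket loses an $n^{\Theta(\gamma)}$ factor, and the analogous crude threshold argument (bounding $W$ by $O(n^{2}\log^{2}n)$ on a good event) loses a logarithmic factor, neither of which the clean bound $n^{1/2}(b-a)$ can absorb.
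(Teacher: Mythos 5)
Your reduction to the two-part split (a bounded part handled by \cref{eq:Eshift-CLT-upper-bound}, plus a joint tail estimate for $\{E_{\mathrm{shift}(1)}\in[a,b]\}\cap\{W\ge sn^2\}$) is a reasonable framing, and the bounded part is correct. But the argument has a genuine gap exactly where you flag ``the main obstacle'': the tilted-anticoncentration step is not proved, and the sketch you give for it does not go through. First, after conditioning on $(\delta_k)_{k\notin K^*}$ on the typical event $|\delta_k|\lesssim\sqrt{m/|V|}\log n$, the residual tilt $e^{\lambda E_{\mathrm{shift}(2)}}$ with $\lambda\asymp 1/n$ is \emph{not} an $O(1)$-bounded weight: on that event $|E_{\mathrm{shift}(2)}|$ can still be of order $|V|m\cdot(m/|V|)\log^2 n\asymp n^{1+2\gamma}\log^2 n$, so the conditional weight is only $\exp(n^{\Theta(\gamma)}\operatorname{polylog}n)$-bounded. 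Second, and more fundamentally, even a genuinely $O(1)$-bounded weight cannot simply be pulled out of $\mb E[w(\vec x)\,e^{itE_{\mathrm{shift}(1)}}]$, because a weight of the form $e^{\lambda E_{\mathrm{shift}(2)}}$ correlates with the phase; and since $E_{\mathrm{shift}(1)}$, $E_{\mathrm{shift}(2)}$ and $\sigma_{\mathrm{shift}}^2$ are all functions of $\vec\Delta$ alone, conditioning on within-bucket information gains nothing — the quadratic tilt couples the bucket variables $\delta_k$ across $K^*$, destroying the product structure that \cref{eq:cos-0-1} requires. So the key estimate $\mc L_{\mathrm{tilted}}(E_{\mathrm{shift}(1)},\snorm{M^*}_{\mr F})\lesssim_{C,H}n^{-3/2}\snorm{M^*}_{\mr F}$ remains unestablished, and without it the tail part of your decomposition is unproved. (Note also that crude substitutes fail: Cauchy--Schwarz gives $\mb E[W\mbm 1]\le\mb E[W^2]^{1/2}\Pr[E_{\mathrm{shift}(1)}\in[a,b]]^{1/2}\lesssim n^2\cdot n^{-1/4}$, which misses the target $n^{3/2}$ by $n^{1/4}$, so the correlation really must be confronted.)

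For contrast, the paper avoids exponential tilting entirely and works with polynomial rather than exponential moments. It majorizes $\mbm 1_{E_{\mathrm{shift}(1)}\in[a,b]}$ by $f(\vec\beta\cdot\vec x-t)$ where $f=\hat\psi$ for the compactly supported $\psi=\mbm 1_{[-1,1]}*\mbm 1_{[-1,1]}$ and $\vec\beta$ is a rescaling of $\tfrac12 Q\vec y$, then expands $E_{\mathrm{shift}(2)}^2$ and $\sigma_{\mathrm{shift}}^2$ (which are degree-$4$ and degree-$2$ polynomials in $\vec x$) into multilinear monomials. For each monomial, writing $f(\vec\beta\cdot\vec x-t)=\int\psi(\theta)e^{-i\theta(\vec\beta\cdot\vec x-t)}\,d\theta$ makes the expectation factorize over coordinates: the $\ell\le 4$ coordinates appearing in the monomial contribute factors $|\sin(\theta\beta_v)|\le|\theta|$, while all remaining coordinates supply Gaussian decay $e^{-\theta^2\snorm{\vec\beta}_2^2/\pi^2}$, and the $\theta$-integral yields $\snorm{\vec\beta}_2^{-(\ell+1)}$. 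This is \cref{clm:prep-crucial-claim}, and it captures the anticoncentration of the linear form and the moments of the quadratic forms simultaneously, which is precisely the coupling your tilting argument struggles with. If you want to rescue your route, you would need a bona fide bound on $|\mb E[e^{\pm\lambda E_{\mathrm{shift}(2)}+itE_{\mathrm{shift}(1)}}]|$ for $\lambda\asymp 1/n$ and $|t|\lesssim 1/n$, which is a substantially harder object than anything your sketch provides.
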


In order to prove \cref{clm:crucial-claim}, we will use a similar Fourier-analytic argument as in the proof of \cref{lem:esseen-upper-crude} to estimate expressions of the form $\mb E[x_{v_1}\cdots x_{v_\ell}\mbm 1_{E_{\mathrm{shift}(1)}\in [a,b]}]$, and deduce the desired bounds by linearity of expectation. We defer the details of this proof to the end of the section (specifically, we will prove it in \cref{sub:proofs-of-claims}).

After all this setup, we are now ready to prove the desired bounds in the statement of \cref{thm:short-interval}. Let $B=B(C)>0$ be as in \cref{clm:conditional-point-probability}. Consider $x\in\mb{Z}$, and write $x'=x-E$.
Let $\mc{E}$ be the event that $|X-x|\le B$. We wish to prove
the upper bound $\Pr[\mc{E}]\lesssim_{C,H} n^{-3/2}$, and
if $|x'|\le(A+1)n^{3/2}$ for some fixed $A>0$ we wish to prove the lower bound $\Pr[\mc{E}]\gtrsim_{C, H, A}n^{-3/2}$
(recall that $|E-\mb E X|\le n^{3/2}$ from \hyperlink{step:condition-garbage}{Step 2}, so we have $|x'|=|x-E|\le(A+1)n^{3/2}$ whenever $|x-\mb{E}X|\le An^{3/2}$).

\medskip
\noindent\textit{Step 7: Proof of the upper bound.}
First, recall from \hyperlink{step:condition-Delta}{Step 4} that $\vec\Delta$ is near-balanced with probability $1-n^{-\omega(1)}$. Also, for $\mc{E}$ to have an appreciable chance of occurring, $E_{\mr{shift}(1)}$ must be quite close to $x'$. Indeed, note that if $\mc{E}$ occurs, $\vec{\Delta}$ is near-balanced, and $|E_{\mathrm{shift}(1)}-x'|\ge\sigma(\log n)^2$, then we have
\[|X-E-E_{\mathrm{shift}(1)}|\geq |E_{\mathrm{shift}(1)}+E-x|-B=|E_{\mathrm{shift}(1)}-x'|-B\geq \sigma(\log n)^2/2\]
(recalling that $\sigma\geq \snorm{M^*}_{\mr F}\gtrsim_C n$ from \hyperlink{step:conditional-E-sigma}{Step 5}). On the other hand by \cref{eq:R-gone-formula} we have (recalling that $E_{\mathrm{shift}(1)}=\frac{1}{2}\vec{y}\cdot \vec{\Delta}=\frac{1}{2}(Q\vec{y})\cdot \vec{x}$)
\[X-E-E_{\mathrm{shift}(1)}=\frac12 \vec{y}\cdot \vec{x}+\frac{1}{8}\vec{x}^{\intercal}M\vec{x} - \frac{1}{2}(Q\vec{y})\cdot \vec{x}=\frac12 ((I-Q)\vec{y})\cdot \vec{x}+\frac{1}{8}\vec{x}^{\intercal}M\vec{x}.\]
Hence (as $M$ is a symmetric matrix with zeroes on the diagonal), we have $\mb{E}[X-E-E_{\mathrm{shift}(1)}]=0$ and $\sigma(X-E-E_{\mathrm{shift}(1)})^2=\frac{1}{32}\|M\|_{\mr F}^{2}+\|\frac{1}{2}(I-Q)\vec{y}\|_{2}^{2}\le n^2+\sigma^2\lesssim_C \sigma^2$ by \cref{eq:boolean-variance} and the definition of $\sigma$ in \hyperlink{step:conditional-E-sigma}{Step 5}.
Thus, accounting for the probability that $\vec \Delta$ is not near-balanced, we have
\begin{equation}\label{eq:prob-E-shift-far-away}
\Pr[\mathcal E\cap \{|E_{\mr{shift}(1)}-x'|\ge \sigma(\log n)^2\}]\le \Pr[|X-E-E_{\mathrm{shift}(1)}|\geq \sigma(\log n)^2/2]+n^{-\omega(1)}\leq n^{-\omega_C(1)}\le n^{-3/2}
\end{equation}
by \cref{thm:concentration-hypercontractivity}  (concentration via hypercontractivity).

So, it suffices to restrict our attention to $\vec\Delta$ which are near-balanced and satisfy $|E_{\mathrm{shift}(1)}-x'|\le \sigma(\log n)^2$. The plan is to apply \cref{clm:conditional-point-probability}(1) to upper-bound $\Pr[\mc E|\vec\Delta]$ for all such $\vec \Delta$, and then to average over $\vec \Delta$. When we apply \cref{clm:conditional-point-probability}(1) we need estimates on $\sigma_{\vec \Delta}$ and $|x-E_{\vec \Delta}|$; we obtain these estimates in different ways depending on properties of $E_{\mr{shift}(1)},E_{\mr{shift}(2)},\sigma_\mr{shift}$.

First, the exponential decay in the bound in \cref{clm:conditional-point-probability}(1) is in terms of $|x-E_{\vec \Delta}|$. From \cref{eq:cond-EX} one can deduce that $|x-E_{\vec \Delta}|$ is at least roughly as large as $|x'-E_{\mr{shift}(1)}|$, unless $E_{\mathrm{shift}(2)}$ is atypically large (at the end of this step we will upper-bound the contribution from such atypical $\vec\Delta$).
Let $\mc{H}$ be the event that $\vec\Delta$ is near-balanced and satisfies $|E_{\mathrm{shift}(1)}-x'|\le \sigma(\log n)^2$ and $|x-E_{\vec{\Delta}}|\geq |E_{\mathrm{shift}(1)}-x'|/2-2n$; we start by upper-bounding $\Pr[\mc E\cap \mc H]$.

For any outcome  of $\vec{\Delta}$ such that $\mc{H}$ holds, by \cref{clm:conditional-point-probability}(1) we have 
\begin{align}
\Pr\left[|X-x|\le B\middle|\vec{\Delta}\right]&\lesssim_C \frac{\exp\left(-\Omega_{C}\left(|x-E_{\vec{\Delta}}|/\sigma_{\vec{\Delta}}\right)\right)+n^{-0.1}}{\sigma_{\vec{\Delta}}}\notag\\
&\lesssim_{C}\frac{\exp\left(-\Omega_{C}\left(|E_{\mathrm{shift}(1)}-x'|/\sigma_{\vec{\Delta}}\right)\right)}{\sigma_{\vec{\Delta}}}+n^{-1.1}\label{eq:upper-bound-first-claim}
\end{align}
(recalling from \hyperlink{step:condition-Delta}{Step 4} that $\sigma_{\vec{\Delta}}\ge \|M^{*}\|_{\mr F}\gtrsim_C n$). Also note that by \cref{eq:Eshift-CLT-upper-bound}, we have
\[\Pr[\mathcal{H}]\le \Pr[|E_{\mathrm{shift}(1)}-x'|\le \sigma(\log n)^2]\lesssim_{C,H} \frac{\sigma(\log n)^2}{n^{3/2}}\leq n^{-0.45}(\log n)^2\]
(recalling that $\sigma\ge \|M^{*}\|_{\mr F}$ and $\sigma\le n^{1.05}$ from \hyperlink{step:conditional-E-sigma}{Step 5}).

Recall from the end of \hyperlink{step:conditional-E-sigma}{Step 5} that we always have $\sigma_{\vec{\Delta}}\le 2\sigma_{\mathrm{shift}}$ or $\sigma/2\le \sigma_{\vec{\Delta}}\le 2\sigma$. First, we bound
\begin{align*}
&\Pr[\mc{E}\cap \mc{H}\cap \{\sigma/2\le \sigma_{\vec{\Delta}}\le 2\sigma\}] \\
&\quad = \sum_{j=0}^{\infty} \Pr\bigg[\mc{E}\cap\mc{H}\cap\{\sigma/2\le \sigma_{\vec{\Delta}}\le 2\sigma\}\cap\bigg\{j\le \frac{|E_{\mathrm{shift}(1)}-x'|}{\sigma}<j+1\bigg\}\bigg] \\
&\quad \lesssim_C \sum_{j=0}^{\infty} \Pr\bigg[\mc{H}\cap (\sigma/2\le \sigma_{\vec{\Delta}}\le 2\sigma)\cap  \bigg\{j\le \frac{|E_{\mathrm{shift}(1)}-x'|}{\sigma}<j+1\bigg\}\bigg] \cdot \left(\frac{\exp\left(-\Omega_{C}(j)\right)}{\sigma}+n^{-1.1}\right)\\
&\quad \le  \Pr[\mathcal{H}]\cdot n^{-1.1}+\sum_{j=0}^{\infty} \Pr\bigg[j\le \frac{|E_{\mathrm{shift}(1)}-x'|}{\sigma}<j+1\bigg] \cdot \frac{\exp\left(-\Omega_{C}(j)\right)}{\sigma}\\
&\quad \lesssim_{C,H} n^{-0.45}(\log n)^2\cdot n^{-1.1}+\sum_{j=0}^{\infty} \frac{\sigma}{n^{3/2}}\cdot \frac{\exp\left(-\Omega_{C}(j)\right)}{\sigma}\lesssim_C n^{-3/2},
\end{align*}
where in the first inequality we used \cref{eq:upper-bound-first-claim} and in the final inequality we used \cref{eq:Eshift-CLT-upper-bound} (recalling that $\sigma\geq \|M^{*}\|_{\mr F}$).

Next, let us bound $\Pr[\mc{E}\cap \mc{H}\cap \{\sigma_{\vec{\Delta}}\le 2\sigma_{\mathrm{shift}}\}]$. Note that \cref{clm:crucial-claim} implies
\begin{equation}\label{eq:upper-bound-second-claim}
\mb E\left[\sigma_{\vec{\Delta}}^{2}\mbm 1_{E_{\mathrm{shift}(1)}\in [a,b]}\mbm 1_{\sigma_{\vec{\Delta}}\le  2\sigma_{\mathrm{shift}}}\right]\le 4 \cdot \mb E\left[\sigma_{\mathrm{shift}}^{2}\mbm 1_{E_{\mathrm{shift}(1)}\in [a,b]}\right]\lesssim_{C,H} n^{1/2}(b-a)
\end{equation}
for any interval $[a,b]\subseteq \mb{R}$ of length $b-a\ge \|M^{*}\|_{\mr F}$. Hence, recalling from \hyperlink{step:condition-Delta}{Step 4} that $\sigma_{\vec{\Delta}}\ge \|M^{*}\|_{\mr F}\gtrsim_C n$ for every near-balanced $\vec{\Delta}$, we obtain
\begin{align*}
&\Pr[\mc{E}\cap \mc{H}\cap \{\sigma_{\vec{\Delta}}\le 2\sigma_{\mathrm{shift}}\}] \\
&\quad =  \sum_{i,j=0}^{\infty} \Pr\bigg[\mc{E}\cap \mc{H}\cap \{\sigma_{\vec{\Delta}}\le 2\sigma_{\mathrm{shift}}\}\cap \bigg\{2^i\le \frac{\sigma_{\vec{\Delta}}}{\|M^{*}\|_{\mr F}}< 2^{i+1}\bigg\}\cap \bigg\{j\le \frac{|E_{\mathrm{shift}(1)}-x'|}{2^i\|M^{*}\|_{\mr F}}<j+1\bigg\}\bigg] \\
&\quad \lesssim_C \sum_{i,j=0}^{\infty} \Pr\bigg[\mc{H}\cap \{\sigma_{\vec{\Delta}}\le 2\sigma_{\mathrm{shift}}\}\cap\bigg\{2^i\le \frac{\sigma_{\vec{\Delta}}}{\|M^{*}\|_{\mr F}}< 2^{i+1}\bigg\}\cap \bigg\{j\le \frac{|E_{\mathrm{shift}(1)}-x'|}{2^i\|M^{*}\|_{\mr F}}<j+1\bigg\}\bigg] \\
&\qquad\qquad\qquad\qquad\qquad\qquad\qquad\qquad\qquad\qquad\qquad\qquad\qquad\qquad\qquad\qquad\cdot \left(\frac{\exp\left(-\Omega_{C}(j)\right)}{2^i\|M^{*}\|_{\mr F}}+n^{-1.1}\right)\\
&\quad \le \frac{\Pr[\mathcal{H}]}{n^{1.1}}+\sum_{i,j=0}^{\infty} \Pr\bigg[\{\sigma_{\vec{\Delta}}\le 2\sigma_{\mathrm{shift}}\}\cap \bigg\{2^i\le \frac{\sigma_{\vec{\Delta}}}{\|M^{*}\|_{\mr F}}\bigg\}\cap \bigg\{j\le \frac{|E_{\mathrm{shift}(1)}-x'|}{2^i\|M^{*}\|_{\mr F}}<j+1\bigg\}\bigg]\cdot \frac{\exp\left(-\Omega_{C}(j)\right)}{2^i\|M^{*}\|_{\mr F}}\\
&\quad \lesssim_{C,H} n^{-0.45}(\log n)^2\cdot n^{-1.1}+\sum_{i,j=0}^{\infty}  \frac{n^{1/2}2^i\|M^{*}\|_{\mr F}}{(2^i\|M^{*}\|_{\mr F})^2}\cdot \frac{\exp\left(-\Omega_{C}(j)\right)}{2^i\|M^{*}\|_{\mr F}}\\
&\quad =n^{-1.55}(\log n)^2+\sum_{i,j=0}^{\infty} \frac{n^{1/2}}{2^{2i}\|M^{*}\|_{\mr F}^2}\cdot \exp\left(-\Omega_{C}(j)\right)\lesssim_C n^{-3/2}+\frac{n^{1/2}}{\|M^{*}\|_{\mr F}^2}\lesssim_C n^{-3/2},
\end{align*}
(The first inequality is by \cref{eq:upper-bound-first-claim} and in the third inequality we used \cref{eq:upper-bound-second-claim} with Markov's inequality.)

We have now proved that $\Pr[\mc{E}\cap \mc{H}]\lesssim_{C,H} n^{-3/2}$. Recalling the definition of $\mc H$ and \cref{eq:prob-E-shift-far-away}, it now suffices to upper-bound the probability that $\mc E$ holds, $\vec \Delta$ is near-balanced, and $|x-E_{\vec{\Delta}}|\leq |E_{\mathrm{shift}(1)}-x'|/2-2n$.

If $\vec{\Delta}$ is near-balanced and $|x-E_{\vec{\Delta}}|\leq |E_{\mathrm{shift}(1)}-x'|/2-2n$, then  $|E_{\mathrm{shift}(1)}-x'|\geq 4n$ and, using $x'=x-E$ and \cref{eq:cond-EX}, furthermore $|E_{\mathrm{shift}(2)}|\geq |E_{\mathrm{shift}(1)}+E-x|-|E_{\vec{\Delta}}-x|-2n\geq |E_{\mathrm{shift}(1)}-x'|/2$. Hence (using \cref{clm:crucial-claim} noting that $\snorm{M^*}_{\mr F}\le n$, and Markov's inequality)
\begin{align*}
    &\Pr[|x-E_{\vec{\Delta}}|\leq |E_{\mathrm{shift}(1)}-x'|/2-2n \text{ and }\vec{\Delta}\text{ is near-balanced}]\\
    &\quad\leq \sum_{i=2}^{\infty}\Pr[(2^i n\le |E_{\mathrm{shift}(1)}-x'|<2^{i+1} n)\cap (|E_{\mathrm{shift}(2)}|\geq 2^{i-1}n)]\lesssim_{C,H} \sum_{i=2}^{\infty}\frac{n^{1/2}\cdot 2^i n}{2^{2(i-1)}n^2}\lesssim n^{-1/2}.
\end{align*}
For every near-balanced outcome of $\vec{\Delta}$, by \cref{clm:conditional-point-probability}(1) we have $\Pr[\mathcal{E}|\vec{\Delta}]\lesssim_C 1/\sigma_{\vec{\Delta}}\lesssim_C 1/n$ (recalling from \hyperlink{step:condition-Delta}{Step 4} that $\sigma_{\vec{\Delta}}\ge \|M^{*}\|_{\mr F}\gtrsim_C n$). Hence the probability that $\mathcal{E}$ holds, $\vec{\Delta}$ is near-balanced, and $|x-E_{\vec{\Delta}}|\leq |E_{\mathrm{shift}(1)}-x'|/2-2n$ is bounded by $O_{C,H}(n^{-3/2})$, completing the proof of the upper bound.

\medskip
\noindent\textit{Step 8: Proof of the lower bound.}
Fix $A>0$, and assume that $|x-E|=|x'|\le (A+1)n^{3/2}$. We need to show that $\Pr[\mathcal{E}]\gtrsim_{C,H,A} n^{-3/2}$. To do so, we define an event $\mc{F}$ such that we can conveniently apply \cref{clm:conditional-point-probability}(2) after conditioning on this event (roughly speaking, we need $E_{\mr{shift}(1)}$ to take ``about the right value'', and we need $E_{\mr{shift}(2)}$ and $\sigma_{\mr{shift}}$ ``not to be too large''). We study the probability of $\mc{F}$ by applying \cref{eq:Eshift-CLT} (Gaussian approximation for $E_{\mr{shift}(1)}$) as well as \cref{clm:crucial-claim} together with Markov's inequality (as in the upper bound proof in the previous step).

Let $s\in\{-1,1\}$ be as in \cref{clm:conditional-point-probability}(2). For any $0<K<n^{3/2}/(2\sigma)$, we can consider the event that $K\sigma \le s(x'-E_{\mathrm{shift}(1)})\le 2K\sigma$, which can be interpreted as the event that $E_{\mathrm{shift}(1)}$ lies in a certain interval of length $K\sigma$ whose endpoints both have absolute value at most $|x'|+2K\sigma\le (A+2)n^{3/2}$.
Using \cref{eq:Eshift-CLT}, we can compare the probability for this event to the probability that a normal random variable with distribution $\mc N(0,(\frac12\|Q\vec y\|_2)^2)$ lies in this interval. In this way, we see that the probability of the event $K\sigma \le s(x'-E_{\mathrm{shift}(1)})\le 2K\sigma$ is at least
\begin{equation}\label{eq:lower-bound-for-definition-K}
K\sigma\cdot \frac{\exp(-(A+2)^2n^3/(\frac12\snorm{Q\vec{y}}_2^2))}{\sqrt{2\pi}\cdot \frac{1}{2}\snorm{Q\vec{y}}_2}-O_{C,H}(1/\sqrt{n})\ge K\sigma\cdot \frac{\exp(-O_{C,A}(1))}{O_H(n^{3/2})}-O_{C,H}(1/\sqrt{n}),
\end{equation}
where we used that $\snorm{Q\vec{y}}_2\gtrsim_C n^{3/2}$ and $\snorm{Q\vec{y}}_\infty\le (H+2)n$ (which implies that $\snorm{Q\vec{y}}_2\lesssim_H n^{3/2}$), as discussed in \hyperlink{step:conditional-E-sigma}{Step 5}.

Now, recalling that $n^{1.05}\ge\sigma\ge\snorm{M^*}_{\mr F}\gtrsim_C n$ from \hyperlink{step:conditional-E-sigma}{Step 5}, we can take $K=K(C,H,A)\ge 10^4$ to be a sufficiently large constant such that the right-hand-side of \cref{eq:lower-bound-for-definition-K} is at least $\sigma/n^{3/2}$, such that $\snorm{M^*}_{\mr F}\ge K^{-1/4}\cdot n$, and such that the hidden constant in the $\lesssim_{C,H}$ notation in the statement of \cref{clm:crucial-claim} is at most $K^{1/4}$. By the choice of $K$, we have
\[\Pr[K\sigma \le s(x'-E_{\mathrm{shift}(1)})\le 2K\sigma]\ge \frac{\sigma}{n^{3/2}}.\]
Furthermore, using \cref{clm:crucial-claim} and Markov's inequality we have
\[\Pr[(E_{\mathrm{shift}(2)}^2+\sigma_{\mathrm{shift}}^2\ge 2K^{5/4}n^2)\cap (K\sigma \le s(x'-E_{\mathrm{shift}(1)})\le 2K\sigma)]\le K^{1/4}\cdot \frac{n^{1/2}\cdot \sigma K}{2K^{5/4}n^2}=\frac{\sigma}{2n^{3/2}}.\]
Thus, with probability at least $\sigma/(2n^{3/2})$, we have $E_{\mathrm{shift}(2)}^2+\sigma_{\mathrm{shift}}^2\le 2K^{5/4}n^2$ and $K\sigma \le s(x'-E_{\mathrm{shift}(1)})\le 2K\sigma$. Let $\mathcal{F}$ be the event that these two conditions are satisfied and $\vec{\Delta}$ is near-balanced (and note that $\mathcal{F}$ only depends on the randomness of $\vec{\Delta}$). Recalling from \hyperlink{step:condition-Delta}{Step 4} that $\vec{\Delta}$ is near-balanced with probability $1-n^{-\omega(1)}$, we see that $\Pr[\mc F]\geq \sigma/(4n^{3/2})$.

We claim that whenever $\mathcal{F}$ holds, we have $\sigma/K^2\le \sigma_{\vec{\Delta}}\le K^2\sigma$ and $3n\le s(x-E_{\vec{\Delta}})\le 3K^3\sigma_{\vec{\Delta}}$. For the first claim, note that if $\mathcal{F}$ holds, then $\sigma_{\mathrm{shift}}^2\le 2K^{5/4}n^2\le K^2n^2/4$ and hence $\sigma_{\vec{\Delta}}^2\geq \sigma^2/2-\sigma_{\mathrm{shift}}^2\geq \sigma^2/2-K^2n^2/4$. So, if $\sigma\geq Kn$, we obtain the desired lower bound $\sigma_{\vec{\Delta}}\geq \sigma/2\geq \sigma/K^2$. If $\sigma\leq Kn$, then we instead obtain the desired lower bound on $\sigma_{\vec{\Delta}}$ by observing that $\sigma\leq Kn\le K^2 \snorm{M^*}_{\mr F}\le K^2 \sigma_{\vec{\Delta}}$ (using that $\vec{\Delta}$ is near-balanced). For the upper bound on $\sigma_{\vec{\Delta}}$, recall from the end of \hyperlink{step:conditional-E-sigma}{Step 5} that we have $\sigma_{\vec{\Delta}}\le 2\sigma \le K^2\sigma$ or $\sigma_{\vec{\Delta}}\le 2\sigma_{\mathrm{shift}}$. In the latter case, we obtain $\sigma_{\vec{\Delta}}\le 2\sigma_{\mathrm{shift}}\le Kn\le K^{2}\snorm{M^*}_{\mr F}\le K^2\sigma$. Altogether, we have proved that $\sigma/K^2\le \sigma_{\vec{\Delta}}\le K^2\sigma$ whenever $\mathcal{F}$ holds, as claimed.

For the second of our two claims, note that whenever $\mathcal{F}$ holds, we have $E_{\mathrm{shift}(2)}^2\le 2K^{5/4}n^2\le 2K^{7/4}\snorm{M^*}_{\mr F}^2\le K^2\sigma^2/4$, so $|E_{\mathrm{shift}(2)}|\le K\sigma/2$ and hence $K\sigma/2 \le s(x'-E_{\mathrm{shift}(1)}-E_{\mathrm{shift}(2)})\le 2.5K\sigma$. Recalling \cref{eq:cond-EX} and $x'=x-E$, this implies the desired claim
\[3n\le K\sigma/2 -2n \le s(x-E_{\vec{\Delta}})\le 2.5K\sigma+2n\le 3K\sigma \le 3K^3\sigma_{\vec{\Delta}},\]
where in the first and fourth inequalities we used that $n\leq K^{1/4}\snorm{M^*}_{\mr F}\le K^{1/4}\sigma$, and in the last inequality we used the first claim.

Now, having established the above claims for all outcomes of $\vec{\Delta}$ satisfying $\mathcal{F}$, \cref{clm:conditional-point-probability}(2) implies that $\Pr[\mc E| \mc F]\gtrsim_{C,H,A} 1/(K^2\sigma)$. Thus, $\Pr[\mc E]\ge\Pr[\mc F]\cdot \Pr[\mc E|\mc F]\gtrsim_{C,H,A} \sigma/(4n^{3/2})\cdot 1/(K^2\sigma)\gtrsim_{C,H,A}n^{-3/2}$, completing the proof of the lower bound.
\end{proof}

\subsection{Proofs of claims\label{sub:proofs-of-claims}}
In order to finish the proof of \cref{thm:short-interval} in the $\gamma$-structured case, it remains to prove \cref{clm:conditional-point-probability,clm:crucial-claim}.

\begin{proof}[Proof of \cref{clm:conditional-point-probability}]
Recall that in the statement of \cref{clm:conditional-point-probability} we fixed a near-balanced outcome of $\vec{\Delta}$ and the desired conclusions are conditional on this outcome of $\vec{\Delta}$. Throughout this proof, let us therefore always condition on the fixed outcome of $\vec{\Delta}$, which we now view as being non-random, and for notational simplicity we omit all ``$|\vec \Delta$'' notation.

Recall that we have $\sigma_{\vec{\Delta}}^{2}=2\|M^{*}\|_{\mr F}^{2}+\|\vec{w}^{*}_{\vec{\Delta}}\|_{2}^{2}+O(n^{7/4+14\gamma})$ and $\|\vec{w}^{*}_{\vec{\Delta}}\|_{\infty}\le n^{1/2+5\gamma}$ (since $\vec{\Delta}$ is near-balanced). Also recalling that all entries of $M^*$ have absolute value at most $1$, this implies $\sigma_{\vec \Delta}^2\le n^2+n\cdot n^{1+10\gamma}+O(n^{7/4+14\gamma})\le  n^{2.2}$ (as $\gamma=10^{-4}$). Thus, $\sigma_{\vec \Delta}\le n^{1.1}$.

For the upper bound in (1) we will use \cref{lem:esseen-upper} and for the lower bound in (2) we will use \cref{lem:esseen-lower}. 
Recalling \cref{eq:normalized-X-formula}, let $Z$ be the ``Gaussian
analog'' of $X$: let $\vec{z}\sim\mathcal{N}(0,1)^{\otimes n}$
be a standard $n$-variate Gaussian random vector and let \[Z=\left(E+\frac{1}{2}\vec{y}\cdot\vec{\Delta}+\frac{1}{8}\vec{\Delta}^{\intercal}M\vec{\Delta}\right)+\vec{w}^{*}_{\vec{\Delta}}\cdot\vec{z}+\vec{z}^{\intercal}M^{*}\vec{z}.\]

Let $\nu=\nu(2C,0.001)>0$ be as in \cref{thm:quadratic-decoupling} and let $\varepsilon=2/\nu$. Let $s \in \{-1, 1\}$ be the sign of the eigenvalue of $M^*$ with the largest magnitude. We collect several estimates.
\begin{enumerate}
\item[(A)] $\sigma(Z)\asymp_{C}\sigma_{\vec \Delta}\gtrsim_{C} n$ and $|\mb E Z-E_{\vec\Delta}|\le 2n$.
\item[(B)] For all $x\in\mb{R}$, 
\[\Pr[|Z-x|\le \eps]\lesssim_{C}\frac{\eps}{\sigma(Z)}\exp\left(-\Omega_{C}\left(\frac{|x-\mb E Z|}{\sigma(Z)}\right)\right)\le \frac\varepsilon{\sigma(Z)}.\]
\item[(C)] $\int_{-2/\eps}^{2/\eps}|\varphi_{X}(\tau)-\varphi_{Z}(\tau)|\,d\tau\le n^{-1.2}$.
\item[(D)] For any fixed $A'\in\mb{R}_{\ge0}$, assuming that $n$ is sufficiently large with respect to $A'$, we have $p_{Z}(y_{1})/p_{Z}(y_{2})\le 2$ for all $y_1,y_2\in\mb{R}$ with $0\le s (y_{1}-\mb E Z)\le A'\sigma(Z)$ and $|y_{1}-y_{2}|\le 2n^{1/4}\varepsilon$.
\item[(E)] For any fixed $A'>0$ and any
$x\in\mb Z$ satisfying $0\le s(x-\mb E Z)\le A'\sigma(Z)$,
\[\Pr[|Z-x|\le \eps]\gtrsim_{C,A'}\frac{1}{\sigma(Z)}\qquad\text{and}\qquad p_Z(x)\gtrsim_{C,A'}\frac1{\sigma(Z)}.\]
\end{enumerate}
We will prove (A--E) using the results from \cref{sec:coupling-lemmas,sec:high-fourier,sec:ramsey-robust-rank,sec:anti-Gauss}; before explaining how to do this, we deduce the desired upper and lower bounds in (1) and (2). Let $B=B(C)=10^4\cdot 2\varepsilon$. First, using that by (A) we have $\eps\le\sigma(Z)$ for sufficiently large $n$, and using (B), we can apply \cref{lem:esseen-upper} to $X-\mb EZ$ and $Z-\mb E Z$ and $\sigma(Z)$. Hence for all $x\in\mb Z$ we have
\begin{align*}
\Pr[|X-x|\leq B]&\le 2\cdot 10^4\sup_{\substack{y\in\mb{R}\\|x-y|\le B}}\Pr[|X-y|\leq \eps]\\
&\lesssim_{C} \frac{\eps^2}{\sigma(Z)^2}+ \frac{\eps}{\sigma(Z)}\exp\left(-\Omega_{C}\left(\frac{|x-\mb E Z|}{\sigma(Z)}\right)\right)+\eps \int_{-2/\varepsilon}^{2/\varepsilon}|\varphi_X(\tau)-\varphi_Z(\tau)|\,d\tau.
\end{align*}
The bound in (1) then follows from (A) and (C). Second, by (A) and (E), if $x\in\mb{Z}$ satisfies $3n\le s(x-E_{\vec \Delta})\le A\sigma_{\vec \Delta}$ then $\Pr[|Z-x|\le \varepsilon]\gtrsim_{C,A} 1/\sigma_{\vec \Delta}$. Furthermore, for all $y_1,y_2\in [x-n^{1/4}\eps,x+n^{1/4}\eps]$ by (A) we have $0\le 3n-|\mb E Z-E_{\vec \Delta}|-n^{1/4}\eps\le  s (y_{1}-\mb E Z)\le A'\sigma(Z)$ for some $A'=A'(C,A)$, and therefore $p_{Z}(y_{1})/p_{Z}(y_{2})\le 2$ by (D). Let $K=2$ and $R=n^{1/4}$, so by \cref{lem:esseen-lower} we have (recalling that $B=10^4\cdot 2\eps=10^4K\eps$)
\begin{align*}
\Pr[|X-x|\leq B]&\ge \Omega_{C,A}(1/\sigma_{\vec \Delta})-C_{\ref{lem:esseen-lower}}\left(R^{-1}\mc L(Z,\varepsilon)+\eps\int_{-2/\varepsilon}^{2/\varepsilon}|\varphi_X(\tau)-\varphi_Z(\tau)|\,d\tau\right).
\end{align*}
The bound in (2) then follows from (A--C).

Now we prove (A--E). First, note that for any matrix $\widetilde M\in\mb{R}^{V\times V}$ with rank at most say 400, we
have $\|M^{*}-\widetilde{M}\|_{\mathrm{F}}^{2}=\frac{1}{64}\|M-(MQ+QM-QMQ+64\widetilde{M})\|_{\mathrm{F}}^{2}\gtrsim_{C} n^{2}\ge\|M^{*}\|_{\mathrm{F}}^{2}$
by \cref{lem:ramsey-low-rank}. Also note that $M^*$ and $\vec{w}^{*}_{\vec{\Delta}}$ satisfy conditions (a)--(d) in \cref{lem:transfer-summary} for $\delta=2\gamma=2\cdot 10^{-4}$, as discussed at the end of \hyperlink{step:rewriting-Delta}{Step 3} and the start of \hyperlink{step:condition-Delta}{Step 4} above.

Then, the two parts of (A) follow from parts (1) and (2) of \cref{lem:transfer-summary} (applied with $\delta=2\gamma=2\cdot 10^{-4}$), recalling $\sigma_{\vec{\Delta}}\gtrsim_C n$ from the end of \hyperlink{step:condition-Delta}{Step 4}. Furthermore, (B) and (E) follow from \cref{thm:gaussian-anticoncentration-technical}(1--2) (for the second part of (E), we use \cref{thm:gaussian-anticoncentration-technical}(2) with $\eps\to 0$).

Now, consider $y_1,y_2$ as in (D), so in particular $|y_{1}-y_{2}|\le 2n^{1/4}\eps$. By the inversion formula \cref{eq:inversion} and \cref{lem:gaussian-high-t} (with $r=8$), and (A), we have
 \begin{align*}
|p_{Z}(y_{1})-p_{Z}(y_{2})| & =\left|\frac{1}{2\pi}\int_{-\infty}^{\infty}(e^{-i\tau y_{1}}-e^{-i\tau y_{2}})\mb E e^{i\tau Z}\,d\tau\right|\lesssim\int_{-\infty}^{\infty}\min\{|\tau(y_{1}-y_{2})|,1\}\cdot |\mb E e^{i\tau Z}|\,d\tau\\
 & \lesssim_{C}\int_{-\infty}^{\infty}\min\{n^{1/4}|\tau|,1\}\cdot(1+\tau^{2}n^{2})^{-2}\,d\tau\lesssim n^{-7/4}=o(1/\sigma(Z)),
\end{align*}
from which we may deduce (D) using the second part of (E). It remains to prove (C), i.e., to bound the integral $\int_{-2/\eps}^{2/\eps}|\varphi_{X}(\tau)-\varphi_{Z}(\tau)|\,d\tau$ by $n^{-1.2}$. If $|\tau|\le n^{-0.99}$, then by \cref{lem:transfer-summary}(3) (with $\delta=2\gamma$) we have $|\varphi_{X}(\tau)-\varphi_{Z}(\tau)|\lesssim |\tau|^4\cdot  n^{3+24\gamma}+|\tau|\cdot n^{3/4+8\gamma}\lesssim |\tau|\cdot n^{3/4+8\gamma}$. Thus, the contribution of the range $|\tau|\le n^{-0.99}$ to the integral $
\int_{-2/\eps}^{2/\eps}|\varphi_{X}(\tau)-\varphi_{Z}(\tau)|\,d\tau$ is $O((n^{-0.99})^2 \cdot n^{3/4+8\gamma})=O(n^{-1.23+8\gamma})$, which is smaller than $n^{-1.2}/2$ (recalling that $\gamma=10^{-4}$).

For $n^{-0.99}\le|\tau|\le 2/\eps$ we bound $|\varphi_X(\tau)|$ and $|\varphi_Z(\tau)|$ separately. By \cref{lem:gaussian-high-t} (with $r=400$) we have $|\varphi_Z(\tau)|\lesssim_C (1+\tau^2n^2)^{-100}\le (n^{0.02})^{-100}=n^{-2}$. To bound $|\varphi_X(\tau)|$ we use \cref{thm:quadratic-decoupling}, after conditioning on any outcome of $U\cap (I_2\cup\cdots\cup I_m)$. After this conditioning, the remaining randomness is just within the first bucket $I_1$, and conditionally $X$ is of the form required to apply \cref{thm:quadratic-decoupling} with respect to the $(2C)$-Ramsey graph $G[I_1]$ of size $|I_1|\geq n^{1-2\gamma}$, and we obtain $|\varphi_X(\tau)|\lesssim n^{-(1-2\gamma)5}\le n^{-4}$ since $|\tau|\ge n^{-0.99}\ge|I_1|^{-0.999}$. Thus, in the range $n^{-0.99}\le|\tau|\le 2/\eps$ we have $|\varphi_{X}(\tau)-\varphi_{Z}(\tau)|\le |\varphi_X(\tau)|+|\varphi_Z(\tau)|\lesssim n^{-2}$, and so the contribution of this range to the integral $\int_{-2/\eps}^{2/\eps}|\varphi_{X}(\tau)-\varphi_{Z}(\tau)|\,d\tau$ is also smaller than $n^{-1.2}/2$.
\end{proof}

We will deduce \cref{clm:crucial-claim} from the following auxiliary estimate, applied with $k=1$ and with $k=2$ (recall that the functions $\psi$ and $f$ already appeared in the proof of \cref{lem:esseen-upper-crude}).

\begin{claim}\label{clm:prep-crucial-claim}
Fix $k\in\mb{N}$. Let us define the function $\psi\colon\mb{R}\to\mb{R}$ as the convolution $\psi=\mbm 1_{[-1,1]}*\mbm 1_{[-1,1]}$ (where $\mbm 1_{[-1,1]}$ is the indicator function of the interval $[-1,1]$) and let $f=\hat{\psi}$ be the Fourier transform of $\psi$. Consider a matrix $A\in\mb{R}^{V\times V}$ whose entries
have absolute value at most $1$, and a vector $\vec{\beta}\in\mb{R}^{V}$ with $\|\vec{\beta}\|_{\infty}\le \pi/4$. Then for any $t\in\mb{R}$ we have $|\mb E[(\vec{x}^{\intercal}A\vec{x})^{k}f(\vec{\beta}\cdot\vec{x}-t)]|\lesssim_{k}(\sqrt{n}/\snorm{\vec{\beta}}_2)^{2k+1}\cdot n^{k-1/2}$.
\end{claim}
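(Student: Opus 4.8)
The plan is to run a Fourier-analytic argument in the spirit of the proof of \cref{lem:esseen-upper-crude}, reducing everything to estimates on $|\mb E[(\vec x^\intercal A\vec x)^k e^{-i\theta\vec\beta\cdot\vec x}]|$ for $\theta$ in the support of $\psi$ (here $\vec x\in\{-1,1\}^V$ is the vector of independent Rademacher random variables from the main proof; since $|V|\le n$ we may freely replace $n$ by $|V|$ throughout, and we may assume $\vec\beta\ne\vec 0$). Since $f=\hat\psi$ and $\psi$ is supported on $[-2,2]$ with $0\le\psi\le 2$, we have $f(\vec\beta\cdot\vec x-t)=\int_{-2}^{2}e^{i\theta t}e^{-i\theta\vec\beta\cdot\vec x}\psi(\theta)\,d\theta$, so by Fubini and the triangle inequality
\[\big|\mb E[(\vec x^\intercal A\vec x)^k f(\vec\beta\cdot\vec x-t)]\big|\le 2\int_{-2}^{2}\big|\mb E[(\vec x^\intercal A\vec x)^k e^{-i\theta\vec\beta\cdot\vec x}]\big|\,d\theta .\]
It therefore suffices to prove $\int_{-2}^{2}|\mb E[(\vec x^\intercal A\vec x)^k e^{-i\theta\vec\beta\cdot\vec x}]|\,d\theta\lesssim_k n^k/\|\vec\beta\|_2$: indeed, since $\|\vec\beta\|_\infty\le\pi/4<1$ we have $\|\vec\beta\|_2^2\le |V|\,\|\vec\beta\|_\infty^2\le n$, hence $\|\vec\beta\|_2^{2k}\le n^k$, hence $n^k/\|\vec\beta\|_2\le n^{2k}/\|\vec\beta\|_2^{2k+1}=(\sqrt n/\|\vec\beta\|_2)^{2k+1}n^{k-1/2}$.

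To bound the integrand, expand $(\vec x^\intercal A\vec x)^k=\sum_{w}A_w\prod_{j\in T(w)}x_j$, where $w=(u_1,v_1,\ldots,u_k,v_k)$ ranges over $V^{2k}$, $A_w=A_{u_1v_1}\cdots A_{u_kv_k}$ (so $|A_w|\le 1$), and $T(w)\subseteq V$ is the set of indices occurring an odd number of times among $u_1,v_1,\ldots,u_k,v_k$ (so $|T(w)|\le 2k$, using $x_j^2=1$). By independence of the coordinates, $\mb E[\prod_{j\in T}x_j\,e^{-i\theta\vec\beta\cdot\vec x}]=\prod_{j\in T}(-i\sin\theta\beta_j)\prod_{j\notin T}\cos\theta\beta_j$, whose modulus is at most $\prod_{j\in T}|\theta\beta_j|\cdot\exp(-\sum_{j\notin T}(\theta\beta_j)^2/\pi^2)$ by $|\sin x|\le|x|$ and \cref{eq:cos} (valid since $|\theta\beta_j|\le 2\cdot\pi/4\le\pi/2$). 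As $|T|\le 2k$ and $|\theta\beta_j|\le\pi/2$, completing the exponent to $-\theta^2\|\vec\beta\|_2^2/\pi^2$ costs only a factor $e^{O(k)}$, so summing over $w$ gives
\[\big|\mb E[(\vec x^\intercal A\vec x)^k e^{-i\theta\vec\beta\cdot\vec x}]\big|\lesssim_k \exp(-\theta^2\|\vec\beta\|_2^2/\pi^2)\sum_{w\in V^{2k}}\prod_{j\in T(w)}|\theta\beta_j| .\]

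The combinatorial heart is the estimate $\sum_{w\in V^{2k}}\prod_{j\in T(w)}|\theta\beta_j|\lesssim_k n^k\max\{1,(|\theta|\,\|\vec\beta\|_2)^{2k}\}$. To see it, group the $2k$ slots of $w$ by their common value: if $w$ uses distinct values $w_1,\ldots,w_m$ with multiplicities $a_1,\ldots,a_m\ge 1$ (so $\sum a_i=2k$), then $T(w)$ consists of the $o:=\#\{i:a_i\text{ odd}\}$ values with odd multiplicity, and $\prod_{j\in T(w)}|\theta\beta_j|=|\theta|^{o}\prod_{i:a_i\text{ odd}}|\beta_{w_i}|$; there are $O_k(1)$ multiplicity patterns, and for each the sum over (distinct) $w_1,\ldots,w_m$ factorizes and is at most $\|\vec\beta\|_1^{o}n^{m-o}\le(\sqrt n\,\|\vec\beta\|_2)^{o}n^{m-o}=\|\vec\beta\|_2^{o}n^{m-o/2}$ by Cauchy--Schwarz. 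Finally $o$ is even (since $\sum a_i$ is even) and the even-multiplicity blocks have size $\ge 2$, so $2k=\sum a_i\ge 2(m-o)+o=2m-o$, giving $m-o/2\le k$; since also $0\le o\le 2k$, this yields $\|\vec\beta\|_2^{o}n^{m-o/2}\le n^k\max\{1,\|\vec\beta\|_2^{2k}\}$, and reinstating the $|\theta|^{o}$ factor gives the claimed estimate. Plugging in and substituting $s=\theta\|\vec\beta\|_2$ then gives $\int_{-2}^{2}|\mb E[(\vec x^\intercal A\vec x)^k e^{-i\theta\vec\beta\cdot\vec x}]|\,d\theta\lesssim_k (n^k/\|\vec\beta\|_2)\int_{\mb{R}}e^{-s^2/\pi^2}\max\{1,s^{2k}\}\,ds\lesssim_k n^k/\|\vec\beta\|_2$, as required.

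I expect the main obstacle to be precisely this term-counting step. The naive bound — that there are $n^{2k}$ terms, each of modulus at most $e^{O(k)}\exp(-\theta^2\|\vec\beta\|_2^2/\pi^2)$ — only produces $n^{2k}/\|\vec\beta\|_2$, which is far too weak when $\|\vec\beta\|_2$ is large (in the application $\|\vec\beta\|_2$ is of order $n^{3/2}$). The real content is to extract the extra saving $(|\theta|\,\|\vec\beta\|_2)^{2k}$, which becomes a harmless constant after integrating against the Gaussian weight; the inequality $m-o/2\le k$ relating the number of distinct values, the number of odd-multiplicity values, and the degree $2k$ is the crucial point, and the final reduction back to the stated bound relies on $\|\vec\beta\|_\infty\le\pi/4<1$.
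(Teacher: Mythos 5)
Your proof is correct and follows essentially the same route as the paper's: both write $f=\hat\psi$ with $\psi$ supported on $[-2,2]$, factor the expectation over the independent Rademacher coordinates using $|\mb E[x_ve^{i\lambda x_v}]|=|\sin\lambda|\le|\lambda|$ and $|\mb E[e^{i\lambda x_v}]|=|\cos\lambda|\le e^{-\lambda^2/\pi^2}$, and integrate the resulting Gaussian-weighted bound in $\theta$. The only difference is bookkeeping in the expansion of $(\vec x^\intercal A\vec x)^k$: the paper bounds each degree-$\ell$ monomial by $\lesssim_\ell\snorm{\vec\beta}_2^{-(\ell+1)}$ and multiplies by the $O(n^\ell)$ count with coefficients $O_k(n^{(2k-\ell)/2})$, whereas your multiplicity-pattern count retains the $|\beta_{v}|$ weights and uses $\snorm{\vec\beta}_1\le\sqrt n\snorm{\vec\beta}_2$, yielding the slightly sharper intermediate bound $n^k/\snorm{\vec\beta}_2$, which implies the stated one since $\snorm{\vec\beta}_2^2\le n$.
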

\begin{proof}
Observing that $x_{v}^{2}=1$, we can express $(\vec{x}^{\intercal}A\vec{x})^{k}$ as a multilinear polynomial of degree at most $2k$ in the $|V|\le n$ variables $x_v$ for $v\in V$. For each $\ell\leq 2k$ this polynomial has at most $O(n^\ell)$ terms of degree $\ell$, and for each such term the corresponding coefficient has absolute value at most $O_k(n^{(2k-\ell)/2})$.

It suffices to prove that $|\mb E[x_{v_{1}}\cdots x_{v_{\ell}}f(\vec{\beta}\cdot\vec{x}-t)]|\lesssim_{\ell}\snorm{\beta}_2^{-(\ell+1)}$ for any $\ell\le 2k$ and any distinct $v_{1},\ldots,v_{\ell}\in V$. Indeed, this does imply $|\mb E[(\vec{x}^{\intercal}A\vec{x})^{k}f(\vec{\beta}\cdot\vec{x}-t)]|\lesssim_k \sum_{\ell=0}^{2k} n^\ell\cdot n^{(2k-\ell)/2}\cdot \snorm{\beta}_2^{-(\ell+1)}\lesssim_k (\sqrt{n}/\snorm{\vec{\beta}}_2)^{2k+1}\cdot n^{k-1/2}$ using that $\snorm{\vec{\beta}}_2\le \sqrt{n}$ since $|V|\le n$ and $\|\vec{\beta}\|_{\infty}\le \pi/4\le 1$.

Note that the support of the function $\psi$ is inside the interval $[-2,2]$ and we furthermore have $0\le\psi(\theta)\le2$ for all $\theta\in\mb{R}$. Therefore we can write
\[|\mb E[x_{v_{1}}\cdots x_{v_{\ell}}f(\vec{\beta}\cdot\vec{x}-t)]|  =\left|\mb E\left[\int_{-\infty}^{\infty}x_{v_{1}}\cdots x_{v_{\ell}}\psi(\theta)e^{-i\theta(\vec{\beta}\cdot\vec{x}-t)}\,d\theta\right]\right|\le2\int_{-2}^{2}|\mb E[x_{v_{1}}\cdots x_{v_{\ell}}e^{-i\theta(\vec{\beta}\cdot\vec{x})}]|\,d\theta.\]
By \cref{eq:cos}, for $-\pi/2\le\lambda\le\pi/2$ and
$v\in V$ we have $|\mb E[e^{i\lambda x_{v}}]|=|\cos\lambda|\le\exp(-\lambda^{2}/\pi^2)$,
and 
\[|\mb E[x_{v}e^{i\lambda x_{v}}]|=\left|\frac{1}{2}\exp(i\lambda)-\frac{1}{2}\exp(-i\lambda)\right|=|\sin \lambda|\le|\lambda|.\]
Since $|\theta\beta_{v}|\le \pi/2$ for all $v\in V$ and $-2\le \theta \le 2$, we can deduce (also using that $|\beta_{v}|\le 1$ for all $v\in V$)
\begin{align*}
|\mb E[x_{v_{1}}\cdots x_{v_{\ell}}f(\vec{\beta}\cdot\vec{x}-t)]|  &\le2\int_{-2}^{2}\prod_{j=1}^{\ell}|\theta \beta_{v_j}| \prod_{v\in V\setminus\{v_{1},\ldots,v_{\ell}\}}e^{-(\theta^2/\pi^2)\beta_v^2}\le 
2 \int_{-2}^{2}|\theta|^{\ell}e^{-(\theta^{2}/\pi^2)(\snorm{\vec{\beta}}_2^2-\ell)}d\theta\\
&\lesssim_\ell\int_{-2}^{2} |\theta|^{\ell}e^{-\theta^2\snorm{\vec{\beta}}_2^2/\pi^2}d\theta=\frac{\pi^{\ell+1}}{\snorm{\vec{\beta}}_2^{\ell+1}} \int_{-2\snorm{\vec{\beta}}_2/\pi}^{2\snorm{\vec{\beta}}_2/\pi} |z|^{\ell}e^{-z^2}dz\lesssim_\ell\snorm{\vec{\beta}}_2^{-(\ell+1)},
\end{align*}
as desired (where in the last step we used that the integral $\int_{-\infty}^{\infty}|z|^{m}e^{-z^2}dz$ is finite).
\end{proof}

Finally, let us deduce \cref{clm:crucial-claim}.

\begin{proof}[Proof of \cref{clm:crucial-claim}]
First, note that it suffices to consider the case where the interval $[a,b]$ has length exactly $(2H+4)n$. Indeed, in the
general case we can cover $[a,b]$ with $\lceil (b-a)/((2H+4)n)\rceil\lesssim_{C.H} (b-a)/((2H+4)n)$ intervals of length exactly $(2H+4)n$ (here, we used that $b-a\ge \snorm{M^*}_{\rm F}\gtrsim_C n$ by \cref{eq:M-F}). So assume that $b-a=(2H+4)n$ and let $s=(a+b)/2$, then $[a,b]=[s-(H+2)n,s+(H+2)n]$.

Using that $Q$ and $M$ are symmetric, recall from \hyperlink{step:conditional-E-sigma}{Step 5} that
\[
E_{\mathrm{shift}(1)}=\frac{1}{2}\vec{y}\cdot\vec{\Delta}=\frac{1}{2}(Q\vec{y})\cdot\vec{x},\quad\quad E_{\mathrm{shift}(2)}=\frac{1}{8}\vec{\Delta}^{\intercal}M\vec{\Delta}=\frac{1}{8}\vec{x}^{\intercal}(QMQ)\vec{x},\]
\[\sigma_{\mathrm{shift}}^{2}=\frac{1}{16}\snorm{(I-Q)M\vec{\Delta}}_2^2=\frac{1}{16}\snorm{(I-Q)MQ\vec{x}}_2^2=\frac{1}{16}\vec{x}^{\intercal}QM(I-Q)^2MQ\vec{x}=\frac{n}{16}\vec{x}^{\intercal}\frac{QM(I-Q)^2MQ}{n}\vec{x}.
\]
Recall that $M$ has entries in $\{0,1\}$, and recall the definition of $Q$ in \hyperlink{step:rewriting-Delta}{Step 3} (and the fact that multiplying with $Q$ has the effect of averaging values over buckets). This shows that in $QMQ$ and also in $(I-Q)MQ$ (and consequently in $(1/n)QM(I-Q)^2MQ$) all entries have absolute value at most $1$.

Furthermore recall from \hyperlink{step:condition-Delta}{Step 4} that $\|Q\vec{y}\|_{\infty}\le (H+2)n$ and $\|Q\vec{y}\|_{2}\gtrsim_C n^{3/2}$. Consider $\psi$ and $f$ as in the statement of \cref{clm:prep-crucial-claim}, and recall from the proof of \cref{lem:esseen-upper-crude} that $f(t)\ge \mbm 1_{[-1,1]}(t)$ for all $t\in\mb{R}$ (more specifically, the function $f$ is given by $f(t)=(2(\sin t)/t)^2$ for $t\neq 0$ and $f(0)=2^2$). Also note that $E_{\mathrm{shift}(2)}^{2}$ and $\sigma_{\mathrm{shift}}^{2}$
are both nonnegative.

Now, let $\vec{\beta}\in\mb{R}^V$ be given by $((H+2)n)^{-1}\cdot \frac{1}{2}Q\vec{y}$, and note that then $\snorm{\vec{\beta}}_\infty\le 1/2<\pi/4$ and $\snorm{\vec{\beta}}_2\gtrsim_{C,H} n^{1/2}$. Furthermore, let $t=((H+2)n)^{-1}s$, so (recalling that $E_{\mathrm{shift}(1)}=\frac{1}{2}(Q\vec{y})\cdot\vec{x}$ and $[a,b]=[s-(H+2)n,s+(H+2)n]$) we have $E_{\mathrm{shift}(1)}\in [a,b]$ if and only if $\vec{\beta}\cdot \vec{x}-t\in [-1,1]$. Hence
\[\mb{E}[E_{\mathrm{shift}(2)}^2\mbm{1}_{E_{\mathrm{shift}(1)}\in [a,b]}]=\mb{E}[E_{\mathrm{shift}(2)}^2\mbm{1}_{\vec{\beta}\cdot \vec{x}-t\in [-1,1]}]\le \mb{E}[E_{\mathrm{shift}(2)}^2f(\vec{\beta}\cdot \vec{x}-t)]=\frac{\mb{E}[(\vec{x}^{\intercal}(QMQ)\vec{x})^2f(\vec{\beta}\cdot \vec{x}-t)]}{64}\]
and therefore by \cref{clm:prep-crucial-claim} applied with $A=QMQ$ and $k=2$,
\[\mb{E}[E_{\mathrm{shift}(2)}^2\mbm{1}_{E_{\mathrm{shift}(1)}\in [a,b]}]\lesssim (\sqrt{n}/\snorm{\vec{\beta}}_2)^5\cdot n^{3/2}\lesssim_{C,H} n^{3/2}.\]
Similarly, writing $A=(1/n)QM(I-Q)^2MQ$ and applying \cref{clm:prep-crucial-claim} wih $k=1$, we have
\[\mb{E}[\sigma_{\mathrm{shift}}^2\mbm{1}_{E_{\mathrm{shift}(1)}\in [a,b]}]\le \mb{E}[\sigma_{\mathrm{shift}}^2f(\vec{\beta}\cdot \vec{x}-t)]=\frac{n}{16}\cdot \mb{E}[(\vec{x}^{\intercal}A\vec{x})f(\vec{\beta}\cdot \vec{x}-t)]\lesssim n\cdot (\sqrt{n}/\snorm{\vec{\beta}}_2)^3\cdot n^{1/2}\lesssim_{C,H} n^{3/2}.\]
Summing these two estimates and recalling that $b-a=(2H+4)n$ now gives the desired result
\[\mb{E}[(E_{\mathrm{shift}(2)}^2+\sigma_{\mathrm{shift}}^2)\mbm{1}_{E_{\mathrm{shift}(1)}\in [a,b]}]\lesssim_{C,H} n^{1/2}(b-a).\qedhere\]
\end{proof}

\section{Switchings for pointwise probability estimates}\label{sec:switch}
So far (in \cref{thm:short-interval}), we have obtained near-optimal estimates on probabilities of events of the form $|X-x|\le B$, for some large constant $B$. However, in order to prove  \cref{thm:point-control}, we need to control the probability that $X$ is \emph{exactly} equal to $x$ (assuming that $e_0$ and the entries of the vector $\vec{e}$ are integers). Of course, an upper bound on $\Pr[|X-x|\le B]$ as in \cref{thm:short-interval} implies an upper bound on $\Pr[X=x]$. So it only remains to prove the lower bound in \cref{thm:point-control}.

In order to deduce the lower bound in \cref{thm:point-control} from \cref{thm:short-interval}, it suffices to show that $\Pr[X=x]$ does not differ too much from $\Pr[X=x']$ for $x'\in [x-B,x+B]$. In order to show this, we use the \emph{switching} method, by which we study the effect of small perturbations to $U$. For example, in the setting of \cref{thm:point-control} one can show that for a typical outcome of $U$ there are many
pairs of vertices $(y,z)$ such that $y\in U$, $z\notin U$ and $|N(z)\cap (U\setminus \{y\})|-|N(y)\cap (U\setminus \{z\})|+e_z-e_y=\ell$. For such a pair $(y,z)$, modifying $U$ by removing $y$ and adding
$z$ (a ``switch'' of $y$ and $z$) changes $X$ by exactly
$\ell$.

As discussed in \cref{subsec:switching}, we introduce
an \emph{averaged} version of the switching method. Roughly speaking,
we define random variables that measure the number of ways to switch
between two classes, and study certain moments of these random variables.
We can then make our desired probabilistic conclusions with the Cauchy--Schwarz inequality.

First, we need a lemma providing us with a special set of vertices which we will use for switching operations (the properties in the lemma make it tractable to compute the relevant moments).

For vertices $v_{1},\ldots,v_{s}$ in a graph $G$, let us define \[\overline{N}(v_{1},\ldots,v_{s})=V(G)\setminus\big(\{v_1,\ldots,v_s\}\cup N(v_1)\cup\cdots\cup N(v_s)\big)\]
to be the set of vertices in $V(G)\setminus\{v_1,\ldots,v_s\}$ that are not adjacent to any of the vertices $v_1,\ldots,v_s$.

\begin{lemma}\label{lem:switching-setup}
For any fixed $C,H>0$ and $D\in\mb N$, there exist $\rho=\rho(C,D)$ with $0<\rho<1$ and $\delta=\delta(C,D)>0$ with $\delta<\rho^3/3^{D+1}$ such that the following holds for all sufficiently large $n$. For every $C$-Ramsey graph $G$ on $n$ vertices and every vector $\vec{e}\in\mb{Z}^{V(G)}$ with $0\le e_v\le Hn$ for all $v\in V(G)$, there exist subsets $S\su S_0\su V(G)$ with $|S|\geq n^{0.48}$ and $|S_0|\geq \delta^{1/\rho}\cdot n$ such that the following properties hold.
\begin{enumerate}
\item The induced subgraph $G[S_0]$ is $(\delta,\rho)$-rich (see \cref{def:rich}).
\item For any vertices $v_{1},\ldots,v_{s}\in S$ with $s\leq D$, we have $|\overline{N}(v_{1},\ldots,v_{s})\cap S_0|\ge \delta |S_0|$.
\item For any vertices $v,w\in S$, we have $|\deg_G(v)/2+e_v-\deg_G(w)/2-e_w|\leq \sqrt{n}$.
\end{enumerate}
\end{lemma}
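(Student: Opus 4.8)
# Proof Proposal for \cref{lem:switching-setup}

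The plan is to build the sets $S_0$ and $S$ in two stages: first obtain $S_0$ as a large "rich" subgraph where we additionally control non-neighborhood intersections, then prune down to $S$ so that the degree-plus-$e$ statistic is nearly constant on $S$.

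\emph{Stage 1: Constructing $S_0$.} First I would apply \cref{lem:rich-subset} (with $\alpha = 1/5$, say, and an appropriate target size $m \asymp n$) to extract from $G$ an induced subgraph $G[R_0]$ on $|R_0| \gtrsim n$ vertices which is $(\delta', \rho')$-rich for suitable constants $\delta', \rho'$ depending on $C$. The subtlety is that we additionally need property (2): for \emph{any} $s \le D$ vertices $v_1,\dots,v_s$ in $S$, the common non-neighborhood $\overline N(v_1,\dots,v_s)$ should still capture a $\delta$-fraction of $S_0$. To get this, I would iteratively apply richness: starting from $W = S_0$ with $|W| \ge \delta |S_0|$, the set of vertices $v$ with $|W \setminus N(v)| \le \rho|W|$ has size at most $|S_0|^{1/5}$, which is tiny. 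So the "bad" vertices (those whose non-neighborhood fails to be large relative to some down-set reached by removing $\le D$ neighborhoods) number at most $\binom{|S|}{\le D}\cdot |S_0|^{1/5}$ — but this is too crude. The cleaner route: choose $\delta, \rho$ so that $\rho^D \ge \delta$, and observe that for any $v_1,\dots,v_s$ with $s \le D$, applying the richness bound $s$ times in succession shows that unless $v_1,\dots,v_s$ all lie in an exceptional set of size $\le s\cdot |S_0|^{1/5}$ at some stage, we have $|\overline N(v_1,\dots,v_s) \cap S_0| \ge \rho^s|S_0| \ge \rho^D|S_0| \ge \delta|S_0|$. So I would first set aside the union of all these exceptional sets (for all $s \le D$), which has size $O_D(|S_0|^{1/5}) = o(|S_0|)$, and remove it; then property (2) holds for vertices outside the removed set. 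One must check that $G[S_0]$ remains $(\delta,\rho)$-rich after this minor deletion (adjusting $\delta$ slightly), which follows because deleting $o(|S_0|)$ vertices changes all the relevant densities and sizes negligibly. This gives property (1) as well, provided $\delta < \rho^3/3^{D+1}$ is chosen small enough relative to $\rho$.

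\emph{Stage 2: Constructing $S$ via pigeonhole.} Now inside $S_0$, I would partition vertices according to the value of $\deg_G(v)/2 + e_v$, which lies in an interval of length $O(n)$ (since $0 \le e_v \le Hn$ and $\deg_G(v) \le n$). Grouping this interval into $O(\sqrt n)$ buckets of length $\sqrt n /2$, by pigeonhole one bucket contains at least $|S_0|/O(\sqrt n) \gtrsim n/\sqrt n = \sqrt n \gg n^{0.48}$ vertices; taking $S$ to be the set of vertices in that bucket gives property (3) immediately (any two vertices in the same bucket differ by at most $\sqrt n$ in this statistic). The only thing to verify is that $|S| \ge n^{0.48}$, which holds comfortably since $|S_0| \ge \delta^{1/\rho} n$ and $n/\sqrt n = n^{1/2} > n^{0.48}$ for large $n$. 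Properties (1) and (2) are inherited by $S \subseteq S_0$ automatically — note (2) is stated for $v_i \in S$ but asserts largeness inside $S_0$, so shrinking $S$ only makes the hypothesis weaker.

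\emph{Main obstacle.} The technical heart is Stage 1: simultaneously guaranteeing richness of $G[S_0]$ \emph{and} the uniform non-neighborhood bound in (2) over all tuples of size up to $D$, while keeping $|S_0| \gtrsim n$. The iterative argument sketched above works but requires care in ordering the quantifiers — one must remove the exceptional vertices \emph{before} fixing $S$, and check that the removal is small enough not to destroy richness or the size bound. A clean way to organize this is: (i) get a rich $G[R_0]$ with generous constants from \cref{lem:rich-subset}; (ii) define the set $B$ of vertices $v \in R_0$ that appear as some $v_i$ in a "failing tuple"; (iii) bound $|B| = O_{C,D}(|R_0|^{1/5})$ by a union bound over tuple lengths combined with the richness estimate applied repeatedly; (iv) set $S_0 = R_0 \setminus B$ and re-verify $(\delta, \rho)$-richness of $G[S_0]$ with slightly worsened parameters, using that densities and set sizes are stable under deleting an $o(1)$-fraction. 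Everything else is routine pigeonhole and bookkeeping on the constants.
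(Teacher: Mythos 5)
Your Stage 2 (pigeonhole on $\deg_G(v)/2+e_v$) matches the paper and is fine, and so is the use of \cref{lem:rich-subset} to obtain a rich $S_0$. The gap is in Stage 1, in how you establish property (2).

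The iterated richness argument shows the following: for a \emph{fixed} prefix $v_1,\ldots,v_{i-1}$ whose iterated non-neighborhood $W_{i-1}$ is still large, all but at most $|S_0|^{1/5}$ choices of $v_i$ keep the non-neighborhood large. But the exceptional set at stage $i$ depends on the down-set $W_{i-1}$, hence on the prefix $(v_1,\ldots,v_{i-1})$. Property (2) is a \emph{universal} statement over all tuples from $S$, so to make your deletion argument work you would have to remove, for every possible prefix, the corresponding exceptional set. The number of prefixes of length up to $D-1$ is of order $|S_0|^{D-1}$, so the union of exceptional sets is only bounded by $O(|S_0|^{D-1+1/5})$, not $O_D(|S_0|^{1/5})$ as you claim in steps (ii)--(iii); already for $D=2$ a single vertex $u$ can be exceptional for $W_1(v_1)$ for some choice of $v_1$ while being unexceptional for others, and the set $B$ of vertices appearing in \emph{some} failing tuple can in principle be all of $S_0$. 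Richness alone only yields the \emph{existential} version of this property (many tuples work — this is exactly what \cref{lem:good-tuples} extracts), not the universal version needed here.

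The paper closes this gap with a different tool: it applies the dependent random choice lemma (\cref{lem:dependent-random-choice}) to the \emph{complement} of $G[S_0]$, which is again $2C$-Ramsey and hence has average degree $\ge\varepsilon|S_0|$. Dependent random choice produces a set $S'$ of size $|S_0|^{0.99}$ in which \emph{every} $D$ vertices have at least $\delta|S_0|$ common neighbors in the complement, i.e.\ at least $\delta|S_0|$ common non-neighbors in $G$ — exactly the uniform statement (2) requires. The pigeonhole for (3) is then performed inside $S'$ (note $|S'|\ge 2(H+1)n^{0.98}$ suffices to leave $n^{0.48}$ vertices in one bucket). If you want to avoid dependent random choice you would need some other mechanism that controls all $D$-tuples simultaneously; the prefix-by-prefix union bound does not.
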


\begin{remark}\label{rem:dependence-B}
We will apply \cref{lem:switching-setup} with $D=8B+4$, where $B=B(C)$ is as in \cref{thm:short-interval}. So the size of $S_0$ depends on $B$. Eventually, we will apply \cref{thm:short-interval} to a Ramsey graph $G[\overline N]$, for a certain subset $\overline N\subseteq S_0$ (with $U\cap \overline N$ as our random vertex set, conditioning on an outcome of $U\setminus \overline N$). Since the proportion of $G$ that $\overline N\subseteq S_0$ occupies depends on $D$, we will have to apply \cref{thm:short-interval} with $A,H$ depending on $D$ (and therefore on $B$). So, it is crucial that in \cref{thm:short-interval}, $B$ does not depend on $A,H$.
\end{remark}

To prove \cref{lem:switching-setup} (specifically, property (2)), we will need a \emph{dependent random choice} lemma: the following simple yet powerful lemma appears as \cite[Lemma 2.1]{FS11}.

\begin{lemma}\label{lem:dependent-random-choice}
Let $F$ be a graph on $n$ vertices with average degree $d$. Suppose that $a,s,r\in\mb N$ satisfy
\[
\sup_{t\in\mb N}\left(\frac{d^{t}}{n^{t-1}}-\binom{n}{r}\cdot \left(\frac{s}{n}\right)^{t}\right)\ge a.
\]
Then, $F$ has a subset $W$ of at least $a$ vertices such that every
$r$ vertices in $W$ have at least $s$ common neighbors in $F$.
\end{lemma}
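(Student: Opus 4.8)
This is the classical \emph{dependent random choice} argument, so the plan is simply to sample a random common neighbourhood, show it is large in expectation while containing few ``bad'' $r$-subsets, and then clean it up. First I would fix a value of $t\in\mb N$ attaining the supremum in the hypothesis. Such a $t$ exists: if the supremum is positive then in particular $s<n$, and since also $d\le n-1$ both $d^t/n^{t-1}=n(d/n)^t$ and $\binom nr(s/n)^t$ tend to $0$ as $t\to\infty$, so the expression $d^t/n^{t-1}-\binom nr(s/n)^t$ is maximised at some finite $t$; and if the supremum is non-positive then $a=0$ (as $a\in\mb N$) and the conclusion is vacuous. Then I would let $v_1,\ldots,v_t$ be independent uniformly random vertices of $F$ and put $A=N_F(v_1)\cap\cdots\cap N_F(v_t)$, the set of common neighbours of $v_1,\ldots,v_t$.

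\textbf{Key steps.} The first moment of $|A|$ is where the average-degree hypothesis enters: for each vertex $u$ we have $\Pr[u\in A]=(\deg_F(u)/n)^t$, so by linearity of expectation together with the power-mean (Jensen) inequality applied to $x\mapsto x^t$, using $\sum_u\deg_F(u)=dn$,
\[\mb E[|A|]=\sum_{u\in V(F)}\Big(\frac{\deg_F(u)}{n}\Big)^t\ge n\Big(\frac1{n^2}\sum_{u\in V(F)}\deg_F(u)\Big)^t=n\Big(\frac dn\Big)^t=\frac{d^t}{n^{t-1}}.\]
Next I would call an $r$-subset $R\subseteq V(F)$ \emph{bad} if it has fewer than $s$ common neighbours in $F$, and let $X$ be the number of bad $r$-subsets contained in $A$. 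For a bad $R$, the event $R\subseteq A$ is exactly the event that every $v_i$ is a common neighbour of $R$; since there are fewer than $s$ such vertices and the $v_i$ are independent and uniform, $\Pr[R\subseteq A]<(s/n)^t$, so summing over all $r$-sets gives $\mb E[X]\le\binom nr(s/n)^t$. Combining the two bounds,
\[\mb E[|A|-X]\ge\frac{d^t}{n^{t-1}}-\binom nr\Big(\frac sn\Big)^t\ge a,\]
and since $|A|-X$ is integer-valued, there is an outcome of $(v_1,\ldots,v_t)$ with $|A|-X\ge a$. Fixing such an outcome and deleting one vertex from each bad $r$-subset of $A$ removes at most $X$ vertices and destroys every bad $r$-set, producing $W\subseteq A$ with $|W|\ge|A|-X\ge a$ in which every $r$ vertices have at least $s$ common neighbours, as desired.

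\textbf{Main obstacle.} Honestly there is none of substance: this is a textbook dependent random choice argument. The only points that call for a word of care are the attainment of the supremum over $t\in\mb N$ (handled above via the $t\to\infty$ decay) and the convexity step producing the $d^t/n^{t-1}$ lower bound; everything else is routine.
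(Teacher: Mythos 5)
Your proof is correct, and it is the standard dependent random choice argument. For comparison: the paper does not prove this lemma at all — it is quoted directly from Fox and Sudakov \cite[Lemma~2.1]{FS11}, and the proof given there is essentially the one you wrote (sample $t$ uniform random vertices, lower-bound the expected size of their common neighbourhood $A$ by convexity, upper-bound the expected number of ``bad'' $r$-subsets of $A$ by a union bound, then delete one vertex from each bad subset). So you have supplied the proof the paper outsources, by the same method as the cited source. One small repair: your side remark that a positive supremum forces $s<n$ is not true in general (for instance when $\binom{n}{r}\le 1$ the expression can be positive with $s\ge n$), but attainment of the supremum is not actually needed — since $|A|-X$ is integer-valued, it suffices to choose $t$ with $d^{t}/n^{t-1}-\binom{n}{r}(s/n)^{t}>a-1$, which exists whenever the supremum is at least $a\ge 1$, and the case $a=0$ is trivial. (Relatedly, since the paper's $\mb N$ includes $t=0$, the bound $\Pr[R\subseteq A]<(s/n)^{t}$ should be non-strict there, which is all the argument uses.)
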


\begin{proof}[Proof of \cref{lem:switching-setup}] Let $\varepsilon=\varepsilon(2C)$ be as in \cref{thm:dense-ramsey}, so for sufficiently large $m$ every
$2C$-Ramsey graph on $m$ vertices has average degree at least $\eps m$. Let $\rho=\rho(C,1/5)>0$ be as in \cref{lem:rich-subset}.
Let $\delta=\delta(C,D)>0$ be sufficiently small such that $\delta<\rho^3/3^{D+1}$ and for all sufficiently large $m$ (in terms of $C$ and $D$) we have
\[
\sup_{t\in\mb N}\left(\varepsilon^{t}m-\binom{m}{D}\delta^{t}\right)\ge m^{0.99}.
\]
To see that this is possible, consider $t=\eta\log m$ for
some small $\eta$ (in terms of $\eps$), and let $\delta$ be small in terms of $\eta$ and $D$.

By \cref{lem:rich-subset}, we can find a $(\delta,\rho)$-rich induced subgraph $G[S_{0}]$ of size $|S_0|\ge\delta^{1/\rho}\cdot n$. 

Since $|S_0|\geq \delta^{1/\rho}\cdot n\ge\sqrt{n}$, the graph $G[S_0]$ is $2C$-Ramsey. Let $\overline G[S_0]$ be the complement of this graph, so that $\overline G[S_0]$ is also a $2C$-Ramsey
graph and therefore has average degree at least $\varepsilon|S_{0}|$. By \cref{lem:dependent-random-choice}
and the choice of $\delta$, the graph $\overline G[S_0]$ contains a set $S'$ of $|S'|\geq |S_{0}|^{0.99}\ge 2(H+1)n^{0.98}$
vertices such that every $D$ vertices in $S'$ have at least
$\delta|S_{0}|$ common neighbors in $\overline G[S_0]$. 
This means that for any $s\le D$ and any $v_{1},\ldots,v_{s}\in S'$, we have $|\overline{N}(v_{1},\ldots,v_{s})\cap S_0|\ge \delta |S_{0}|$,
so (2) holds for any subset $S\su S'$.

Finally, note that $\deg_G(v)/2+e_v\in [0,(H+1)n]$ for all $v\in S'$, and consider a partition of the interval $[0,(H+1)n]$ into $\lfloor 2(H+1) \sqrt{n}\rfloor$ sub-intervals of length  $ (H+1)n/\lfloor 2(H+1) \sqrt{n}\rfloor\leq \sqrt{n}$. By the pigeonhole principle, there exists a set $S\su S'$ of at least $ 2(H+1)n^{0.98}/\lfloor 2(H+1) \sqrt{n}\rfloor\ge n^{0.48}$ vertices $v$ whose associated values $\deg_G(v)/2+e_v$ lie in the same sub-interval. Then (3) holds.
\end{proof}

As foreshadowed earlier, the next lemma estimates
moments of certain random variables that measure the number of ways
to switch between certain choices of the set $U$. The proof of this lemma relies on \cref{thm:short-interval}.

\begin{lemma}\label{lem:switching-moment-estimate} Fix $C,H,A>0$, let $B=B(2C)$ be as in \cref{thm:short-interval} and define $D=D(C)=8B+4$. Consider a $C$-Ramsey graph $G$ on $n$ vertices and a vector vector $\vec{e}\in\mb{Z}^{V(G)}$ with $0\le e_v\le Hn$ for all $v\in V(G)$. Let $S\su S_0\su V(G)$, $\rho=\rho(C,D)>0$ and $\delta=\delta(C,D)>0$ be as in \cref{lem:switching-setup}, and define
\[T=\big\{(y,z)\in S^{2} \,:\, |(N(z)\setminus N(y))\cap S_0|\geq \rho^2 |S_0| \text{ and } |(N(y)\setminus N(z))\cap S_0|\geq \rho^2 |S_0| \big\}.\]
Consider a random vertex subset $U\subseteq V(G)$ obtained
by including each vertex with probability $1/2$ independently, and let $X=e(G[U])+\sum_{u\in U}e_u$. For $\ell=-B,\ldots,B$, let $Y_{\ell}$ be the number of vertex pairs $(y,z)\in T$ with $y\in U$ and $z\notin U$ such that $(|N(z)\cap (U\setminus \{y\})|+e_z)-(|N(y)\cap (U\setminus \{z\})|+e_y)=\ell$.
For $x\in\mb Z$, let $Z_{x-B,x+B}\in\{0,1\}$ be the indicator random variable for the event that $x-B\le X\le x+B$.

Then, for any $x\in\mb Z$ satisfying $|x-\mb EX|\le An^{3/2}$, and
any $a_{-B},\ldots,a_{B}\in\{0,1,2\}$, we have 
\[
\mb E[Y_{-B}^{a_{-B}}\cdots Y_{B}^{a_{B}}Z_{x-B,x+B}]\asymp_{C,H,A}\frac{(|T|/\sqrt{n})^{a_{-B}+\cdots+a_B}}{n^{3/2}}.
\]
\end{lemma}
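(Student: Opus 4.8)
\textbf{Proof plan for \cref{lem:switching-moment-estimate}.}
The plan is to estimate $\mb E[Y_{-B}^{a_{-B}}\cdots Y_B^{a_B}Z_{x-B,x+B}]$ by expanding the product $Y_{-B}^{a_{-B}}\cdots Y_B^{a_B}$ as a sum over tuples of vertex pairs and reducing to an application of \cref{thm:short-interval}. Write $m=a_{-B}+\cdots+a_B\le 2(2B+1)$, so the product $Y_{-B}^{a_{-B}}\cdots Y_B^{a_B}$ expands as $\sum$ over $m$-tuples $((y_1,z_1),\ldots,(y_m,z_m))\in T^m$, where the $j$-th pair is ``tagged'' with the value $\ell_j\in\{-B,\ldots,B\}$ it is supposed to realize (the multiset of tags being prescribed by $(a_{-B},\ldots,a_B)$), of the indicator that $y_j\in U$, $z_j\notin U$, and $(|N(z_j)\cap(U\setminus\{y_j\})|+e_{z_j})-(|N(y_j)\cap(U\setminus\{y_j\})|+e_{y_j})=\ell_j$ for every $j$, times $Z_{x-B,x+B}$. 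First I would argue that the dominant contribution comes from tuples in which all $2m$ vertices $y_1,z_1,\ldots,y_m,z_m$ are distinct: there are $\asymp_{C,H,A}|T|^m$ such tuples (using $|S|\ge n^{0.48}$ and $|T|\ge c_{C}|S|^2$, which follows from richness of $G[S_0]$ since for all but at most $|S_0|^{1/5}$ choices of $z$ one has $|(N(z)\setminus N(y))\cap S_0|\ge\rho^2|S_0|$ — this needs to be checked but is routine from \cref{def:rich}), while tuples with a coincidence number $O(|T|^{m-1/2})$ or fewer, and each such tuple contributes $O(n^{-3/2}\cdot n^{-(m-1)/2})$ in a way that is dwarfed by the main term $\asymp (|T|/\sqrt n)^m n^{-3/2}$. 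The remaining point is that $|T|/\sqrt n\gtrsim_{C,H} n^{0.46}\to\infty$, so the distinct-vertex count genuinely dominates.

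For a fixed tuple with $2m$ distinct vertices, condition on $U\cap W$, where $W=\{y_1,z_1,\ldots,y_m,z_m\}$, choosing the outcome $y_j\in U$, $z_j\notin U$ for all $j$ (probability $2^{-2m}$). Then the event that the $j$-th degree-difference equals $\ell_j$ for all $j$ depends only on the edges from $W$ into $U\setminus W$, and one would like to say this event has probability $\asymp n^{-m/2}$ \emph{and} is, up to negligible error, independent of $Z_{x-B,x+B}$. To get the $n^{-m/2}$ factor I would use property (3) of \cref{lem:switching-setup} (which makes $\deg(z_j)/2+e_{z_j}-\deg(y_j)/2-e_{y_j}$ controlled to within $\sqrt n$) together with property (2) and the membership $(y_j,z_j)\in T$ (which guarantee that both $N(z_j)\triangle N(y_j)$ and $\overline N(y_1,z_1,\ldots)$ meet $S_0\subseteq V(G)$ in $\Omega(n)$ vertices), so that $|N(z_j)\cap(U\setminus\{y_j\})|-|N(y_j)\cap(U\setminus\{z_j\})|$ is a sum of $\Omega(n)$ independent $\pm 1/2$-type contributions with a local CLT / local limit behavior at scale $1$; the joint local limit statement over the $m$ indices follows from a chain-conditioning argument in the same spirit as \cref{lem:joint-probability,lem:circle} (reveal the relevant bipartite-adjacency bits one index at a time along the disjoint difference-neighborhoods). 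This yields $\Pr[\text{all }m\text{ differences}=\vec\ell\mid U\cap W]\asymp_{C,H} n^{-m/2}$.

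The decoupling from $Z_{x-B,x+B}$ is where the real content of \cref{thm:short-interval} enters, and this is the step I expect to be the main obstacle. After conditioning on $U\cap W$ and on all edges of $G$ incident to $W$ (equivalently, on all the $N(\cdot)\cap(U\setminus W)$ statistics that define the $\ell_j$-events), the residual randomness is $U\cap\overline N$ where $\overline N=\overline N(y_1,z_1,\ldots,y_m,z_m)\cap S_0$ has size $\ge\delta|S_0|=\Omega(n)$ by property (2). On this residual probability space, $X$ restricted to $\overline N$ has exactly the form $e(G[\overline N\cap U])+\sum_{v\in U\cap\overline N}e'_v+e'_0$ required by \cref{thm:short-interval}, with $G[\overline N]$ a $(2C)$-Ramsey graph (since $|\overline N|\ge n^{1-o(1)}$), and $e'_v=e_v+\deg_{U\setminus\overline N}(v)$, $e'_0$ absorbing everything determined by the conditioning; here $e'_v\le (H+1)n$, so the relevant $H$-parameter is $O_{C,H}(1)$, and crucially $B$ in \cref{thm:short-interval} does not depend on $H$ (this is \cref{rem:dependence-B}). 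One checks that $\mb E[X\mid\text{conditioning}]$ stays within $O_{C,H,A}(n^{3/2})$ of $x$ for a $1-o(1)$ fraction of conditionings (a concentration estimate via \cref{thm:concentration-hypercontractivity}), so \cref{thm:short-interval} gives both $\Pr[x-B\le X\le x+B\mid\text{conditioning}]\asymp_{C,H,A}|\overline N|^{-3/2}\asymp_{C,H,A} n^{-3/2}$ and the matching upper bound. Since the $\ell_j$-events were measurable with respect to the conditioning while $Z_{x-B,x+B}$ is governed by the residual randomness, multiplying the three factors $2^{-2m}$, $\asymp n^{-m/2}$, $\asymp n^{-3/2}$ and summing over the $\asymp|T|^m$ distinct-vertex tuples (and absorbing the $O(|T|^{m-1/2})$ degenerate tuples into the error) yields $\mb E[Y_{-B}^{a_{-B}}\cdots Y_B^{a_B}Z_{x-B,x+B}]\asymp_{C,H,A}|T|^m n^{-m/2}n^{-3/2}=(|T|/\sqrt n)^m n^{-3/2}$, as claimed. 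The one subtlety to handle carefully is that \cref{thm:short-interval} is stated for a \emph{fixed} graph and fixed $\vec e$, whereas here $\overline N$ and $e'$ depend on the random conditioning; this is fine because for each outcome of the conditioning we get a deterministic instance to which \cref{thm:short-interval} applies, and the implied constants are uniform since they depend only on $C,H,A$.
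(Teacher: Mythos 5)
The lower-bound half of your plan matches the paper's argument: restrict to tuples with distinct vertices for which each set $N_i(y_1,z_1,\ldots,y_s,z_s)\cap S_0$ is large (richness of $G[S_0]$ guarantees all but a bounded fraction of tuples have this property), expose $U$ on the complement of $N_1\cup\cdots\cup N_s\cup\overline N$ first, use a local-limit/hypergeometric estimate on the disjoint sets $N_i$ to hit the $s$ degree-difference targets with probability $\gtrsim n^{-s/2}$ while keeping the conditional mean within $O(n^{3/2})$ of $x$ (the paper packages this as \cref{lem:switching-robust-conditioning}(2), which gives the joint bound you need rather than a ``$1-o(1)$ fraction'' statement), and finally apply \cref{thm:short-interval} to $G[\overline N]$.

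The upper bound, however, has a genuine gap. Your dichotomy is ``all $2m$ vertices distinct'' versus ``a coincidence among the vertices,'' and you assert that every distinct-vertex tuple contributes $O(n^{-m/2}\cdot n^{-3/2})$. This is false: the joint anticoncentration of the $m$ degree differences is governed not by distinctness of the vertices but by the (robust) rank of the $m\times n$ matrix whose rows are $\vec 1_{N(z_j)\setminus\{y_j\}}-\vec 1_{N(y_j)\setminus\{z_j\}}$. If these rows are (close to) linearly dependent — which can happen with all vertices distinct, e.g.\ when several pairs have nearly identical difference-neighborhoods — the chain-conditioning argument stalls and the joint probability can be as large as $n^{-r/2}$ with $r<m$. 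One must therefore show that tuples of ``rank deficiency'' $k$ are rare enough to compensate, and the obvious count fails: discarding one constraint per bad coordinate and counting bad coordinates via richness only saves a factor of order $|T|/(|S|n^{1/5})\gtrsim n^{0.28}$ per deficient coordinate, which does not match the $n^{1/2}$ you lose in the anticoncentration bound. The paper's resolution (\cref{def:degeneracy}, \cref{lem:switching-technical-lemma-degenerate-tuples}, \cref{lem:s-tuples-in-T-to-consider}(b), \cref{thm:halasz-discrete}) is to define $k$-degeneracy of a tuple via low-rank approximation of this matrix after deleting $O(n)$ columns, to prove via richness that for each degenerate coordinate \emph{both} $y_j$ and $z_j$ are constrained to at most $n^{1/5}$ choices each (so the count of $k$-degenerate tuples drops by $|T|/n^{2/5}\geq n^{1/2}$ per unit of degeneracy), and to invoke a multidimensional Hal\'asz-type inequality to get the matching $n^{-(s-k)/2}$ anticoncentration. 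Without some version of this rank-deficiency bookkeeping your upper bound does not close.
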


We defer the proof of \cref{lem:switching-moment-estimate} (using \cref{thm:short-interval}) until the end
of the section, first showing how it can be used to prove \cref{thm:point-control}. This argument requires the set $T$ in \cref{lem:switching-moment-estimate} to be non-empty, which is implied by the following lemma.

\begin{lemma}\label{lem:switching-T-large}
The set $T$ defined in \cref{lem:switching-moment-estimate} has size $|T|\geq |S|^2/2\geq n^{0.96}/2$.
\end{lemma}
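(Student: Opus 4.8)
The plan is to show that for at least $|S|/2$ choices of one coordinate, at least $|S|/2$ choices of the other coordinate yield a pair in $T$; this gives $|T|\ge(|S|/2)^2=|S|^2/4\ge n^{0.96}/4$, using $|S|\ge n^{0.48}$ and $|S_0|\le n$. The starting observation is that the two ``exceptional'' subsets of $S$,
\[
S^{\mathrm{hi}}=\{v\in S:\deg_{G[S_0]}(v)>(1-\rho)|S_0|\},\qquad S^{\mathrm{lo}}=\{v\in S:\deg_{G[S_0]}(v)<\rho|S_0|\},
\]
are disjoint (we may assume $\rho\le 1/2$, as $\rho$ is a small constant provided by \cref{lem:rich-subset}). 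Hence $|S^{\mathrm{hi}}|+|S^{\mathrm{lo}}|\le|S|$, so at least one of them has size at most $|S|/2$; we treat the two resulting cases symmetrically. The only ingredient we need from \cref{lem:switching-setup} is property (1), that $G[S_0]$ is $(\delta,\rho)$-rich; properties (2) and (3) will not be used here.

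Suppose first $|S^{\mathrm{hi}}|\le|S|/2$, and fix any $y\in S\setminus S^{\mathrm{hi}}$. Then the set $W:=S_0\setminus N(y)$ satisfies $|W|=|S_0|-\deg_{G[S_0]}(y)\ge\rho|S_0|\ge\delta|S_0|$ (using $\delta<\rho$), so $(\delta,\rho)$-richness of $G[S_0]$ applies to $W$: all but at most $|S_0|^{1/5}$ vertices $z\in S_0$ satisfy $|N(z)\cap W|>\rho|W|\ge\rho^2|S_0|$. Since $N(z)\cap W=(N(z)\setminus N(y))\cap S_0$, every such $z$ gives a pair $(y,z)\in T$. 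As $|S|\ge n^{0.48}\ge 2|S_0|^{1/5}$ for large $n$, at least $|S|-|S_0|^{1/5}\ge|S|/2$ of these good vertices $z$ lie in $S$. Ranging over the $\ge|S|/2$ choices of $y\in S\setminus S^{\mathrm{hi}}$ (which produce distinct pairs) yields $|T|\ge|S|^2/4$.

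In the complementary case $|S^{\mathrm{lo}}|\le|S|/2$, fix $z\in S\setminus S^{\mathrm{lo}}$ and apply $(\delta,\rho)$-richness of $G[S_0]$ to $W:=N(z)\cap S_0$, which has $|W|=\deg_{G[S_0]}(z)\ge\rho|S_0|\ge\delta|S_0|$; this time we use the other half of the richness conclusion, namely that all but at most $|S_0|^{1/5}$ vertices $y\in S_0$ satisfy $|W\setminus N(y)|>\rho|W|\ge\rho^2|S_0|$. Since $W\setminus N(y)=(N(z)\setminus N(y))\cap S_0$, this gives $(y,z)\in T$ for at least $|S|-|S_0|^{1/5}\ge|S|/2$ vertices $y\in S$, and summing over the $\ge|S|/2$ choices of $z\in S\setminus S^{\mathrm{lo}}$ again yields $|T|\ge|S|^2/4\ge n^{0.96}/4$. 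The only real content is the disjointness observation, together with correctly matching the one-sided quantity $|(N(z)\setminus N(y))\cap S_0|$ to the appropriate half of the richness conclusion in each case; the rest is bookkeeping with the size bounds.
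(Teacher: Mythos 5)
Your proof is correct and uses the same key ingredient as the paper's proof (the $(\delta,\rho)$-richness of $G[S_0]$ applied to neighborhood-related subsets, followed by double counting); the paper avoids your case split by first applying richness to $W=S_0$ itself, which shows directly that all but at most $n^{1/5}$ vertices $z\in S_0$ satisfy $|N(z)\cap S_0|\ge\rho|S_0|$ (i.e., your $S^{\mathrm{lo}}$ is automatically tiny), and then applying richness to $W=N(z)\cap S_0$ exactly as in your second case. The only caveat in your version is the assumption $\rho\le 1/2$ needed for the disjointness of $S^{\mathrm{hi}}$ and $S^{\mathrm{lo}}$ — harmless, since \cref{lem:rich-subset} chooses $\rho$ small, but the paper's route makes it unnecessary.
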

\begin{proof}
Recall that the set $S\su S_0$ has size $|S|\geq n^{0.48}$ and that $G[S_0]$ is $(\delta,\rho)$-rich, where $\delta<\rho^3/3^{D+1}< \rho$ is as in \cref{lem:switching-setup}. We first claim that at least $(3/4)\cdot |S|^2$ pairs $(y,z)\in S^{2}$ satisfy the first condition $|(N(z)\setminus N(y))\cap S_0|\geq \rho^2 |S_0|$ in the definition of $T$. Indeed, by \cref{def:rich}, all but at most $n^{1/5}$ vertices $z\in S_0$ satisfy $|N(z)\cap S_0|\geq\rho|S_0|$. Hence, $|N(z)\cap S_0|\geq\rho|S_0|$ for at least $|S|-n^{1/5}$ vertices $z\in S$. Furthermore, for each such $z\in S$ we have $|(N(z)\setminus N(y))\cap S_0|=|(N(z)\cap S_0)\sm N(y)|\geq \rho\cdot |N(z)\cap S_0|\geq\rho^2|S_0|$ for all but at most $n^{1/5}$ vertices $y\in S_0$ and in particular for at least $|S|-n^{1/5}$ vertices $y\in S$. Thus, there are at least $(|S|-n^{1/5})^2\geq (3/4)\cdot |S|^2$ pairs $(y,z)\in S^2$ satisfying $|(N(z)\setminus N(y))\cap S_0|\geq \rho^2 |S_0|$.
Analogously, at least $(3/4)\cdot |S|^2$ pairs $(y,z)\in S^{2}$  satisfy the second condition $|(N(y)\setminus N(z))\cap S_0|\geq \rho^2 |S_0|$ in the definition of $T$. This means that the number of pairs $(y,z)\in S^2$ satisfying both conditions is at least $|S|^2-2(|S|^2-(3/4)\cdot |S|^2)=|S|^2/2$ and hence $|T|\geq |S|^2/2\geq n^{0.96}/2$.
\end{proof}

Now we are ready to deduce \cref{thm:point-control} from \cref{lem:switching-moment-estimate}.

\begin{proof}[Proof of \cref{thm:point-control}]
Consider a $C$-Ramsey graph $G$, a random subset $U\su V(G)$ and $X=e(G[U])+\sum_{v\in U}e_v+e_0$ as in \cref{thm:point-control}, and
consider the setup of \cref{lem:switching-moment-estimate}. Note that the upper bound in \cref{thm:point-control} follows immediately from the upper bound in \cref{thm:short-interval}, so it only remains to prove the lower bound.

For $x\in\mb Z$ let $Z_{x}$ be the indicator random variable for the event that
$X=x$. Note that for all $x\in\mb Z$ and $\ell=-B,\ldots,B$ we have $\mb E[Y_{-\ell}Z_{x+\ell}]=\mb E[Y_{\ell}Z_{x}]$.
Indeed, if $X=e(G[U])+\sum_{u\in U}e_u+e_0=x+\ell$, then $Y_{-\ell}$ is the number of ways
to perform a ``switch'' of two vertices $y\in U$, $z\notin U$ with $(y,z)\in T$, to obtain a vertex subset $U'=(U\setminus \{y\})\cup \{z\}$ with $e(G[U'])+\sum_{v\in U'}e_v+e_0=x$. Conversely, if $X=e(G[U])+\sum_{v\in U}e_v+e_0=x$, then $Y_{\ell}$ is the number of ways to perform such
a switch ``in reverse'' to obtain a vertex subset $U'$ with $e(G[U'])+\sum_{v\in U'}e_v+e_0=x+\ell$. So, $2^{n}\mb E[Y_{-\ell}Z_{x+\ell}]$ and $2^{n}\mb E[Y_{\ell}Z_{x}]$ both describe the total number of ways to switch in this way between an outcome of $U$ with $X=x+\ell$ and an outcome with $X=x$.

Now, for every $x\in\mb Z$ with $|x-\mb E X|\le An^{3/2}$ there is some $\ell\in\{-B,\ldots,B\}$ such that
\begin{align*}
\mb E[Y_{-B}\cdots Y_{B}Z_{x+\ell}]&\ge\frac{1}{2B+1}\sum_{\ell'=-B}^B\mb E[Y_{-B}\cdots Y_{B}Z_{x+\ell'}]\\
&=\frac{1}{2B+1} \mb E[Y_{-B}\cdots Y_{B}Z_{x-B,x+B}]\gtrsim_{C,H,A}\frac{(|T|/\sqrt{n})^{2B+1}}{n^{3/2}},\end{align*}
where the last step is by \cref{lem:switching-moment-estimate}. For this $\ell$, the Cauchy--Schwarz inequality, together with \cref{lem:switching-moment-estimate} and the fact that $Z_{x+\ell}\le Z_{x-B,x+B}$, implies that
\[\mb E[Y_{\ell}Z_{x}]=\mb E[Y_{-\ell}Z_{x+\ell}]\ge\frac{(\mb E[Y_{-B}\cdots Y_{B}Z_{x+\ell}])^{2}}{\mb E[Y_{-B}^{2}\cdots Y_{-\ell-1}^{2}Y_{-\ell}Y_{-\ell+1}^{2}\cdots Y_{B}^{2}Z_{x+\ell}]}\gtrsim_{C,H,A}\frac{(|T|/\sqrt{n})^{4B+2}/n^{3}}{(|T|/\sqrt{n})^{4B+1}/n^{3/2}}=\frac{|T|/\sqrt{n}}{n^{3/2}}.
\]

Finally, we use the Cauchy--Schwarz inequality and \cref{lem:switching-moment-estimate} once more (noting that $Z_{x}\le Z_{x-B,x+B}$) to conclude that
\[
\Pr[X=x]=\mb E Z_{x}\ge\frac{(\mb E[Y_{\ell}Z_{x}])^{2}}{\mb E[Y_{\ell}^{2}Z_{x}]}\gtrsim_{C,H,A} \frac{(|T|/\sqrt{n})^{2}/n^{3}}{(|T|/\sqrt{n})^{2}/n^{3/2}}=\frac{1}{n^{3/2}}.\tag*{\qedhere}
\]
\end{proof}

It now remains to prove the moment estimates in \cref{lem:switching-moment-estimate}. We will write the desired moments as a combinatorial sum of probabilities; for various tuples of pairs of vertices $(y,z)$, we then need to control the joint probability that $X=e(G[U])+\sum_{u\in U}e_u$ lies in a certain interval and that $U$ contains a specified number of vertices from the neighborhoods of the various $y$ and $z$. The next lemma gives a lower bound for certain probabilities of this form. Slightly more precisely, it allows us to specify the intersection sizes of $U$ in with given disjoint vertex subsets $W_{1},\ldots,W_{s}$. When applying this lemma in the proof of \cref{lem:switching-moment-estimate}, we will take $s=a_{-B}+\cdots+a_B$, and given $s$ pairs of vertices $(y_1,z_1),\ldots,(y_s,z_s)\in T$, we will take $W_1,\ldots,W_s$ to be certain regions of the Venn diagram given by the neighborhoods of $y_1,z_1,\ldots,y_s,z_s$. We can then use the intersection sizes of $U$ with $W_1,\ldots,W_s$ to control the events that the $s$-tuple of pairs $(y_1,z_1),\ldots,(y_s,z_s)$ contributes to $Y_{-B}^{a_{-B}}\cdots Y_{B}^{a_{B}}Z_{x-B,x+B}$. For this argument, we will, however, need to condition on the outcome of $U$ outside these special regions of the Venn diagram. This conditioning affects the linear terms and constant terms in our random variable $X$, so we use the variables $f_v$ and $f_0$ in the lemma statement below (when applying the lemma, we take $f_v$ and $f_0$ to be the terms obtained from $e_v$ and $e_0$ after accounting for this conditioning).


\begin{lemma}\label{lem:switching-robust-conditioning}
Let $\delta'>0$ and $R\geq 1$, and consider an $n$-vertex graph $G$, a real number $f_0$, and a sequence $\vec{f}\in\mb{R}^{V(G)}$ with $|f_{v}|\le R n$ for each $v\in V(G)$. Let $U\subseteq V(G)$ be a vertex subset obtained by including each vertex with probability $1/2$ independently, and let $X=e(G[U])+\sum_{v\in U}f_{v}+f_0$. Then the following hold.
\begin{enumerate}
    \item $\on{Var}[X]\leq R^2n^3$.
    \item For any $s\leq R$ and any disjoint subsets $W_{1},\ldots,W_{s}\su V(G)$, each of size at least $\delta' n$, and any $w_{1},\ldots,w_{s}\in\mb Z$ satisfying $\big|w_{i}-|W_{i}|/2\big|\le R\sqrt{n}$ for $i=1,\ldots,s$, we have
\[\Pr\left[|X-\mb E X|\leq 6R^2n^{3/2}\text{ and }|U\cap W_i|=w_i\text{ for }i=1,\ldots,s\right]\gtrsim_{\delta', R} n^{-s/2}.\]
\end{enumerate}
\end{lemma}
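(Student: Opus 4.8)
\textbf{Proof plan for \cref{lem:switching-robust-conditioning}.}
The plan is to prove (1) by a direct variance computation and (2) by conditioning on the coordinates of $U$ inside $W_1 \cup \cdots \cup W_s$ and then applying a second-moment/Paley--Zygmund argument to the conditional distribution of $X$. For (1), write $X = e(G[U]) + \sum_v f_v \mathbbm 1_{v\in U} + f_0$; using the Fourier--Walsh expansion as in \cref{eq:fourier-walsh}, $X - \mb E X$ has a linear part with coefficients of size $O(Rn)$ and a quadratic part $\frac14\sum_{uv\in E(G)}x_ux_v$ with $e(G)\le n^2$ squared-coefficient total. By \cref{eq:boolean-variance}, $\on{Var}[X] = \frac14\sum_v(f_v + \tfrac12\deg_G(v))^2 + \frac1{16}e(G) \le \tfrac14\cdot n\cdot (2Rn)^2 + \tfrac1{16}n^2 \le R^2 n^3$ for large $n$ (absorbing lower-order terms; if one wants a clean inequality for all $n$ one can note $f_v + \deg_G(v)/2 \le (R + 1/2)n$ and adjust constants, or simply observe the stated bound holds once $n$ is large, which suffices for the applications).

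For (2), first expose the coordinates $x_v$ for $v \in W := W_1 \cup \cdots \cup W_s$. By a Chernoff bound (\cref{lem:chernoff}), since $|W_i| \ge \delta' n$ and $|w_i - |W_i|/2| \le R\sqrt n$, we have $\Pr[|U\cap W_i| = w_i] \gtrsim_{\delta',R} n^{-1/2}$ for each $i$, and these events are independent across $i$ (disjoint $W_i$); hence $\Pr[|U\cap W_i| = w_i \text{ for all } i] \gtrsim_{\delta',R} n^{-s/2}$. Condition on any such outcome of $\vec x_W$. Under this conditioning, $X$ is a quadratic function of the remaining independent Rademacher variables $\vec x_{V(G)\setminus W}$ plus a constant, and its conditional mean $\mu' := \mb E[X \mid \vec x_W]$ and conditional variance ${\sigma'}^2 := \on{Var}[X\mid \vec x_W]$ satisfy: the conditional linear coefficients are still $O(Rn)$ in size, so ${\sigma'}^2 \le R^2 n^3$ by the same computation as in (1); and $|\mu' - \mb E X| \le 2R^2 n^{3/2}$ with room to spare for a typical outcome --- but in fact we need this deterministically for \emph{every} allowed outcome, which follows since $X$ changes by $O(Rn)$ when one coordinate in $W$ is flipped and we move $O(R\sqrt n)$ coordinates per block across $s \le R$ blocks, giving $|\mu' - \mb E X| \lesssim_R n \cdot \sqrt n = O_R(n^{3/2})$; choosing constants, $|\mu' - \mb E X| \le R^2 n^{3/2}$ for large $n$.

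It remains to show that, conditionally, $\Pr[|X - \mu'| \le 4R^2 n^{3/2}] \gtrsim 1$ (then $|X - \mb E X| \le |X - \mu'| + |\mu' - \mb E X| \le 5R^2 n^{3/2} \le 6R^2 n^{3/2}$). For this I would apply Chebyshev's inequality: if ${\sigma'}^2 \le R^2 n^3$ then $\Pr[|X - \mu'| > 2Rn^{3/2}] \le 1/4$, so $\Pr[|X-\mu'|\le 2Rn^{3/2}] \ge 3/4$, and since $2Rn^{3/2} \le 4R^2 n^{3/2}$ (as $R \ge 1$) we are done. Combining with the $\gtrsim_{\delta',R} n^{-s/2}$ lower bound on the probability of the conditioning event gives the claim. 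I do not expect a serious obstacle here; the only mild care needed is bookkeeping the constants so that the deterministic bound $|\mu' - \mb E X| = O_R(n^{3/2})$ holds for all admissible $(w_1,\ldots,w_s)$, and making sure the Chernoff lower bound on $\Pr[|U\cap W_i| = w_i]$ — a local/point probability for a binomial near its mean — is invoked correctly (this is standard via Stirling, with the $|w_i - |W_i|/2| \le R\sqrt n$ hypothesis ensuring we stay within $O(1)$ standard deviations). Note this lemma does not use \cref{thm:short-interval} — it is the elementary ingredient that feeds into the proof of \cref{lem:switching-moment-estimate}, which is where \cref{thm:short-interval} enters.
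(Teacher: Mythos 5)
There is a genuine gap in your argument for part (2), at the step where you condition on a \emph{full} outcome of $\vec x_W$ and claim that the conditional mean satisfies $|\mu'-\mb E X|\lesssim_R n^{3/2}$ \emph{deterministically} for every allowed outcome. Your justification ("we move $O(R\sqrt n)$ coordinates per block") conflates two different things: the intersection \emph{size} $|U\cap W_i|$ differs from $|W_i|/2$ by only $O(R\sqrt n)$, but the \emph{set} $U\cap W_i$ can differ from any reference set of the same size in up to $|W_i|\asymp n$ elements, and each swapped element can change $\mu'$ by as much as $2Rn$ (the coefficients $f_v$ are only bounded in absolute value, not close to each other). Concretely, take $W_1$ with $|W_1|=\delta' n$, $w_1=|W_1|/2$, and $f_v=Rn$ on half of $W_1$ and $f_v=-Rn$ on the other half; the outcome in which $U\cap W_1$ is exactly the positive half has $\mu'-\mb E X$ containing the term $\sum_{v\in U\cap W_1}f_v=\tfrac{R\delta'}{2}n^2\gg n^{3/2}$. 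So the deterministic bound is false, and with it the Chebyshev step as you have set it up.

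The fix is to condition only on the intersection \emph{sizes} $\{|U\cap W_i|=w_i\}$ rather than on the full vector $\vec x_W$ (this is what the paper does). Under that conditioning, $U$ is uniform subject to the size constraints, and one couples it with an unconditioned uniform $U'$: by Chebyshev (using part (1)) and the variance bound $\on{Var}|U'\cap W_i|\le n/4$, with probability at least $1/2$ one has $|X'-\mb EX|\le 2Rn^{3/2}$ and $\big||U'\cap W_i|-w_i\big|=O(R\sqrt n)$ simultaneously, and then only $O(R^2\sqrt n)$ vertices need to be added or deleted to turn $U'$ into $U$, each changing $X$ by at most $(R+1)n$. This yields $|X-\mb EX|=O_R(n^{3/2})$ with conditional probability at least $1/2$, which together with your (correct) Stirling estimate $\Pr[|U\cap W_i|=w_i]\gtrsim_{\delta',R}n^{-1/2}$ and independence across the disjoint $W_i$ gives the claim. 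Your part (1) and the computation of $\Pr[|U\cap W_i|=w_i]$ are fine (modulo the harmless "for large $n$" caveat in (1), which can be removed by tracking constants).
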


\begin{proof}
For (1), the expression for $X$ in \cref{eq:fourier-walsh} and the formula in \cref{eq:boolean-variance} show that
\[\on{Var}[X]=\frac{1}{4}\sum_{v\in V(G)}\left(f_v+\frac{1}{2}\deg(v)\right)^2+\frac{1}{16}e(G)\leq R^2n^3.\]

Let $E=\mb{E}X$ and note that for each $i=1,\ldots,s$ we have \[\Pr[|U\cap W_i|=w_i]=\binom{|W_i|}{w_i}^{-1}\asymp_{\delta',R} n^{-1/2}.\]
and these events are independent for all $i$. Thus, in order to establish (2), it suffices to show that when conditioning on $|U\cap W_i|=w_i$ for $i=1,\ldots,s$, we have $|X-E|\leq 6R^2n^{3/2}$ with probability at least $1/2$.

Also note that the value of $X$ changes by at most $(R+1)n$ when adding or deleting a vertex of $U$. We can sample a uniformly random subset $U\su V(G)$ conditioned on $|U\cap W_i|=w_i$ for $i=1,\ldots,s$ by the following procedure. First, sample a uniformly random subset $U'\su V(G)$, and then construct $U$ from $U'$ by deleting $|U'\cap W_i|-w_i$ uniformly randomly chosen vertices from $U'\cap W_i$ (if $|U'\cap W_i|\ge w_i$) or adding $w_i-|U'\cap W_i|$ randomly chosen vertices from $W_i\setminus U'$ to $U'$ (if $|U'\cap W_i|<w_i$) for each $i=1,\ldots,s$. With probability at least $1/2$ the value $X'=e(G[U'])+\sum_{v\in U'}f_{v}+f_0$ satisfies $|X'-E|\leq 2Rn^{3/2}$ and we have $||U'\cap W_i|-|W_i|/2|\leq s\sqrt{n}$ for $i=1,\ldots,n$ (by Chebyshev's inequality using $\on{Var}[X']\leq R^2n^3$ and $\on{Var}[|U'\cap W_i|]\leq n/4$). Whenever this is the case, we have $\big||U'\cap W_i|-w_i\big|\leq 2R\sqrt{n}$ for $i=1,\ldots,s$, implying $|X-X'|\leq 4R^2n^{3/2}$ and thus $|X-E|\leq 4R^2n^{3/2}+2Rn^{3/2}\leq 6R^2n^{3/2}$, as desired.
\end{proof}

The proof of \cref{lem:switching-moment-estimate} involves the consideration of tuples $((y_1,z_1),\ldots,(y_s,z_s))\in T^s$ and studies the probability that each $(y_i,z_i)$ contributes to some specified $Y_{\ell_i}$. So, we will need to establish various properties of the tuples $((y_1,z_1),\ldots,(y_s,z_s))\in T^s$. In particular, the properties in the following definition will be used in our proof of the upper bound in \cref{lem:switching-moment-estimate}. In this definition, and for the rest of this section, we write $\vec 1_A$ for the characteristic vector of a set $A$ (with $(\vec 1_A)_i=1$ if $i\in A$, and $(\vec 1_A)_i=0$ otherwise)\footnote{
In this section, we will not use the notation $\vec{x}_A$ for the restriction of a vector $\vec{x}$ to a set of indices $A$.}.

\begin{definition}\label{def:degeneracy}
Fix $C>0$ and let $\rho=\rho(C)>0$ and $\delta=\delta(C)>0$ be as in \cref{lem:switching-moment-estimate}. For a $C$-Ramsey graph $G$ on $n$ vertices and vertex pairs $(y_1,z_1),\ldots,(y_s,z_s)\in V(G)^2$, let us define $M(y_1,z_1,\ldots,y_s,z_s)$ to be the $s\times n$ matrix (with rows indexed by $1,\ldots,s$ and columns indexed by $V(G)$) with entries in $\{-1,0,1\}$ such that for $i=1,\ldots,s$ the $i$-th row of $M(y_1,z_1,\ldots,y_s,z_s)$ is the difference of characteristic vectors $\vec{1}_{N(z_i)\setminus\{y_i\}}-\vec{1}_{N(y_i)\setminus\{z_i\}}\in\mb{R}^{V(G)}$. We say that $((y_1,z_1),\ldots,(y_s,z_s))$ is \emph{$k$-degenerate} for some $k\in\{0,\ldots,s\}$ if it is possible to delete at most $\delta^{3/\rho}\cdot n$ columns from the matrix $M(y_1,z_1,\ldots,y_s,z_s)$ and obtain a matrix of rank at most $s-k$. We furthermore define the \emph{degeneracy} of $((y_1,z_1),\ldots,(y_s,z_s))$ to be the maximum $k$ such that $((y_1,z_1),\ldots,(y_s,z_s))$ is $k$-degenerate.
\end{definition}

Note that $(y_1,z_1,\ldots,y_s,z_s)$ is always $0$-degenerate (so the definition of degeneracy is well-defined).

The significance of the matrix $M(y_1,z_1,\ldots,y_s,z_s)$ is as follows. For any subset $U\su V(G)$ the entries of the product $M(y_1,z_1,\ldots,y_s,z_s)\vec{1}_U$ (which is a vector with $s$ entries) are precisely $|N(z_i)\cap (U\setminus \{y_i\})|-|N(y_i)\cap (U\setminus \{z_i\})|$ for $i=1,\ldots,s$ (these quantities occur in the definition of $Y_\ell$ in \cref{lem:switching-moment-estimate}). We can obtain a bound on the joint anticoncentration of these quantities from the following version of a theorem of Hal\'asz \cite{Hal77} (which can be viewed as a multi-dimensional version of the Erd\H{o}s--Littlewood--Offord theorem~\cite{Erd45}). This version follows via a fairly short deduction from the standard version of Hal\'asz' theorem \cite[Theorem 1]{Hal77} (for the case $r=s$, see also \cite[Exercise 7.2.3]{TV10}), but it is slightly more convenient to instead make our deduction from a version of Hal\'asz' theorem due to Ferber, Jain and Zhao~\cite{FJZ22}.

\begin{theorem}\label{thm:halasz-discrete}
Fix integers $s\geq r\geq 0$ and $\lambda>0$ and consider a matrix $M\in\mb{R}^{s\times n}$. Suppose that whenever we delete at most $\lambda n$ columns of $M$, the resulting matrix still has rank at least $r$. Then for a uniformly random vector $\vec{\xi}\in \{0,1\}^{n}$ we have $\Pr[M\vec{\xi}=\vec{\lambda}]\lesssim_{s,\lambda} n^{-r/2}$ for any vector $\vec{\lambda}\in\mb{R}^s$.
\end{theorem}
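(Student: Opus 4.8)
The plan is to deduce \cref{thm:halasz-discrete} from the version of Hal\'asz' inequality due to Ferber, Jain and Zhao~\cite{FJZ22}, after two routine reductions. First I would pass from zero-one vectors to Rademacher vectors: if $\vec\xi\in\{0,1\}^n$ is uniform and $\vec x\in\{-1,1\}^n$ is the associated Rademacher vector (so $\vec\xi=(\vec x+\vec 1)/2$), then $M\vec\xi = \tfrac12 M\vec x + \tfrac12 M\vec 1$, so $\Pr[M\vec\xi=\vec\lambda] = \Pr[M\vec x = 2\vec\lambda - M\vec 1]$, and it suffices to prove the bound $\Pr[M\vec x = \vec\mu]\lesssim_{s,\lambda} n^{-r/2}$ for all $\vec\mu\in\mb R^s$, for a uniform Rademacher vector $\vec x$. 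Second, I would reduce from $\vec\mu$ arbitrary to $\vec\mu=\vec 0$: writing $n = 2n' + (n\bmod 2)$ and pairing up columns, we can lower bound $\Pr[M\vec x=\vec\mu]$ by conditioning — more simply, note that the standard symmetrization trick gives $\Pr[M\vec x = \vec\mu] \le \Pr[M(\vec x - \vec x\,') = \vec 0]$ for an independent copy $\vec x\,'$, and $\vec x - \vec x\,'$ has i.i.d.\ coordinates supported on $\{-2,0,2\}$; collapsing $0$'s this is (up to scaling by $2$) a random $\pm 1$ combination of a random subset of the columns. Conditioning on which coordinates are nonzero, with probability $1-e^{-\Omega(n)}$ at least $n/3$ of them survive, and deleting at most $\lambda n < n/3$ columns still leaves rank $\ge r$ by hypothesis; so after this conditioning we are exactly in the setting of the anticoncentration theorem of~\cite{FJZ22} applied with $\vec\mu = \vec 0$ (the ``robustly rank $\ge r$'' hypothesis is exactly the hypothesis there, with the constant $\lambda$ absorbing the loss from $n\mapsto n/3$).

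Concretely, the version in~\cite{FJZ22} states (for a suitable normalization) that if $M\in\mb R^{s\times n}$ has the property that deleting any $\lambda n$ columns leaves a matrix of rank at least $r$, then for a uniform Rademacher vector $\vec y$ of the appropriate length one has $\sup_{\vec w}\Pr[M\vec y=\vec w]\lesssim_{s,\lambda} n^{-r/2}$. Applying this to the submatrix of $M$ on the surviving columns (of size $\ge n/3$), with the deletion parameter rescaled to $3\lambda$, and then integrating over the random choice of surviving coordinates (the bad event of having fewer than $n/3$ survivors contributes at most $e^{-\Omega(n)} \ll n^{-r/2}$), yields the claimed bound. All implied constants depend only on $s$ and $\lambda$, as required, since $r\le s$.

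I do not expect any serious obstacle here — this is a packaging lemma. The one point requiring mild care is checking that the ``robust rank'' hypothesis is preserved under the reductions: after restricting to a subset $T\subseteq[n]$ of surviving columns with $|T|\ge n/3$, deleting $3\lambda|T|\le\lambda n$ of them is the same as deleting $\le\lambda n$ columns of the original $M$, which by hypothesis leaves rank $\ge r$; so $M[{\cdot}\times T]$ is robustly rank $\ge r$ with parameter $3\lambda$ relative to its own width $|T|$, and since $|T|\asymp n$ the bound $|T|^{-r/2}\asymp_\lambda n^{-r/2}$ is of the right order. The symmetrization step is standard: $\Pr[M\vec x=\vec\mu]^2 \le \Pr[M\vec x = M\vec x\,']$ by Cauchy--Schwarz (averaging over $\vec\mu'$) — or one simply notes $\Pr[M\vec x=\vec\mu] = \sum_{\vec\nu}\Pr[M\vec x=\vec\nu,\,M\vec x\,'=\vec\nu+\vec\mu-\vec\mu]$... in fact the cleanest route is: $\max_{\vec\mu}\Pr[M\vec x=\vec\mu]\le \max_{\vec\mu}\Pr[M(\vec x-\vec x\,')=\vec\mu-\vec\mu] = \Pr[M(\vec x - \vec x\,')=\vec 0]$ fails in general, so instead I would use the genuinely correct inequality $\max_{\vec\mu}\Pr[M\vec x = \vec\mu] \le \big(\Pr[M(\vec x - \vec x\,')=\vec 0]\big)^{1/2}$, which follows from Cauchy--Schwarz, and this loses only a square root that is harmless for an $n^{-r/2}$-type bound once one also notes $\vec x - \vec x\,'$, conditioned on its support $T$, is $2$ times a uniform Rademacher vector on $T$; then apply the previous paragraph to conclude $\Pr[M(\vec x-\vec x\,')=\vec 0] \lesssim_{s,\lambda} n^{-r}$, giving $n^{-r/2}$ after the square root. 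This completes the deduction.
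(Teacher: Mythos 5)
There is a genuine gap, and it sits in the symmetrization step. After reducing to a Rademacher vector $\vec x$, you replace $\Pr[M\vec x=\vec\mu]$ by $\big(\Pr[M(\vec x-\vec x\,')=\vec 0]\big)^{1/2}$ via Cauchy--Schwarz, and you then assert that $\Pr[M(\vec x-\vec x\,')=\vec 0]\lesssim_{s,\lambda}n^{-r}$. That bound is false in general: conditioning on the support $T$ of $\vec x-\vec x\,'$ and applying a Hal\'asz-type inequality to the submatrix on the columns in $T$ yields only $|T|^{-r/2}\asymp_\lambda n^{-r/2}$, and this order is sharp (take $s=r=1$ and $M$ the all-ones row, for which $\Pr[M(\vec x-\vec x\,')=0]\asymp n^{-1/2}$ by the local central limit theorem). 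So your argument as written delivers $n^{-r/4}$, not the claimed $n^{-r/2}$. The detour is also unnecessary: Hal\'asz-type inequalities, including the one in \cite{FJZ22}, bound $\sup_{\vec w}\Pr[M\vec y=\vec w]$ for an \emph{arbitrary} target $\vec w$, so there is no need to reduce to $\vec w=\vec 0$ at all --- indeed your own paraphrase of the input theorem carries a $\sup_{\vec w}$, which makes the symmetrization pointless as well as lossy.

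A second, more minor issue: you claim the column-deletion (robust rank) hypothesis is ``exactly'' the hypothesis of \cite{FJZ22}. The result actually used, \cite[Theorem~1.10]{FJZ22}, assumes instead that the columns can be partitioned into $m$ groups each spanning a subspace of dimension at least $r$, and concludes a bound of order $m^{-r/2}$. Passing from the deletion hypothesis to this partition hypothesis requires a short combinatorial step which your write-up omits: take a maximal collection of pairwise disjoint linearly independent $r$-subsets of columns; if there were fewer than $\lambda n/r$ of them, deleting the at most $\lambda n$ columns they cover would leave a matrix of rank below $r$, contradicting the hypothesis; hence there are at least $\lceil\lambda n/r\rceil$ such subsets, and distributing the remaining columns among them gives the required partition with $m\gtrsim_{\lambda,s} n$. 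This is precisely the paper's proof, applied directly to the $\{0,1\}$-valued vector $\vec\xi$ with the arbitrary target $\vec\lambda$ and with no symmetrization; your proposal needs to incorporate this step and drop the symmetrization to recover the stated exponent.
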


\begin{proof}
The assumption on $M$ implies that the set of columns of $M$ contains  $\lceil\lambda n/r\rceil$ disjoint linearly independent subsets of size $r$ (indeed, consider a maximal collection of such subsets, and note that upon deleting the corresponding columns from $M$ the resulting matrix has rank less than $r$). Hence the columns of $M$ can be partitioned into $\lceil\lambda n/r\rceil$ subsets, such that the span of each of these subsets has dimension at least $r$. By \cite[Theorem 1.10]{FJZ22} this implies that $\Pr[M\vec{\xi}=\vec{\lambda}]\lesssim_{s} (\lceil\lambda n/r\rceil)^{-r/2} \lesssim_{s,\lambda} n^{-r/2}$.
\end{proof}

Applying this theorem to the matrix-vector product $M(y_1,z_1,\ldots,y_s,z_s)\vec{1}_U$
yields bounds that get weaker as the degeneracy of $((y_1,z_1),\ldots,(y_s,z_s))$ increases. We therefore need to show that there are only few $s$-tuples $((y_1,z_1),\ldots,(y_s,z_s))\in T^s$ with high degeneracy (see part (b) of \cref{lem:s-tuples-in-T-to-consider} below), and we will use the following technical lemma to do this.

\begin{lemma}\label{lem:switching-technical-lemma-degenerate-tuples}
For a $C$-Ramsey graph $G$ on $n$ vertices (where $n$ is sufficiently large with respect to $C$), let $S\su S_0\su V(G)$, $T\su V(G)^2$, $D = D(C)$, $\rho=\rho(C)>0$ and $\delta=\delta(C)>0$ be defined as in \cref{lem:switching-moment-estimate}. Let $((y_1,z_1),\ldots,(y_s,z_s))\in T^s$ be a $k$-degenerate $s$-tuple for some $0\leq s\leq D/2$ and $k\in \{0,\ldots,s\}$. Then there exist indices $1\leq i_1<\cdots<i_{s-k}\leq s$ such that the following holds. For every vector $\vec{t}\in \{-1,0,1\}^{s-k}$, let $W_{\vec{t}}\su V(G)$ be the set of vertices such that the corresponding column of the $(s-k)\times n$ matrix $M(y_{i_1},z_{i_1},\ldots,y_{i_{s-k}},z_{i_{s-k}})$ (as in \cref{def:degeneracy}) equals $\vec{t}$. Then for each $j\in [s]\setminus \{i_1,\ldots,i_{s-k}\}$ one can find a vector $\vec{t}\in \{-1,0,1\}^{s-k}$ such that the set $W_{\vec{t}}$ fulfills the following three conditions:
\begin{itemize}
    \item[(i)] $|W_{\vec{t}}\cap S_0|\geq \delta\cdot |S_0|$.
    \item[(ii)] $|N(y_j)\cap W_{\vec{t}}\cap S_0|\leq  \rho\cdot |W_{\vec{t}}\cap S_0|$.
    \item[(iii)] $|N(z_j)\cap W_{\vec{t}}\cap S_0|\geq (1-\rho)\cdot |W_{\vec{t}}\cap S_0|$.
\end{itemize}
\end{lemma}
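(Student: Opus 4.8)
\textbf{Proof plan for \cref{lem:switching-technical-lemma-degenerate-tuples}.}
The plan is to first choose the indices $i_1,\ldots,i_{s-k}$ via the degeneracy structure, and then analyze each ``surplus'' index $j$ separately using the richness of $G[S_0]$ and the definition of the set $T$. Since $((y_1,z_1),\ldots,(y_s,z_s))$ is $k$-degenerate, by \cref{def:degeneracy} we can delete at most $\delta^{3/\rho}\cdot n$ columns of $M(y_1,z_1,\ldots,y_s,z_s)$ to obtain a matrix of rank at most $s-k$. Among the $s$ rows, pick a maximal subset whose restriction to the remaining columns is linearly independent; this has size at most $s-k$, and after padding if necessary we get indices $1\le i_1<\cdots<i_{s-k}\le s$ such that the rows indexed by $i_1,\ldots,i_{s-k}$, restricted to the non-deleted columns, span the row space of the truncated matrix. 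Consequently, for every $j\in[s]\setminus\{i_1,\ldots,i_{s-k}\}$, the $j$-th row of $M(y_1,z_1,\ldots,y_s,z_s)$ agrees, \emph{outside a set $E_j$ of at most $\delta^{3/\rho}\cdot n$ columns}, with some fixed linear combination of the rows indexed by $i_1,\ldots,i_{s-k}$. The point of this is combinatorial rather than algebraic: I would actually want the stronger statement that the $j$-th row is \emph{determined} by the pattern vector $\vec t\in\{-1,0,1\}^{s-k}$ on all but few columns — but a cleaner route is to note that the sets $W_{\vec t}$ ($\vec t\in\{-1,0,1\}^{s-k}$) partition $V(G)$, so it suffices to show that for the index $j$, among the blocks $W_{\vec t}$ there is one that simultaneously is large, avoids $N(y_j)$, and is contained in $N(z_j)$ (up to $\rho$-fractions).

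Now fix such a $j$. Consider the set $P_j := (N(z_j)\setminus N(y_j))\cap S_0$; by the definition of $T$ (applied to $(y_j,z_j)\in T$), we have $|P_j|\ge \rho^2|S_0|$. The key observation is that $P_j$ is a union of some of the blocks $W_{\vec t}\cap S_0$ (intersected with $S_0$): indeed $P_j\subseteq S_0$, and whether a vertex $v\in S_0$ lies in $P_j$ depends only on its adjacency to $y_j$ and $z_j$, hence is constant on each block $W_{\vec t}$ (this uses that $y_j,z_j\in S$, so $y_j=y_{i_r}$ or $z_j=z_{i_r}$ type coincidences are handled — actually I need to be a little careful here: $P_j$ being a union of blocks requires that $y_j$ and $z_j$ each appear as $y_{i_r}$ or $z_{i_r}$ for some $r$, OR that we enlarge the index set; this is precisely where one uses that one can always include $j$ itself among the $i$'s unless $k\ge 1$, and a short argument shows the relevant adjacency patterns are captured). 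Granting this, since $|P_j|\ge \rho^2|S_0|$ is covered by at most $3^{s-k}\le 3^{D/2}$ blocks, by pigeonhole some block $W_{\vec t}$ has $|W_{\vec t}\cap S_0|\ge |W_{\vec t}\cap P_j|\ge \rho^2|S_0|/3^{D/2}\ge \delta\cdot|S_0|$, using $\delta<\rho^3/3^{D+1}<\rho^2/3^{D/2}$ from \cref{lem:switching-setup}. This block lies inside $P_j\cap S_0 = (N(z_j)\setminus N(y_j))\cap S_0$ up to the columns of $E_j$ and other small exceptional sets, so $W_{\vec t}\cap S_0\subseteq N(z_j)$ and $W_{\vec t}\cap S_0\cap N(y_j)=\varnothing$, giving (i)–(iii) with room to spare (one absorbs the $O(\delta^{3/\rho}n)$ exceptional columns into the $\rho$-fractions since $\delta^{3/\rho}n\le \delta\cdot|S_0|\cdot\rho$ once $|S_0|\ge \delta^{1/\rho}n$ and $\delta$ is small).

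\textbf{Main obstacle.} The delicate point — and the one I would spend the most care on — is making precise the claim that $P_j=(N(z_j)\setminus N(y_j))\cap S_0$ (or a set differing from it by $O(\delta^{3/\rho}n)$ vertices) is a union of blocks $W_{\vec t}\cap S_0$. This is exactly the reason the indices $i_1,\ldots,i_{s-k}$ must be chosen so that the ``pattern'' of each surplus row $j$ is, outside a sparse exceptional set, recoverable from the patterns of the chosen rows; the degeneracy hypothesis gives a \emph{linear} dependence, and one must convert that into a statement about the $\{-1,0,1\}$-valued entries (a linear combination of $\{-1,0,1\}$ vectors with at most $2^s$ possible value-tuples). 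Concretely: the $j$-th row equals a fixed rational combination $\sum_r c_r (\text{row } i_r)$ outside $E_j$, so its entry on a column in block $W_{\vec t}$ equals $\sum_r c_r t_r$ — a function of $\vec t$ alone — hence the $j$-th row is constant on $W_{\vec t}\setminus E_j$, which is what we need since $N(z_j)\setminus N(y_j)$ is read off from this entry being $+1$. Once that reduction is in hand, the rest is the pigeonhole-plus-richness argument above. I would also double-check the edge cases $k=s$ (then no surplus indices, vacuous) and $s\le D/2$ ensuring $3^{s-k}\le 3^{D/2}$ and all $\delta,\rho$ inequalities from \cref{lem:switching-setup} apply.
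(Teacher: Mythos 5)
Your proposal is correct and follows essentially the same route as the paper: delete the $\le\delta^{3/\rho}n$ exceptional columns, choose $i_1<\cdots<i_{s-k}$ spanning the row space of the truncated matrix, observe that each surplus row $j$ is therefore constant on each block $W_{\vec t}$ outside the deleted columns (its value there being $\sum_r c_r t_r$, a function of $\vec t$ alone), and then pigeonhole $(N(z_j)\setminus N(y_j))\cap S_0$ over the $3^{s-k}$ blocks and absorb the exceptional vertices into the $\rho$-fractions. The only wrinkle is that your first justification that $P_j$ is a union of blocks (``depends only on adjacency to $y_j,z_j$'') is not valid as initially stated, but you explicitly flag this and repair it in your ``main obstacle'' paragraph with exactly the linear-combination argument the paper uses.
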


\begin{proof}
Since $((y_1,z_1),\ldots,(y_s,z_s))\in T^s$ is $k$-degenerate, there is a way to delete at most $\delta^{3/\rho}\cdot n$ columns from the $s\times n$ matrix  $M(y_1,z_1,\ldots,y_s,z_s)$ and obtain a matrix $M'$ of rank at most $s-k$. Let $Q\su V(G)$ be the set of vertices corresponding to the deleted columns. We have the bound $|Q|+2\leq \delta^{3/\rho}\cdot n+2\leq \delta^{2/\rho}\cdot |S_0|+2\leq \delta \cdot|S_0|\leq (\rho^2/2)\cdot |S_0|$ (recall from \cref{lem:switching-setup} that $|S_0|\ge\delta^{1/\rho}\cdot n$ and $\delta<\rho^3/3^{D+1}$).

Since $M'$ has rank at most $s-k$, we can choose indices $1\leq i_1<\cdots<i_{s-k}\leq s$ such that every row of $M'$ can be written as a linear combination of the rows with indices $i_1,\ldots,i_{s-k}$. We will show that this choice of indices satisfies the desired statement.

The rows of $M'$ with indices $i_1,\ldots,i_{s-k}$ form precisely the matrix $M(y_{i_1},z_{i_1},\ldots,y_{i_{s-k}},z_{i_{s-k}})$ with the columns corresponding to vertices in $Q$ deleted. Note that for each vector $\vec{t}\in \{-1,0,1\}^{s-k}$ and each $h=1,\ldots,s-k$, the entries in the $i_h$-th row of $M'$ in the columns with indices in $W_{\vec{t}}\setminus Q$ all have the same value, namely $t_h$. In  other words, writing
\[\vec M'_j=\vec{1}_{N(z_j)\setminus(\{y_j\}\cup Q)}-\vec{1}_{N(y_j)\setminus(\{z_j\}\cup Q)}\in \{-1,0,1\}^{V(G)\sm Q}\]
for the $j$-th row of $M'$ for $j=1,\ldots,s$, each of the row vectors $\vec M'_{i_1},\ldots,\vec M'_{i_{s-k}}$ are constant on each of the column sets $W_{\vec{t}}\setminus Q$, for $\vec{t}\in \{-1,0,1\}^{s-k}$. Since every row $\vec M'_j$ is a linear combination of these vectors, it follows that in fact each row $\vec M'_j$ is constant on each of the column sets $W_{\vec{t}}\setminus Q$.

Now, let us fix some $j\in [s]\setminus \{i_1,\ldots,i_{s-k}\}$. We need to show that we can find some $\vec{t}\in\{-1,0,1\}^{s-k}$ satisfying conditions (i)--(iii) in the lemma. Since $(y_j,z_j)\in T$, the definition of $T$ (see the statement of \cref{lem:switching-moment-estimate}) implies $|(N(z_j)\setminus N(y_j))\cap S_0|\geq \rho^2 \cdot|S_0|$, and so $|(N(z_j)\setminus N(y_j))\cap (S_0\setminus (Q\cup\{y_j,z_j\}))|\geq \rho^2 \cdot|S_0|-|Q|-2\geq (\rho^2/2)\cdot|S_0|$. This means that $\vec M'_j$ has at least $(\rho^2/2)|S_0|$ entries corresponding to vertices in $S_0\setminus (Q\cup\{y_j,z_j\})$ with value $1-0=1$. Hence, by the pigeonhole principle there must be some $\vec{t}\in \{-1,0,1\}^{s-k}$ for which there are at least $\rho^2\cdot |S_0|/(2\cdot 3^{s-k})\geq (\rho^2/3^{D+1})\cdot |S_0|$ vertices in $(W_{\vec{t}}\cap S_0)\setminus (Q\cup\{y_j,z_j\})$ such that the corresponding entry in $\vec M'_j$ is 1.

For this $\vec t$ we have $|W_{\vec{t}}\cap S_0|\geq (\rho^2/3^{D+1})\cdot |S_0|\geq(\delta/\rho)\cdot |S_0|$, so $\vec{t}$ satisfies (i) (recall from \cref{lem:switching-setup} that $0<\rho<1$). Furthermore recall that $\vec M'_j$ is constant on the index set $W_{\vec{t}}\setminus Q$, so this constant value must be 1. This means that for all vertices $v\in W_{\vec{t}}\setminus (Q\cup\{y_j,z_j\})$ we must have $v\in N(z_i)$ and $v\not\in N(y_i)$. Hence $|N(y_j)\cap W_{\vec{t}}\cap S_0|\leq |Q\cup\{y_j,z_j\}|\leq |Q|+2\leq \delta\cdot |S_0|\le\rho\cdot |W_{\vec{t}}\cap S_0|$, establishing (ii). Furthermore, we similarly have $|N(z_j)\cap W_{\vec{t}}\cap S_0|\geq |W_{\vec{t}}\cap S_0|-|Q\cup\{y_j,z_j\}|\geq (1-\rho)\cdot |W_{\vec{t}}\cap S_0|$ as required in (iii).
\end{proof}

Given a graph $G$ and vertex pairs $(y_1,z_1),\ldots,(y_s,z_s)\in V(G)^2$, for each $i=1,\ldots,s$ define
\[N_i(y_1,z_1,\ldots,y_s,z_s)=N(z_i)\cap\overline{N}(y_1,z_1,\ldots,y_{i-1},z_{i-1},y_i,y_{i+1},z_{i+1},\ldots,y_s,z_s)\]
to be the set of vertices in $V(G)\setminus \{y_1,z_1,\ldots,y_s,z_s\}$ that are adjacent to $z_i$ but not to any of the other vertices among $y_1,z_1,\ldots,y_s,z_s$. For the lower bound in \cref{lem:switching-moment-estimate}, we will consider tuples $((y_1,z_1),\ldots,(y_s,z_s))\in T^s$ such that $|N_i(y_1,z_1,\ldots,y_s,z_s)\cap S_0|\geq \rho\delta\cdot |S_0|$ for all $i=1,\ldots,s$.

\begin{lemma}\label{lem:s-tuples-in-T-to-consider}
For a $C$-Ramsey graph $G$ on $n$ vertices (where $n$ is sufficiently large with respect to $C$), let $S\su S_0\su V(G)$, $T\su V(G)^2$, $D = D(C)$, $\rho=\rho(C)>0$ and $\delta=\delta(C)>0$ be defined as in \cref{lem:switching-moment-estimate}. Then for each $s=0,1,\ldots,D/2$ the following statements hold.
\begin{itemize}
    \item[(a)] At least $|T|^s/2$ different $s$-tuples $((y_1,z_1),\ldots,(y_s,z_s))\in T^s$ with distinct $y_1,z_1,\ldots,y_s,z_s$ satisfy $|N_i(y_1,z_1,\ldots,y_s,z_s)\cap S_0|\geq \rho\delta\cdot |S_0|$ for all $i=1,\ldots,s$.
    \item[(b)] For each $k=0,\ldots,s$, the number of $k$-degenerate $s$-tuples $((y_1,z_1),\ldots,(y_s,z_s))\in T^s$ is at most $|T|^s/\sqrt{n}^k$.
\end{itemize}
\end{lemma}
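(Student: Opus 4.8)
\textbf{Proof plan for \cref{lem:s-tuples-in-T-to-consider}.}

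For part (a), the plan is to show that only a negligible proportion of $s$-tuples in $T^s$ fail the conclusion. First I would discard the $s$-tuples $((y_1,z_1),\ldots,(y_s,z_s))\in T^s$ that do not have distinct coordinates: since $|T|\geq n^{0.96}/4$ by \cref{lem:switching-T-large} and there are at most $(2s)^2\cdot |T|^{s-1/2}\cdot O(1)$ tuples with a repeated vertex (fixing which two of the $2s$ slots coincide costs a factor of roughly $\sqrt{|S|}$, hence $\sqrt{n}$, in the count), this is at most $|T|^s/4$ for large $n$. Next, for each $i$, I would bound the number of tuples violating $|N_i(y_1,z_1,\ldots,y_s,z_s)\cap S_0|\geq \rho\delta|S_0|$. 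The key point is property (2) of \cref{lem:switching-setup}: for any $\{v_1,\ldots,v_r\}\subseteq S$ with $r\leq D$ we have $|\overline N(v_1,\ldots,v_r)\cap S_0|\geq \delta|S_0|$. Applying this with the $2s-1$ vertices $y_1,z_1,\ldots,z_{i-1},y_i,y_{i+1},\ldots,z_s$ (all of which lie in $S$, and $2s-1\le D-1\le D$), we get $|\overline N(\ldots)\cap S_0|\geq \delta|S_0|$; and since $G[S_0]$ is $(\delta,\rho)$-rich, at most $|S_0|^{1/5}$ vertices $z\in S$ have $|N(z)\cap (\overline N(\ldots)\cap S_0)|\le \rho\cdot|\overline N(\ldots)\cap S_0|$, i.e.\ at most $|S_0|^{1/5}\le n^{1/5}$ choices of $z_i$ violate $|N_i\cap S_0|\geq \rho\delta|S_0|$ once the other $2s-1$ vertices are fixed. (Here I'd need to be a little careful that fixing $y_i$ separately and then $z_i$ gives the clean count; fixing all coordinates except $z_i$ leaves $\le n^{1/5}$ bad choices of $z_i$, so the number of tuples bad at position $i$ is $\le |T|^{s-1}\cdot O(n^{1/5})\ll |T|^s/(4s)$.) Summing over $i=1,\ldots,s$ and adding the count of non-distinct tuples gives that at most $|T|^s/2$ tuples fail, proving (a).

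For part (b), I would count $k$-degenerate $s$-tuples by exploiting the structure exposed in \cref{lem:switching-technical-lemma-degenerate-tuples}. Fix a $k$-degenerate tuple $((y_1,z_1),\ldots,(y_s,z_s))\in T^s$; by that lemma there are indices $i_1<\cdots<i_{s-k}$ and, for each $j\notin\{i_1,\ldots,i_{s-k}\}$, a vector $\vec t^{(j)}\in\{-1,0,1\}^{s-k}$ whose associated column class $W_{\vec t^{(j)}}$ (determined only by $(y_{i_1},z_{i_1},\ldots,y_{i_{s-k}},z_{i_{s-k}})$) satisfies $|W_{\vec t^{(j)}}\cap S_0|\ge\delta|S_0|$, $|N(y_j)\cap W_{\vec t^{(j)}}\cap S_0|\le\rho|W_{\vec t^{(j)}}\cap S_0|$, and $|N(z_j)\cap W_{\vec t^{(j)}}\cap S_0|\ge(1-\rho)|W_{\vec t^{(j)}}\cap S_0|$. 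The plan is: first choose which $s-k$ of the $s$ positions are the "free" ones ($\binom sk$ ways), then choose the free pairs $(y_{i_h},z_{i_h})$ in $T^{s-k}$ ($\le |T|^{s-k}$ ways); this determines all the sets $W_{\vec t}$. Then for each of the $k$ remaining positions $j$, choose $\vec t^{(j)}\in\{-1,0,1\}^{s-k}$ ($3^{s-k}\le 3^D$ ways) and then choose $(y_j,z_j)$: by $(\delta,\rho)$-richness of $G[S_0]$ applied to the set $W_{\vec t^{(j)}}\cap S_0$ (which has size $\ge\delta|S_0|$), there are at most $|S_0|^{1/5}\le n^{1/5}$ vertices $v$ with $|N(v)\cap W_{\vec t^{(j)}}\cap S_0|\le\rho|W_{\vec t^{(j)}}\cap S_0|$ and at most $n^{1/5}$ vertices $v$ with $|N(v)\cap W_{\vec t^{(j)}}\cap S_0|\ge(1-\rho)|W_{\vec t^{(j)}}\cap S_0|$; hence $y_j$ has $\le n^{1/5}$ choices and $z_j$ has $\le n^{1/5}$ choices. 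Multiplying, the number of $k$-degenerate $s$-tuples is at most $\binom sk\cdot |T|^{s-k}\cdot (3^D)^k\cdot (n^{1/5}\cdot n^{1/5})^k = O_C(|T|^{s-k}\cdot n^{2k/5})$. Since $|T|\ge n^{0.96}/4$, we have $|T|^{k}\ge n^{0.96k}/4^D \gg n^{2k/5}\cdot n^{k/2}\cdot O_C(1)$ for large $n$, so $|T|^{s-k}n^{2k/5} = |T|^s\cdot n^{2k/5}/|T|^k \le |T|^s/n^{k/2}$, which is the claimed bound (with room to spare). This handles all $k$ uniformly.

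The main obstacle is part (b), and specifically making the reduction to \cref{lem:switching-technical-lemma-degenerate-tuples} airtight: one must verify that $s\le D/2$ ensures $2s\le D$ (so the richness/neighborhood hypotheses of \cref{lem:switching-setup} and \cref{lem:switching-technical-lemma-degenerate-tuples} apply), and that the column classes $W_{\vec t}$ really are functions of the free pairs alone, so that the counting factorizes as described. A secondary subtlety is the bookkeeping in part (a) for tuples with repeated coordinates and the precise order in which vertices are fixed when invoking richness (fixing all but one coordinate at a time). Both are routine once set up carefully, but they require keeping the constants $\delta,\rho,D$ and the polynomial exponents ($n^{1/5}$ from richness with $\alpha=1/5$, versus the $n^{0.96}$ lower bound on $|T|$) straight throughout.
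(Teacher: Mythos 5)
Your proposal is correct and follows essentially the same approach as the paper's proof: for (a) you use property (2) of \cref{lem:switching-setup} together with $(\delta,\rho)$-richness to bound the bad choices of $z_i$ by $n^{1/5}$ after fixing the other $2s-1$ coordinates, and for (b) you count $k$-degenerate tuples exactly as the paper does via \cref{lem:switching-technical-lemma-degenerate-tuples}, paying $3^{s-k}\cdot n^{2/5}$ per constrained position and absorbing this using $|T|\gtrsim n^{0.96}$. The only nitpick is that in (a) your stated count $|T|^{s-1}\cdot O(n^{1/5})$ for tuples bad at position $i$ omits the factor of $|S|$ for the choice of $y_i$ (the correct count is $|T|^{s-1}\cdot|S|\cdot n^{1/5}$), but since $|S|\cdot n^{1/5}\ll|T|/(4s)$ this does not affect the conclusion.
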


\begin{proof}
For (a), we first claim that for each fixed $i=1,\ldots,s$ there are at most $|T|^s/(4D)$ different $s$-tuples $((y_1,z_1),\ldots,(y_s,z_s))\in T^s$ with  $|N_i(y_1,z_1,\ldots,y_s,z_s)\cap S_0|< \rho\delta\cdot |S_0|$. Indeed, without loss of generality assume $i=s$ and note that there are $|T|^{s-1}$ choices for the pairs $(y_1,z_1),\ldots,(y_{s-1},z_{s-1})$ and $|S|$ choices for $y_s$. Fixing these choices determines the set $\overline{N}(y_1,z_1,\ldots,y_{s-1},z_{s-1},y_s)$ and by property (2) of \cref{lem:switching-setup} this set satisfies
\[|\overline{N}(y_1,z_1,\ldots,y_{s-1},z_{s-1},y_s)\cap S_0|\geq \delta\cdot |S_0|.\]Hence, since the graph $G[S_0]$ is $(\delta,\rho)$-rich (by property (1) of \cref{lem:switching-setup}), there are at most $n^{1/5}$ choices for the remaining vertex $z_s$ such that the set
\[N_s(y_1,z_1,\ldots,y_s,z_s)\cap S_0=N(z_s)\cap \overline{N}(y_1,z_1,\ldots,y_{s-1},z_{s-1},y_s)\cap S_0\]
has size at most $\rho \cdot |\overline{N}(y_1,z_1,\ldots,y_{s-1},z_{-1},y_s)\cap S_0|$. In particular, there are at most $n^{1/5}$ choices for $z_s$ with $|N_s(y_1,z_1,\ldots,y_s,z_s)\cap S_0|< \rho\delta\cdot |S_0|$.

This indeed shows that for each $i=1,\ldots,s$ there are at most $|T|^{s-1}\cdot |S|\cdot n^{1/5}\leq |T|^{s}/(4D)$ different $s$-tuples $((y_1,z_1),\ldots,(y_s,z_s))\in T^s$ with $|N_i(y_1,z_1,\ldots,y_s,z_s)\cap S_0|< \rho\delta\cdot |S_0|$ (recall from \cref{lem:switching-T-large} that $|T|\geq |S|^2/2\geq |S|\cdot n^{0.48}/2$). Hence there are at least $(3/4)\cdot |T|^s$ different $s$-tuples $((y_1,z_1),\ldots,(y_s,z_s))\in T^s$ with $|N_i(y_1,z_1,\ldots,y_s,z_s)\cap S_0|\geq \rho\delta\cdot |S_0|$ for all $i=1,\ldots,s$. Now, at most $O_s(|T|^{s-1}\cdot |S|)\leq |T|^s/4$ of these $s$-tuples can have a repetition among the vertices $y_1,z_1,\ldots,y_s,z_s$. This proves (a).

For (b), fix some $k\in\{0,\ldots,s\}$. For each $k$-degenerate $s$-tuple $((y_1,z_1),\ldots,(y_s,z_s))\in T^s$ we can find indices $1\leq i_1<\cdots<i_{s-k}\leq s$ with the property in \cref{lem:switching-technical-lemma-degenerate-tuples}. It suffices to show that for any fixed $1\leq i_1<\cdots<i_{s-k}\leq s$, there are at most $|T|^s/(\sqrt{n}^k\cdot \binom{s}{k})$ different $s$-tuples $((y_1,z_1),\ldots,(y_s,z_s))\in T^s$ with the property in \cref{lem:switching-technical-lemma-degenerate-tuples}. To show this, first note that there are $|T|^{s-k}$ choices for $(y_{i_1},z_{i_1}),\ldots,(y_{i_{s-k}},z_{i_{s-k}})\in T$. After fixing these choices, we claim that for each $j\in [s]\setminus \{i_1,\ldots,i_{s-k}\}$ there are at most $3^{s-k}\cdot n^{2/5}$ possibilities for the vertices $y_j$ and $z_j$. Indeed, for every such $j$ there must be a vector $\vec{t}\in \{-1,0,1\}^{s-k}$ such that conditions (i) to (iii) in \cref{lem:switching-technical-lemma-degenerate-tuples} hold. There are at most $3^{s-k}$ possibilities for $\vec{t}$ satisfying (i), and whenever (i) holds there are at most $n^{1/5}$ choices for $y_j$ satisfying (ii) and at most $n^{1/5}$ choices for $z_j$ satisfying (iii), since the graph $G[S_0]$ is $(\delta,\rho)$-rich.
So overall, for fixed indices $1\leq i_1<\cdots<i_{s-k}\leq s$, there are indeed at most $|T|^{s-k}\cdot (3^{s-k}n^{2/5})^{k}\leq 3^{Dk}\cdot |T|^{s-k} \cdot (n^{0.4})^k\leq |T|^s/(\sqrt{n}^k\cdot \binom{s}{k})$ different $s$-tuples $((y_1,z_1),\ldots,(y_s,z_s))\in T^s$ satisfying the property in \cref{lem:switching-technical-lemma-degenerate-tuples} for $n$ sufficiently large (recalling that $|T|\geq n^{0.96}/2$ by \cref{lem:switching-T-large}).
\end{proof}

Now we prove \cref{lem:switching-moment-estimate}.

\begin{proof}[Proof of \cref{lem:switching-moment-estimate}]
We may assume that $n$ is sufficiently large with respect to $C$ and $A$. Let $X=e(G[U])+\sum_{u\in U}e_u$ and let us define $E=\mb{E}X$. Consider $x\in\mb Z$ such that $|x-E|\leq An^{3/2}$, and fix $a_{-B},\ldots,a_B\in \{0,1,2\}$. Let $s=a_{-B}+\cdots+a_B\leq 4B+2$ and fix a list $(\ell_{1},\ldots,\ell_{s})$ containing $a_{\ell}$ copies of each $\ell=-B,\ldots,B$. For $(y,z)\in T$, let $\mathcal{E}_i(y,z)$ be the event that $(y,z)$ contributes to $Y_{\ell_i}$; i.e., the event that we have $y\in U$ and $z\notin U$ and $(|N(z)\cap(U\setminus \{y\})|+e_z)-(|N(y)\cap (U\setminus \{x\})|+e_y)=\ell_i$. Now,
\begin{equation}\label{eq:moment-sum}
\mb E[Y_{-D}^{a_{-D}}\cdots Y_{D}^{a_{D}}Z_{x-B,x+B}]  =\sum \Pr\big[|X-x|\leq B\text{ and }\mathcal{E}_i(y_{i},z_{i})\text{ holds for }i=1,\ldots,s\big],
\end{equation}
where the sum is over all $s$-tuples $((y_1,z_1),\ldots,(y_s,z_s))\in T^s$. To prove the lemma, we separately establish lower and upper bounds
on this quantity. Note that for $s=0$, we already know that $\Pr[|X-x|\leq B] \asymp_{C,H,A} n^{-3/2}$ by \cref{thm:short-interval}, so we may assume that $s\geq 1$.

\medskip
\noindent\textit{Step 1: the lower bound. }
For the lower bound, we will only consider the contribution to \cref{eq:moment-sum} from $s$-tuples in $T^s$ satisfying \cref{lem:s-tuples-in-T-to-consider}(a). There are at least $|T|^s/2$ such $s$-tuples. So in order to establish the desired lower bound $\Omega_{C,H,A}((|T|/\sqrt{n})^s\cdot n^{-3/2})$ for the sum in \cref{eq:moment-sum}, it suffices to prove that each such $s$-tuple contributes at least $\Omega_{C,H,A}(n^{-(s+3)/2})$ to the sum. In other words, it suffices to show that
\begin{equation}\label{eq:moment-sum-lower-bound}
    \Pr\big[|X-x|\leq B\text{ and }\mathcal{E}_i(y_{i},z_{i})\text{ holds for }i=1,\ldots,s\big]\gtrsim_{C,H,A} n^{-s/2}\cdot n^{-3/2}
\end{equation}
for any $s$-tuple $((y_1,z_1),\ldots,(y_s,z_s))\in T^s$ with $|N_i(y_1,z_1,\ldots,y_s,z_s)\cap S_0|\geq \rho\delta|S_0|$ for all $i=1,\ldots,s$ and such that the vertices $y_1,z_1,\ldots,y_s,z_s$ are distinct. So let $((y_1,z_1),\ldots,(y_s,z_s))\in T^s$ be such an $s$-tuple. For simplicity of notation we write $\overline{N}=\overline{N}(y_1,z_1,\ldots,y_s,z_s)\cap S_0$ and  $N_i=N_i(y_1,z_1,\ldots,y_s,z_s)\cap S_0$ for $i=1,\ldots,s$. Then  $|N_i|\geq \rho\delta|S_0|\ge \rho\delta^{1+1/\rho}\cdot n$ for $i=1,\ldots,s$, and also $|\overline{N}|\geq \delta|S_0|\geq\delta^{1+1/\rho}\cdot n$ by property (2) of \cref{lem:switching-setup} (as $2s\leq 8B+4\leq D$). Note that  $N_1,\ldots,N_s$ and $\overline{N}$ are disjoint subsets of $S_0\sm\{y_1,z_1,\ldots,y_s,z_s\}$. Let us write $W=V(G)\setminus (N_1\cup\cdots\cup N_s\cup\overline{N})$, and note that $N(y_i)\su W$ and $N(z_i)\su W\cup N_i$ for $i=1,\ldots,s$.

We will now expose the random subset $U\su V(G)$ in several steps. First, we expose $U\cap W$ and consider the conditional expectation $\mb E[X\,|\, U\cap W]$ (which is a function of the random outcome of $U\cap W$). Note that this random variable is of the form in \cref{lem:switching-robust-conditioning} applied to the graph $G[W]$ with the random set $U\cap W\subseteq W$, with $f_w=e_w+\deg_{V(G)\setminus W}(w)$ for all $w\in W$, with $f_0 = e(V(G)\setminus W)+\sum_{v\in V(G)\setminus W}e_v$, and with $R=(H+1)n/|W|$. By \cref{lem:switching-robust-conditioning}(1), its variance is at most $((H+1)n/|W|)^2\cdot |W|^3\le (H+1)^2n^3$, and trivially its expectation is exactly $E=\mb E[X]$. Now, we claim that with probability at least $2^{-2s-2}=\Omega_C(1)$ the random outcome of $U\cap W$ satisfies the following three properties:
\begin{enumerate}
    \item[(A)] $y_1,\ldots,y_s\in U$ and $z_1,\ldots,z_s\notin U$, and
    \item[(B)] $|\mb E[X\,|\, U\cap W]-E|\leq 2^{s+1}(H+1)n^{3/2}$, and
    \item[(C)] for all $i=1,\ldots,s$, the quantity $|U\cap W\cap (N(z_i)\setminus \{y_i\})|=|U\cap (N(z_i)\setminus (\{y_i\}\cup N_i))|$ differs from $|N(z_i)\setminus (\{y_i\}\cup N_i)|/2$ by at most $2^{s+1}s\sqrt{n}$ and similarly $|U\cap W\cap (N(y_i)\setminus \{z_i\})|=|U\cap (N(y_i)\setminus \{z_i\})|$ differs from $|N(y_i)\setminus \{z_i\}|/2$ by at most $2^{s+1}s\sqrt{n}$.
\end{enumerate}
Indeed, (A) holds with probability exactly $2^{-2s}$, and by Chebyshev's inequality, (B) and (C) fail with probability at most $2^{-2s-2}$ and $2s\cdot 2^{-2s-2}/s^2$, respectively.

From now on we condition on an outcome of $U\cap W$ satisfying (A--C). Next we expose $U\cap (N_1\cup\cdots\cup N_s)$, which then determines all of $U\setminus \overline{N}$ and in particular determines whether the events $\mathcal{E}_i(y_{i},z_{i})$ for $i=1,\ldots,s$ hold. More precisely, after fixing the outcome of $U\cap W$, for each $i=1,\ldots,s$ the event $\mathcal{E}_i(y_{i},z_{i})$ is now determined by $U\cap N_i$ and holds if and only if 
\begin{equation}\label{eq:switch-lower-bound-intersection-size}
|U\cap N_i|=-|U\cap (N(z_i)\setminus (\{y_i\}\cup N_i))|-e_{z_i}+|U\cap (N(y_i)\setminus \{z_i\})|+e_{y_i}+\ell_i.
\end{equation}
In particular, the quantity on the right-hand side is determined given the information $U\cap W$. By (C), this quantity differs by at most $2^{s+2}s\sqrt{n}\leq 2^{D+2}D\sqrt{n}$ from
\begin{align*}
    &-|N(z_i)\setminus (\{y_i\}\cup N_i)|/2-e_{z_i}+|N(y_i)\setminus \{z_i\}|/2+e_{y_i}+\ell_i\\
    &\qquad \qquad =|N_i|/2-|N(z_i)\setminus \{y_i\}|/2-e_{z_i}+|N(y_i)\setminus \{z_i\}|/2+e_{y_i}+\ell_i\\
    &\qquad \qquad =|N_i|/2+(\deg(y_i)/2+e_{y_i})-(\deg(z_i)/2+e_{z_i})+\ell_i
\end{align*}
Recalling that $|(\deg(y_i)/2+e_{y_i})-(\deg(z_i)/2+e_{z_i})|\leq \sqrt{n}$ by property (3) of \cref{lem:switching-setup}, this means that the quantity on the right-hand side of \cref{eq:switch-lower-bound-intersection-size} differs from $|N_i|/2$ by at most $(2^{D+2}D+1)\sqrt{n}+B\leq 2^{D+3}D\sqrt{n}$.
Now note that, conditioning on our fixed outcome of $U\cap W$, the random variable $\mb E[X\,|\,U\setminus \overline{N}]$ is of the form in \cref{lem:switching-robust-conditioning} with the graph $G[N_1\cup\cdots\cup N_S]$ (of size at least $\rho\delta\cdot\delta^{1/\rho}n$) and with $R=R(C,H)=\max\{2^{D+3}D,(H+1)/(\rho\delta^{1+1/\rho})\}$. This random variable has expected value $\mb E [X\,|\,U\cap W]$, which differs from $E$ by at most $2^{s+1}(H+1)n^{3/2}$ by (B). So, by \cref{lem:switching-robust-conditioning}(2), 
with probability at least  $\Omega_{C,H}(n^{-s/2})$ the outcome of $U\setminus \overline{N}$ satisfies both
\begin{equation}\label{eq:switching-expectation-shift}
\big|\mb E[X\,|\,U\setminus \overline{N}]-E\big|\le (2^{s+1}(H+1)+6R^2) \cdot n^{3/2}
\end{equation}
and \cref{eq:switch-lower-bound-intersection-size} for all $i=1,\ldots,s$ (which implies that $\mathcal E_i (y_i,z_i)$ holds for all $i=1,\ldots,s$). From now on, we condition on such an outcome of $U\setminus \overline{N}$.

Finally, consider the randomness of $U\cap \overline{N}$ (having conditioned on our outcome of $U\setminus \overline{N}$). Note that $G[\overline{N}]$ is a $(2C)$-Ramsey graph (as $|\overline{N}|\geq \delta^{1+1/\rho}\cdot n\geq \sqrt{n}$), and that (in our conditional probability space) $X$ has the form in \cref{thm:short-interval}, with expectation $\mb E[X\,|\, U\setminus \overline{N}]$. Now, recalling \cref{eq:switching-expectation-shift} and the fact that $|x-E|\leq An^{3/2}$, note that $x$ differs from $\mb E[X\,|\, U\setminus \overline{N}]$ by at most $(A+2^{s+1}(H+1)+6R^2) \cdot n^{3/2}$. Therefore \cref{thm:short-interval} (plugging in $(H+1)/\delta^{1+1/\rho}$ for the ``$H$'' and $(A+2^{s+1}(H+1)+6R^2)/(\delta^{1+1/\rho})^{3/2}$ for the ``$A$'' in \cref{thm:short-interval}) implies that (conditioned on our fixed outcome of $U\setminus \overline{N}$ and subject only to the randomness of $U\cap \overline{N}$) we have $\Pr[|X-x|\leq B]\gtrsim_{C,H,A} n^{-3/2}$. This proves \cref{eq:moment-sum-lower-bound} and thereby gives the desired lower bound for the sum in \cref{eq:moment-sum}.

\medskip
\noindent \textit{Step 2: the upper bound.} To establish the desired upper bound $O_{C,H,A}((|T|/\sqrt{n})^s\cdot n^{-3/2})$ for the sum in \cref{eq:moment-sum}, for each $k=0,\ldots,s$, we separately consider the contribution of  $s$-tuples $((y_1,z_1),\ldots,(y_s,z_s))\in T^s$  of degeneracy $k$ (see \cref{def:degeneracy}). By \cref{lem:s-tuples-in-T-to-consider}, for each $k=0,\ldots,s$ there are at most $|T|^s/\sqrt{n}^k$ different such $s$-tuples of degeneracy $k$. Thus, it suffices to prove that for every $s$-tuple $((y_1,z_1),\ldots,(y_s,z_s))\in T^s$  of degeneracy $k$ we have
\begin{equation}\label{eq:moment-sum-upper-bound}
\Pr\big[|X-x|\leq B\text{ and }\mathcal{E}_i(y_{i},z_{i})\text{ holds for }i=1,\ldots,s\big]\lesssim_{C,H} n^{-(s-k)/2}\cdot n^{-3/2}.
\end{equation}
Recall the definition of the $s\times n$ matrix $M(y_1,z_1,\ldots,y_s,z_s)$ in \cref{def:degeneracy}. For every outcome of $U\su V(G)$, the entries of the vector $M(y_1,z_1,\ldots,y_s,z_s)\vec{1}_U$ are precisely $|N(z_i)\cap (U\setminus \{y_i\})|-|N(y_i)\cap (U\setminus \{z_i\})|$ for $i=1,\ldots,s$, since
\begin{align*}
\vec{1}_{N(z_i)\setminus \{y_i\}}\cdot \vec{1}_U-\vec{1}_{N(y_i)\setminus \{z_i\}}\cdot \vec{1}_U&=|(N(z_i)\setminus \{y_i\})\cap U|-|(N(y_i)\setminus \{z_i\})\cap U|\\
&=|N(z_i)\cap (U\setminus \{y_i\})|-|N(y_i)\cap (U\setminus \{z_i\})|.
\end{align*}
So if the events $\mathcal{E}_i(y_{i},z_{i})$ for $i=1,\ldots,s$  hold, we must have $M(y_1,z_1,\ldots,y_s,z_s)\vec{1}_U=(e_{y_i}-e_{z_i}+\ell_i)_{i=1}^s$.
Since $((y_1,z_1),\ldots,(y_s,z_s))$ is not $(k+1)$-degenerate, whenever we delete $\delta^{3/\gamma}\cdot n$ columns of the matrix $M(y_1,z_1,\ldots,y_s,z_s)$ the resulting matrix still has rank at least $s-k$. So applying \cref{thm:halasz-discrete} (with $\lambda=\delta^{3/\rho}$ and $r=s-k$) yields:
\[\Pr\big[\mathcal{E}_i(y_{i},z_{i})\text{ holds for }i=1,\ldots,s\big]\leq \Pr\big[M(y_1,z_1,\ldots,y_s,z_s)\vec{1}_U=(e_{y_i}-e_{z_i}+\ell_i)_{i=1}^s \big]\lesssim_{C}n^{-(s-k)/2}.\]
Thus in order to show \cref{eq:moment-sum-upper-bound}, it now suffices to prove the conditional probability bound
\begin{equation}\label{eq:switching-upper-bound-conditional}
\Pr\big[|X-x|\leq B\,\big|\,\mathcal{E}_i(y_{i},z_{i})\text{ for }i=1,\ldots,s\big]\lesssim_{C,H} n^{-3/2}.
\end{equation}
Note that the events $\mathcal{E}_i(y_{i},z_{i})$ for $i=1,\ldots,s$ only depend on $U\cap (V(G)\setminus \overline{N}(y_1,z_1,\ldots,y_s,z_s))$. So, condition on any outcome of $U\cap (V(G)\setminus \overline{N}(y_1,z_1,\ldots,y_s,z_s))$ such that $\mathcal{E}_i(y_{i},z_{i})$ holds for $i=1,\ldots,s$. Subject to the randomness of $U\cap \overline{N}(y_1,z_1,\ldots,y_s,z_s)$, our random variable $X$ has the form in \cref{thm:short-interval}, with the graph $G[\overline{N}(y_1,z_1,\ldots,y_s,z_s)]$ (which is a $(2C)$-Ramsey graph, since $|\overline{N}(y_1,z_1,\ldots,y_s,z_s)|\geq \delta|S_0|\geq  \delta^{1+1/\rho}\cdot n\geq \sqrt{n}$ by property (2) of \cref{lem:switching-setup}). Thus, in our conditional probability space, \cref{thm:short-interval} (plugging in  $(H+1)\delta^{-1-1/\rho}$ for the ``$H$'' in \cref{thm:short-interval}) yields
\[\Pr\big[|X-x|\leq B\,\big|\,U\cap (V(G)\setminus \overline{N}(y_1,z_1,\ldots,y_s,z_s))\big]\lesssim_{C,H}n^{-3/2}.\]
This proves \cref{eq:switching-upper-bound-conditional} and therefore establishes \cref{eq:moment-sum-upper-bound}, as desired.
\end{proof}

\bibliographystyle{amsplain0.bst}
\bibliography{main.bib}

\end{document}